\DeclareTextFontCommand{\emph}{\color{ForestGreen}\em} 
\newcommand{\RR}{\mathbb{R}}
\newcommand{\NN}{\mathbb{N}}
\newcommand{\LL}{\mathcal{L}}
\newcommand{\PP}{\mathcal{P}}
\newcommand{\Pre}{\mathfrak{P}}
\newcommand{\II}{\mathcal{I}}
\newcommand{\JJ}{\mathcal{J}}
\newcommand{\ZZ}{\mathbb{Z}}
\newcommand{\QQ}{\mathbb{Q}}
\newcommand{\FF}{\mathcal{F}}
\newcommand{\cC}{\mathcal{C}}
\newcommand{\cR}{\mathcal{R}}
\newcommand{\bp}{\mathbf{x_0}}
\newcommand{\bpx}{\mathbf{x}}
\newcommand{\wb}{\overline}
\newcommand{\ub}{\underline}
\newcommand{\wt}{\widetilde}
\newcommand{\lint}{\llbracket}
\newcommand{\rint}{\rrbracket}
\newcommand{\intint}[1]{{\lint #1 \rint}}		
\DeclareMathOperator{\reg}{Reg}
\DeclareMathOperator{\Int}{Int}
\DeclareMathOperator{\Diag}{Diag}
\DeclareMathOperator{\homeo}{homeo}
\DeclareMathOperator{\diam}{diam}
\DeclareMathOperator{\im}{im}
\DeclareMathOperator{\id}{id}
\DeclareMathOperator{\Leaf}{Leaf}
\DeclareMathOperator{\Sing}{Sing}
\newtheoremstyle{colorplain}%
{\topsep}   
{\topsep}   
{\itshape}  
{0pt}	   
{} 
{.}		 
{5pt plus 1pt minus 1pt} 
{\textbf{\textcolor{RoyalBlue}{\textbf{\thmname{#1} \thmnumber{#2}}}}\thmnote{ (#3)}}
{}
\newtheoremstyle{colorremark}%
{\topsep}   
{\topsep}   
{}  
{0pt}	   
{\itshape} 
{.}		 
{5pt plus 1pt minus 1pt} 
{\textcolor{RoyalBlue}{\thmname{#1} \thmnumber{#2}}\thmnote{ (#3)}}
{}
\newtheoremstyle{colordefinition}%
{\topsep}   
{\topsep}   
{}  
{0pt}	   
{} 
{.}		 
{5pt plus 1pt minus 1pt} 
{\textcolor{RoyalBlue}{\textbf{\thmname{#1} \thmnumber{#2}}}\thmnote{ (#3)}}
{}
\renewenvironment{proof}[1][\proofname]{
	\par
	\pushQED{\qed}%
	\normalfont \topsep6\p@\@plus6\p@\relax
	\trivlist
	\item\relax
	{\itshape\color{RoyalBlue}
	#1\@addpunct{.}}\hspace\labelsep\ignorespaces
}{%
	\popQED\endtrivlist\@endpefalse
}
\theoremstyle{colorplain}
\newtheorem{theorem}{Theorem}
\numberwithin{theorem}{section}
\newtheorem{maintheorem}{Theorem}
\newtheorem{remark}[theorem]{Remark}
\newtheorem{example}[theorem]{Example}
\newtheorem{lemma}[theorem]{Lemma}
\newtheorem{proposition}[theorem]{Proposition}
\newtheorem{corollary}[theorem]{Corollary}
\newtheorem{question}{Question}
\theoremstyle{colorremark}
\newtheorem{observation}[theorem]{Observation}
\theoremstyle{colordefinition}
\newtheorem{definition}[theorem]{Definition}
\title{From pre-lamination to foliated Plane}
\author{Christian Bonatti, Théo Marty}
\date{}
\begin{document}

\maketitle

\begin{abstract}
	To a singular foliation on the plane corresponds a circular boundary at infinity endowed with a pre-lamination on the circle. We solve the converse direction. We determine which pre-lamination on the circle are boundary at infinity of a foliation, and we build the corresponding (unique) foliation. We consider both regular foliations and a singular foliations with prong singularities.
\end{abstract}

\noindent
{\bf Keywords:} foliations on the plane, pre-laminations on the circle.

\noindent
{\bf AMS classification:} 57R30, 37C86.

{
	\hypersetup{linkcolor=black}
	\tableofcontents \label{ToC}
}


\section{Introduction} \label{sec-intro}
\subsection{Short presentation of the questions and results}

Foliations on the plane and their relations with pre-laminations on the circle recently saw interesting applications, for instance, to the characterization of pseudo-Anosov flows by their bi-foliated planes. 
For almost a century, it has been known that foliations on the plane are characterized by their leaf spaces \cite{KaplanI,KaplanII}, which are non-Hausdorff 1-manifolds with some additional data. More recently, Mather \cite{Mather} build a natural circular boundary at infinity associated to a foliation of $\RR^2$ (more exactly, he considers germs of foliations at a given isolated singular point). Using similar ideas, given a foliation $\FF$  of the plane $\RR^2$, \cite{BonattiCirc} shows that the plane admits a unique compactification as a compact disc by adding a circle at infinity $\partial_\infty\FF\simeq S^1$, in such a way that  any half leaf of $\FF$ converges toward a point in $S^1$, no more than countably many half leaf have the same endpoint, and that the endpoints of leaves are dense in the circle\footnote{The generalized construction allows at most countable families of (singular) foliations with some mild transversality assumptions, and states the uniqueness of the compactification under some natural conditions.}. 

The circle at infinity is naturally endowed with a pre-lamination as follows. Denote by $\Delta$ the diagonal in $S^1\times S^1$. Given a (non-singular) leaf of the foliation~$\FF$, its two endpoints induce an unordered pair of distinct points in $S^1$, that is, a point in $(S^1\times S^1\setminus\Delta)_{/(x,y)\sim(y,x)}$. All these pairs of endpoints form a collection called \emph{pre-lamination of the circle $S^1$} that is a collection of pairs of points that do not cross each other. We call it \emph{the end pre-lamination $L_\FF$ of~$\FF$}. 

 In this article, we complete the picture by solving the converse question:
\begin{question}\label{q.induced}
	Which pre-laminations of the circle are induced by a foliation on the plane? 
\end{question}
More precisely, starting from a pre-lamination $L$ satisfying some properties, we build an associated non-Hausdorff 1-manifold and a foliation $\FF$ on the plane whose boundary at infinity is endowed with $L_\FF=L$.  We answer the question for usual (non-singular) foliations (Theorem~\ref{main-A}) and for a large family of singular foliations: the one with prong singularities, at least three separatrixes per singularity, and that have no connection between singularities (Theorem~\ref{main-A-sing}).


The uniqueness of the compactification in \cite{BonattiCirc} has an important consequence. Every homeomorphism of the plane preserving the foliation~$\FF$ extends in a unique way on the compact disc $D^2_\FF$ and induces by restriction a homeomorphisms of the circle $\partial_\infty \FF$. Thus any group action $\phi\colon G\to\homeo(\RR^2)$ preserving~$\FF$ extends as a group action $\Phi\colon G\to\homeo(D^2_\FF)$ and induces a group action $\varphi\colon G\to\homeo(\partial_\infty \FF)$ on the circle.

An important long-term motivation for this work is the following question.

\begin{question}\label{q.action} 
    Let $\FF$ be a foliation on the interior of $D^2$, and $\varphi\colon G\to \homeo(\partial_\infty\FF)$ be a group action which preserves the end lamination $L_\FF$ of $\FF$. How can we determine when $\varphi$ can be extended 
    as an action $\Phi\colon G\to\homeo(D^2)$ that preserves~$\FF$? 
\end{question}
Let us spend some time discussing Question~\ref{q.action}. In the setting of Question~\ref{q.action}, let $\{g_i\}_{i\in \II}$ be a systems of generators of the group $G$ and let $F_\II$ be the free group generated by $\{g_i\}_{i\in \II}$. One may choose, for every $i\in \II$, a homeomorphism $h_i$ of the compact disc, preserving $\FF$ and inducing $g_i$ on $\partial_\infty\FF$. This defines an action $\psi$ of $F_\II$ on  $D^2$, preserving $\FF$.  For every $f\in F_\II$ in the kernel $Ker(\rho)$ of the natural morphism $\rho\colon F_\II\to G$, the homeomorphism $\psi(f)$ not only preserves $\FF$ but also preserves every leaf of $\FF$. The question is to determine under which condition one may chose the $\psi(g_i)$  so that $\psi$ would be trivial on $Ker(\rho)$. 

We do not answer this question in the present paper but we tackle it by answering the next question. 
In view of Question~\ref{q.action}, we would like to be able to rebuild the foliation~$\FF$ with a smaller data than its whole end pre-lamination: ideally, one would like to be able to rebuild~$\FF$ by considering a $G$-orbit of a single
pair in $S^1\times S^1\setminus \Delta$. Thus we are interested by the following question:

\begin{question}\label{q.completed}
 Given a pre-lamination $\ell\subset(S^1\times S^1\setminus\Delta)_{/(x,y)\sim(y,x)}$, under which assumptions does it exist a pre-lamination $L\supset\ell$ induced by a foliation (or a singular foliation)~$\FF$ and so that $\ell$ corresponds to a dense subset of leaves of~$\FF$ ?
\end{question}

We give in Theorem~\ref{t.non-complete} a complete answer to Question~\ref{q.completed} in the non-singular setting only. 

\vspace{\baselineskip}
Note that the corresponding questions in the setting of two transverse (possibly singular) foliations have recently been solved in \cite{BBM2024}.  However, the arguments cannot be adapted to the case of one foliation. In fact, two transverse foliations provide enough structure to explicitly answer these questions. Given two transverse pre-laminations on the circles, with good properties, the bi-foliated plane can be rebuilt from the intersection of their leaves: a point of the plane corresponds to a pair of crossing leaves. In contrast, when given only one pre-lamination, we do not have enough structure to generalize this construction.


\subsection{Statement of our results: the non-singular case}
Let us now present our results, first in the non-singular cases. 
Theorem~\ref{main-A} below states that the pre-laminations $L$ which are induced by foliations are exactly the pre-laminations whose endpoints are dense in $S^1$ and with no more than countably many leaves ending at a given endpoints, and that satisfies two additional conditions. We express these conditions using \emph{the geodesic realization $G(L)$} of $L$ that is, the family of Euclidian geodesic segments of $\Int(D^2)$ whose pair of endpoints belongs to~$L$:
\begin{itemize}
	\item \emph{shell property}: the pre-lamination $L$ is a \emph{shell pre-lamination} if every complementary region $\Delta$ of $G(L)$ is a \emph{shell}, that is, the interior of $\Delta$ is not empty, and all but exactly one boundary components of $\Delta$ are leaves of $L$. The boundary component that is not a leaf of $L$ is called \emph{the root} of the shell~$\Delta$. 
	\item \emph{no bad-accumulation}: for any sequence of complementary regions that converges toward a geodesic, their roots accumulate on that geodesic. 
\end{itemize}
The shell condition is related to branching of the foliation, that is non-separated leaves. The no bad-accumulation condition reflects the existence of transverse segments of the foliation at the limiting leaf.

\begin{maintheorem}\thlabel{main-A}
	Let $L$ be a pre-lamination of $S^1$ whose endpoints are dense in $S^1$ and with no more than countably many leaves ending at a given endpoints. 
	Then $L$ is the end pre-lamination of a (non singular) foliation of $\Int(D^2)$ if and only if it is a shell pre-lamination and has no bad accumulation. 
	
	Additionally, when it satisfies the previous condition, the foliation is unique up to a homeomorphism of $D^2$ equal to the identity on $S^1$. 
\end{maintheorem}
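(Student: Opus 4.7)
The plan is to prove three things in turn: necessity of the two conditions, existence of the foliation, and its uniqueness.

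\textbf{Necessity.} Suppose $L = L_\FF$ for a foliation $\FF$ of $\Int(D^2)$. Let $\Delta$ be a complementary region of $G(L)$. First, one checks $\Int(\Delta) \neq \emptyset$ using the density of endpoints. For the shell property, let $\gamma_1, \dots, \gamma_k$ be the geodesic sides of $\Delta$ and $\ell_1, \dots, \ell_k$ the corresponding leaves of $\FF$ (same endpoints as the $\gamma_i$). Those leaves bound a region of the plane; a topological argument using the fact that $\FF$ fills $\Int(D^2)$ forces exactly one boundary component of $\Delta$ to be non-geodesic, the root capturing the direction in which $\FF$ escapes $\Delta$. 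For ``no bad accumulation'', a sequence $\Delta_n \to \gamma$ whose roots stay away from $\gamma$ would yield a small transverse arc on the leaf corresponding to $\gamma$ that meets no leaves of $\FF$, contradicting the density of endpoints of $L_\FF$.

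\textbf{Sufficiency.} Given $L$ satisfying the assumptions, I would construct $\FF$ in three steps. First, build a candidate leaf space: a non-Hausdorff $1$-manifold $T$ whose points are the leaves of $L$, augmented by additional points parameterizing transverse structure along the root of each shell; the topology comes from the local orderings of leaves of $L$ induced by the cyclic order on $S^1$. The shell condition ensures that $T$ is locally modeled on $\RR$ (with non-Hausdorff branching), while no bad accumulation ensures that non-separated points of $T$ are approached by a dense family of well-separated leaves. Second, realize $T$ as the leaf space of a foliation of $\Int(D^2)$: use $G(L)$ as a rigid skeleton and extend inside each shell $\Delta$ by foliating it with arcs ``parallel'' to its geodesic sides and sharing endpoints on the root; Kaplan-type arguments (cf.~\cite{KaplanI,KaplanII}) confirm that such a leaf space is realized by a foliation of $\RR^2$. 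Third, apply \cite{BonattiCirc}: the compactification $D^2_\FF$ is canonically identified with the original $D^2$, and by construction $L_\FF = L$.

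\textbf{Uniqueness.} Let $\FF, \FF'$ be two foliations on $\Int(D^2)$ realizing the same $L$. Their leaves are in canonical bijection via the common endpoint data. Using the abstract leaf space $T$ (which depends only on $L$), one parameterizes both foliations by transverse arcs and builds a conjugating homeomorphism of $\Int(D^2)$ locally, then glues across the plane. Extending by the identity on $S^1$ yields the required homeomorphism of $D^2$.

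\textbf{Main obstacle.} The delicate part will be the sufficiency: showing that the two conditions on $L$ translate \emph{exactly} into the data needed to build a foliation of $\RR^2$ whose end pre-lamination recovers $L$. One must carefully treat all types of roots of shells (arcs of $S^1$ disjoint from $L$'s endpoints, singletons on $S^1$, or geodesic limits), verify that the no bad-accumulation condition provides the required transverse structure at limits, and globally assemble the local foliated pieces into a coherent foliation of $\Int(D^2)$ with no unintended extra leaves. The second ingredient one must watch carefully is the interplay between the possibly countable collection of leaves of $L$ sharing an endpoint and the shell structure at the corresponding geodesic accumulations.
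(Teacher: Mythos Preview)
Your proposal has the right three-part architecture (necessity, existence, uniqueness via a leaf-space intermediary and Kaplan), but several steps are either incorrect or miss the actual mechanism.

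\textbf{Necessity.} Your argument for no bad accumulation is wrong: a small transverse arc always meets leaves of $\FF$, so ``meets no leaves'' cannot be the contradiction. The paper's argument (\thref{cor-no-acc}) is that if the roots of the $\Delta_n$ do not converge to $\gamma$, then the two long edges of $\Delta_n$ are both leaves of $L_\infty(\FF)$ converging to $\gamma$; the corresponding leaves of $\FF$ are non-separated but both cross a single transversal near $\pi_\FF^{-1}(\gamma)$, contradicting Lemma~\ref{l.separated}. Your shell argument is also too vague: the paper first shows (Corollary~\ref{c.accumulee}) that any accumulated non-leaf geodesic bounds a shell, then classifies the boundary behavior of each leaf side-by-side (Lemma~\ref{l.leaf-boundary}).

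\textbf{Sufficiency.} Two genuine problems. First, your leaf space $T$ is built incorrectly: you should \emph{not} augment $L$ with extra points along roots. In the paper, for a $\Pre$-pre-lamination the leaf space $\Leaf^*(L)$ is \emph{set-theoretically equal to $L$}; the work is in putting the right topology on it (the $L$-interval topology, Definition~\ref{d.interval}) and proving it is a simply connected non-Hausdorff $1$-manifold (Corollary~\ref{c.leafspace}). The root of a shell is not represented by any point of the leaf space; rather, the leaves bounding the shell become a branching (a set of pairwise non-separated points). The no-bad-accumulation hypothesis is used precisely to show that every leaf sits in a \emph{simple} $L$-interval (Lemma~\ref{l.simple-interval-singulier}), i.e.\ one realized by a transverse geodesic segment. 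Second, you are missing the closing step: after Kaplan gives a foliation $\FF$ with $\Leaf(\FF)\cong\Leaf^*(L)$, you must still show $L_\infty(\FF)$ is conjugate to $L$ by a circle homeomorphism. This is Theorem~\ref{t.prelam-planar}: two $\Pre$-pre-laminations with isomorphic planar structures are conjugate on $S^1$. Your geometric idea of foliating each shell directly inside $D^2$ is not the paper's route and would require substantial extra work. (Also, your parenthetical list of ``types of roots'' is confused: a root is always a geodesic of $\Int(D^2)$, never an arc of $S^1$.)

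\textbf{Uniqueness.} The paper's argument is: Kaplan's uniqueness gives a plane homeomorphism realizing any isomorphism of planar structures, and Theorem~\ref{t.prelam-planar} (uniqueness part) forces the induced circle map to be the identity. Your sketch is compatible with this once the leaf space is set up correctly.
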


Later on, a pre-lamination satisfying the hypotheses in Theorem~\ref{main-A} will be called a $\Pre$-pre-lamination. 
A more formal statement is given in \thref{th-good-lam-are-end-fol}. 

As a corollary of Theorem~\ref{main-A}, Theorem~\ref{t.non-complete} gives a complete answer to Question~\ref{q.completed} by providing a necessary a sufficient condition for a pre-lamination $\ell$ to be contained in the end pre-lamination $L_\FF$ of non-singular foliation~$\FF$, and corresponding to a dense subset of leaves of~$\FF$. The main difference with the hypotheses of Theorem~\ref{main-A} is that complementary regions are allowed to have more than one boundary component that is not a leaf. This requires to modify slightly the no bad-accumulation property. The main difference with the conclusion is that the completion $L_\FF$ of $\ell$ is, in general, not unique, and therefore the foliation~$\FF$ is not unique. 

In this introduction, we present a simple setting where Theorem~\ref{t.non-complete} as a simple expression.
A pre-lamination $\ell$ of $S^1$ is said \emph{regular} if:
\begin{itemize}
 \item each leaf of $\ell$ is accumulated on both sides by leaves of $\ell$,
 \item the leaves are pairwise disjoint, as sets of pairs of points of $S^1$.
\end{itemize}
This setting is natural: a leaf of a foliation is said regular when it is separated from the other leaves. The set of regular leaves of a foliation are dense in $Int(D^2)$. Any dense set of regular leaves of a foliation is a regular pre-lamination: that is the motivation of this notion. 

\begin{maintheorem}\label{t.regular} Let $\ell$ be a regular pre-lamination whose endpoints are dense in $S^1$. 
Then there is a foliation~$\FF$ so that $\ell\subset L_\FF$ and $\ell$ corresponds to a dense subset of leaves of~$\FF$. 

However the pre-lamination $L_\FF$ (and therefore the foliation~$\FF$) is not unique, unless~$\FF$ is conjugated to the trivial foliation by horizontal straight lines. 
\end{maintheorem}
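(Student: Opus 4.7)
The strategy is to construct, from $\ell$, a $\Pre$-pre-lamination $L \supset \ell$ and then to apply Theorem~\ref{main-A}. Consider the connected components of $\Int(D^2) \setminus G(\ell)$. Since $\ell$ is regular, no chord of $G(\ell)$ lies on the boundary of such a component, and each component falls into one of two types: one-dimensional, i.e., a single chord $c \notin G(\ell)$ accumulated by leaves of $\ell$ from both sides; or two-dimensional, i.e., an open region $\Delta$ whose boundary chords, lying in $\mathrm{Clos}(G(\ell)) \setminus G(\ell)$, are accumulated by leaves of $\ell$ only from outside $\Delta$. Density of endpoints of $\ell$ excludes 2D monogons, and positivity of area makes the 2D components countable. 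For each 2D component $\Delta$, pick an arbitrary boundary chord $r_\Delta$ (the root), and set
\[
L := \ell \;\cup\; \{\text{all 1D components}\} \;\cup\; \bigcup_{\Delta\ \text{2D}} \bigl(\text{boundary chords of }\Delta \setminus \{r_\Delta\}\bigr).
\]

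\textbf{Verification and existence.} I verify that $L$ is a $\Pre$-pre-lamination. Density of endpoints is inherited from $\ell$; the countability at each endpoint reduces to bounding the 1D components and the boundary chords of 2D components meeting a given point of $S^1$. The shell property holds by construction: the 2D components of $G(L)$ coincide with those of $G(\ell)$ (adding non-root boundary chords does not split them), each having its root $r_\Delta$ as the unique non-$L$ boundary component and a non-empty interior, while the 1D components are now in $L$ and are no longer complementary regions. For no bad-accumulation, if a sequence of 2D components $\Delta_n$ Hausdorff-converges to a geodesic $g$, then all their boundary chords—including the roots $r_{\Delta_n}$—are trapped in shrinking neighborhoods of $g$ and hence converge to $g$. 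Theorem~\ref{main-A} then supplies a foliation $\FF$ with $L_\FF = L$; since every leaf of $L \setminus \ell$ is a limit of leaves of $\ell$, the leaves of $\ell$ form a dense family in $\FF$.

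\textbf{The dichotomy.} If $\FF$ is conjugate to the trivial horizontal foliation, then $\mathrm{Clos}(G(\ell)) = \Int(D^2)$ and there are no 2D components; the construction offers no choice of root and produces a unique $L$, namely the pre-lamination of all horizontal chords. Moreover any shell pre-lamination $L' \subsetneq L$ still containing $\ell$ would leave a horizontal chord uncovered, producing an empty-interior 1D complementary region and violating the shell condition; hence $L_\FF$ and $\FF$ are unique. Conversely, if $\FF$ is not conjugate to the trivial foliation, then by Kaplan's theorem its leaf space is non-Hausdorff, which translates into the existence of at least one 2D component $\Delta$ of $G(L_\FF)$ with at least two boundary chords. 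Replacing the root of $\Delta$ with another boundary leaf $r'$, i.e., setting $L' := (L_\FF \setminus \{r'\}) \cup \{r_\Delta\}$, yields a different $\Pre$-pre-lamination containing $\ell$, and Theorem~\ref{main-A} then provides a genuinely different foliation $\FF'$ with $L_{\FF'} = L'$.

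\textbf{Main obstacle.} The delicate step is verifying the countability condition at every point of $S^1$ for the constructed $L$: while the bounds coming from $\ell$ itself and from the 2D components are straightforward, controlling the number of 1D components meeting a given endpoint genuinely requires the pairwise disjointness of $\ell$ and the density of its endpoints used together; this is the step where the argument is most sensitive to the hypotheses, and the most plausible locus of failure if regularity is weakened.
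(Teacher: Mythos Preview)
Your construction is essentially the same as the paper's: add to $\ell$ all empty-interior complementary components and all-but-one boundary chord of each two-dimensional complementary region, then invoke Theorem~\ref{main-A}. The non-uniqueness argument via swapping a single root is also correct and matches the paper's reasoning.

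There is, however, a genuine gap in your verification of the no-bad-accumulation property. You write that the root $r_\Delta$ may be chosen \emph{arbitrarily}, and then argue that if $\bar\Delta_n\to g$ in the Hausdorff topology, ``all their boundary chords---including the roots $r_{\Delta_n}$---are trapped in shrinking neighborhoods of $g$ and hence converge to $g$.'' This misreads the condition. ``The roots accumulate on $g$'' means the roots converge to $g$ \emph{as geodesics}, i.e.\ their lengths stay bounded away from $0$; the formal definition (Section~4) says a bad accumulation occurs precisely when the two \emph{long} edges of $\Delta_n$ are leaves of $L$, equivalently when the lengths of the roots tend to $0$. With an arbitrary choice of root this can fail: take any $\ell$ whose 2D complementary regions $\Delta_n$ accumulate on a geodesic $g$, each $\Delta_n$ having two long edges $a_n,b_n\to g$ and at least one short edge $c_n$ of length $\to 0$; declaring $r_{\Delta_n}=c_n$ makes both long edges leaves of $L$ and produces a bad accumulation. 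Your argument only shows the roots Hausdorff-converge to a \emph{subset} of $g$ (possibly a single endpoint), which is vacuous.

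The fix, and what the paper actually does in the proof of Theorem~\ref{t.non-complete}, is to choose the root of $\Delta$ to be one of its two longest boundary components, for all but finitely many $\Delta$ (Lemma~\ref{l.calibrage} guarantees these are well defined). With that choice, no sequence of shells can have both long edges in $L$, and no bad accumulation occurs. This same choice is also what drives the paper's verification of countability (via the few-common-ends property), the step you correctly flagged as delicate but did not carry out: an infinite family in $E_L(\theta)$ between two fixed leaves would force consecutive pairs to bound common complementary regions whose two long edges are both leaves, again a bad accumulation. So both of your outstanding verifications hinge on the non-arbitrary choice of root.
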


The proof of Theorem~\ref{main-A} is based on Kaplan's work (see \cite{KaplanI, KaplanII} for instance) which characterizes a foliation on $\RR^2$ by it leaf space. Let us recall it briefly. Let~$\FF$ be a cooriented foliation of $\RR^2$. The leaf space of the foliation~$\FF$ is a simply-connected (possibly) non-Hausdorff oriented 1-manifold. Its branchings, that is maximal subsets of at least two points that are pairwise non-separable, correspond to subsets of leaves of~$\FF$ that are approached by a common sequence of leaves. Each branching comes with a natural order: the left-right order when viewing the coorientation going upward. The leaf space of~$\FF$ together with these orientations form a so-called \emph{planar structure}. 
Kaplan proved the $[1:1]$ correspondence between cooriented foliation of the plane (up to homeomorphisms) and planar structures.

An important remark is that the branchings of~$\FF$ are in a canonical $1$ to $1$ correspondence with the complementary regions of the the geodesic realization $G_\FF=G_{L_\FF}$ of $L_\FF$.
The shell property means that each complementary region $\Delta$ has a particular form: exactly one boundary component $\gamma$ of $\Delta$ is not a leaf of $G_\FF$, but it is accumulated by leaves $\gamma_n$ of $G_\FF$. The leaves in $\partial \Delta$ correspond to leaves of~$\FF$ which are pairwise non separated and are all accumulated by the leaves of~$\FF$ corresponding to the $\gamma_n$. Together with the no bad-accumulation property, this will allow us to define a non-Hausdorff 1-manifold as being the leaf space of a pre-lamination $L$ of the circle (Theorem~\ref{t.prelam-planar}). 

\subsection{Statement of our results: the general (singular) case}
In this paper, singular foliations are assumed to be as follows. Their singular points are $k$-prong singularities with $k\geq 3$, and there have no connections between singular points\footnote{\thref{main-A-sing} and \thref{mainth-Kaplan-sing} fail when we allow connections between singular points. A generalization seems natural in the case where each leaf carries at most finitely many singular points, but the condition and the precise statement are necessarily more complicated, and the foliation fails to be unique.}. We call such a foliations a \emph{$pA$-foliation} as this setting is motivated by the foliations associated to pseudo-Anosov homeomorphisms. 

The answer to Questions~\ref{q.induced} in the singular case are very similar to the one in the non-singular case: one just needs to adapt the two main hypotheses, the shell property and no bad-accumulations. The main difference is that Kaplan's work has been only stated in the non-singular setting, and an important difficulty of this paper is to provide the singular version of Kaplan's work. 

Let us start with the statement of the answers to Questions \ref{q.induced} and~\ref{q.completed}. 

\begin{itemize}
	\item \emph{shell/star pre-lamination}: any complementary region $\Delta$ of $G(L)$ is either:
	\begin{itemize} 
        \item a \emph{shell region}: $\Delta$ admits exactly one boundary component that is not a leaf of $L$ (the root of $L$),
    	\item or a \emph{star region}: the boundary $\partial \Delta$ is the union of finitely many leaves of $L$ which form an ideal polygon. 
	\end{itemize}
    Additionally, no leaf of $L$ belongs to the boundary of two distinct star regions. 
	\item \emph{no bad-accumulation}: for any sequence $\Delta_n$ of complementary regions that accumulates on a geodesic $\gamma$, we have:
    \begin{itemize}
        \item if the $\Delta_n$ are shell regions, then their roots also accumulates on $\gamma$, 
    	\item if the $\Delta_n$ are star regions, then there exist pairs of successive boundary components $(\gamma_{1,n},\gamma_{2,n})$ of $\Delta_n$ so that the $\gamma_{i,n}$ converge in $n$ toward $\gamma$.
	\end{itemize}
\end{itemize}

\begin{maintheorem}\thlabel{main-A-sing}
	Let $L$ be a pre-lamination of $S^1$ whose endpoints are dense in $S^1$ and with no more than countably many leaves ending at a given endpoints. 
	Then $L$ is the end pre-lamination $L_\FF$ of a $pA$-foliation~$\FF$ if and only if it is a shell/star pre-lamination and has no bad accumulation. Additionally, when it satisfies the previous condition, the singular foliation is unique up to homeomorphism.
\end{maintheorem}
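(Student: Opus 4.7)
The plan is to follow the same scheme as in \thref{main-A}, replacing Kaplan's classical theorem by its singular version \thref{mainth-Kaplan-sing}, which we establish independently. Given a pre-lamination $L$ satisfying the shell/star and no bad-accumulation properties, we build a singular non-Hausdorff simply connected $1$-manifold $\LL_L$ equipped with a planar structure and a (possibly empty) distinguished set of $k$-prong singular points, and then invoke \thref{mainth-Kaplan-sing} to produce a unique $pA$-foliation of the plane whose leaf space is $\LL_L$ and whose end pre-lamination is $L$.

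For the necessity direction, suppose $\FF$ is a $pA$-foliation with $L_\FF = L$. Using the compactification of $\RR^2$ as $D^2_\FF$ from \cite{BonattiCirc}, each regular leaf has two endpoints on $S^1 = \partial_\infty\FF$, and the complementary regions of $G(L)$ split into two classes: those whose unique non-leaf boundary component is accumulated by a maximal family of pairwise non-separated leaves of $\FF$ (shell regions), and those whose boundary is an ideal polygon whose sides are the $k\geq 3$ leaves through the separatrixes of a prong singularity (star regions). The no-connection-between-singularities hypothesis prevents a leaf from bounding two star regions, and the existence of local transversals at every leaf yields the two variants of no bad-accumulation: roots of shells accumulate along any limit geodesic in the first case, and successive sides of the polygons converge to the limit geodesic in the second.

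For the sufficiency direction, from $L$ we define $\LL_L$ as the set of leaves of $L$ together with one extra \emph{singular} point per star region, equipped with a topology generated by transverse arcs that cross one-parameter families of nearby leaves; the no bad-accumulation condition is exactly what allows these charts to be well-defined and to endow a neighbourhood of every regular leaf with a $1$-manifold structure, while each star region contributes a neighbourhood of a $k$-prong singularity whose separatrixes are indexed by its boundary leaves. The non-Hausdorff branchings of $\LL_L$ arise from shell regions: the leaves in the boundary of a common shell are pairwise non-separated, and the circular order of their endpoints in $S^1$ yields the required planar (left--right) order at that branching. One then checks that $\LL_L$ is simply connected (the compactification $G(L)\cup S^1$ is a topological disc) and applies \thref{mainth-Kaplan-sing}.

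The main obstacle is \thref{mainth-Kaplan-sing} itself: Kaplan's classical argument glues local horizontal strips along the planar structure, and must be adapted around $k$-prong singularities, where a local model made of $k$ half-planes glued cyclically along half-separatrixes must be inserted in a way compatible with the planar order at every branching. Proving that the resulting simply connected singular surface is homeomorphic to $\RR^2$ requires an exhaustion by compact discs, using the no bad-accumulation property to guarantee that each such disc meets only finitely many leaves and singular points. Uniqueness of $\FF$ up to homeomorphism of $D^2$ fixing $S^1$ then follows from the corresponding uniqueness in \thref{mainth-Kaplan-sing}, together with the fact that $L$ reconstructs $\LL_L$ and its planar structure canonically.
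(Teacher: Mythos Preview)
Your overall architecture---necessity via the compactification, then build a leaf-space object from $L$, apply the singular Kaplan theorem, and return to the circle---is the paper's architecture. But several of your concrete choices diverge from the paper in ways that create mismatches.

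\textbf{The leaf-space object.} You define $\LL_L$ as the set of leaves of $L$ \emph{together with one extra singular point per star region}. The paper does the opposite: it removes the singularities and works with a genuine non-Hausdorff $1$-manifold $\Leaf^*(L)$, whose points are either non-star-bounding leaves or \emph{separatrices} (unordered pairs of adjacent star boundary leaves). The stars are then encoded not as branch points but as \emph{cyclic branchings} of this $1$-manifold. This matters because the input of \thref{mainth-Kaplan-sing} is a singular planar structure in the paper's sense, i.e.\ a tame $1$-manifold with ordered branchings and cyclically ordered cyclic branchings---not an $\RR$-tree with $k$-valent vertices. Your $\LL_L$ is not of that type, so you cannot invoke \thref{mainth-Kaplan-sing} as stated. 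Relatedly, your claim that $\LL_L$ is simply connected is correct for your tree-like object but would be \emph{false} for the paper's $\Leaf^*(L)$ whenever there is a star (see Remark~\ref{r.planar}): a point on a cyclic branching does not disconnect.

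\textbf{Proving \thref{mainth-Kaplan-sing}.} Your sketch (glue local $k$-prong models along the planar data, exhaust by discs) is plausible but is not what the paper does. The paper constructs an explicit \emph{universal} $pA$-foliation: first a ramified cover over $\RR^2\setminus K$ (the base of the non-singular universal foliation) to get all even-prong singularities, then surgeries removing one quadrant at certain singularities to obtain all odd prong counts. Existence for an arbitrary singular planar structure then comes by embedding into this universal one; uniqueness is proved by lifting to the non-singular universal cover and invoking classical Kaplan there, then pushing down. Your exhaustion argument would need independent justification, and the no bad-accumulation hypothesis plays no role at that stage (it is a hypothesis on $L$, not on the abstract singular planar structure).

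\textbf{Returning to the circle.} You say uniqueness follows because ``$L$ reconstructs $\LL_L$ canonically''. The paper needs an explicit converse step here: Theorem~\ref{t.prelam-planar} shows that any isomorphism of singular planar structures $\Leaf^*(L_1)\to\Leaf^*(L_2)$ is induced by a unique orientation-preserving homeomorphism of $S^1$. This, combined with the identification $\Leaf(\FF)\cong\Leaf^*(L_\infty(\FF))$ (Theorem~\ref{t.identification-planaire}), is what closes the loop. Without it, an isomorphism at the leaf-space level does not automatically yield a conjugacy of pre-laminations on $S^1$.
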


Later on, a pre-lamination satisfying the hypotheses in the theorem will be called a $\Pre^*$-pre-lamination. 
A more formal statement is given in \thref{th-good-lam-are-end-sing-fol}. 


An important part of this paper consists of providing a singular version of Kaplan's work. Given a $pA$-foliations~$\FF$ on $\RR^2$, its leaf space $\Leaf(\FF)$ is a non-Hausdorff 1-manifold\footnote{Here we consider the leaf space of the restriction of~$\FF$ to $\RR^2\setminus \Sing(\FF)$, that is, we remove the singular points}. We equip it with an additional data, called singular planar structure, denote by $(\Leaf(\FF),\preceq_\cdot)$. 
We prove a similar theorem to Kaplan's theorems. 

\begin{maintheorem}\thlabel{mainth-Kaplan-sing}	
	Let $(\LL,\preceq_\cdot)$ be a singular planar structure, there exists a $pA$-foliation $\FF$ of $\Int(D^2)$, and an isomorphism $f\colon(\LL,\preceq_\cdot)\to(\Leaf(\FF),\preceq_\cdot)$ of singular planar structures. 
	
	Additionally, $\FF$ is unique, in the following sense. For any $pA$-foliation $\FF'$ on $\Int(D^2)$ and any isomorphism $f'\colon\left(\LL,\preceq_\cdot\right)\to(\Leaf(\FF'),\preceq_\cdot)$, there is an orientation preserving homeomorphism $h\colon\Int(D^2)\to\Int(D^2)$, conjugating $\FF$ to $\FF'$ and inducing an isomorphism $\wt h\colon(\Leaf(\FF'),\preceq_\cdot)\to(\Leaf(\FF'),\preceq_\cdot)$ that satisfies $\wt h\circ f=f'$.
\end{maintheorem}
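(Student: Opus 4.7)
The plan is to reduce \thref{mainth-Kaplan-sing} to the classical (non-singular) Kaplan theorem \cite{KaplanI,KaplanII} by \emph{unfolding} the prong data of $(\LL,\preceq_\cdot)$. The singular planar structure carries, on top of the usual non-Hausdorff $1$-manifold and its branch-set orderings, a discrete collection of \emph{prong marks}: for each future $k$-prong singularity, a cyclically ordered $k$-tuple $(s_1,\dots,s_k)$ of pairwise non-separated leaves of $\LL$ playing the role of separatrices, together with the cyclic left--right data that $\preceq_\cdot$ carries on them. The hypothesis $k\ge 3$ and the absence of connections between singular points guarantee that each such separatrix belongs to at most one prong mark and is genuinely non-separated from the $k-1$ others within the mark.

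I would first produce a non-singular planar structure $(\wt\LL,\wt\preceq_\cdot)$ by cutting each prong mark open: every separatrix $s_i$ is duplicated into two copies $s_i^-,s_i^+$ corresponding to the two sectors of the prong that it bounds, and the cyclic data is converted into a linear ordering on each resulting branch. Kaplan's theorem, applied to $(\wt\LL,\wt\preceq_\cdot)$, produces a regular cooriented foliation $\wt\FF$ on a plane $\wt P$ together with an isomorphism of planar structures $\wt\LL\simeq\Leaf(\wt\FF)$. Next, for each prong mark I perform a topological quotient of $\wt P$ that identifies the pairs $s_i^-,s_i^+$ along with a common ``ideal'' point shared by the $k$ sectors; the cyclic data dictates the cyclic arrangement of sectors, so that a standard $k$-prong model is recovered. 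Call the output $P$ and the induced foliation $\FF$. That $P\simeq\Int(D^2)$ follows from the planarity of $\wt P$ and from the fact that each quotient glues finitely many planar sectors along their boundaries at a single interior point, preserving planarity. Tracing the construction then yields an isomorphism $(\Leaf(\FF),\preceq_\cdot)\simeq(\LL,\preceq_\cdot)$ of singular planar structures.

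For uniqueness, given another $pA$-realization $\FF'$ on $\Int(D^2)$ and an isomorphism $f'\colon(\LL,\preceq_\cdot)\to(\Leaf(\FF'),\preceq_\cdot)$, I apply the same unfolding to $\FF'$ to obtain a regular foliation $\wt\FF'$ with the same regular planar structure $(\wt\LL,\wt\preceq_\cdot)$. Kaplan's uniqueness produces a plane homeomorphism conjugating $\wt\FF$ to $\wt\FF'$; because $f'\circ f^{-1}$ respects the prong marks by assumption, this homeomorphism descends through the quotient to an orientation-preserving homeomorphism $h\colon\Int(D^2)\to\Int(D^2)$ conjugating $\FF$ to $\FF'$ with $\wt h\circ f=f'$. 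The main obstacle will be checking that the unfolding really yields a valid non-singular planar structure: one must verify that the linearized orderings satisfy the planar axioms, that no new non-separation is created between cut copies originating from distinct prong marks (this is exactly where the no--singular--connections hypothesis enters), and that the cyclic-to-linear conversion is compatible with the global non-Hausdorff topology of $\LL$. A secondary technical point is to carry out the quotient coherently when prong singularities accumulate in the leaf space, which likely requires an exhaustion argument or a description of the construction up to the circle at infinity $\partial_\infty\FF$ supplied by \cite{BonattiCirc}.
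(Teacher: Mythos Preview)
Your instinct to reduce to the non-singular Kaplan theorem is sound---indeed the paper does exactly this for the \emph{uniqueness} half---but the ``unfolding'' you describe has a genuine gap, and for existence the paper takes a quite different route.

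The problem is the quotient step. Your finite duplication of separatrices does not produce a well-defined non-singular planar structure: after splitting every $s_i$ into $s_i^-,s_i^+$, each copy has only one side left, so $\wt\LL$ would have boundary (or, if you instead cut along one separatrix per prong, the two cut copies become honest leaves of the Kaplan foliation $\wt\FF$ on the plane $\wt P$, and identifying two leaves of a plane foliation does not return a plane---you typically create a cylinder). The correct desingularisation is the \emph{universal cover} $\wt\LL\to\LL$, which is an infinite-sheeted cover and is simply connected. Kaplan then gives $\wt\FF$ on a plane $\wt P$, but to descend you need the deck group $\pi_1(\LL)$ to act on $\wt P$ by homeomorphisms, not just on $\wt\LL$. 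Kaplan's uniqueness only determines each homeomorphism up to leaf-preserving isotopy, so you do not get a group action for free; this is precisely the difficulty flagged as Question~\ref{q.action} in the introduction, and it is not resolved by the hypotheses at hand.

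The paper therefore proves existence by an entirely different mechanism: it constructs a single explicit \emph{universal} $pA$-foliation $\FF_\varphi$ (a ramified cover of the horizontal foliation on $\RR^2\setminus K$, followed by surgeries that excise one quadrant at each odd-prong singularity) whose singular planar structure is shown to be pre-universal, hence contains an embedded copy of every $(\LL,\preceq_\cdot)$. The saturated open set over the image of $\LL$ is then the desired foliated plane. For uniqueness the paper does pass to the universal covers $P_i\to\RR^2\setminus\Sigma_i$ and apply non-singular Kaplan to get $\Phi\colon P_1\to P_2$; but rather than attempting to make $\Phi$ equivariant, it chooses one \emph{non-origin} separatrix per singularity, cuts along these to obtain simply connected fundamental domains $\Delta_i$, checks $\Phi(\wt\Delta_1)$ projects to $\Delta_2$, and then isotopes $\Phi$ leafwise near the cuts so that the boundary identifications match. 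The locally finite structure of non-origin separatrices (Lemma~\ref{l.opposite} and Corollary~\ref{cor-non-or-sep}) is what makes this deformation possible even when singularities accumulate.
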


As for Question \ref{q.completed}, a similar theorem to the non-singular case is certainly possible in the singular case. But the precise statement that one would need for any reasonable application is unclear. A difficulty is to decide whether a given complementary region should be completed in a shell or a star, which depends on the type of application one could have in mind. Although we do not answer the question, we provide enough tools for it.

\paragraph{Choice of leaf space.} The leaf space of a non-singular foliation is usually seen as a non-Hausdorff 1-dimensional manifold, whereas the leaf space of a singular foliation with no branching is often seen as a $\RR$-tree ($\RR$-trees are Hausdorff by definition). We had a choice of expressing our problem with two point of views: with a generalized non-Hausdorff $\RR$-tree or with a non-Hausdorff 1-manifold. We opted for the later, which we are more used too. It is essentially a matter of taste, and the argument would have been equivalent with the other language. 

\paragraph{Acknowledgment.} Théo Marty gratefully acknowledges the support of a postdoctoral position at the Université de Bourgogne, in Dijon, established in honor of Marco Brunella, which made this collaboration possible.

\section{Preliminary notions} \label{sec-preliminar}

In this section, we introduce our terminology about ramified cover (used as a tool later on) and about foliations. We also state precisely the compactification of foliations on the plane.

\subsection{Ramified cover}
Let $S$ be a connected surface which is not homeomorphic to a sphere, $X\subset S$ be a closed discrete subset, and $\varphi\colon X\to \NN\setminus\{0,1\}$ be any function.
As $X$ is closed and discrete, we can fix a family of disjoint compact discs $D_x$ centered at $x\in X$, pairwise disjoint.

Fix an origin $\bp\in S\setminus\bigcup_{x\in X}D_x$. Let $G(X,\varphi)$ be the normal subgroup of $\pi_1(S\setminus X,\bp)$ generated (as a normal subgroup) by the generators $\{(\gamma_x)^{\varphi(x)}\}_{x\in X}$ where $\gamma_x$ is a loop around $x$ (the boundary of $D_x$). Clearly, each $\gamma_x$ depends on the choice of a path between the origin $\bp$ and $x$ but, as we consider the normal subgroup, the group $G(X,\varphi)$ does  not depend on the choices.

A normal cover only depends on a normal subgroup of the fundamental group. Thus $G(X,\varphi)$ determines a normal cover $\Pi_{X,\varphi}\colon \widetilde{(S\setminus X)}_\varphi\to S\setminus X$. Any connected component of $\Pi_{x,\varphi}^{-1}(D_x\setminus\{x\})$, for $x\in X$, is a punctured disc, and $\Pi_{x,\varphi}$ induces on that punctured disc a cyclic cover or degree $\varphi(x)$ over $D_x\setminus\{x\}$. Let us consider the completion of $\widetilde{(S\setminus X)}_\varphi$ obtained by adding all the punctures of these punctured discs. It yields a new surface $\widetilde{S_{X,\varphi}}$ and the projection $\Pi_{X,\varphi}$ extends in a unique way as a ramified cover
$$\Pi_{X,\varphi}\colon \widetilde{S_{X,\varphi}}\to S,$$
ramified over $X$ and which is a cyclic ramified  cover of degree $\varphi(x)$ on each connected component of
$\Pi_{X,\varphi}^{-1}(D_x)$.

The fundamental group of $\widetilde{(S\setminus X)}_\varphi$ is by construction the group $G(X,\varphi)$ and is generated by the loop around the punctured discs, connected components of $\Pi_{x,\varphi}^{-1}(D_x\setminus\{x\})$, for $x\in X$. Adding the punctures vanishes all this generators, so that the surface $\widetilde{(S\setminus X)}_\varphi$ is connected, simply connected and not compact, and so a plane. 

\begin{definition}\label{d.ramified} The projection $\Pi_{X,\varphi}\colon \widetilde{S_{X,\varphi}}\to S $  is called the \emph{universal cover of $S$, ramified along $X$ with degree $\phi$}.
\end{definition}

\subsection{Singular foliations: terminology}



Let~$\FF$ be a singular foliation on a surface $\RR^2$. That is~$\FF$ is defined outside a closed set of $S$, called the singular set and denoted by $\Sing(\FF)$. We denote by $\FF^*$ the (regular) foliation obtained as the restriction of~$\FF$ on $S\setminus \Sing(\FF)$.

In this paper we deal with singular foliations whose singular points are isolated, and are \emph{$k$-prongs singular points}, with $k\geq 3$,  equivalently called \emph{saddle points with $k$ separatrices}. Let us describe precisely these singularities.

Consider $(-1,1)^2$ endowed with the horizontal foliation $H$ restricted to $(-1,1)^2\setminus{(0,0)}$, and consider the quotient $O_0=(0,1)^2/(r,s)\simeq(-r,-s)$ endowed with the quotient foliation $H_0$. Denote by $\omega\in O_0$ the image of $(0,0)$ by the projection. The triplet $(O_0, H_0, \omega)$ is called the model \emph{$1$-prong singular point}.

An isolated singular point $x$ of a singular foliation~$\FF$ is a $k$-prong singular point (for $k\geq 1$) if there is a neighborhood $O_x$ of $x$ and a ramified cover $\pi\colon O_x\to O_0 $ with $\pi(x)=\omega$, of degree $k$ on $\omega$, and so that the restriction of~$\FF$ to $O_x\setminus \{x\}$
is the lift by $\pi$ of the foliation $H_0$ on $O_0\setminus \{\omega\}$.  A leaf of $\FF^*$  is \emph{a separatrix} of $x$ if it contains a connected component of the preimage by $\pi$ of the projection of $(0,1)\times\{0\}$ on $O_0$.

A singular foliation on $\RR^2$ is called \emph{pseudo-Anosov like}, shortly \emph{is a pA-foliation}, if the following hold:
\begin{itemize}
	\item every singular point $x\in S$ of~$\FF$ is a $k$-prongs singularity for some $k\geq 3$,
	\item no two distinct singularity share a same separatrix (we say there is no saddle connection).
\end{itemize}

According to the terminology used in pseudo-Anosov homeomorphisms or flows, a \emph{singular leaf} $\ell$ of a pA-foliation~$\FF$ of $\RR^2$ is the union of a singular point $p$ with all its separatrices.
The separatrices in $\ell$ are naturally cyclically ordered and the notion of successive separatrices of $p$ is well-defined. A \emph{face} of the singular leaf $\ell$ is the union of a singular point $\{p\}$ with two of its successive separatrices.

A ray is the image of a proper embedding of $[0,+\infty)$ in $\RR^2$. A \emph{ray of~$\FF$} is a ray contained in a leaf (singular or not).  An \emph{end of leaf} of~$\FF$ is a germ of a ray of~$\FF$ at infinity. That is two rays of~$\FF$ are equivalent at infinity if they contain a common ray. A end of leaf of~$\FF$ is the equivalent class of rays of~$\FF$. For pA-foliations, a classical consequence of Poincar\'e-Bendixson argument implies:

				\begin{itemize}
					\item a non-singular leaf of~$\FF$ has two ends
					\item a separatrix of~$\FF$ has one end
					\item a singular leaf of~$\FF$ has as much ends as separatrices
					\item a face of a singular leaf has two ends.
				\end{itemize}

				An important consequence of the fact that the Poincar\'e-Hopf index of the singular points are all strictly negative is :

				\begin{lemma}\label{l.1point}
					Let $\sigma\colon [-1,1]\to \RR^2\setminus \Sing(\FF)$ be transverse to the~$\FF$. Then, for every (singular or not) leaf $f$ of~$\FF$ the intersection $\sigma\cap f$ consists of at most one point.
				\end{lemma}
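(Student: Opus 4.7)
The plan is to argue by contradiction, using a ``bigon'' construction together with a return-map analysis. Suppose $\sigma\cap f$ contains two distinct points. Since $\sigma\cap f$ is finite (by transversality and compactness), choose $p_1,p_2$ consecutive along $\sigma$, so that the subarc $\sigma'\subset\sigma$ between them meets $f$ only at its endpoints. Let $f'$ be the unique embedded arc in $f$ joining $p_1$ to $p_2$ (passing through the singular point $p$ of $f$ exactly when $f$ is singular and $p_1,p_2$ lie on distinct separatrices of $p$). Then $J=\sigma'\cup f'$ is a simple closed curve bounding a closed disc $D\subset\RR^2$, and one checks that $f\cap\overline D=f'$, using transversality at the $p_i$ together with the fact that any other separatrices of $p$, if they existed inside $D$, would need to exit through $\sigma'$, which meets $f$ only at $\{p_1,p_2\}$.

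Next I would reduce, by induction on the number of singularities of $\FF$ in $\Int D$, to the case of no interior singularity. If $q\in\Int D$ is a $k$-prong singularity ($k\ge 3$) with singular leaf $g$, then $g\ne f$, since the singular point of $f$ (when $f$ is singular) is never in $\Int D$ but lies on $\partial D$ or outside $\overline D$; hence $g\cap f=\emptyset$. Each separatrix of $q$ is a proper ray in $\RR^2$ (its end at infinity lying outside any compact set), so it must exit $\overline D$; it cannot exit through $f'$ (by $g\cap f=\emptyset$) nor land at $p_1$ or $p_2$ (which lie on $f$, not on $g$), so it must exit through $\Int\sigma'$. Choosing two \emph{cyclically consecutive} separatrices of $q$ yields a strictly smaller sub-bigon $D'\subsetneq D$ bounded by a sub-arc of $\sigma$ and a two-separatrix path through $q$; this $D'$ has $q\in\partial D'$ and hence strictly fewer interior singularities than $D$. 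Iterating reaches a bigon $D_0$ with no singularity of $\FF$ in $\Int D_0$, whose tangent boundary is now a chain of separatrices meeting at the removed singular points on $\partial D_0$.

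To finish, I would analyze the return map inside $D_0$. For each $x\in\Int\sigma'_0$, the half-leaf $\ell_x$ through $x$ entering $D_0$ is a proper ray in $\RR^2$, hence exits $\overline{D_0}$ in finite time. It cannot exit through the tangent boundary (leaves are pairwise disjoint, and the consecutive-separatrix choice in the reduction ensures $\ell_x$ is not a separatrix of any boundary singularity, so it does not approach one). Thus $\ell_x$ exits at a unique point $R(x)\in\Int\sigma'_0$ distinct from $x$, yielding a continuous involution $R\colon\Int\sigma'_0\to\Int\sigma'_0$. Product structure along the tangent boundary, combined with continuity of the single hyperbolic passage at each boundary singularity (again guaranteed by the consecutive-separatrix choice, which makes that passage a single hyperbolic sector), shows $R(x)\to p_{2,0}$ as $x\to p_{1,0}$ and vice versa. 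By the intermediate value theorem applied to $R(x)-x$, the map $R$ has a fixed point $x^*$; but $R(x^*)=x^*$ would force the leaf $\ell_{x^*}$ to form a closed loop in $\RR^2$, contradicting embeddedness of leaves (equivalently, the absence of closed leaves in a planar foliation). The main technical subtlety of this plan is ensuring $R$ is well-defined and continuous up to the vertices $p_{1,0},p_{2,0}$ in presence of boundary singularities; the consecutive-separatrix choice at each reduction step is precisely what secures this.
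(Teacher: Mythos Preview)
The paper does not give an explicit proof of this lemma: it simply records it as a consequence of the fact that every singular point has strictly negative Poincar\'e--Hopf index. The intended argument is the standard index computation on the bigon $D$: with part of $\partial D$ tangent and part transverse to the line field, the index formula forces the sum of indices of interior singularities of $D$ to be strictly positive, which is impossible since every $k$-prong with $k\ge 3$ has index $1-k/2\le -\tfrac12$.

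Your route is genuinely different: you trade the global index computation for an explicit reduction plus a first-return argument. The construction of the bigon, the reduction step, and the return-map step are all sound, but one point of your write-up is inaccurate. Each reduction step \emph{replaces} the tangent boundary entirely: the new sub-bigon $D'$ is bounded by two consecutive separatrices of the chosen interior singularity $q$ together with a sub-arc of $\sigma$, and it is disjoint from the old tangent boundary. Consequently the final bigon $D_0$ has at most \emph{one} singular point on its tangent boundary (the last $q$ removed), not a ``chain of separatrices meeting at the removed singular points''. This does not break your argument---it in fact simplifies the return-map analysis to a single hyperbolic sector---but you should correct the description. Note also that the final contradiction (``no closed leaves'') is not about embeddedness; it is the elementary instance of Poincar\'e--Hopf asserting that a nonsingular foliation of a closed disc tangent to the boundary is impossible. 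So you do not completely escape index reasoning, but you reduce it to its simplest case, whereas the paper's one-line approach invokes the full index formula on a bigon with corners.
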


				\paragraph{Non-separated leaves, regular leaves.}

				Let~$\FF$ be a $pA$-foliation as above and $f$ be a non-singular leaf of~$\FF$. Let us fix a transverse curve $\sigma\colon\RR\to\RR^2\setminus \Sing(\FF)$ to~$\FF$, with $\sigma(0)\in f$. Up to a countable set of $t\in [0,1]$, the point $\sigma(t)$ and $\sigma(-t)$ belongs to non-singular leaves $f_t$ and $f_{-t}$ of~$\FF$ (the set of singular leaves is at most countable).
				For every $t>0$ for which $f_t$ and $f_{-t}$ are non-singular,  we denote by $U_t(f),U^+_t(f)$, and $U^-_t(f)$ the closures of the connected components of $\RR^2\setminus (f_{t}\cup f_{-t})$, $\RR^2\setminus (f_{t}\cup f)$, and $\RR^2\setminus (f_{-t}\cup f)$ bounded by both $f_t$
				and $f_{-t}$, $f_t$ and $f$, $f_{-t}$ and $f$, respectively.

				We denote
				$$K(f)=\bigcap_{t>0}U_t,\quad K^+(f)=\bigcap_{t>0}U^+_t , \mbox{and } K^-(f)=\bigcap_{t>0}U^-_t$$
				Then $K(f),K^+(f),K^-(f)$ are three closed non-empty~$\FF$-saturated subsets of $\RR^2$ and $K(f)$ is independent of the choice of the transverse segment $\sigma$. Additionally, $K^+(f)$ and $K^-(f)$ only depend on the orientation of $\sigma$: a change of orientation of $\sigma$ just exchanges $K^+(f)$ with $K^-(f)$.

				One says that a non-singular $f$ is \emph{regular} if $f=K(f)$. One defines also the regularity on one side of $f$:   $f$ is regular on the  side of $\sigma((0,1])$ or of $\sigma([-1,0))$ if  $f=K^+(f)$ or $f=K^-(f)$, respectively.

If $f$ is a face of a singular leaf $F$ of the foliation~$\FF$, then one side of $f$ contains $F\setminus f$ and the other side is disjoint from $F$. Let us call that other side \emph{the free side of the face $f$}. Let $\sigma$ be a segment transverse to~$\FF$ so that $\sigma(0)\in f$
and $\sigma(1)$ is contained in the free side of $f$.
For $t\in (0,1]$ so that the leaf $f_t$ through $\sigma(t)$ is non-singular, one defines $U^+_t$ as the closure of the connected component of $\RR^2\setminus (f_{t}\cup f)$ bounded by both $f_t$ and $f$, and we denote
$$K^+(f)=\bigcap_{t\in(0,1]}U_t.$$
Then $K^+(f)$ is independent of $\sigma$. The face $f$ of the singular leaf $F$ is \emph{regular on its free side} if $K^+(f)=f$.

One says that a sequence $f_n$ of non-singular leaves or faces of leaves \emph{accumulates} on a non-singular leaf $f$ if, for $n$ large enough, $f_n$ intersects any transverse segment $\sigma$ with $\sigma(0)\in f$ at points $\sigma(t_n)=\im(\sigma)\cap f_n$ with $t_n\to 0$. This notion does not depend on the choice of the segment $\sigma$.
One says that a sequence $f_n$ of non-singular leaves or faces of  singular leaves \emph{accumulates} on a face $f$ of a singular leaf if, for $n$ large enough, $f_n$  is contained in the free side of $f$ and intersects a transverse segment $\sigma$ with $\sigma(0)\in f$ at points $\sigma(t_n)$ with $t_n\to 0$. This notion for $f_n$ on the free side of $f$ does not depend on the choice of the segment $\sigma$.

Two non-singular leaves or faces of singular leaves $f$, $g$ of the foliation~$\FF$ are \emph{non-separated} if there is a family of non-singular leaves $f_n$ accumulating on both $f$ and $g$. A leaf or face of leaf $f$ \emph{is non-separated} if there if  a leaf or face of leaf $g$ so that $f$ and $g$ are not separated.

As a direct consequence of Lemma~\ref{l.1point} one gets:

\begin{lemma}\label{l.separated}
	If two non-singular leaves or faces of singular leaves $f$, $g$ of~$\FF$ are non-separated, then given any transverse segment $\sigma\colon\RR\to\RR^2\setminus \Sing(\FF)$, the intersection $\sigma\cap (f\cup g)$ consists of at most one point.
\end{lemma}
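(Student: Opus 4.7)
My plan is to argue by contradiction, reducing everything to Lemma~\ref{l.1point} applied to a well-chosen accumulating sequence. Note first that Lemma~\ref{l.1point} already gives $|\sigma\cap f|\le 1$ and $|\sigma\cap g|\le 1$ (since a face of a singular leaf is contained in a leaf), so it suffices to rule out the configuration in which $\sigma$ meets $f$ at one point and $g$ at a different point. Suppose toward a contradiction that $\sigma(a)\in f$ and $\sigma(b)\in g$ with $a\neq b$, and let $f_n$ be a sequence of non-singular leaves accumulating simultaneously on $f$ and on $g$, whose existence is guaranteed by the non-separatedness hypothesis.

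The key step is then to produce, for large $n$, two distinct intersection points of $f_n$ with $\sigma$. I would choose disjoint open intervals $I_a\ni a$ and $I_b\ni b$ so that the sub-arcs $\sigma(I_a)$ and $\sigma(I_b)$ are disjoint and still avoid $\Sing(\FF)$. When $f$ is a non-singular leaf, the definition of accumulation directly forces $f_n\cap\sigma(I_a)\neq\emptyset$ for all large $n$. When $f$ is a face of a singular leaf, I would shrink $I_a$ to a one-sided interval at $a$ so that $\sigma(I_a)$ lies on the free side of $f$; the definition of accumulation on a face then yields the same conclusion $f_n\cap\sigma(I_a)\neq\emptyset$ for large $n$. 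The symmetric argument at $b$, with $g$ in place of $f$, gives $f_n\cap\sigma(I_b)\neq\emptyset$ for large $n$.

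Combining the two conclusions, for $n$ large enough the non-singular leaf $f_n$ meets the transverse segment $\sigma$ in at least two distinct points, contradicting Lemma~\ref{l.1point}. The only delicate point I foresee is the bookkeeping in the face case: one must restrict $\sigma$ to the free side of the face in order to apply the stated definition of accumulation, but this is immediate once the correct half of $I_a$ (respectively $I_b$) is selected, so I do not expect any real obstacle.
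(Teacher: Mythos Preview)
Your proof is correct and follows essentially the same approach as the paper's: assume $\sigma$ meets both $f$ and $g$, then the accumulating leaves $f_n$ must hit $\sigma$ twice, contradicting Lemma~\ref{l.1point}. The paper states this in a single sentence without the bookkeeping about faces and free sides, but your more detailed treatment is a faithful expansion of the same argument.
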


\begin{proof}
	A transverse segment cutting both $f$ and $g$ would cut the leaves accumulating $f$ and $g$ in $2$ points contradicting Lemma~\ref{l.1point}.
\end{proof}

When $f$ and $g$ are not separated,  any sequence $f_n$ accumulating on both $f$ and $g$ are always contained (for $n$ large) on one side of $f$ and one side of $g$. One says that \emph{$f$ is non separated from $g$ on that side}.

\begin{lemma}\label{l.regular-separated}
	A non-singular leaf $f$ is regular on one side  if and only if, on that side,  it is separated from any other leaf or face of singular leaf.

	A face $f$ of a singular leaf is regular on its free side if and only if it is separated from any other leaf or any face of other singularities.
\end{lemma}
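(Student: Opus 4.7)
The plan is to prove each implication by contrapositive, treating the leaf and the face cases uniformly: let $f$ denote either a non-singular leaf (considered on the side determined by $\sigma((0,1])$) or a face of a singular leaf (considered on its free side), and use the associated sets $U^+_t$ and $K^+(f)=\bigcap_{t>0}U^+_t$; regularity on the relevant side is then exactly $K^+(f)=f$.

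For the easy direction I assume $f$ is non-separated from some $g\neq f$ on the relevant side and take a sequence $f_n$ of non-singular leaves accumulating on both. The definition of non-separatedness on a prescribed side forces the $f_n$ to lie on that side of $f$, so they meet $\sigma((0,t])$ for $n$ large, and hence sit inside $U^+_t$ (a leaf cannot cross $\partial U^+_t\subset f\cup f_t$). Since $g$ is accumulated by the $f_n$, one gets $g\subset\overline{\bigcup_{n\geq N}f_n}\subset U^+_t$ for every $t>0$, so $g\subset K^+(f)$, showing $K^+(f)\supsetneq f$.

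For the harder direction I assume $K^+(f)\supsetneq f$ and aim to produce a point $z\in\partial K^+(f)\setminus f$ on which the family $\{f_t\}_{t\to 0^+}$ accumulates. Step~1 is to show $\partial K^+(f)\supsetneq f$. Otherwise $\Int K^+(f)=K^+(f)\setminus f$ is a nonempty open saturated set whose frontier in $\RR^2$ is contained in $f$; using that $f$ (a properly embedded topological line, even when $f$ is the V-shape of a face) separates $\RR^2$ into two open halves and calling $H$ the one corresponding to the relevant side, $\Int K^+(f)\subset H$ has empty frontier in the connected set $H$, is therefore clopen in $H$, and hence equals $H$; then $\overline{H}\subset K^+(f)\subset U^+_t$ for every $t$, contradicting the fact that $f_t\subset H$ is itself a properly embedded line separating $H$ and pinning $U^+_t$ strictly inside $\overline{H}$. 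Step~2 is that any such $z$ is accumulated by the $f_t$: otherwise a small connected neighborhood $V$ of $z$ would avoid $f$ and every $f_t$ for $t$ small, so $V\cap U^+_t$ would be clopen in $V$ and force $V\subset K^+(f)$, contradicting $z\in\partial K^+(f)$.

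Finally I extract the non-separated companion. If $z$ is non-singular, the leaf through $z$ is $\neq f$ and, by the product structure of foliated charts along it, accumulation of $\{f_t\}$ at one of its points propagates to the whole leaf, so this leaf is non-separated from $f$. If $z$ is singular, the $k$-prong local model shows that, after passing to a subsequence, $f_{t_n}$ sits in a fixed sector at $z$ and accumulates on the corresponding face from its free side; since $z\notin f$ and $z$ lies in the open relevant half $H$, this face is distinct from $f$ and, in the face case, belongs to a different singularity, exactly as the statement demands. I expect Step~1 to be the main obstacle: it is the only point where the two-dimensional topology of $\RR^2$ and the properly-embedded and separating properties of both $f_t$ and the V-shape of a face are needed in an essential way.
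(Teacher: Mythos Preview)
Your proof is correct and follows the same contrapositive strategy as the paper. The easy direction is identical. For the converse, the paper's argument is more compressed: it notes directly that $K^+(f)$ is $\FF$-saturated (as an intersection of saturated sets), so its boundary in $\RR^2$ is a union of non-singular leaves and faces; any such leaf or face $g\neq f$ in $\partial K^+(f)$ is then accumulated by the $f_t$, hence non-separated from $f$. Your Steps~1 and~2 and the final case analysis on whether $z$ is singular are just an explicit unpacking of this: Step~1 is the justification (glossed over in the paper) that $\partial K^+(f)$ is strictly larger than $f$, and the case analysis recovers the leaf or face through $z$ that the saturation argument produces in one stroke. Neither approach gains anything the other lacks; the paper's version is shorter because it invokes saturation globally rather than arguing pointwise at a chosen $z\in\partial K^+(f)$.
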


\begin{proof}
	Assume first that  $f$ is non-separated from $g\neq f$. Let  $f_n$ be a sequence of leaves accumulating on both $f$ and $g$ and  contained in one side (by convention the positive side) of $f$. Denote by $\sigma$ a open transversal intersecting $f$ at $0$, and $U_t$ be defined as above. Then, for every $t>0$ and for $n$ large enough, $f_n$ is contained in $U_t^+$. Therefore $g$ is contained in $U_t^+$. Thus, $g\subset K^+(f)$, implying that $f$ is not regular on that side.

	Reciprocally, if $K^+(f)\neq f$ then $K^+(f)$ is saturated for~$\FF$, so that its boundary consists of non-singular leaves or faces of~$\FF$.  Every leaf or face $g$ contained in that boundary is accumulated by the leaves $f_t$ bounding $U^+_t$, which also accumulates on $f$: thus  $f$ is not separated from~$g$.
\end{proof}

\subsection{The circle at infinity of a foliation}

Given a $pA$-foliation~$\FF$ on the plane $\RR^2$, we give its compactification at infinity.

\begin{theorem}[\cite{Mather,BonattiCirc}] \thlabel{th-Bonatti}\label{th-Bonatti}
	Let~$\FF$ be a (possibly singular with $k$-prongs singularities, $k\geq 3$) foliation on $\RR^2$. Then there exists a embedding $\psi\colon \RR^2\to D^2$ with $\psi(\RR^2)=\Int(D^2)$, unique up to a homeomorphism of the disc $D^2$, which satisfies the following:
	\begin{enumerate}
		\item any germ of rays $\rho$ of $\psi(\FF)$ converges toward a unique point $\xi_\rho$ in $\partial D^2$,
		\item given $\xi\in\partial D^2$, there is at most countably many germs of rays $\rho$ of $\psi(\FF)$ with $\xi_\rho=\xi$,
		\item for any non-empty open subset $O\subset\partial D^2$, the set of germs of rays $\rho$ of $\psi(\FF)$ with $\xi_\rho\in O$ is uncountable.
	\end{enumerate}
\end{theorem}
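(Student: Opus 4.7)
The plan is to follow the Mather--Bonatti strategy: build the boundary circle $\partial_\infty\FF$ as an abstract cyclically ordered set whose points include all ends of leaves (with the remaining ``gaps'' filled by a Dedekind-style completion), then topologize $\RR^2 \sqcup \partial_\infty\FF$ so that it becomes a closed disc, and finally verify the three stated properties together with uniqueness.

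First, I would define a cyclic order on ends of leaves. Given three distinct ends $\rho_1,\rho_2,\rho_3$, one picks proper ray representatives that are pairwise disjoint outside a compact set, which is possible because distinct ends live on distinct (or well-separated pieces of) leaves, and each singular leaf carries only finitely many ends. These three rays together with a ``circle at infinity'' cut $\RR^2$ into three unbounded open regions, and the cyclic arrangement of these regions gives an unambiguous cyclic order. Planar Jordan arguments yield independence of representatives and the four-point cyclic consistency needed to define a genuine cyclic order. The cyclically ordered set of ends is then completed to a topological circle by adjoining one point for every Dedekind cut not realized by an existing end.

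Next I would equip $\RR^2 \sqcup \partial_\infty\FF$ with the topology whose basis at a boundary point $\xi$ consists of the unbounded sectors bounded by pairs of long sub-rays converging to $\xi$ on each side. Property (1) is then built into the construction. For property (2), if uncountably many pairwise disjoint leaf rays converged to the same $\xi$, an exhaustion of $\RR^2$ by compact discs would yield a transverse segment meeting uncountably many distinct leaves, contradicting Lemma~\ref{l.1point} together with second countability. For property (3), any non-empty open arc $O \subset \partial_\infty\FF$ corresponds to a non-empty open region $U_O\subset\RR^2$; choosing a transverse segment in $U_O$, the leaves through its points are pairwise distinct and each supplies at least one end in $O$, yielding uncountably many endpoints in $O$.

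The main obstacle I expect is the careful bookkeeping around non-separated leaves and prong singularities when setting up the cyclic order: non-separated ends must be assigned \emph{distinct} boundary points even though they are approached by common sequences of rays, and each $k$-prong singularity contributes $k$ separatrix ends that have to be inserted in the correct cyclic positions among their surrounding regular rays. Once these are handled and the topology is in place, uniqueness follows from the fact that any other compactification $\psi'$ of $\RR^2$ satisfying (1)--(3) induces the same cyclic order on ends of leaves; by (3) these endpoints are dense in both circles, so the induced bijection between dense subsets extends uniquely to a homeomorphism of $S^1$, which glued to the identity on $\RR^2$ gives the required homeomorphism of $D^2$.
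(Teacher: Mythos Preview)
This theorem is not proved in the paper; it is quoted from \cite{Mather,BonattiCirc}, and the paragraph following the statement gives only a brief sketch of the construction. Your plan matches that sketch in spirit, but there is one substantive discrepancy that creates a genuine gap.

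The paper's sketch does not pass directly to a Dedekind completion of the cyclically ordered set of ends. It first \emph{identifies} two ends whenever one of the two cyclic intervals they bound contains at most countably many ends, and then argues that the resulting quotient is in increasing bijection with a dense subset of $S^1$. This identification step is not optional: the cyclic order on ends can have pairs of consecutive elements (for instance, the ``outer'' ends of two leaves bounding the same shell complementary region, between which no other leaf has an end), and the Dedekind-type completion of a cyclic order with consecutive points is not homeomorphic to a circle. So your construction, as written, need not produce $S^1$.

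Relatedly, your argument for property~(2) does not work. A transverse segment \emph{always} meets uncountably many distinct leaves, so that is no contradiction; and Lemma~\ref{l.1point} only bounds the number of intersections of a segment with a single leaf, saying nothing about how many rays may share an endpoint at infinity. In the Mather--Bonatti construction, property~(2) is tied to the identification step: one must show separately that each equivalence class of ends is at most countable, and this is exactly the kind of argument carried out later in the paper as Lemma~\ref{lem-end-lam-few-end} (the ``few common ends'' property). That is the piece your outline is missing.
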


The proof of the theorem consists in building an abstract circle $\partial_\infty \FF$ and putting a natural topology on $\RR^2\coprod \partial_\infty \FF$ so that it is a disc. The circle $\partial_\infty \FF$ is obtained as follows: the set of ends  of leaves of~$\FF$ is naturally cyclically ordered. One identifies two ends if one of the intervals bounded by them (for this cyclic order) is at most countable. This quotient is in an increasing (for the cyclic order) bijection with a dense subset of the circle $S^1$, and this bijection is unique up to an orientation-preserving homeomorphism of $S^1$.

\begin{remark}\label{r.same-end}
	The second item in Theorem~\ref{th-Bonatti} implies:
	\begin{itemize}
		\item two distinct ends of the same leaf (singular or not) tend to distinct points of $S^1.$
		\item two distinct leaves may share at most one limit point in $S^1$.
	\end{itemize}
\end{remark}


We denote by $\pi=\pi_\FF$ the map which associates the pair of endpoints $\pi(f)$ in $(S^1\times S^1 \setminus\Diag )_{/(x,y)\simeq (y,x)}$ to each non-singular leaf or face of singular leaf $f$ of~$\FF$. Remark~\ref{r.same-end} implies that $\pi_\FF$ is well-defined and injective.

We now assume that the foliation~$\FF$ lies on the interior of $D^2$, so that its compactification as above coincide with $D^2$.

\begin{lemma}\label{l.continuous}
	Let $f$ be a non-singular leaf or a face of singular leaf, and $\sigma\colon[0,1)\to \Int(D^2)$ a transversal to~$\FF$ so that $\sigma(0)$ lies in $f$. Let $f_t$ denotes the non-singular leaves through $\sigma(t)$, if exists, and $f_0=f$ . The map
	$$t\mapsto \pi(f_t)\in(S^1\times S^1\setminus\Diag)_{/\simeq}$$
	is continuous at $t=0$ if and only if $f$ is regular on the side of $\sigma$ (and $\sigma$ lies on the free side of $f$ if $f$ is a face of a singular leaves).
\end{lemma}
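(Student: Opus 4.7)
The equivalence is proved by two implications.

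\emph{Not regular $\Rightarrow$ not continuous.} Assume $f$ is not regular on the side of $\sigma$. By \thref{l.regular-separated} there is a leaf or face $g \neq f$ non-separated from $f$ on that side. Unwinding the definition of $K^+$, I can choose a sequence $t_n \to 0^+$ so that each (non-singular) $f_{t_n}$ accumulates on both $f$ and $g$. By Remark~\ref{r.same-end}, the pairs $\pi(f)$ and $\pi(g)$ are distinct. The leaves $f_{t_n}$ lie in a fixed connected component of $\Int(D^2)\setminus(\overline f\cup \overline g)$, and since they accumulate on interior points of both $f$ and $g$, their two ends are forced (after extracting a subsequence) to exit $D^2$ toward points that lie in the ``free'' endpoints of the ideal region bounded by $\overline f$ and $\overline g$. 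Because $f$ and $g$ share at most one point of $S^1$ (Remark~\ref{r.same-end}), the limit pair $(a,b)$ thus obtained cannot coincide with $\pi(f)$, so continuity fails.

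\emph{Regular $\Rightarrow$ continuous.} Assume $K^+(f)=f$ (free side, in the face case). Write $(a_f,b_f)=\pi(f)$ and let $V$ be the component of $\Int(D^2)\setminus\overline f$ on the $\sigma$ side, with $J$ the open arc of $S^1\setminus\{a_f,b_f\}$ contained in $\partial V$. For any sequence $t_n\to 0^+$ with $f_{t_n}$ non-singular, each $f_{t_n}\subset V$, so $\pi(f_{t_n})\in\overline J\times\overline J$. By compactness of the pair space it suffices, after extraction, to show that if $\pi(f_{t_n})\to(a,b)$ then $\{a,b\}=\{a_f,b_f\}$.

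\emph{Confinement step.} Suppose for contradiction that, say, $b$ lies in the interior of $J$. By item~(3) of Theorem~\ref{th-Bonatti}, the set of endpoints of leaves of $\FF$ is uncountable on every open sub-arc of $S^1$; choose a non-singular leaf $h\neq f$ with an endpoint $\eta$ in $J$ strictly between $b$ and $b_f$. Distinct leaves of $\FF^*$ are disjoint in $\Int(D^2)$, and by Remark~\ref{r.same-end} their closures in $D^2$ meet in at most one point of $S^1$; so $\overline h$ is a Jordan arc in $\overline{D^2}$ which, together with an arc of $S^1$ containing $\eta$, bounds a subregion $W\subset V$ whose closure does not meet $\overline f$ except possibly at an endpoint. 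Since $K^+(f)=f$, the shrinking regions $U^+_t$ eventually lie outside the fixed open set $W$; hence for $t$ small, $f_t\subset V\setminus W$. But then, because the endpoints of $f_{t_n}$ tend to $(a,b)$ and $\eta$ separates $b$ from $b_f$ on the boundary of $V$, the connected arc $\overline{f_{t_n}}$ would have to cross $\overline h$ — impossible for two distinct leaves of the same foliation. This contradiction forces $\{a,b\}=\{a_f,b_f\}$ and establishes continuity at $t=0$.

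\emph{Expected obstacle.} The delicate point is the confinement step: turning the purely planar condition $K^+(f)=f$ into a separation statement inside the compactified disc and verifying that the auxiliary leaf $h$ genuinely traps $f_{t_n}$ away from the erroneous limit $b$. This requires using the topology provided by Theorem~\ref{th-Bonatti} together with \thref{l.1point} (a transversal meets each leaf at most once), to ensure that the one-parameter family of neighborhoods $U^+_t$ is well behaved and disjoint from $W$ for small $t$. The case where $f$ is a face of a singular leaf is handled identically, using that $\sigma$ is assumed to lie on the free side and that the singular vertex contributes no extra endpoint on $S^1$.
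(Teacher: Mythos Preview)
Your approach is essentially the same as the paper's: both directions hinge on the nested regions $U^+_t$ and an auxiliary leaf whose endpoint lies in the ``wrong'' place on $S^1$. The paper, however, organizes everything around the two boundary arcs $I_t,J_t\subset S^1$ of $\overline{U^+_t}$, which are monotone in $t$; this makes both implications nearly immediate. Continuity of $t\mapsto\pi(f_t)$ is equivalent to $\bigcap_t I_t$ and $\bigcap_t J_t$ being singletons, and any leaf $g\subset K^+(f)$ has its endpoints in $\bigcap_t(I_t\cup J_t)$, so continuity forces $\pi(g)=\pi(f)$ and hence $g=f$. Conversely, if $\bigcap_t I_t$ has nonempty interior, pick a leaf $g\neq f$ with an end there; since $g\notin K^+(f)=f$ there is $t_0$ with $g\not\subset U^+_{t_0}$, so that end is not in $I_{t_0}$, a contradiction.

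Your write-up has two loose spots worth tightening. In the first implication you try to track the limit of $\pi(f_{t_n})$ directly; the sentence about ends being ``forced to exit toward the free endpoints of the ideal region bounded by $\overline f$ and $\overline g$'' is vague and not obviously true. The contrapositive the paper uses is cleaner: assume continuity, deduce that any $g\subset K^+(f)$ has $\pi(g)=\pi(f)$, hence $K^+(f)=f$. In the confinement step of the second implication, the claim ``$U^+_t$ eventually lie outside the fixed open set $W$'' does not follow from $\bigcap_t U^+_t=f$ alone (the $U^+_t$ are not compact). What makes it work is that $h$ is a leaf: either $h\subset U^+_t$ or $h\cap U^+_t=\emptyset$, and since $h\notin K^+(f)$ the latter holds for some $t_0$; then $U^+_{t_0}$, being connected and on the $f$-side of $h$, is disjoint from $W$. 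Once you insert this, your crossing contradiction goes through and is equivalent to the paper's interval argument.
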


\begin{proof}
	Consider the sets $U^+_t$ bounded by $f$ and by $f_t$ as defined previously. The adherence of $U^+_t$ on $\partial D^2$ is the union of two intervals, each of them bounded by one end point of $f$ and one end point of $f_t$.
	Assume that $t\mapsto\pi(f_t)$ is continuous, that is the endpoints of $f_t$ tend to those of $f$. Then the boundary of $K^+(f)=\bigcap_t U^+_t$ in $\partial D^2$ is exactly the two end points of $f$. Thus the leaves in $K^+(f)$ share their endpoints with $f$, and this implies that $K^+(f)=f$.

	Conversely, assume $K^+(f)=f$. Denote by $I_t,J_t$ the two intervals in $\partial U^+_t\cap\partial D^2$ as discuss above, so that $I_t$ contains a fixed end point of $f$ (independent on $t$). Then $I_t$ is increasing in $t$, so it converges toward $\cap_tI_t$ when $t$ goes to zero. Assume that $\cap_tI_t$ has a non-empty interior. Then there exists a leaf $g\neq f$ of~$\FF$ that has an end inside the interior of $\cap_tI_t$. The leaf $g$ does not belong to $K^+(f)=f$, so it does not belong to $U^+_t$ for some $t$. It follows that its end do not belong to $I_t$, which contradicts the assumption. Therefore, $\cap_tI_t$ and similarly $\cap_tJ_t$ are both reduced to a point. So $t\mapsto f_t$ is continuous.
\end{proof}



\section{Planar structure}

We introduce here the notion of planar structure: roughly speaking, a non-Hausdorff 1-manifold with order data that encode a (maybe singular) foliation on the plane.

\subsection{Tame non-Hausdorff $1$-manifold\label{ss.tame}}
A \emph{a non-Hausdorff $1$-manifold} $\LL$ is a topological space which admits a countable basis of its topology and so that every point admits a neighborhood homeomorphic to $\RR$. Any subset of $\LL$ homeomorphic to $\RR$ will be called \emph{a chart of $\LL$} or equivalently \emph{an open interval of $\LL$}. Note that we do not actually assume that $\LL$ is non-Hausdorff (pair of points may not admit disjoint neighborhood), we emphasis on the fact that it may be non-Hausdorff, contrary to usual manifolds.

The example below is what we don't want to consider:

\begin{example}
	Consider the usual diadic Cantor set $\cC\subset\RR$. Let $\varphi$ be a homeomorphism of $\RR\setminus \cC$. 
	Let $\LL$ be the quotient of $\RR\times\{0,1\}$ obtained by identifying $(t,0)$ with $(\varphi(t),1)$ for $t\notin \cC$.
	Then $\LL$ is a non-Hausdorff 1-manifold, and every point in $\cC$ is a non-separated point, in particular there are uncountably many non-separated points.
\end{example}

If $\LL$ is a non-Hausdorff $1$-manifold and $x\in\LL$. A \emph{side of $x$} is a germ at $x$ of connected component of $I\setminus\{x\}$ where $I$ is an open interval that contains $x$: a connected component of $I\setminus \{x\}$ define the same side as a connected component of $J\setminus \{x\}$ if they coincide in a neighborhood of $x$. Thus each point as two sides.

\begin{definition}\label{d.tame}
	A non-Hausdorff $1$-manifold $\LL$ is \emph{tame} if given any two points $x,y\in\LL$ which are not separated one from the other there are charts $I_x,I_y$ that contain respectively $x$ and $y$, so that $I_x\cap I_y$ is a connected component of both $I_x\setminus \{x\}$ and $I_y\setminus \{y\}$.
	In other words, $x$ is not separated from $y$ if they have a common side, their other sides being disjoint. One says that $x$ is not separated from $y$ on their common side.
\end{definition}

On tame non-Hausdorff $1$-manifold, to be non-separated on one side induces an equivalence relation on the set of pairs points/side of the point. A \emph{branching} of $\LL$ is an equivalence class of the non-separating relation on a given side. More precisely: consider the set of pairs $(x,s)$ where $x\in\LL$, and $s$ is a side of $x$. The relation $(x,s)\sim(x',s')$ if $x$ and $x'$ are non separated on their side $s$ and $s'$, is an equivalence relation. Let $X$ be an equivalence class of this relation $\sim$. Then the set $\{x\in\LL, \exists s \text{ side of } x, (x,s)\in X\}$ is \emph{a branching}.
A point $x$ may belong to at most $2$ branchings, corresponding to the two sides of $x$.

\begin{lemma}\label{l.countable}
	Let $\LL$ be a tame non-Hausdorff 1-manifold.
	Then the set of non-separated points of $\LL$ is at most countable.
\end{lemma}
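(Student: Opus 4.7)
The plan is to count the unordered non-separated pairs $\{x,y\}\subset\LL$: since each non-separated point lies in at least one such pair, a countable bound on the pairs yields the lemma. The key idea is that tameness forces every non-separated pair to be witnessed by a compatible overlap of two basic charts, and second countability limits such witnesses to countably many.

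First I fix a countable basis $\{V_n\}_{n\in\NN}$ of $\LL$ consisting of open intervals, which exists because open intervals form a basis and $\LL$ admits a countable basis by definition of non-Hausdorff $1$-manifold. For every ordered pair $(n,m)$, the intersection $V_n\cap V_m$ is open in the Hausdorff chart $V_n\cong\RR$, hence decomposes into at most countably many connected components. Each such component $C$ is an open subinterval of both $V_n$ and $V_m$; at each of its two ends, $C$ may or may not be ``capped'' in each chart by an interior point of that chart. When $C$ is capped by a point $x\in V_n$ and a point $y\in V_m$ at the same end, any sequence $p_k\in C$ converging to that end converges to $x$ in the chart $V_n$ and to $y$ in the chart $V_m$, hence converges to both $x$ and $y$ in $\LL$; so $\{x,y\}$ is a non-separated pair, and necessarily $x\neq y$ because no Hausdorff chart contains two non-separated points. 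Each triple $(n,m,C)$ thus produces at most two non-separated pairs, and since there are only countably many such triples, only countably many pairs arise this way.

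It remains to show that every non-separated pair $\{x,y\}$ is produced by some triple. Applying tameness to $x,y$, I pick charts $I_x\ni x$ and $I_y\ni y$ whose intersection is a connected component of $I_x\setminus\{x\}$ and of $I_y\setminus\{y\}$. The chart change on the common tail must carry the $x$-end in $I_x$ to the $y$-end in $I_y$, since otherwise no sequence in the tail could converge to both $x$ and $y$. Using that the basic opens form a neighborhood basis, I refine to basic opens $V_n\subset I_x$ containing $x$ and $V_m\subset I_y$ containing $y$, both chosen small enough that the common-tail half of each is contained inside $I_x\cap I_y$. The corresponding component $C$ of $V_n\cap V_m$ is then capped by $x$ in $V_n$ and by $y$ in $V_m$ on the same end, so $\{x,y\}$ is realized as the pair produced by $(n,m,C)$.

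The main obstacle will be the refinement step in the surjectivity argument: one must shrink $V_n$ and $V_m$ simultaneously so that a component of $V_n\cap V_m$ lines up as a single tail that is capped by $x$ on one side and $y$ on the other, rather than extending past each other along the common tail. This is essentially a bookkeeping task, solved by taking $V_n,V_m$ arbitrarily small around $x$ and $y$ using second countability, and by invoking the orientation information encoded in the tameness axiom so that the matched caps are actually $x$ and $y$ rather than stray boundary points.
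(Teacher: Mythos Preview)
Your proof is correct and follows essentially the same strategy as the paper's: use tameness to associate to every non-separated pair $\{x,y\}$ a pair of basic charts from a countable basis, and then observe that only countably many such associations are possible. The paper compresses this into three sentences by asserting that the tame charts $I_x,I_y$ can be taken from the countable basis and that such a pair of charts determines at most one non-separated pair; your version is more careful, counting triples $(n,m,C)$ with $C$ a component of $V_n\cap V_m$ and allowing up to two pairs per triple, which makes the injectivity step completely explicit at the cost of a little extra bookkeeping.
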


\begin{proof}
	From the tame definition, every non-separated pair of points in $\LL$ is contained in a pair of intervals containing no other non-separated pair of points. One may choose these intervals in a countable basis of the topology of $\LL$. Thus, these set of pairs of intervals is countable, and (a fortiori) so is the set of pairs of non-separated points. Hence the set of non-separated points is at most countable
\end{proof}

Branching have been used to characterize non-separated leaves of (non-singular) foliations. We introduce a similar notion to characterize separatrices of $pA$-foliations.
A \emph{cyclic branching} is a cyclically ordered finite set $\nu=\{x_i\}_{i\in\ZZ/n\ZZ}$, $x_i\in\LL$, so that:

\begin{itemize}
	\item for all $i$, $x_i$ and $x_{i+1}$ are on a common branching $\mu_i$,
	\item $\mu_{i-1}\neq \mu_i$ that is, $\mu_{i-1}$ and $\mu_i$ are branching in distinct sides of $x_i$.
	\item given $j\notin\{i-1,i,i+1\}$, $x_i$ and $x_j$ do not belong to a common branching.
\end{itemize}

One denotes by $\preceq^\circlearrowleft_\nu$ the cyclic order on $\nu$ given by the usual cyclic order of the indexation $\ZZ/n\ZZ$.

The \emph{degree} of a cyclic branching $\nu$ is its cardinal.
A \emph{separatrix} is any point that lies on a cyclic branching.

\subsection{(Non-singular) planar structure}

We introduce in this section and the next the notion of planar structure, that encode the leaf space of a foliation. We use that a connected non-Hausdorff $1$-manifold is simply connected if and only if every $x\in\LL$ disconnects $\LL$ (see \cite[p.113]{haefliger1957}.

\begin{lemma}\label{l.simply-tame}
	If $\LL$ is a simply-connected $1$-manifold, then the intersection of any two charts is either empty or is a chart. As consequences, $\LL$ is tame and the intersection of any two distinct branchings consists in at most one point.
\end{lemma}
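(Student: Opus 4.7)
The strategy is to reduce everything to the criterion recalled just above the lemma: a connected non-Hausdorff $1$-manifold is simply connected if and only if every point disconnects it. The core of the proof is the chart-intersection statement; tameness and the branching claim then follow quickly from it.

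For the main claim, let $I, J$ be charts of $\LL$ with $I \cap J \neq \emptyset$. Suppose for contradiction that $I \cap J$ has at least two connected components, and pick $x_1, x_2$ in distinct components. Using $I \cong \RR$, the unique arc in $I$ from $x_1$ to $x_2$ crosses some point $p \in I \setminus J$; I claim $\LL \setminus \{p\}$ is connected, which contradicts simple connectedness. The two components $I^+, I^-$ of $I \setminus \{p\}$ contain $x_1$ and $x_2$ respectively, and since $p \notin J$ the connected chart $J$ sits in $\LL \setminus \{p\}$ and joins $x_1$ to $x_2$; hence $I^+$ and $I^-$ both live in a common connected component $C$ of $\LL \setminus \{p\}$. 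Any hypothetical further component $C'$ is open and disjoint from $I$ (because $I \setminus \{p\} \subset C$), so $(C \cup I) \sqcup C'$ is an open partition of the connected space $\LL$, forcing $C' = \emptyset$. Thus $\LL \setminus \{p\}$ is connected, giving the contradiction. Being open in $I \cong \RR$ and connected, $I \cap J$ is an open interval, hence a chart.

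For tameness, let $x \neq y$ be non-separated. Local Euclideanity gives the $T_1$ property, so I can pick charts $I_x \ni x$ and $I_y \ni y$ with $y \notin I_x$ and $x \notin I_y$. The main claim shows $I_x \cap I_y$ is a chart, non-empty by non-separation; being connected and avoiding $x$ (resp. $y$), it lies in one component $I_x^+$ of $I_x \setminus \{x\}$ and one component $I_y^+$ of $I_y \setminus \{y\}$. Using a sequence $z_n$ converging to both $x$ and $y$, the homeomorphism identifying $I_x \cap I_y$ as a subinterval of $I_x$ with $I_x \cap I_y$ as a subinterval of $I_y$ must send the $x$-end to the $y$-end. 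Cutting $I_x$ and $I_y$ at matching interior points of $I_x \cap I_y$ then yields subcharts $I_x', I_y'$ whose intersection is a full side of $x$ in $I_x'$ and a full side of $y$ in $I_y'$ — the tame condition.

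Finally, for the branching statement, I first deduce from the main claim that if $x \neq y$ are non-separated then the pair of sides realizing non-separation is unique: otherwise, sequences realizing non-separation on two distinct side pairs $(s_i, t_i)$ would sit in the disjoint open subsets $I_x^{s_i} \cap I_y^{t_i}$ of $I_x \cap I_y$, contradicting its connectedness. If two distinct branchings $\mu_1 \neq \mu_2$ shared two distinct points $x, y$, they would correspond to distinct sides at $x$ and distinct sides at $y$, witnessing non-separation on two different pairs of sides, a contradiction. The main obstacle is the chart-intersection step: it is the only place where simple connectedness is used in an essential way, through the "one point disconnects $\LL$" criterion, and the short connectedness argument ruling out a stray component $C'$ must be set up with care. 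Once that step is available, tameness and the branching statement unpack routinely by reapplying the same result.
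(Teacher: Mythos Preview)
Your proof is correct and follows the same strategy as the paper: exhibit a point in $I \setminus J$ whose removal fails to disconnect $\LL$, contradicting the simple-connectedness criterion, and then read off tameness and the branching claim from the connectedness of chart intersections. The paper's version is terser --- it picks the endpoint of one component of $I \cap J$ rather than an arbitrary point on the arc, and leaves the deduction of tameness entirely implicit --- but your more explicit treatment of tameness and of the uniqueness of the non-separated side is a faithful unpacking of the same argument.
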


\begin{proof}
	Let $I$ and $J$ be two charts of $\LL$ and assume that $I\cap J$ has (at least) two connected components $U,V$. Let $a\in I$ be the endpoint of $U$ on the side of $V$ (as see from inside $I$). In particular $a$ is not inside $J$. Now $I\cup J\setminus \{a\}$ is connected: indeed the two connected components of $I\setminus \{a\}$ are the one of $U$ and the one of $V$, but $U$ and $V$ are contained in the connected set $J$.
	Thus $a$ does not disconnect $\LL$, which contradicts that $\LL$ is simply connected. It follows that $\LL$ is tame.

	If two distinct points $x,y$ belong to two distinct branchings $\mu_1,\mu_2$, then these branchings correspond to the two sides of $x$ and of $y$. Thus both sides of $x$ and $y$ coincide: charts at $x$ and $y$ intersect on both sides, contradicting the first part of the statement.
\end{proof}

\begin{lemma}
	A non-Hausdorff simply connected $1$-manifold has no cyclic branching.
\end{lemma}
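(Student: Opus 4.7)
The strategy is to reason by contradiction using the criterion cited just before the statement (\cite{haefliger1957}): a connected non-Hausdorff $1$-manifold is simply connected if and only if every one of its points disconnects it. So assume a simply connected non-Hausdorff $1$-manifold $\LL$ carries a cyclic branching $\nu=\{x_i\}_{i\in\ZZ/n\ZZ}$, with $\mu_i$ the branching shared by $x_i$ and $x_{i+1}$; fix some index $i$. I shall exhibit a path in $\LL\setminus\{x_i\}$ joining the two local sides of $x_i$. A connected component of $\LL\setminus\{x_i\}$ whose closure in $\LL$ avoids $x_i$ would be both open and closed in $\LL$, hence equal to $\LL$, which is impossible; so every component accumulates on $x_i$, and since near $x_i$ the complement has exactly two local pieces, each global component must contain one of them. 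Producing a path joining the two local sides therefore forces $\LL\setminus\{x_i\}$ to be connected, contradicting the criterion.

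To build the path I go ``the long way'' around the cycle, from $x_{i+1}$ to $x_{i-1}$. By \thref{l.simply-tame}, $\LL$ is tame, so for every consecutive pair $(x_j,x_{j+1})$ with $j\notin\{i-1,i\}$, tameness yields charts $I_j\ni x_j$ and $I_{j+1}\ni x_{j+1}$ whose intersection is a common side realizing the branching $\mu_j$. Shrinking these charts (always possible in a $1$-manifold) so as to exclude the finitely many other points of $\nu$, in particular $x_i$, we obtain a short arc from $x_j$ to $x_{j+1}$ lying entirely in $\LL\setminus\{x_i\}$. Concatenating these for $j$ running cyclically from $i+1$ to $i-2$ produces an arc from $x_{i+1}$ to $x_{i-1}$ in $\LL\setminus\{x_i\}$. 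Finally, applying tameness to the non-separated pairs $(x_i,x_{i+1})$ and $(x_{i-1},x_i)$ produces chart overlaps on the $\mu_i$ side and on the $\mu_{i-1}$ side of $x_i$ respectively; extending our arc through these overlaps reaches the two opposite local sides of $x_i$, still without passing through $x_i$ itself.

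The only step requiring real care, and the one making essential use of the cyclic branching axioms, is the treatment of the adjacent vertices $x_{i-1}$ and $x_{i+1}$: since each is non-separated from $x_i$, a chart around $x_{i-1}$ or $x_{i+1}$ could a priori contain $x_i$. The axiom $\mu_{i-1}\neq\mu_i$ forces $x_i$ to be non-separated from $x_{i+1}$ only on the $\mu_i$ side, and from $x_{i-1}$ only on the $\mu_{i-1}$ side; on the opposite outward sides (used to travel toward $x_{i+2}$ and $x_{i-2}$), tameness of the pair $(x_i,x_{i+1})$ (respectively $(x_{i-1},x_i)$) localizes the non-separation to the $\mu_i$ (respectively $\mu_{i-1}$) side of the chart, so the outward part of the chart can be shrunk freely to exclude $x_i$. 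Once this has been arranged at both adjacent vertices, the concatenation is well-defined inside $\LL\setminus\{x_i\}$ and the desired contradiction follows.
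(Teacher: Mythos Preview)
Your proof is correct and follows essentially the same route as the paper: both argue that $\LL\setminus\{x_i\}$ is connected by going around the cycle, contradicting the disconnection criterion for simple connectedness. The paper's version is more economical, replacing your explicit path construction with the one-line observation that non-separated points $x_j,x_{j+1}\neq x_i$ automatically lie in the same connected component of $\LL\setminus\{x_i\}$ (components are open, and non-separated points admit no disjoint open neighborhoods). Note also that your third paragraph addresses a non-issue: since any chart is homeomorphic to $\RR$ and hence Hausdorff, a chart containing $x_{i\pm 1}$ can never contain the non-separated point $x_i$, so no special shrinking is needed there.
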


\begin{proof}
	Assume that $\{x_i\}_{i\in \ZZ/k\ZZ}$ is a cyclic branching. Then, $x_{0}$ and $x_2$ are in distinct connected component of $\LL\setminus \{x_1\}$. But, for any $j\notin\{0,1\}$, $x_j$ and $x_{j+1}$ belong to the same connected component of $\LL\setminus\{x_1\}$ since they are not separable. Thus all the $x_j$ ($j\neq 1$) belong to the same component leading to a contradiction.
\end{proof}

An \emph{ordered branching} is a branching $\mu$ equipped with a total order $\leq_\mu$.

\begin{definition}
	A \emph{planar structure} is the data $(\LL,\{\leq_\mu,\mu \text{ branching}\})$ where $\LL$ is a connected, simply-connected, non-Hausdorff $1$-manifold $\LL$ and $\leq_\mu$ is a total order on $\mu$ for every branching $\mu$.
\end{definition}

An \emph{isomorphism of planar structures} $(\LL,\{\leq_\mu,\mu\})\xrightarrow{f}(\LL',\{\leq_{\mu'},\mu'\})$ is a homeomorphism $\LL\xrightarrow{f}\LL'$ so that, for every branching $\mu$ of~$\LL$, the restriction $f_{|\mu}\colon(\mu,\leq_\mu)\to (f(\mu),\leq_{f(\mu)})$ is increasing. In the rest, we will write $(\LL,\leq_\cdot)$ for shortness.

\begin{remark}
	Any simply connected non-Hausdorff $1$-manifold is orientable.
\end{remark}

A planar structure $(\LL,\leq)$ is \emph{oriented} if $\LL$ is endowed with an orientation.
The properties of planar structures will be described in next sections as a particular case of singular planar structures.

\subsection{Singular planar structure}

We introduce singular planar structure. Think of them as the leaf space of the $pA$-foliations with $\geq3$-prong singularities and no connection.

\begin{definition}\label{d.singular-planar}
	A \emph{singular planar structure}
	is a tame connected non-Hausdorff $1$-manifold $\LL$, with a total order $\preceq_\mu$ on every branching $\mu$, a total cyclic order on every cyclic branching of $\LL$, which satisfies the following properties:
	\begin{enumerate}
		\item the cyclic branchings are pairwise disjoint,
		\item the cardinal of every cyclic branching is $\geq 3$,
		\item for every cyclic branching $\nu$ and every branching $\mu$, $\mu\cap\nu$ is either empty or it has two points. If $\mu\cap\nu=\{x,y\}$ then $x$ and $y$ are consecutive for the order $\preceq_\mu$ on $\mu$ and for the cyclic order $\preceq^{\circlearrowleft}_\nu$. Furthermore, if $y$ is the successor of $x$ for $\preceq_\mu$ it is also the successor of $x$ for $\preceq^\circlearrowleft_\nu$.
		\item every cyclic branching of order $k$ disconnects $\LL$ in $k$ connected components,
		\item every non-separatrix point $x\in\LL$ (i.e. $x$ not in a cyclic branching) disconnects $\LL$ in two connected components.
	\end{enumerate}
\end{definition}

We start determining the intersection of intervals in a singular planar structure.

\begin{lemma}\label{l.|A|}
	Let $\nu$ be a cyclic branching of a singular planar structure $\LL$. For any non-empty subset $A\subset\nu$, $\LL\setminus A$ has $|A|$ connected component.
	Furthermore, two separatrices $x,y$ of $\nu\setminus A$ are in the same connected component of $\LL\setminus A$ if and only if there are
	$x=x_0,x_1\dots,x_k=y$ in $\nu\setminus A$ so that $x_i$ is adjacent to $x_{i+1}$ in $\nu$.
\end{lemma}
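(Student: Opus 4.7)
The plan is to reduce to condition (4) in the definition of singular planar structure via a local analysis at each separatrix of $\nu$. Writing $\nu = \{x_i\}_{i \in \ZZ/n\ZZ}$ with $n = |\nu|$, that condition gives that $\LL \setminus \nu$ has exactly $n$ connected components. By tameness together with the defining property that each $x_j$ is non-separated from $x_{j-1}$ on one side and from $x_{j+1}$ on the other, I can choose small open intervals $I_j$ around $x_j$ so that (after shrinking) $I_j \cap I_{j+1}$ is a single connected open set, equal to one side of $I_j\setminus\{x_j\}$ and to the opposite side of $I_{j+1}\setminus\{x_{j+1}\}$, and lying entirely in $\LL\setminus\nu$. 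This lets me label the $n$ components as $C_0, \ldots, C_{n-1}$ coherently, where $C_j$ is the unique component containing $I_j \cap I_{j+1}$.

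I expect the main obstacle to be verifying that each $x_j \in \nu$ lies in the closure of exactly the two components $C_{j-1}$ and $C_j$, and in no other. The inclusions $x_j \in \overline{C_{j-1}} \cap \overline{C_j}$ are immediate from the labeling. For the converse, suppose $x_j \in \overline{C_k}$ with $k \notin \{j-1, j\}$: then every neighborhood of $x_j$ meets $C_k$, yet a sufficiently small $I_j$ satisfies $I_j \setminus \{x_j\} \subset C_{j-1} \cup C_j$ by the side identification, forcing $C_k$ to intersect $C_{j-1}$ or $C_j$ and hence to coincide with one of them, a contradiction. The delicate ingredient here is tameness, which ensures that an interval-neighborhood of $x_j$ matches on each side a neighborhood of $x_{j\pm 1}$, so no third component of $\LL \setminus \nu$ can approach $x_j$.

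Setting $B = \nu\setminus A$ and viewing $\LL\setminus A$ as $(\LL\setminus\nu)\cup B$, the previous step shows that reinserting each $x_j \in B$ merges precisely $C_{j-1}$ with $C_j$ (into a single connected set, via a neighborhood of $x_j$) and affects no other component. I encode the resulting gluing combinatorially by a graph on vertex set $\{C_0, \ldots, C_{n-1}\}$ arranged as an $n$-cycle, keeping the edge between $C_{j-1}$ and $C_j$ iff $x_j \in B$. Components of $\LL \setminus A$ are in bijection with components of this graph, and removing the $|A|$ edges indexed by $A$ from an $n$-cycle yields exactly $|A|$ path-components, proving the cardinality claim.

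The second assertion reads off the same model: two separatrices $x_p, x_q \in B$ lie in the same component of $\LL \setminus A$ iff the corresponding edges $\{C_{p-1}, C_p\}$ and $\{C_{q-1}, C_q\}$ are joined by a path of remaining edges, which unpacks exactly to the existence of a chain $x_p = y_0, y_1, \ldots, y_k = x_q$ in $\nu\setminus A$ with each consecutive pair $y_i, y_{i+1}$ adjacent in the cyclic order of $\nu$.
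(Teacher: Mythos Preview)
Your argument is correct and takes a genuinely different route from the paper's. The paper uses a sandwich-counting trick: axiom (4) gives $n$ components when $A=\nu$, a direct check gives one component when $|A|=1$ (since $x_i$ is non-separated from $x_{i\pm1}$ on its two sides), and since adding or removing a single point changes the component count by at most one, the $n-1$ intermediate steps must each change it by exactly one; the second assertion is then reduced to the case of a two-element subset of $\nu$. Your approach instead identifies the $n$ components of $\LL\setminus\nu$ explicitly via the shared sides $I_j\cap I_{j+1}$, shows each $x_j$ touches exactly the two adjacent ones, and reads both claims off the combinatorics of an $n$-cycle with $|A|$ edges deleted. This is more structural and delivers the two assertions simultaneously, whereas the paper's trick is slicker but says less about which separatrices land in which component.

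One small presentational circularity to clean up: you assert that the labeling $j\mapsto C_j$ is a bijection onto the set of components before justifying it, yet your step-4 contradiction (``$k\notin\{j-1,j\}$, hence $C_k\neq C_{j-1},C_j$'') already uses that bijection. The fix is immediate: first prove $I_j\setminus\{x_j\}\subset C_{j-1}\cup C_j$ as sets (your side-identification does this without assuming distinctness), deduce that every component of $\LL\setminus\nu$ equals some $C_j$ from connectedness of $\LL$, and then count to get injectivity.
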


\begin{proof}
	It holds true by definition for $A=\nu$. By definition of $1$ dimensional (no Hausdorff) manifold, any point admits a neighborhood that is an interval. Hence it cuts this interval in two components: thus removing a point adds either $0$ or $1$ components, depending if the two local components belong to the same global component or not. If $|A|=1$, that is $A$ is a unique point $x_i$ of $\nu$, $x_i$ is no separated from $x_{i-1} $ from one side and from $x_{i+1}$ in the other side. Thus $x_i$ does not disconnect $\LL$, but breaks the cyclic order.
	Now, Lemma~\ref{l.|A|} follows from a direct induction on $|\nu|-|A|$: each new point of $|A|$ that one removes adds a component.

	Let $x,y\in \nu\setminus A$ be two adjacent separatrices. They share a side disjoint from $A$, hence they are in the same connected component of $\LL\setminus A$. Thus the same holds if $x,y$ are connected by a chain of adjacent separatrices in $\nu\setminus A$.

	Reciprocally, assume that $x,y$ do not satisfy the hypothesis. Then there exists $z_1,z_2\in\nu\setminus A$ so that for the cyclic order on $\nu$, $x$ and $y$ are on distinct interval $B_1,B_2\subset \nu$ bounded by $z_1$ and $z_2$. It follows from above that $\LL\setminus \{z_1,z_1\}$ has two connected components, each on them contains exactly one of $B_1$ and $B_2$. 
	Thus $x,y$ belong to distinct connected components of $\LL\setminus\{z_1,z_2\}$ and hence of $\LL\setminus A$, concluding the proof.
\end{proof}

\begin{lemma}\label{l.single-intersection}
	Let $\LL$ be a singular planar structure and $\nu$ a cyclic branching on $\LL$. Then any interval of $\LL$ intersect $\nu$ in at most one point.
\end{lemma}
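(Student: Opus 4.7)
The plan is to argue by contradiction: assume some chart $I$ of $\LL$ meets $\nu$ in at least two points. Since $\nu$ is finite, $I\cap\nu$ is a finite subset of $I\cong\RR$, linearly ordered along $I$; pick two points $x$ and $y$ that are consecutive in this order, and shrink $I$ to an open sub-interval containing $x$ and $y$ but no other point of $\nu$, so that $I\cap\nu=\{x,y\}$. Then split into two cases, according to whether $x$ and $y$ are adjacent in the cyclic order on $\nu$ or not.

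In the adjacent case, the definition of cyclic branching forces $x$ and $y$ to lie on a common branching, hence to be non-separated in $\LL$. But $I$ is an open chart homeomorphic to $\RR$ containing both points, so $x$ and $y$ admit disjoint open sub-intervals of $I$ as neighborhoods; since $I$ is open in $\LL$, these are also disjoint open $\LL$-neighborhoods of $x$ and $y$, contradicting non-separation.

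The heart of the proof is the non-adjacent case, and the key trick is to cut along $A:=\nu\setminus\{x,y\}$ rather than along $\{x,y\}$ itself. By Lemma~\ref{l.|A|}, $\LL\setminus A$ has $|A|=|\nu|-2$ connected components, and two elements of $\nu\setminus A=\{x,y\}$ lie in the same component precisely when they can be joined by a chain of pairwise adjacent elements of $\nu\setminus A$. Since $\nu\setminus A$ consists only of $x$ and $y$, which are non-adjacent in $\nu$ by assumption, no such chain exists, and $x,y$ lie in distinct components of $\LL\setminus A$. On the other hand, the preliminary reduction guarantees $I\cap A=\emptyset$, so the connected set $I\subset\LL\setminus A$ meets both components---absurd. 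The only insightful step is the choice of $A=\nu\setminus\{x,y\}$; once this is identified, shrinking $I$ so that $I\cap\nu=\{x,y\}$ is exactly what lets Lemma~\ref{l.|A|} deliver the contradiction.
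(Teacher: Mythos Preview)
Your proof is correct, and it follows a genuinely different route from the paper's. The paper does not split into cases on whether the two points are adjacent in $\nu$; instead it applies Lemma~\ref{l.|A|} with $A=\nu$ to describe the components $A_i$ of $\LL\setminus\nu$, then argues that the injective path $\gamma\subset I$ from $x_i$ to $x_j$ must start in some $A_i$ and exit through $x_{i+1}$, and uses the non-separation of $x_i,x_{i+1}$ on the side of $A_i$ to conclude that the germs of $\gamma$ near $x_i$ and near $x_{i+1}$ live in the same one-sided neighborhood, forcing a self-intersection. Your argument, by contrast, isolates the adjacent case with a direct Hausdorff-versus-non-separated contradiction, and then in the non-adjacent case applies Lemma~\ref{l.|A|} with the complementary choice $A=\nu\setminus\{x,y\}$: this lets the \emph{second} clause of that lemma do all the work, reducing the statement to a pure connectivity obstruction. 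The paper's approach is unified but relies on a slightly delicate curve argument; yours is a clean two-case split in which the key non-adjacent case becomes an immediate corollary of Lemma~\ref{l.|A|} once the right $A$ is chosen.
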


\begin{proof}
	Write $\nu =\{x_1,\cdots,x_n\}$ given with the cyclic order.
	From Lemma \ref{l.|A|}, we know that $\nu$ separates $\LL$ in $n$ connected components, each delimited by two consecutive $x_i$. Let $A_i$ be the connected component of $\LL\setminus\nu$ that contains the a side of both $x_i$ and $x_{i+1}$. A curve that starts in $A_i$ either remains in $A_i$ or intersects $\{x_i,x_{i+1}\}$.

	Let $I$ be an  interval in $\LL$ and assume it intersects $\nu$ in two points $x_i,x_j$. Denote by $\gamma$ the (injective) curve inside $I$ that starts on $x_i$ and ends on $x_j$. Up to changing the order in $\nu$, we may assume that $\gamma$ starts inside $A_i$. Then $\gamma$ has to leave $A_i$ through $x_{i+1}$. Note that any neighborhood of $x_i$ inside $A_i$ is also a neighborhood of $x_{i+1}$ inside $A_i$. Thus $\gamma$ self intersects just after $x_i$ and just before $x_{i+1}$. It contradicts the injectivity of $\gamma$. Therefore $I$ can not intersect $\nu$ twice.
\end{proof}

\begin{lemma}\label{l.intersection-connexe}
	Let $I$ and $J$ be two intervals of a singular planar structure $\LL$. Then $I\cap J$ is either empty or an interval.
\end{lemma}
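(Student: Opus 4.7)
The plan is to argue by contradiction: assuming $I \cap J$ has at least two connected components $U$ and $V$, I will produce a contradiction using the disconnection axioms (items~4 and~5 of Definition~\ref{d.singular-planar}). The strategy extends the simply-connected argument of Lemma~\ref{l.simply-tame}, but the new obstacle is that a separatrix no longer disconnects $\LL$, so the direct argument fails when the endpoint in question lies in a cyclic branching, and one must use the cyclic branching structure instead.

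First I would set up the key objects. Since $I$ and $J$ are charts, each component of $I \cap J$ is an open sub-interval of both, and $U$ admits an endpoint $a$ in $I$ on the side of $V$ and an endpoint $a'$ in $J$ on the side of $V$. Because $U$ is a connected component of the open set $I \cap J$, a short argument shows $a \notin J$ and $a' \notin I$ (otherwise $U$ could be enlarged), so in particular $a \neq a'$. Since $U$ is contained in both $I$ and $J$ and has $a$ (in $I$) and $a'$ (in $J$) as its endpoints on the $V$-side, the points $a$ and $a'$ are non-separated on the $U$-side. If $a$ is \emph{not} a separatrix, property~5 of Definition~\ref{d.singular-planar} says $a$ disconnects $\LL$ into exactly two components, one containing $U$ and the other containing $V$; but $J$ is a connected subset of $\LL \setminus \{a\}$ meeting both, a contradiction, exactly as in the proof of Lemma~\ref{l.simply-tame}.

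The hard part is when $a$ is a separatrix. Let $\nu = \{x_1, \dots, x_k\}$ be the cyclic branching containing $a = x_i$, with $k \geq 3$ by property~2, and let $B_j$ denote the component of $\LL \setminus \nu$ adjacent to $x_j$ and $x_{j+1}$ on their common side; by Lemma~\ref{l.|A|} the $k$ components $B_j$ are pairwise distinct. Up to relabeling the cyclic order, assume the $U$-side of $a$ is the side shared with $x_{i-1}$; then $U \subset B_{i-1}$, and following $I$ past $a$ gives $V \subset B_i$. I then examine $J$. By Lemma~\ref{l.single-intersection}, $J \cap \nu$ has at most one point; if it is empty or does not separate $U$ from $V$ in $J$, then $U$ and $V$ are joined by a path in $\LL \setminus \nu$, contradicting $B_{i-1} \neq B_i$. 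Hence $J \cap \nu = \{y\}$ with $y$ between $U$ and $V$ in $J$. The component of $J \setminus \{y\}$ containing $U$ is a connected subset of $\LL \setminus \nu$ meeting $B_{i-1}$, hence contained in $B_{i-1}$, so $y \in \overline{B_{i-1}} \cap \nu = \{x_{i-1}, x_i\}$; since $a = x_i \notin J$ we conclude $y = x_{i-1}$. The component of $J \setminus \{y\}$ containing $V$ is then adjacent to $x_{i-1}$ on its non-$U$-side, i.e.\ the side shared with $x_{i-2}$, and so lies in $B_{i-2}$; thus $V \subset B_{i-2}$. Combined with $V \subset B_i$ and $B_i \neq B_{i-2}$ (using $k \geq 3$), this gives the desired contradiction.
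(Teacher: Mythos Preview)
Your argument is correct, and it follows the same opening move as the paper: pick a point of $I$ strictly between two components of $I\cap J$ (the paper's $z$, your $a$), observe that it fails to disconnect $\LL$, and conclude via property~5 that it must be a separatrix. The difference is in how the separatrix case is finished. You track explicitly which complementary region $B_j$ of $\LL\setminus\nu$ contains $U$ and $V$, first via $I$ (getting $V\subset B_i$) and then via $J$ (getting $V\subset B_{i-2}$), and use $k\geq 3$ to force $B_i\neq B_{i-2}$. The paper instead notes that since $|\nu|\geq 3$ and each of $I,J$ meets $\nu$ at most once, there is a point $p\in\nu\setminus(I\cup J)$; then $\{z,p\}$ cuts $\LL$ into two pieces by Lemma~\ref{l.|A|}, the connected set $(I\cup J)\setminus\{z\}$ lies in one of them, and since $I\cup J$ is a neighborhood of $z$ this forces $\LL\setminus\{p\}$ to be disconnected, contradicting Lemma~\ref{l.|A|}. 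The paper's route is shorter and avoids the explicit bookkeeping of the $B_j$'s, while yours is more concrete and makes visible exactly why the $k\geq 3$ hypothesis is needed.
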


\begin{proof}
	We argue by contradiction assuming that $I\cap J$ has (at least) two connected components $I_1$ and $I_2$. Take any point $z\in I\setminus (I\cap J)$, between $I_1$ and $I_2$. By construction, $z$ does not disconnecting $I\cup J$ and therefore does not disconnect $\LL$. Thus, by definition of a singular planar structure, $z$ belongs to a cyclic branching $\mu$.
	According to Lemma \ref{l.single-intersection}, $\mu$ intersects each interval $I$ and $J$  on at most one point. Since $\mu$ has at least three points, it has at least one point $p$ outside $I\cup J$.

	Lemma \ref{l.|A|} states that $\{z,p\}$ cuts $\LL$ in two connected components $A_1,A_2$.
	The set $(I\cup J)\setminus\{z\}$ is connected, so it lies inside one of the sets $A_i$, let us say~$A_1$. Since $I\cup J$ is a neighborhood of $z$, the set $A_1\cup\{z\}$ is open. So $\{A_1\cup\{z\},A_2\}$ is a partition of $\LL\setminus\{p\}$ in disjoint open subsets. It follows that $\LL\setminus\{p\}$ is not connected, which contradicts Lemma \ref{l.|A|}.
\end{proof}

\begin{lemma}\label{l.presque-connexe}
	Let $\LL$ be a singular planar structure and $C\subset\LL$ be a connected open set, union of a finite family $J_1,\dots J_k$ of open intervals. Let $I$ be an interval so that $I\cap C\neq \emptyset$. Then $I\cap C$ is of the type of $U\setminus \{x_1,\dots ,x_k\}$ where $U$ is an interval, and $x_i$ are each a point in a cyclic branching $\nu_i$ that satisfies $\nu_i\setminus\{x_i\}\subset C$.
\end{lemma}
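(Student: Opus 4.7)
The plan is to describe $I \cap C$ explicitly as a finite union of open subintervals of $I$ and then analyze the gaps between consecutive components. First, by Lemma~\ref{l.intersection-connexe}, each $I \cap J_l$ is either empty or an interval, so $I \cap C = \bigcup_l (I \cap J_l)$ is a finite union of open subintervals of $I$, hence has finitely many connected components $V_1,\dots,V_m$ ordered along $I$ (with $m \leq k$). Parameterizing $I$ as $\RR$, write $V_i = (a_i, b_i)$ with $b_i \leq a_{i+1}$, and set $U = (a_1, b_m)$, an open subinterval of $I$ and hence an interval of $\LL$.

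Second, I show each gap $[b_i, a_{i+1}]$ is reduced to a single point. Pick any $z \in (b_i, a_{i+1})$. If $z$ disconnected $\LL$ into $A \sqcup B$, the two sides of $z$ inside the chart $I$ would have to lie in different components of $\LL \setminus \{z\}$ (otherwise the connected $\LL$ would admit a partition into two disjoint open sets); but then the connected set $C \subset \LL \setminus \{z\}$ would meet both $A$ and $B$ via $V_i$ and $V_{i+1}$, a contradiction. Hence $z$ does not disconnect $\LL$, so by property~(5) of Definition~\ref{d.singular-planar}, $z$ lies in a cyclic branching, and in particular $z$ is non-separated. This is the main obstacle: if the open gap $(b_i, a_{i+1})$ were non-empty, it would consist of uncountably many non-separated points, contradicting Lemma~\ref{l.countable}. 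So $b_i = a_{i+1}$; set $x_i := b_i = a_{i+1}$. The same disconnection argument applied to $z = x_i$ (using that $V_i$ and $V_{i+1}$ sit on opposite sides of $x_i$) shows that each $x_i$ is a separatrix, lying in a cyclic branching $\nu_i$, and therefore $I \cap C = U \setminus \{x_1,\dots,x_{m-1}\}$.

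Third, I show $\nu_i \setminus \{x_i\} \subset C$. Write $\nu_i = \{x_i, y_1, \dots, y_{n-1}\}$ in cyclic order with $n \geq 3$, and denote by $W_{x_i,y_1}$ and $W_{y_{n-1},x_i}$ the two components of $\LL \setminus \nu_i$ adjacent to $x_i$. By the tameness of $\LL$ together with the cyclic branching structure, the two sides of $x_i$ in $I$ lie respectively in $W_{x_i,y_1}$ and $W_{y_{n-1},x_i}$, so $V_i$ and $V_{i+1}$ provide points of $C$ inside both wedges. Suppose for contradiction that some $y_l \in \nu_i \setminus \{x_i\}$ is not in $C$. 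Then $C \subset \LL \setminus \{x_i, y_l\}$, which by Lemma~\ref{l.|A|} has exactly two connected components. A case analysis on $l$ using the adjacency criterion of Lemma~\ref{l.|A|} shows that $W_{x_i,y_1}$ and $W_{y_{n-1},x_i}$ lie in different components of $\LL \setminus \{x_i, y_l\}$, since any chain of adjacent separatrices in $\nu_i$ joining $y_1$ to $y_{n-1}$ must pass through either $x_i$ (the short way) or $y_l$ (the long way). But then the connected set $C$ meets both components, a contradiction. Hence $\nu_i \setminus \{x_i\} \subset C$, completing the proof.
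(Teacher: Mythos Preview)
Your proof is correct and follows essentially the same route as the paper's: decompose $I\cap C$ via Lemma~\ref{l.intersection-connexe}, show gap points fail to disconnect $\LL$ and hence lie in cyclic branchings (so are countable, forcing the gaps to be singletons), and then use connectivity to force $\nu_i\setminus\{x_i\}\subset C$. The only real difference is in the last step: the paper argues that $\{x_1,\dots,x_k\}$ does not disconnect $C\cup I$, deduces $\nu_i\setminus\{x_i\}\subset C\cup I$, and then invokes Lemma~\ref{l.single-intersection} to drop the $I$; you instead run the two-component argument from Lemma~\ref{l.|A|} directly against $C$, which is more explicit but amounts to the same thing (and your claim about the sides of $x_i$ in $I$ landing in the two adjacent wedges is itself an instance of Lemma~\ref{l.single-intersection}).
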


An example of the lemma is to take $C$ a neighborhood of a cyclic branching minus one point $x$ on that cyclic branching, and to take $I$ to be a small interval that contains $x$. Then $I\cap C$ is equal to $I\setminus x$.

\begin{proof}
	According to Lemma~\ref{l.intersection-connexe} $I\cap J_i$ is an interval $U_i$ (may be empty). Given a point $x\in I$ between (in $I$) two successive $U_i$, $x$ does not disconnect $C\cup I$, and \emph{a fortiori} neither $\LL$. Hence $x$ is on a cyclic branching. Thus, there are at most countably many possible points $x$. Since the intervals $U_i$ comes in finite amount, there are finitely many points $x$ as above, denote them by $\{x_1\cdots x_k\}$ (potentially $k=0$).

	The set $U=\cup_iU_i\cup\{x_1\cdots x_k\}$ is thus an open interval, and $\cup_iU_i$ is equal to $U\setminus\{x_1\cdots x_k\}$. Denote by $\nu_i$ the cyclic branching that contains $x_i$. As the $\{x_1\cdots x_k\}$ does not disconnect $C\cup I$, the other points of $\nu_i$ belong to $C\cup I$. Since an interval intersect a cyclic branching on at most one point (see Lemma~\ref{l.single-intersection}), $\nu_i\setminus\{x_i\}$ belong to $C$, concluding the proof.
\end{proof}

\begin{remark} \label{r.planar}
	If $\LL$ has at least one cyclic branching, then $\LL$ is not simply connected, as each point in a cyclic branching is non-disconnecting.
\end{remark}

If $J$ is the connected components of $I\setminus x$ that accumulates on a given branching $\mu$, we say that $\mu$ \emph{is a branching on the side $J$} of $x$.

\subsection{Universal planar structure}
An oriented planar structure $(\LL,\preceq_\cdot)$ is \emph{a universal planar structure} if given any oriented planar structure $(\tilde\LL,\tilde\preceq_\cdot)$ there is an injective morphism
$\varphi\colon\tilde\LL\to\LL $, increasing for the orientations of $\tilde\LL$ and $\LL$ and increasing for the orders $\tilde\preceq_{\tilde\mu}$ and $\preceq_\mu$ in restriction to any branching $\mu$ of $\LL$ and $\tilde \mu$ of $\tilde\LL$ with $\varphi(\tilde \mu)\subset \mu$.

Notice that any countable totally ordered set $(X,\leq)$ can be increasingly embedded into $\QQ$. Call a countable ordered set $(X,\leq_X)$ \emph{maximally ordered} (or call $\leq_X$ a maximal order) if there exists an increasing bijection onto $(\QQ,\leq)$.

\begin{definition}\label{d.maximally}
	A planar structure $\LL$ is called \emph{maximally branched} if $\LL$ satisfies the three following properties:
	\begin{itemize}
		\item the union of its branchings is dense,
		\item every non-separated point in $\LL$ belongs to two branching (one on each side),
		\item and every branching is maximally ordered.
	\end{itemize}
\end{definition}

\begin{theorem}\thlabel{th-univ-leaf}
	Any maximally ordered planar structure is universal.
\end{theorem}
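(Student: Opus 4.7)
The plan is to build the embedding $\varphi\colon\tilde\LL\to\LL$ by a back-and-forth argument, modeled on Cantor's proof that $(\QQ,\leq)$ is universal among countable dense linear orders without endpoints. First I would fix a countable subset $\tilde X=\{\tilde x_n\}_{n\geq 1}\subset\tilde\LL$ that is dense in $\tilde\LL$ and contains every non-separated point of $\tilde\LL$; this is possible because $\tilde\LL$ has a countable basis of its topology by definition, and its set of non-separated points is countable by \thref{l.countable}. It then suffices to construct $\varphi$ on $\tilde X$, since every $\tilde x\in\tilde\LL\setminus\tilde X$ is separated and sits inside a chart in which $\tilde X$ is order-dense; the image point $\varphi(\tilde x)$ is then forced by continuity and monotonicity.

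The heart of the proof is the inductive step. Suppose $\varphi$ has been defined on $F_n=\{\tilde x_1,\ldots,\tilde x_n\}$ so as to preserve orientation and the order of every branching of $\LL$ meeting the image $\varphi(F_n)$. To place $\tilde x_{n+1}$, I read off from $\tilde\LL$ a finite combinatorial datum: the position of $\tilde x_{n+1}$ in the "skeleton" generated by $F_n$ (on which side of each $\tilde x_i$ it lies, through which intermediate branchings the arc from $\tilde x_i$ to $\tilde x_{n+1}$ passes), together with, when $\tilde x_{n+1}$ is non-separated from some $\tilde x_i$, the order position of $\tilde x_{n+1}$ within the branching containing it. If $\tilde x_{n+1}$ is separated from every point of $F_n$, I use density of branchings in $\LL$ to produce a chart of $\LL$ in the prescribed relative position and an admissible point inside it. If $\tilde x_{n+1}$ shares a branching on exactly one side with some $\tilde x_i$, the fact that the corresponding branching of $\LL$ is order-isomorphic to $(\QQ,\leq)$ lets me insert $\varphi(\tilde x_{n+1})$ in the correct order slot of that branching.

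The step I expect to be the main obstacle is the case where $\tilde x_{n+1}$ is non-separated from already-placed points on both of its sides through two distinct branchings of $\tilde\LL$. Then $\varphi(\tilde x_{n+1})$ must lie simultaneously on two prescribed branchings of $\LL$, with prescribed order position in each. Here all three clauses of the maximally-branched hypothesis come together: density of branchings of $\LL$ yields a continuum of candidate locations in the right relative position; each branching of $\LL$ having order type $(\QQ,\leq)$ provides the required slot inside each branching individually; and the hypothesis that every non-separated point of $\LL$ belongs to two branchings ensures that some candidate actually carries the required two-sided branching structure. The verification amounts to a combinatorial statement about how finite sub-trees of $\tilde\LL$ can be realized inside $\LL$, and this is where careful bookkeeping of the inductive invariant is required.

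Finally, once $\varphi$ is defined on all of $\tilde X$, the extension to $\tilde\LL$ is routine. Every $\tilde x\in\tilde\LL\setminus\tilde X$ is separated, so it lies inside a chart $I$ in which $\tilde X\cap I$ is order-dense and whose image $\varphi(\tilde X\cap I)$ sits inside a single chart of $\LL$; continuity and monotonicity determine $\varphi(\tilde x)$ uniquely. Injectivity of $\varphi$ and preservation of both the orientation of $\tilde\LL$ and the order of every branching then follow at once from the construction.
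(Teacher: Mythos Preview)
Your point-by-point back-and-forth is a natural idea, but the two-sided case you flag as ``the main obstacle'' is a genuine gap, not just bookkeeping. Suppose $\tilde x_{n+1}$ is non-separated from the already-placed $\tilde x_i$ on one side and from the already-placed $\tilde x_j$ on the other. Once $\varphi(\tilde x_i)$ and $\varphi(\tilde x_j)$ are fixed in $\LL$, the branchings $\mu_1,\mu_2$ of $\LL$ to which they belong on the relevant sides are \emph{completely determined}; you must now set $\varphi(\tilde x_{n+1})\in\mu_1\cap\mu_2$. But Lemma~\ref{l.simply-tame} says two distinct branchings of a simply connected non-Hausdorff $1$-manifold meet in at most one point. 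So there is at most one candidate, and nothing in your invariant forces that single point to exist, to sit in the required order slots inside both $\mu_1$ and $\mu_2$, or to be unoccupied. The three clauses of maximally-branched give you many branchings in $\LL$ and $\QQ$-like order type on each, but they do not let you choose $\mu_1,\mu_2$ after the fact: those were fixed the moment you placed $\varphi(\tilde x_i)$ and $\varphi(\tilde x_j)$. Your sentence about ``a continuum of candidate locations'' is therefore misleading.

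The paper's proof sidesteps this entirely by working line-by-line rather than point-by-point. It covers $\tilde\LL$ by maximal open intervals $U_0,U_1,\dots$ (lines), arranged so that each $U_{i}$ meets $V_i=\bigcup_{j<i}U_j$ in a single sub-interval (Lemma~\ref{l.presque-connexe}). At the inductive step one only has to attach one or two half-rays of $U_{i}$ to the already-embedded $V_i$; each attaching point $a$ is non-separated from $V_i$ on exactly one side, and on that side one uses the $\QQ$-order type of the target branching to insert $h(a)$ in the correct slot, then extends freely along a fresh ray in $\LL$ (sending non-separated points into non-separated points). Because the constraint at each boundary point is one-sided, the rigidity of $\mu_1\cap\mu_2$ never arises. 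If you want to salvage the pointwise strategy, you would effectively need to enumerate the $\tilde x_n$ so that no point is processed before both of its branching-neighbours---which amounts to rediscovering the paper's line decomposition. Your extension step from $\tilde X$ to $\tilde\LL$ also needs more than you say (why does $\varphi(\tilde X\cap I)$ land in a single chart of $\LL$?), but that is secondary to the two-sided obstruction.
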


We build a universal planar structure in Section \ref{sec-univ-fol}.

We say that $U\subset\LL$ is a \emph{line} if it is an open interval which is maximal for the inclusion. That is no other open interval contains $U$.

\begin{proof}
	Let $\LL_1,\LL_2$ be two oriented planar structures and assume $\LL_2$ to be maximally ordered. We need to prove that there exists an increasing embedding $\LL_1\hookrightarrow\LL_2$.

	For some set $N=\NN$ or $N=\intint{0,N}$, let $(U_i)_{i\in N}$ be a family of lines of $\LL_1$ that covers $\LL_1$, and denote by $f_i\colon U_i\to\RR$ an increasing homeomorphism, for all $i\in N$.
	Denote by $V_i=\cup_{j<i}U_j$, for $i>0$. Up to reordering the family $(U_i)_i$ and removing some elements (while keeping $\cup_iU_i=\LL_1$), we can assume that $V_i$ is connected, it intersects $U_i$ but does not contain $U_i$.
	As a straightforward consequence of Lemma~\ref{l.presque-connexe}, $U_i\cap V_i$ is an open interval.


	We build the map $h\colon\LL_1\to\LL_2$ inductively on $V_i$. Let us denote by $B$ the set of non-separated points in $\LL_2$, and by $A$ the set of non separated points in~$\LL_1$.

	For $i=1$, $V_1=U_0$, take a line $U\subset\LL_2$ and an increasing homeomorphism $f_1\colon U_0\to U\subset\LL_2$ so that $f_1$ sends $A\cap U_0$ into the $B\cap U$. This is possible since $A\cap U_0$ is at most countable and the set $B\cap U$ is a countable dense subset of $U$, and so ordered maximally.

	Assume that an increasing embedding $h_i\colon V_i\to\LL_2$ has been built, sending $A\cap V_i$ inside $B$. Since $V_i$ and $U_i$ are open, $I\coloneqq U_i\cap V_i$ is an open sub-interval of $U_i$, and is not all $U_i$ by assumption.

	Suppose first that for some increasing homeomorphism $f\colon U_i\to\RR$, one has $f(I)=]-\infty,0[$. Denote by $a=f^{-1}(0)$ the boundary point of $I$ inside $U_i$. Since all $U_j$ with $j<i$ are line (so maximal), the sequence $f^{-1}(t)\in I$ with $t\to 0^-$ accumulates on a non-empty subset of a branching $\mu\subset B\subset\LL_1$, and the point $a$ belongs to $\mu\subset B$. Then by assumption on $h_i$, $h_i(V_i\cap\mu)$ is contained in a negative branching $\nu$ of $\LL_2$.

	We first extend $h$ on $V_i\cup\{a\}$ as follows:
	The set $V_i\cap\mu$ is finite since all line of $\LL_1$ intersects $\mu$ at most once, by simply-connectedness of $\LL_1$. Since the order on $\nu$ is maximal, one can find a point $b\in \nu$ which satisfies that the map $g\colon(V_i\cap\mu)\cup\{a\}\to\nu$, defined by $g(a)=b$ and $g(x)=h(x)$ for $x\in V_i\cap\mu$, is an increasing bijection.

	Denote by $W$ a positive ray of $\LL_2$ that start through $b$, that is a maximal subset of $\LL_2$ homeomorphic to $[0,+\infty[$ where $b$ corresponds to $0$. Using the same argument as before, one can find an increasing homeomorphism $U_i\setminus I\to W\subset\LL_2$ that send $a$ to $b$, $A\cap (U_i\setminus I)$ into $B\cap W$, and use it to extends the map $h_i$ to $h_{i+1}\colon V_{i+1}\to\LL_2$. Since $\LL_2$ is simply-connected, $h_{i+1}$ is injective. Therefore, it is an increasing embedding, and by construction it sends $A\cap V_{i+1}$ into $B$.

	When $U_i\setminus I$ is a negative ray or the union of two rays (positive and negative), one proceed similarly to extends $h_{V_i}$ to $V_{i+1}$.

	We build inductively the maps $h_i\colon V_i\to\LL_2$. As $h_i$ and $h_{j}$ coincide on $V_i\cap V_j$, the union of all maps $h_{V_i}$ yields a well-defined increasing embedding $\LL_1\hookrightarrow\LL_2$.
\end{proof}

\subsection{Universal singular planar structure}
Let $\LL$ be a singular planar structure and $U\simeq\RR$ be an oriented open interval of $\LL$ and $x\in U$ a separatrix. Denote by $\nu=(x_k)_{k\in\ZZ/n\ZZ}$, with $x_1=x$, the compatible cyclic branching which contains $x$. We say that
\begin{itemize}\item $\nu$ is \emph{on the left} of $U$ if when $y\in U$ converges toward $x$ from below, $y$ also converges on $x_0$ (think of the cyclic order on $\nu$ as going anti-clockwise).
	\item $\nu$ is \emph{on the right} of $U$ if $y$ also converges toward $x_2$.
\end{itemize}
Notice that it depends only on the germ of $U$ at $x$ and on its orientation and not on the precise choice of interval.

\begin{definition}\label{d.pre-universal-planar} Let $\LL$ be a singular planar structure. Denote by $A\subset\LL$ the union of all branching, $B_k$ ($k\geq3$) the union of all cyclic branching of order $k$, and $B_0 =A\setminus\cup_{k\geq 3}B_k$. singular planar structure $\LL$ is said \emph{pre-universal} if :
	\begin{enumerate}
		\item every element in $A$ is on a branching on its two sides,
		\item for any two points $a_1\prec_\mu a_2$ on a common branching $\mu$ and that are not on a common cyclic branching, the set $\{x\in\mu,a_1\prec_\mu x\prec_\mu a_2\}$ intersects all~$B_i$,
		\item for every branching $\mu$ and every $i$, $\mu\cap B_i$ has no maximum and no minimum for $\preceq_\mu$,
		\item for every oriented charts $U\simeq\RR$ in $\LL$, the set $B_0\cap U$ is dense in $U$,
		\item for every oriented charts $U\simeq\RR$ in $\LL$ and every $k\geq3$, the two sets of separatrices in $B_k\cap U$ that lies on the left/right of $U$ are each dense in $U$.
	\end{enumerate}

\end{definition}

\begin{definition}\label{d.universal-planar}
	A singular planar structure $\LL$ is said \emph{universal} if for any singular planar structure $\LL'$, there exists an embedding $\LL'\hookrightarrow\LL$ preserving the orders of the branchings and the cyclic orders of the cyclic branchings.

\end{definition}

\begin{theorem}\thlabel{th-univ-sing-leaf}
	Any pre-universal singular planar structure is universal.
\end{theorem}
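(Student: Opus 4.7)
The plan is to carry out a back-and-forth construction analogous to the one in the proof of \thref{th-univ-leaf}, but with additional bookkeeping to match cyclic branchings. Let $\LL'$ be any singular planar structure and $\LL$ be pre-universal. I would cover $\LL'$ by a countable family of open intervals $(U'_i)_{i \in N}$, and, after reordering and thinning out, assume that $V'_i := \bigcup_{j<i} U'_j$ is connected and that $U'_i \cap V'_i$ is a proper non-empty open subset of $U'_i$. By Lemma~\ref{l.presque-connexe}, each $U'_i \cap V'_i$ has the shape $W \setminus \{x_1, \ldots, x_k\}$ where $W$ is an open interval and each $x_j$ lies on a cyclic branching $\nu_j$ of $\LL'$ whose remaining points already belong to $V'_i$. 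Thus extending from $V'_i$ to $V'_{i+1}$ amounts to filling in the missing separatrices $x_j$, then prolonging the chart $U'_i$ by one or two rays beyond $W$.

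The inductive hypothesis is that an embedding $h_i \colon V'_i \to \LL$ has been built, preserving branching orders, preserving cyclic orders, and sending every cyclic branching $\nu$ of $\LL'$ that meets $V'_i$ to a subset of a cyclic branching $h_i(\nu)$ of $\LL$ of cardinality $|\nu|$. Each extension step across a boundary point $p$ of $V'_i$ inside $U'_i$ divides into two cases. If $p$ is a missing separatrix $x_j$ of the type described above, then $h_i(\nu_j \setminus \{x_j\})$ already occupies all but one position of a cyclic branching $\nu$ of $\LL$; the remaining position is unique, and I set $h(x_j)$ to be that point. If instead $p$ is a boundary point of the new ray of $U'_i$, then $p$ lies on a branching $\mu'$ of $\LL'$ whose intersection $h_i(\mu' \cap V'_i)$ with $\LL$ sits inside some branching $\mu$; I must pick an image in $\mu$ that fits the prescribed cut of $\preceq_{\mu'}$, and, if $p$ happens to be a separatrix, that lies on a cyclic branching of the correct order and on the correct side.

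All of these choices are provided by the pre-universality of $\LL$: properties 2 and 3 of Definition~\ref{d.pre-universal-planar} make $\mu \cap B_0$ and each $\mu \cap B_k$ dense between any two consecutive elements of $h_i(\mu' \cap V'_i)$, allowing the branching-point choice; properties 4 and 5 then let me prolong along a ray of $\LL$ and realize, densely and with the prescribed left/right distribution, the countable ordered sequence of non-separatrix points, degree-$k$ separatrices, and their left/right orientations that appear in $U'_i \setminus V'_i$ (the standard back-and-forth argument with maximally ordered countable dense sets suffices here). Since $\LL$ has no cyclic branching of order $2$ in its branchings (property 3 of Definition~\ref{d.singular-planar}) and since property 1 ensures that every branching point of $\LL$ admits a branching on both sides, the extension can be iterated past each subsequent separatrix of the new ray.

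The main obstacle is the compatibility of this ``double'' back-and-forth: each time I insert a new separatrix on a chosen ray in $\LL$, I commit to a specific cyclic branching of $\LL$ with a fixed left/right side, and future steps may need to place separatrices both on that same cyclic branching and on neighbouring ones. The key point is that only countably many such placements are ever required, while pre-universality guarantees dense availability of every relevant type on every side. Gluing the $h_i$ along the inclusions $V'_i \subset V'_{i+1}$ yields the desired embedding $h \colon \LL' \to \LL$; injectivity follows from the fact that distinct points of $\LL'$ end up in distinct charts of $\LL$ along the construction, and the order- and cyclic-order-preserving properties are preserved at each step by construction.
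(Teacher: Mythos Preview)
Your proposal is correct and follows essentially the same inductive strategy as the paper's proof (carried out in Lemma~\ref{l.univ-sing-leaf}): cover the source by charts, extend the embedding chart by chart, filling in each missing separatrix of an already-seen cyclic branching by the unique remaining slot, and attaching each new ray by using the density and maximality properties of pre-universality together with Lemma~\ref{l.denombrable-dense}. One small point worth making explicit: take your $U'_i$ to be \emph{lines} (maximal intervals), as the paper does, so that the boundary point $p$ of each new ray is automatically a non-separated point and hence lies on a branching of~$\LL'$; without maximality this step of your argument would not go through.
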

As for universal planar structures, not all universal singular planar structures are pre-universal.

Next Lemma is an easy exercise of topology of $\RR$ and left to the reader:

\begin{lemma}\label{l.denombrable-dense}
	Let $I\subset\NN$ and $(X_i)_{i\in I}$, $(X_i)_{i\in I}$ be two families of dense countable subsets of $\RR$, so that the $X_i$ are pairwise disjoint, and the $Y_i$ too. Then there is an increasing homeomorphisms
	$h\colon\RR\to\RR$ so that $h(X_i)=Y_i$ holds for all $i\in I$.
\end{lemma}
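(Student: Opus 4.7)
The plan is to run a Cantor-style back-and-forth construction that builds an order-preserving bijection between $X\coloneqq\bigsqcup_{i\in I}X_i$ and $Y\coloneqq\bigsqcup_{i\in I}Y_i$ respecting the labels, and then to extend it to an increasing self-homeomorphism of $\RR$ by continuity.

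First I would set up the construction. Enumerate $X=\{x_1,x_2,\dots\}$ and $Y=\{y_1,y_2,\dots\}$, and for each $x\in X$ let $i(x)\in I$ be the unique index such that $x\in X_{i(x)}$; define $i(y)$ analogously for $y\in Y$. I will build inductively a nested sequence of finite partial maps $\varphi_n\colon A_n\to B_n$ with $A_n\subset X$ and $B_n\subset Y$ such that $\varphi_n$ is strictly increasing and $i(\varphi_n(x))=i(x)$ for every $x\in A_n$. At odd stages $n=2k+1$, I take the $x_m\in X\setminus A_{n-1}$ with smallest index $m$, let $i\coloneqq i(x_m)$, and choose $\varphi_n(x_m)$ inside the open interval of $\RR$ bounded by the $\varphi_{n-1}$-images of the immediate predecessor and successor of $x_m$ in $A_{n-1}\cup\{x_m\}$ (with $-\infty,+\infty$ used when these do not exist). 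This choice is possible precisely because $Y_i$ is dense in $\RR$, hence meets every non-empty open interval, and we may pick a point of $Y_i$ not already in $B_{n-1}$ since $B_{n-1}$ is finite. At even stages $n=2k$ I do the symmetric step starting from the smallest-index unused point of $Y$, using density of the appropriate $X_i$.

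The back-and-forth guarantees $\bigcup_n A_n=X$ and $\bigcup_n B_n=Y$, so the union $\varphi\coloneqq\bigcup_n\varphi_n\colon X\to Y$ is a strictly increasing bijection with $\varphi(X_i)=Y_i$ for every $i\in I$. I then extend $\varphi$ to the whole line by
\[
h(t)\coloneqq\sup\{\varphi(x):x\in X,\ x\leq t\}\qquad(t\in\RR).
\]
Since $X$ has no upper or lower bound in $\RR$ (being dense) and $\varphi$ is increasing, $h$ is well-defined and non-decreasing, and it agrees with $\varphi$ on $X$. Continuity of $h$ from the left is built-in by the $\sup$, and continuity from the right follows from the density of $X$: any jump at $t$ would force an open interval in the range disjoint from $h(X)=Y$, contradicting density of $Y$. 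The same density argument shows $h$ is surjective, and strict monotonicity follows from the strict monotonicity of $\varphi$ combined with the density of $X$ between any two reals. Therefore $h$ is an increasing homeomorphism of $\RR$, and $h(X_i)=\varphi(X_i)=Y_i$ for every $i\in I$.

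The only delicate point is the bookkeeping at each back-and-forth step: we must produce a point of the correct label $Y_{i(x_m)}$ in a prescribed open interval. This is exactly the content of the density hypothesis on each $Y_i$ (and each $X_i$), so no obstacle arises; the disjointness of the families is only used to make the label $i(x)$ well-defined. The rest of the proof is routine verification that the standard continuous extension of a dense order-isomorphism is a homeomorphism.
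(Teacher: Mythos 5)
Your proof is correct, and it is precisely the standard back-and-forth argument that the paper had in mind when it left this lemma as an exercise. The back-and-forth yields a label-preserving order isomorphism $\varphi\colon\bigcup_i X_i\to\bigcup_i Y_i$, and the extension by $\sup$ together with the density of both sides gives the increasing homeomorphism; all the verifications you sketch (well-definedness of the $\sup$, two-sided continuity, surjectivity, strict monotonicity) go through as stated.
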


\begin{proof}[proof of Theorem~\ref{th-univ-sing-leaf}]
	Let $\LL,\LL'$ be two singular planar structures, so that $\LL'$ is pre-universal. We build an embedding $\LL\to\LL'$ inductively.

	Denote by $A\subset\LL$, $A'\subset\LL'$, the subsets union of all of branchings, and by $B_n\subset\LL$, $B'_n\subset\LL'$, $n\in \NN\setminus\{1,2\}$ the union of all cyclic branchings of order $n$ for $n\geq 3$ and not in a cyclic branching for $n=0$.

	Let $(U_k)_{k\geq 0}$ be a countable covering of $\LL$ by lines (maximal open intervals), so that for all $i\geq 0$, the set $V_i=\cup_{j\leq i}U_i$ for $i\geq 0$ is connected.

	We will build the restriction $h_i$ of $h$ on $V_i$ inductively on $i$. More precisely:
	\begin{lemma}\label{l.univ-sing-leaf}
		For any $i\in \NN$ there an embedding $h_i\colon V_i\to\LL'$ with the following properties:
		\begin{enumerate}
			\item $h_i$ coincides with $h_{i-1}$ on $V_{i-1}$ (when $i\geq 1$),
			\item $h_i(A\cap V_i)\subset A'$ and $h_i(B_n\cap V_i)\subset B_n$ hold for every $n$,
			\item for any branching $\mu$ of $\LL$, $h_i(\mu\cap V_i)$ is contained in a branching $\mu'$ of $\LL'$ and the restriction of $h_i$ to $(\mu\cap V_i, \prec_\mu)\to (\mu', \prec_{\mu'})$ is increasing,
			\item if $\nu$ is a cyclic branching of $\LL$ that intersects $V_i$, then $h_i(\nu\cap V_i)$ is contained in one cyclic branching $\nu'$ of $\LL'$ which satisfies $h_i^{-1}(\nu')\subset\nu$, and additionally $h_i$ preserve the cyclic orders on $\nu$ and $\nu'$,
			\item given $j\leq i$, an orientation on $U_j$, the induced orientation on $h_i(U_j)$, a cyclic branching $\nu$ that intersects $U_j$ and $\nu'$ as above, $\nu$ is on the right of $U_j$ if and only if $\nu'$ is on the right of $h_i(U_j)$.
		\end{enumerate}
	\end{lemma}

	Assume Lemma~\ref{l.univ-sing-leaf} temporally.

	Defining $h\colon\LL\to\LL'$ as the common values of the $h_i$ where defined yields an embedding $\LL\to\LL'$ that is additionally a morphism of singular planar structures.
\end{proof}

\begin{proof}[Proof of Lemma~\ref{l.univ-sing-leaf}]
	We now prove the claim inductively on $n$, starting with $n=0$.
	To build $h_0$, we arbitrarily choose an open interval $U'_0$ in $\LL'$. We orient $U_0$ and $U'_0$. Then we apply Lemma~\ref{l.denombrable-dense} to $V_0=U_0\simeq\RR$ and $U'_0\simeq\RR$ where the countable dense subsets in the lemma are the sets of points in cyclic branchings of a given order $n$ at the right/left of $U_0$ and $U'_0$, and the sets $B_0\cap U_0$ and $B'_0\cap U'_0$. Lemma~\ref{l.denombrable-dense} provides $h_0$ satisfying items 2,3. The item 1 is empty for $i=0$ and items 4,5 are satisfied since a cyclic branching intersects an interval in at most one point.

	Let us now assume that $h_i$ has been built at rank $i$. We will extend $h_i$ on $U_{i+1}$, that is we define it on $U_{i+1}\setminus V_i$. According to Lemma~\ref{l.presque-connexe} $U_{i+1}\cap V_i$ is of the form $O\setminus C$, where $O$ is an open interval, and $C$ is a finite set of points. Additionally, for any $z\in C$, $z$ belongs to a cyclic branching $\nu$ that satisfies $\nu\setminus\{z\}\subset V_i$.
	Thus, $U_{i+1}\setminus V_i$ consists in one or two semi-open intervals and finitely many points on cyclic branchings.
	Extending the map on open intervals is the role of Lemma~\ref{l.denombrable-dense}. So the difficulty is essentially concentrated on the endpoints of $O$ in $U_{i+1}$ and the points in $C$. We thus proceed in two steps.

	\paragraph{Extension of $h_i$ on $C$.} Fix a points $z\in C$, $\nu$ the cyclic branching of $\LL$ that contains $z$, and $k$ the order of $\nu$. By induction hypothesis, the image $h_i(\nu\setminus\{z\})$ is contained in a cyclic branching $\nu'$ of $\LL'$, of the same order $k$. It follows from item 4 of the induction that there is a unique point $z'$ in $\nu'$ which is not inside $h_i(V_i)$. We set $h_{i+1}(z)=z'$, and extend $h_{i+1}$ on the all set $C$.

	We prove that the extension is continuous on $C$. Fix a point $z\in C$ and $\nu,\nu',z'$ as above.
	Denote by $\nu=(z_k)_{k\in\ZZ/k_t\ZZ}$ and $\nu'=(z'_k)_{k\in\ZZ/k_t\ZZ}$ the points in the cyclic branchings, in cyclical order, so that $z_1=z$ and $z'_1=z'$. We orient $O$ so that $\nu$ is on the right of $O$. Denote by $I^-\subset O\setminus C$ a half-open interval that converges toward $z$ from below, for the orientation on $O$. Then it also converges toward $z_2$. Denote by $U_j$, $j\leq i$, one of the open interval that contains $z_2$. Up to making $I^-$ smaller, we may assume that $U_j$ contains $I^-$. Then $h_i(I^-)$ converges toward two adjacent points $z'_k$ and $z'_s$ in $\nu'$, so that $z'_k$ belongs to $h_i(U_j)$. If $z'_s\neq z'_1$ holds, then $I^-$ converges toward the three points $h_i^{-1}(z'_s)$, $z_1$ and $z_2$, which contradicts the definition of cyclic branchings. Thus $z'_s$ is equal to $z'_1$. It follows that $h_{i+1}(z)=z'_s$, and that $h_{i+1}$ is continuous from below on $z$ (from inside $I^-$). The continuity from above is similar.

	We prove that this extension preserves the left/right position of cyclic branching. The orientation on $O$ induces an orientation on $I^-$ and then on~$U_j$. By choice of orientation on $O$, $\nu$ is on the left of $U_j$. So by induction hypothesis (item 5), $\nu'$ is on the left of $h_{i+1}(U_j)$ and so $h_{i+1}(O)$ is on the right of $\nu'$. It follows that the left/right position of cyclic branching is preserved by $h_{i+1}$.

	We prove that the extension preserves the injectivity. First see that $h_{i+1}$ is injective on $C$ since for any two distinct points $z_1,z_2\in C$, they are separated by any non-separatrix point $x\in O$ that lies in between them. Thus $h_i(x)$ also separates the cyclic branching that contains $h_{i+1}(z_1)$ and $h_{i+1}(z_2)$. So these two image points are different. The injectivity then follows from the fact that $h_{i+1}(C)$ is disjoint from $h_i(V_i)$. The injectivity of $h_{i+1}$ on the all set $U_{i+1}\cup V_i$ is addressed in the next paragraph.

	\paragraph{Extension of $h_i$ on a connected component of $U_{i+1}\setminus O$.}
	Let $W$ be a complementary component of $O$ in $U_{i+1}$. It contains one of the endpoints $w$ of $O$.
	We orient $O$ so that, that is the limit set of $\wt O(t)$ for $t\to+\infty$ given any orientation preserving parametrization $\wt O\colon\RR\to O$. Note that $w$ belongs to $\mu$.

	Since $U_i$ are lines, any sequence in $U_i$ that converges inside $\LL$ admits a limit inside $U_i$. And $V_i$ satisfies the same property. Let $x_n\in O\subset V_i$ be a sequence of points that converges toward $w\in\LL$. It follows from above that $x_n$ converges toward a point in $V_i$, necessarily difference from $w$ as $w$ is not in $V_i$. It follows that $\mu$ contains at least two elements, and thus is a branching of~$\LL$.

	Let us consider a first case where $w$ is not a separatrix. Then by induction hypothesis, $V_i\cap\mu$ lies in a branching $\mu'$ of $\LL'$ and $V_i\cap\mu\xrightarrow{h_i}\mu'$ is increasing. Let $w'$ be a non-separatrix point inside $\mu'\setminus h_i(\mu)$ (note that $|\mu|\leq i$ holds and $\mu'$ is infinite). We extend $h_i$ by setting $h_{i+1}(w)=w'$. Since $\mu'$ is maximally ordered, the points $w'$ can be chosen so that the map $(V_i\cap\mu)\cup\{w\}\xrightarrow{h_{i+1}}\mu'$ is increasing.

	Note that $w'$ disconnect $\LL'\setminus\{w'\}$, and one connected component of $\LL'\setminus\{w'\}$ contains $h_i(V_i)$.
	Take a semi-open interval $W'\subset\LL'$ that starts at $w'$ is disjoint from $h_i(V_i)$. Then we extend $h_i$ to $W$ by using Lemma~\ref{l.denombrable-dense} from $W\setminus\{w\}$ to $W'\setminus\{w'\}$. The extension is continuous on $W$. Note that it preserves the injectivity, since $h_{i+1}(W)$ is made disjoint from $h_i(V_i)$. We should also point out that $h_{i+1}(W)$ is disjoint from the set $h_{i+1}(C)$ defined in the previous step. Additionally, the extension of $h_i$ on the two connected components of $U_{i+1}\setminus O$ are disjoint, since for any separable points $x$ in $h_{i+1}(O)$, $x$ disconnects the two sets $h_{i+1}(W)$ defined above.

	Let us explain how to adapt the previous step when $w$ is a separatrix, and belongs to a cyclic branching $\nu$. We assume that $\nu$ lies on the right of $U_{i+1}$, the other case is similar. Denote by $w_*$ the separatrix in $\nu$ that comes just before $w$, so that $w_*$ also belongs to $\mu$ and $w_*<w$ holds on $\mu$. Since $\LL'$ is universal, there exist two points $w'$ and $w_*'$ in $\mu'$ that are adjacent for the order on $\mu'$, so that $w_*'<w'$ holds on $\mu'$.
	When $w_*$ is outside $V_i$, then we chose $w'$ and $w_*'$ outside $h_i(V_i)\cap\mu$ (which is finite). When $w_*$ belongs to $V_i$, we claim that we may chose $w_*'=h_i(V_i)$ and $w'$ outside $h_i(V_i)$.

	To prove the last claim, denote by $\nu'$ the cyclic branching that contains $h_i(w_*)$, and by $w'$ the element in $\nu'$ that comes immediately after $h_i(w_*)$. Reason by contradiction and assume that $w'$ belongs to $h_i(V_i)$, and denote by $x=h_i^{-1}(w')$. Then by assumption, $x$ belongs to the cyclic branching $\nu$. The points $h_i(w_*)$ and $w'$ are not separated from their side that contains $h_i(O\setminus C)$, so $w_*$ and $x$ are not separated from their side that contains $O$. It follows that $w,w_*,x$ belong to $\nu$, and are all non separated from each other on the same side, which contradicts the definition of cyclic branchings. It follows that $w'$ does not belong to $h_i(V_i)$, and so we can take $w_*'=h_i(V_i)$ as above.

	We set $h_{i+1}(w)=w'$. Note that when $w_*$ belongs to $h_i(V_i)$, then $h_{i+1}$ preserves the cyclic orders on $\nu$ and $\nu'$. Then proceed as in the previous case to extend $h_{i+1}$ on $W\setminus\{w\}$. Note that we need to adapt an argument: $w'$ does not separate $\LL'$ anymore. When $w_*$ does not belong to $h_i(V_i)$, then $\{w',w_*'\}$ separates $\LL'$ and we may use in the extension argument. When $w_*$ does not belong to $h_i(V_i)$, we claim that the separatrix $w''$ in $\nu'$ that comes just after $w'$ does not belongs to $h_i(V_i)$. Indeed if it was the case, since $h_i(V_i)$ is connected and disjoint from $w'$, $h_i(V_i)$ contains all $\nu'\setminus\{w'\}$. But then $V_i$ contains all $\nu\setminus\{w\}$ (using item 2 and 4 in the induction hypothesis). It would follow that $V_i$ contains a neighborhood of $w$ minus $w$ itself. But then $w$ would not be an endpoints of $O$ be would be inside $C$. Since it is not the case, $w''$ does not belongs to $h_i(V_i)$. Thus $\{w',w''\}$ separates $\LL'$, and we can use it in the extension argument.

	The continuity and the injectivity are proven similarly to the first case, with minor adjustments. The item 1,2,3 and 5 follow from the construction. Let us argue about the $4^th$ item. Given a cyclic branching $\nu$ of $\LL$ that intersects $U_{i+1}\setminus V_i$ and a (unique) point $w$. If $w$ belongs to $C$, then by construction, $\nu$ is included in $U_{i+1}\cup V_i$ and $U_{i+1}\cup V_i\xrightarrow{h_{i+1}}\LL'$ preserves the cyclic order on $\nu$. Assume that $w$ is one end point of the interval $O$. Either $w_*$ (as defined above) belongs to $V_i$, then $\nu$ contains $w_*$ and $h_{i+1}(w)$ is defined to belong to the same cyclic branching as $h_i(w_*)$, respecting the cyclic branching. Or $w_*$ does not belong to $V_i$ and $w'$ belong to a cyclic branching that intersects $h_{i+1}(U_{i+1}\cup V_i)$ only on $h_{i+1}(w)$. In both case, item 4 is satisfied. Assume now that $w$ is not in $C$ or a end point of $O$, then item 4 follows from Lemma~\ref{l.denombrable-dense}. Thus item 4 is satisfies, ending the induction and the proof.

\end{proof}

\section{Pre-laminations of the circle}  

We introduce the main properties of pre-lamination that we will use later.

\subsection{Definitions}\label{s.pre-lam-def}
Denote by $\Diag$ the diagonal in $S^1\times S^1$ where $S^1$ denotes the unit circle boundary of the unit disc $D^2$. Given two pairs $(x,y)$ and $(z,w)$ in $S^1\times S^1\setminus\Diag$, we say that they \emph{cross each other} if the four elements in $S^1$ are distinct and if one of the two tuples $(x,z,y,w)$, $(x,w,y,z)$ is cyclically ordered. Or equivalently if the geodesic $]x,y[$ and $]z,w[$ in $\Int(D^2)$ cross each other.

\begin{definition}
	A \emph{pre-lamination} $L$ of the circle $S^1$ is a subset of $S^1\times S^1\setminus\Diag$ invariant by symmetry $(x,y)\mapsto(y,x)$ and so that no two elements in $L$ cross each other.
\end{definition}

A pre-lamination $L$ is called \emph{dense} if the set $\{x\in S^1|\exists y\in S^1,(x,y)\in L\}$ is dense in $S^1$. In this article, only dense  pre-laminations are of interest. So let us fix a dense pre-lamination $L$ here.

The \emph{geodesic realization $G(L)$} of $L$ is the pre-lamination of the interior of the unit disc $\Int(D^2)$ formed by the Euclidean geodesics $]x,y[$ for all pairs of points $(x,y)\in L$. 
Any geodesic $]x,y[$ for $(x,y)\in L$ is called a \emph{leaf} of $G(L)$. Any geodesic in $\Int(D^2)$ which is accumulated by leaves of $G(L)$, and that is not a leaf of $G(L)$ on its own, is called an \emph{accumulated geodesic}. In the following, we identify $L$ and its realization $G(L)$, and pair $(x,y)\in L$ with the corresponding (oriented) leaf. We often express our arguments on the realization $G(L)$ instead of  in terms of pairs of points of $S^1$. It makes the discussion simpler to picture and to argue, but every argument could be translated purely on the circle.

For the geodesic realization, we consider geodesic for the Euclidean metric. We make that choice for simplicity, but any non-positively curved metric would suffice. In particular, picture will be illustrated using the hyperbolic metric in the Poincaré model, for clarity reasons.

A \emph{complementary region} $\Delta$ of $L$ is a connected component of $\Int(D^2)\setminus G(L)$.
A complementary region $\Delta$ is either an accumulated geodesic in $\Int(D^2)$ (accumulated on both sides), or it is bounded in $\Int(D^2)$ by at least three and at most countably many geodesics (leaves or accumulated geodesics). Note that it is has empty-interior only in the first case.
When $L$ is dense, the intersection of the closure of $\Delta$ with $S^1=\partial D^2$ is totally disconnected.

\begin{definition}
	Let $L$ be a dense pre-lamination on the circle.
	A \emph{shell} is a complementary region $\Delta$ with non-empty interior and so that exactly one boundary component $\gamma$ is not a leaf of $G(L)$. The geodesic $\gamma$ is called the \emph{root} of $\Delta$.

	A \emph{star} is a complementary region $\Delta$ with non-empty interior bounded by an \emph{ideal polygon}, that is by finitely many leaves of $G(L)$. Note that each boundary leaves share its endpoints with two other boundary leaves.
\end{definition}

We will see that the shells play an important role, as they correspond to branching for foliations, and the stars corresponds to the prong singular points.
We illustrates this phenomenon in Figure \ref{fig-branching}.
On the left, a family of accumulated leaves of a foliation induces a shell, in light blue, whose root is in dashed line. On the right, a $3$-prong singularity of a singular foliation induces a star with three boundary components.

\begin{figure}
	\begin{center}
		\begin{picture}(130,26)(0,0)
			\put(0,0){\includegraphics[width=130mm]{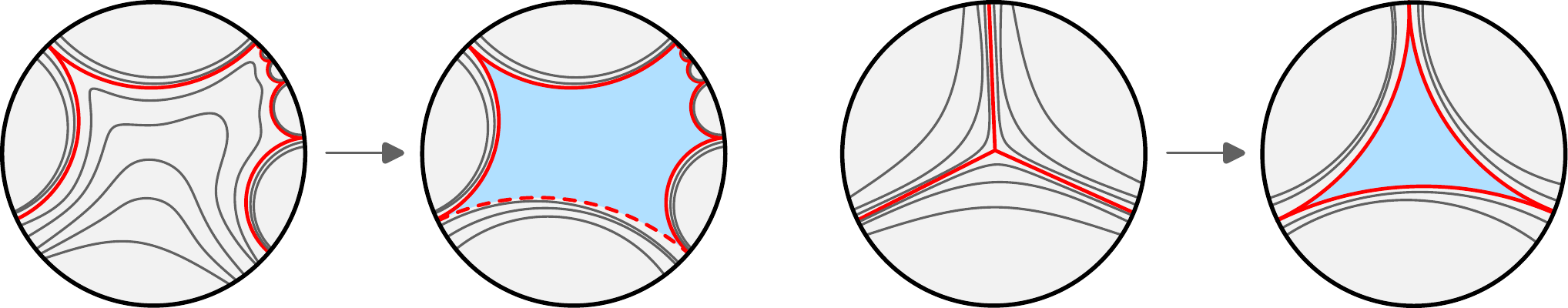}}
			\put(-4,-4){accumulated leaves}
			\put(43.8,-4){shell}
			\put(75.5,-4){3-prong}
			\put(114.5,-4){star}
		\end{picture}
	\end{center}
	\caption{Branchings from foliations to pre-laminations.}
	\label{fig-branching}
\end{figure}


We are interested by a particular pre-lamination, the one induced by foliations.
Let~$\FF$ be a foliation on $\Int(D^2)$ so that, using Theorem~\ref{th-Bonatti}, it boundary at infinity coincide with $S^1$.
Given $f$ an (oriented) non-singular leaf or a face of a singularity, denote by $\pi_\FF(f)=(\xi_{f^+},\xi_{f^-})$ in $S^1\times S^1$ the pair of ends of the leaf (in any order). We define $L_\infty(\FF)$ to be the set of pairs as above. According to Remark~\ref{r.same-end} on has $\xi_{f^+}\neq \xi_{f^-}$. It is also clear that two pairs $(\xi_{f_1^+},\xi_{f_1^-}),(\xi_{f_1^+},\xi_{f_2^-})$ do not cross, as leaves or faces of leaves do not cross. Therefore, $L_\infty(\FF)$ is a dense pre-lamination of $S^1$. We call $L_\infty(\FF)$ the \emph{end pre-lamination of~$\FF$}.

\subsection{Monotonous sequences of leaves}

A sequence $(l_k)_{k\in\NN}$ of leaves of $G(L)$ is called \emph{monotonous} if for any $k$, $l_k$ separates any pair of leaves $(l_i,l_j)$ with $i<k$ and $j>k$. A monotonous sequence $(l_k)_k$ is called \emph{bounded} if there exist a leaf $l$ of $L$ so that all $l_k$ with $k\geq 1$ separates $l_0$ and $l$. It is called \emph{diverging} otherwise. Next lemma is easy and its proof is left to the reader.

\begin{lemma}\thlabel{lem-monotone-leaf-lam}
	Any  bounded monotonous sequence $(l_k)_{k\in\NN}$ of leaves of $G(L)$ converges to either a leaf of $G(L)$ or an accumulated geodesic.

	Any diverging monotonous sequence $(l_k)_{k\in\NN}$ of leaves of $G(L)$ converges to a point of the circle $S^1$.
\end{lemma}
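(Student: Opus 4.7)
The plan is to analyze how the endpoints of the leaves $l_k$ behave on $S^1$, using the nested structure induced by monotonicity, and then to match the bounded/diverging dichotomy with the behaviour of the limit arc.

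First I would label the two endpoints of each $l_k$ as $a_k, b_k \in S^1$ and let $I_k$ denote the open arc of $S^1\setminus\{a_k,b_k\}$ that contains the endpoints of $l_{k+1}, l_{k+2},\ldots$. The monotonicity of $(l_k)$, combined with the non-crossing property of $L$, gives inductively a strictly nested decreasing sequence of open arcs $I_{k+1}\subset I_k$. After reordering each unordered pair $\{a_k,b_k\}$, the sequence $a_k$ moves monotonically along one arc of $S^1$ toward a limit $a_\infty$, and $b_k$ moves monotonically along the opposite arc toward a limit $b_\infty$. Set $\wb{I_\infty}:=\bigcap_k\wb{I_k}$, which is a closed sub-arc of~$S^1$.

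Next, I would split into two cases. If $a_\infty=b_\infty=:\xi$, then $\wb{I_k}$ shrinks to the point $\xi$, so the Euclidean geodesics $l_k$ converge to $\xi\in S^1$ in the compact disc $\wb{D^2}$. If $a_\infty\neq b_\infty$, then the $l_k$ converge to the Euclidean geodesic $l_\infty$ joining $a_\infty$ to $b_\infty$; by construction $l_\infty$ is accumulated by leaves of $G(L)$, so it is either a leaf of $G(L)$ or an accumulated geodesic.

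It then remains to match these two cases with the bounded/diverging dichotomy. If $(l_k)$ is bounded by a leaf $l\in L$, the two distinct endpoints of $l$ lie in $\wb{I_k}$ for every $k$, hence in $\wb{I_\infty}$, which is therefore non-degenerate; so we are in the second case. Conversely, suppose $a_\infty\neq b_\infty$. If $l_\infty\in L$, it bounds the sequence. Otherwise, by density of~$L$ there is a leaf $l'\in L$ with an endpoint in the open arc $I_\infty$; its other endpoint cannot lie outside $\wb{I_\infty}$, for otherwise $l'$ would cross $l_k$ for $k$ large (since $l_k\to l_\infty$), contradicting that $L$ is a pre-lamination. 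Hence $l'$ bounds $(l_k)$. Thus the first case corresponds exactly to diverging sequences and the second to bounded ones, yielding both conclusions of the lemma.

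The main subtlety will be the strict nesting $I_{k+1}\subset I_k$ when consecutive leaves of $L$ are allowed to share endpoints; this is handled by observing that the separation of $l_{k-1}$ from $l_{k+1}$ by $l_k$ forces $l_{k+1}$ to lie strictly on the $I_k$-side of $l_k$, which is enough to run the nested-arcs argument.
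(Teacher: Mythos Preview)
Your argument is correct; the paper itself leaves this lemma to the reader (``Next lemma is easy and its proof is left to the reader''), so there is nothing to compare against. Your nested-arcs analysis, together with the density of $L$ (which is a standing hypothesis in this section) to produce a bounding leaf $l'$ in the non-degenerate case, cleanly establishes both assertions.
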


\begin{remark}
	Consider a sequence $(l_k)_{k\in\NN}$ of leaves of $G(L)$ crossing a same geodesic $\gamma$. Then the sequence $l_k$ is monotonous if and only if the sequence of intersection points $x_k=l_k\cap \gamma$ is monotonous inside $\gamma$.
\end{remark}

Let~$\FF$ be a pA-foliation of the plane. A sequence $(f_k)_{k\in\NN}$ of non-singular leaves or faces of singular leaves of~$\FF$  is called \emph{monotonous} if $f_k$ separates the leaves $(l_i,l_j)$ whenever $i<k<j$ holds. This sequence is \emph{bounded } if  there exists a leaf $f$ of~$\FF$ so that all leaf $f_k$, $k\geq 1$ separates $f_0$ and $f$.


\begin{remark}
	Denote by $\pi=\pi_\FF$ the map which associate to a non-singular leaf or a face of~$\FF$ the corresponding leaf of $L_\infty(F)$. Then a sequence $(f_k)_{k\in\NN}$ of leaves or faces is monotonous  if and only if the sequence $l_k=\pi_\FF(f_k)$ is monotonous.
\end{remark}

We also define the opposite notion: a sequence $(f_k)_{k\in I\subset \NN}$ of non-singular leaves or faces of singular leaves of~$\FF$ is said to be \emph{stationary} if they are pairwise distinct and if no $f_k$ separates two other $f_i,f_j$.  One defines in the same way \emph{stationary sequences of leaves of $G(L)$}, where $L$ is a pre-lamination.  A sequence $(f_k)_{k\in I\subset \NN}$ of non-singular leaves or faces of singular leaves
is stationary if and only if the sequence $l_k=\pi_\FF(f_k)$ of leaves of $G(L_\infty(\FF)$ is stationary.

Next remark shows that this notion is natural.

\begin{remark}
	Given a complementary region $\Delta$ of a pre-lamination $L$, the family of leaves of $L$ in the boundary of $\Delta$ is stationary.
\end{remark}

\begin{lemma}\thlabel{lem-leaf-comp}
	Let  $(f_k)_{k\in\NN}$ be a monotonous sequence of non-singular leaves or faces of~$\FF$, and $l_k=\pi_\FF(f_k)$ be the corresponding leaves of $L_\infty(\FF)$. Then $f_k$ is bounded if and only if $l_k$ is bounded. Additionally, the two following holds:
	\begin{itemize}
		\item If $f_k$ and $l_k$ are bounded, let $a,b\in S^1$, $a\neq b$ be the end points of the limit geodesic of the sequence $l_k$. Then the closures $\bar f_k$ of $f_k$ in $D^2$ converges toward a simple curve joining $a$ to $b$, whose intersection with $S^1$ is totally disconnected and whose intersection with $\Int(D^2)$ is the union of at most countably many non-singular leaves or faces of~$\FF$, pairwise non-separated.
		\item If $f_k$ is not bounded, it converges toward a unique point in $D^2$, which lies inside $S^1$ and which is also the limit of the $l_k$.
	\end{itemize}
\end{lemma}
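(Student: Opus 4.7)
The plan is to prove the three assertions (equivalence of boundedness, unbounded case, bounded case) in sequence.

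First, I would observe that for any non-singular leaf or face $h$ of~$\FF$, Remark~\ref{r.same-end} gives $\xi_h^+ \neq \xi_h^-$, so the closure $\bar h$ in $\bar{D^2}$ is a Jordan arc joining these two points on $S^1$. Hence $\bar{D^2}\setminus\bar h$ has two components whose traces on $S^1$ are the two open arcs of $S^1\setminus\{\xi_h^+,\xi_h^-\}$. Any other leaf or face $f$ disjoint from $h$ lies entirely in one such component, determined by which arc contains its endpoints. Therefore $h$ separates two leaves/faces $f,g$ in $\RR^2$ if and only if the geodesic $\pi_\FF(h)$ separates the pairs of endpoints $\pi_\FF(f)$ and $\pi_\FF(g)$ on $S^1$; applying this to a bounding leaf or face gives the equivalence of boundedness for $(f_k)$ and $(l_k)$.

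For the unbounded case, let $R_k\subset\bar{D^2}$ denote the closed region bounded by $\bar f_k$ on the side of $\{f_j\}_{j>k}$, set $I_k=R_k\cap S^1$, and $R_\infty=\bigcap_k R_k$. A point $p\in R_\infty\cap\Int(D^2)$ would lie on a leaf or face $F$ entirely contained in every $R_k$ (as leaves and faces of $\FF$ do not cross $f_k$), hence $F$ would bound $(f_k)$, contradicting the unboundedness hypothesis. So $R_\infty\subset S^1$ equals $\bigcap_k I_k$, a closed sub-arc with empty interior: otherwise the density of leaf endpoints (Theorem~\ref{th-Bonatti}, item 3) would provide an endpoint $\eta$ in this interior, whose corresponding ray would lie in $R_\infty\cap\Int(D^2)=\emptyset$, absurd. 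Hence $R_\infty=\{\xi\}\subset S^1$, and both $\bar f_k$ and $l_k$ (contained in $R_k$) converge to $\xi$ in $\bar{D^2}$.

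For the bounded case, assume $l_k\to\gamma$ with endpoints $a,b$ and define $A_k\subset\bar{D^2}$ analogously, with $A_\infty=\bigcap_k A_k$, so $A_\infty\cap S^1$ equals the closed arc $\bar I$ from $a$ to $b$ on the appropriate side. Any point $p\in A_\infty\cap\Int(D^2)$ lies on a leaf or face $F\subset A_\infty$, and a transverse segment $\sigma$ through $p$ directed outside $A_\infty$ crosses each $\bar f_k$ at some $\sigma(t_k)$ with $t_k\to 0^+$, so $(f_k)$ accumulates on $F$ on that side. This forces any two such leaves/faces to be non-separated; Lemma~\ref{l.countable} (applied to the tame leaf space, in both the regular and singular settings) then yields $A_\infty\cap\Int(D^2)=\bigsqcup_{i\in I}F_i$ with $I$ at most countable, linearly ordered by the branching order. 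Taking $L^+$ to be the Hausdorff limit of $(\bar f_k)$ in $\bar{D^2}$, one checks $L^+\supset\bigcup_i\bar F_i\cup\{a,b\}$ and, conversely, that $L^+\cap S^1$ consists only of $\{a,b\}$ and the endpoints of the $F_i$, hence is countable and totally disconnected. Finally, concatenating the arcs $\bar F_i$ in branching order through the complementary subarcs of $\bar I$ provides an injective continuous parametrization from $a$ to $b$, exhibiting $L^+$ as a simple curve.

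The main obstacle is in the last step: showing that $L^+\cap S^1$ contains no extraneous accumulation points beyond $\{a,b\}$ and the endpoints of the $F_i$, and that the concatenation of the $\bar F_i$ through this totally disconnected set yields a genuine simple curve rather than a more complicated continuum. The resolution relies on the non-crossing structure of the foliation and the linear branching order on non-separated leaves in the leaf space, rather than on direct metric arguments, since the Jordan arcs $\bar f_k$ could a priori have fingers reaching close to $S^1$ away from their endpoints, and one must rule out that such fingers survive in the Hausdorff limit.
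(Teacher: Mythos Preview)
Your approach is essentially the same as the paper's---form the nested closed regions $A_k$, take $A_\infty=\bigcap_k A_k$, and identify the Hausdorff limit with $\partial A_\infty$---but there are two genuine errors in the bounded case.

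First, you write $A_\infty\cap\Int(D^2)=\bigsqcup_{i\in I}F_i$. This is false: $A_\infty\cap\Int(D^2)$ is an $\FF$-saturated set that in general contains many leaves in its interior (those ``behind'' the $F_i$), not just the boundary leaves accumulated by $(f_k)$. Your transversal argument ``$t_k\to 0^+$'' only works for $p\in\partial A_\infty$, not for arbitrary $p\in A_\infty$. The correct object is $L^+=\partial A_\infty$, and it is $\partial A_\infty\cap\Int(D^2)$ that equals $\bigsqcup_i F_i$.

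Second, and more seriously, your claim that $L^+\cap S^1$ consists only of $\{a,b\}$ together with the endpoints of the $F_i$ (hence is countable) is wrong in general. What one actually gets is the \emph{closure} of this countable set inside the arc $\bar I$, which can perfectly well be a Cantor set. Your stated obstacle---ruling out ``extraneous accumulation points''---cannot be overcome, because such points can genuinely occur. The paper's argument for total disconnectedness does not go through countability: instead one observes that for each $F_i$ the open subarc $J_i\subset\bar I$ ``behind'' $F_i$ is disjoint from $L^+$, and then uses density of leaf endpoints (Theorem~\ref{th-Bonatti}(3)) to show that any open subarc of $\bar I$ meets some $J_i$, forcing $L^+\cap S^1=\bar I\setminus\bigcup_i J_i$ to have empty interior in $\bar I$. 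The simple-curve structure then comes from the map $\bar I\to L^+$ that is the identity on $L^+\cap S^1$ and sends each closed arc $\bar J_i$ homeomorphically onto $\bar F_i$; this is a continuous bijection from a compact interval, hence a homeomorphism.
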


\begin{proof}
	The equivalence is clear.
	Let us prove the first item.
	Let $f$ be the leaf in the definition of bounded monotone sequence: $f_n$ separates $f_0$ from a given leaf $f$.
	Consider the component $U_n$ of $\Int(D^2)\setminus f_n$ containing $f$, and let $D_n$ be its closure inside $D^2$. The sequence $D_n$ is a deceasing sequence (for the inclusion) of compact discs so the set $D_\infty=\bigcap_n D_n$ is compact and not empty. Let $U$ be the connected component of $\Int(D^2)\setminus f$ contained in $U_n$ and $D\subset D_n$ its closure. Since. Then $D$ lies inside all $D_n$ and so inside $D_\infty$ too. It follows that $D_\infty$ has non empty interior and that it intersects $\Int(D^2)$. The open discs $U_n$ are saturated for~$\FF$ and so is $D_n\cap\Int(D^2)$. Thus, $D_\infty\cap \Int(D^2)$ is a closed subset of $\Int(D^2)$ saturated for~$\FF$. Hence, its boundary in $\Int(D^2)$ consists of leaves of~$\FF$, all accumulated by the boundary component $f_n$ of $U_n$. In particular, either it consists of a unique leaf or face, or all these leaves and faces are pairwise non-separated.

	Observe that $D_\infty$ is connected and simply connected. Denote by $\sigma$ the closure of $\Int(D^2)\cap\partial D_\infty$ inside $D^2$. Observe that $\sigma\cap\Int(D^2)$ is the accumulation locus of the leaves $f_n$, and so is the union of at most countably many leaves that are pair-wise non-separated. Let us orient the leaves $f_n$ so that each $f_n$ is on the left if $f_i$ when $i<n$, and on the right otherwise. For any leave $f$ inside $\sigma\cap\Int(D^2)$, we orient $f$ so that al $f_n$ are on the left of $f$. Then all points on the right of $f$ are inside $D_\infty$. So the open interval in $S^1$ bounded by the ends of $f$, and on the right of $f$, is disjoint from $\sigma$. From the construction of $\sigma$, $\sigma\cap S^1$ is the closure of the set of ends of all the leaves that belong to $\sigma\cap\Int(D^2)$. So $\sigma\cap S^1$ is the closure of the set of end points of at most countably many intervals, so it is totally disconnected.

	Denote by $l$ the leaf of $L_\infty(\FF)$ that is limit of $l_n$, and $a,b$ its endpoints. Choose them so that when oriented from $a$ to $b$, $l_n$ is on the left of $l$. The set $\sigma$ is homeomorphic to the interval from $a$ to $b$ in $S^1$, where we identify any leaf $f$ in $\sigma\cap S^1$ to the interval in $S^1$ between the two ends of $f$. Thus $\sigma$ is homeomorphic to a segment.

	We now prove the second item. The sequence $l_n$ converges toward a point $x\in S^1$. Define $D_n$ and $D_\infty$ as above. Then $D_\infty\cap S^1$ is equal to $x$. If there exists a point $y\in D_\infty\cap\Int(D^2)$, the leaf $f$ that contains $y$ lies inside $D_n$ for all $n$, and so $f$ bounds the sequence $f_n$. It contradicts the assumption, so $D_\infty$ is equal to $\{x\}$.
\end{proof}

\subsection{Limit of complementary components}
In this section, length, diameter, and area are considered on $D^2$ endowed with the Euclidian metric.

Let $L$ be a pre-lamination of $S^1$, identified with its geometric realization $G(L)$ on $\Int(D^2)$. Let $\Delta_n\subset\Int(D^2)$ be a sequence of pairwise distinct complementary components of $L$. Let $\bar\Delta_n$ be the closures of $\Delta_n$ in $D^2$.

As the set of compact subset of a compact  metric space set (endowed with the Hausdorff topology) is a compact metric space,
up to taking a subsequence we may assume that $\bar\Delta_n$ converges to a compact subset $\Delta_\infty$ of $D^2$.

\begin{lemma}\label{l.limite-regions}
	Let $\Delta_n$ be a sequence of complementary regions of a pre-lamination $L$, and assume that $\bar \Delta_n$ converges to a compact subset $K\subset D^2$. Then the following hold true:
	\begin{itemize}
		\item if the diameter $\diam(\Delta_n)$ tends to $0$, then $K$ is a point on $S^1$.
		\item if the diameter $\diam(\Delta_n)$ has a strictly positive limit $\delta>0$, then $K$ is a geodesic of length $\delta$. Moreover, for $n$ large enough, there are two distinct boundary components $\gamma_{n,1},\gamma_{n,2}$ of $\Delta_n$ which satisfy $\lim \gamma_{i,n}=K$ and that the supremum of the lengths of the other boundary components of $\Delta_n$ converges toward zero.
	\end{itemize}
\end{lemma}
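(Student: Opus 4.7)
The plan proceeds by a convexity observation followed by case analysis. The core geometric fact is that for every $n$, the closure $\bar\Delta_n$ is Euclidean-convex: if $p,q\in\bar\Delta_n$, the Euclidean segment $[p,q]$ cannot cross any leaf of $G(L)$ (a transversal crossing would put $p$ and $q$ on opposite sides of a separating chord, contradicting connectedness of $\bar\Delta_n$) nor any accumulated geodesic (approximating leaves of $L$ would cross the segment and give the same contradiction), so $[p,q]\subset\bar\Delta_n$. Moreover $\bar\Delta_n$ always meets $S^1$: the endpoints of any boundary chord of $\Delta_n$ lie in $\bar\Delta_n\cap S^1$, and if $\Delta_n$ is itself an accumulated geodesic then its endpoints on $S^1$ belong to $\bar\Delta_n$. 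Since the Hausdorff limit of convex sets is convex, $K$ is a compact convex subset of $D^2$ meeting $S^1$.

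In the first case, $\diam(\Delta_n)\to 0$ forces $K$ to be a single point; any sequence $s_n\in\bar\Delta_n\cap S^1$ satisfies $s_n\to K$, so $K\in S^1$.

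In the second case, $\diam(\Delta_n)\to\delta>0$. Since the $\Delta_n$ are pairwise disjoint open subsets of $D^2$, their areas are summable, so along a subsequence $\mathrm{area}(\bar\Delta_n)\to 0$. For convex planar bodies, Hausdorff convergence is compatible with area (if $K$ had non-empty interior, any ball inside $K$ would eventually be contained in $\bar\Delta_n$), hence $K$ has empty interior. Being compact convex of positive diameter and empty interior, $K$ is a non-degenerate segment of length $\delta$. To see it is a chord, I observe that in a convex region bounded by chords and arcs of $S^1$, every extreme point lies on $S^1$: an interior point of $\bar\Delta_n$ is not extreme, and a point in the relative interior of a boundary chord lies in the relative interior of a segment of $\bar\Delta_n$. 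Hence the diameter of $\bar\Delta_n$ is realized by two points of $\bar\Delta_n\cap S^1$; passing to the limit, both endpoints of $K$ lie on $S^1$.

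For the moreover statement, write $K\cap S^1=\{a,b\}$. Every boundary chord $\gamma_n$ of $\Delta_n$ has its endpoints in $\bar\Delta_n\cap S^1$, which converges to $\{a,b\}$; so along any subsequence either the two endpoints of $\gamma_n$ collapse to the same point (and $|\gamma_n|\to 0$), or they separate to $a$ and $b$ (and $\gamma_n\to K$). The main obstacle is to show that exactly two boundary chords fall in the second alternative. Upper bound: three non-crossing chords with endpoints tending to $\{a,b\}$ form a parallel stack, so the middle one lies strictly between the outer two and cannot be a boundary chord of the convex set $\bar\Delta_n$ (whichever side $\bar\Delta_n$ sits on at the middle chord, one of the outer chords gets excluded). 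Lower bound: taking the line $\ell$ perpendicular to $K$ at its midpoint $m$, the convex slice $\bar\Delta_n\cap\ell$ is a segment whose two endpoints both converge to $m$; since $m$ sits at a fixed positive distance from $S^1$ while a chord of length $<\epsilon$ has midpoint within $O(\epsilon^2)$ of $S^1$, these two endpoints must lie on long chords, and non-crossing plus convexity force these to be two distinct chords $\gamma_{n,1},\gamma_{n,2}$ converging to $K$. Finally, if $\sup_{i\geq 3}|\gamma_{n,i}|$ did not tend to $0$, a diagonal extraction would produce a third boundary chord of length bounded below, which by the dichotomy above would converge to $K$, contradicting the upper bound.
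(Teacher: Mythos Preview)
Your proof is correct and follows essentially the same route as the paper: convexity of $\bar\Delta_n$, vanishing area forcing empty interior of $K$, and a transversal line through $K$ to isolate exactly two long boundary chords. The paper phrases the ``moreover'' part slightly more directly (the transversal $\sigma$ crosses $\partial\Delta_n$ in two points lying on distinct boundary geodesics, which are the long ones; all others miss $\sigma$ and are trapped near the ends), whereas you split it into a dichotomy plus separate upper and lower bounds, but the content is the same.
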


Let us give another version of Lemma~\ref{l.limite-regions} (both lemmas will be proved together).

\begin{lemma}\label{l.calibrage}
	Let $L$ be a pre-lamination. For any $\delta>0$ and any $\varepsilon\in ]0,\tfrac{\delta}{2}[$, every complementary region $\Delta$ with diameter $\diam(\Delta)>\delta$, up to possibly finitely many of them, has the following properties:
	\begin{itemize}
		\item $\Delta$ has exactly two boundary components, which are geodesics with length larger than $\diam(\Delta)-\varepsilon$
		\item the other boundary components have all length smaller that $\varepsilon$.
	\end{itemize}
\end{lemma}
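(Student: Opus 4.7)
The plan is to prove Lemmas~\ref{l.limite-regions} and~\ref{l.calibrage} simultaneously via a compactness argument by contradiction. Suppose infinitely many distinct complementary regions $\Delta_n$ with $\diam(\Delta_n) > \delta$ fail one of the two conditions in Lemma~\ref{l.calibrage}. Each $\Delta_n$ is Euclidean-convex, as an intersection of open half-planes determined by its boundary chords, and since the $\Delta_n$ are pairwise disjoint in $D^2$, one has $\mathrm{area}(\Delta_n) \to 0$. By compactness in the Hausdorff topology, extract a subsequence so that $\bar\Delta_n \to K$ for some convex compact $K \subset D^2$ of zero area and diameter $\geq \delta$; hence $K$ is a non-degenerate Euclidean segment.

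The key structural point is that the extreme points of the closed convex set $\bar\Delta_n$ all lie on $S^1$: any interior point of a boundary chord is in the relative interior of a segment contained in $\bar\Delta_n$, hence not extreme. Choose diameter realizers $x_n, y_n \in \bar\Delta_n$ to be extreme. Up to a subsequence $x_n \to x$ and $y_n \to y$ with $x, y \in S^1$ and $d(x, y) = \lim \diam(\Delta_n) \geq \delta$, so $K$ is the chord from $x$ to $y$. For $n$ large enough that $\bar\Delta_n \subset N_\eta(K)$ with $\eta < \varepsilon/4$, the totally disconnected set $\bar\Delta_n \cap S^1$ (the vertices at infinity of the ideal polygon $\bar\Delta_n$) is contained in $N_\eta(x) \cup N_\eta(y)$. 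Every boundary chord of $\Delta_n$ has its two endpoints among these vertices, so its length is either $< 2\eta < \varepsilon$ (both endpoints near the same point $x$ or $y$) or at least $d(x,y) - 2\eta > \diam(\Delta_n) - \varepsilon$ (one endpoint near each). In particular no boundary chord has length in the interval $[\varepsilon, \diam(\Delta_n) - \varepsilon]$.

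The main obstacle, and the only remaining geometric point, is to verify that there are exactly two long chords. On the one hand, going around $\partial \Delta_n$ corresponds to traversing its vertices in $S^1$ in cyclic order, and these are encountered in the pattern: first all those in $N_\eta(x)$, then all those in $N_\eta(y)$, then back. At each of the two transitions there is a single boundary component (a leaf or an accumulated geodesic) joining the two arcs, yielding at least two long chords. On the other hand, three non-crossing chords each joining $N_\eta(x)$ to $N_\eta(y)$ must be nested, so the middle one separates the other two and cannot bound the same convex region $\Delta_n$; hence at most two long chords. This contradicts the badness of $\Delta_n$ and proves Lemma~\ref{l.calibrage}. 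Lemma~\ref{l.limite-regions} then follows: if $\diam(\Delta_n) \to 0$ the limit $K$ collapses to a single point of $S^1$, and otherwise $K$ is the asserted chord of length $\delta$ while the two long boundary chords converge to $K$ and the lengths of the remaining boundary components tend to zero.
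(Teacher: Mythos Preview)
Your proof is correct and follows essentially the same route as the paper's: argue by contradiction, pass to a Hausdorff-convergent subsequence, use convexity and pairwise disjointness of the $\Delta_n$ to force the limit $K$ to be a chord, and then pin down the two long boundary components. The only cosmetic difference is in this last step: the paper introduces a transversal segment $\sigma$ through $K$ and identifies the long components as the two boundary geodesics crossed by $\sigma$, whereas you obtain them from the cyclic order of the vertices in the two small arcs of $S^1$ near the endpoints of $K$.
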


\begin{proof}
	Let $(\Delta_n)_{n\in\NN}$ be a sequence of distinct complementary regions of $L$.
	The $\Delta_n$ are convex with extremal points on $S^1$, thus their limit is a convex compact subset of $D^2$ with extremal points on $S^1$.  Furthermore, as the $\Delta_n$ have pairwise disjoint interiors,  the limit set $K$ has empty interior. Thus it is a point on $S^1$ or a geodesic segment with endpoints on $S^1$.  Thus if $\diam(\Delta_n)$ tends to $0$ the limit set $K$ is a points on $S^1$ otherwise it is a geodesic segment.

	We now assume that $K$ is a geodesic segment of length $\delta$. Take some $\epsilon$ in~$]0,\tfrac{\delta}{2}[$.
	The interiors of the~$\Delta_n$ are pairwise disjoint, so the sum of their area is inferior to the area of the disc~$D^2$. Thus, the area of $\Delta_n$ tends to $0$.
	Consider a geodesic segment $\sigma$ crossing transversely $K$.  For $n$ large, $\sigma$ crosses the boundary of $\Delta_n$ at two points. These two points are based on distinct boundary components~$\gamma_{1,n}$ and~$\gamma_{2,n}$. As the $\gamma_{i,n}$ get arbitrarily close to $\sigma\cap K$ and do not cross $K$ they make an arbitrarily small angle with the direction of $K$. Hence, their length converges toward the length of $K$, and so it is larger than $\delta-\varepsilon$ for large enough $n$

	No other boundary component of $\Delta_n$ crosses $\sigma$, otherwise one of them should separate the two other, contradicting the connectedness of $\Delta_n$. Thus, the other boundary components are contained in the thin strip bounded by the $\gamma_{i,n}$ and not crossing $\sigma$ thus their length converge uniformly to $0$ with $n$.
\end{proof}

Let $L$ be a pre-lamination. For any $\delta>0$, up to finitely many complementary regions  of diameter larger than $\delta$,  a complementary region $\Delta$ with diameter larger that $\delta$ has exactly $2$ longest boundary components.

Thus the notion of \emph{the two largest boundary components of a complementary region $\Delta$} is well-defined, up to finitely many complementary regions of diameter larger than $\delta$.

\subsection{Properties of the induced pre-laminations:  the conditions in Theorems~\ref{main-A} and  \ref{main-A-sing} are necessary.} \label{sec-end-pre-lam}

We explore the elementary properties satisfied by the end pre-lamination. More precisely, in this section we will see that the hypotheses of Theorems~\ref{main-A} and \ref{main-A-sing} are satisfied for every pre-lamination induced by a non-singular foliation or by a pA-foliation.

\subsubsection{Shell/star pre-lamination}

Recall that  $\pi=\pi_\FF$ denotes the map which associates to a non-singular leaf or a face of~$\FF$ the corresponding leaf of $L_\infty(\FF)$. 
Let $L$ be a dense pre-lamination.
Recall that a complementary region of $L$ is a shell if it has a non-empty interior and if it has exactly one boundary component (its root) that is an accumulated geodesic (i.e. is not a leaf). 
A pre-lamination $L$ is \emph{a shell pre-lamination} when all its complementary regions are shells.

As a corollary of Lemma~\ref{lem-leaf-comp} one gets:

\begin{corollary}\label{c.limites}
	Let $f_n$ be a bounded monotonous sequence of non-singular leaves or faces of singular leaves, and $l_n=\pi_\FF(f_n)$ the corresponding leaves of $L_\infty(\FF)$.
	Let $l_\infty$ be the limit of the $l_n$.  Then
	\begin{itemize}
		\item if $l_\infty$ is a leaf of $L_\infty(\FF)$ then $f_n$ tends to the leaf (or face) $f_\infty$ with $\pi_\FF(f_\infty)=l_\infty$.
		\item if $l_\infty$ is not a leaf of $L_\infty(\FF)$ then let
		      $(g_i), i\in I\subset \NN$ be the family of leaves (or faces) given by Lemma~\ref{lem-leaf-comp} on which the sequence $f_n$ accumulates, and consider $\gamma_i=\pi_\FF(g_i)$. Then $l_\infty$ and the $\gamma_i$ are the boundary components of a complementary region of $L_\infty(\FF)$ which is a shell with root $l_\infty$.
	\end{itemize}
\end{corollary}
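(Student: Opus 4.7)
The corollary is a direct unpacking of Lemma~\ref{lem-leaf-comp}. Since $l_n=\pi_\FF(f_n)$ is a bounded monotonous sequence in $L_\infty(\FF)$, Lemma~\ref{lem-monotone-leaf-lam} furnishes the limit $l_\infty$, and Lemma~\ref{lem-leaf-comp} furnishes a simple arc $\bar\sigma$ in $D^2$ joining the endpoints $a,b\in S^1$ of $l_\infty$, such that $\sigma\cap\Int(D^2)$ equals the union of pairwise non-separated leaves or faces $(g_i)_{i\in I}$ of $\FF$ and the closures $\bar f_n$ Hausdorff converge to $\bar\sigma$. Throughout I will denote by $D_\infty=\bigcap_n D_n$ the limiting saturated region constructed in the proof of Lemma~\ref{lem-leaf-comp}, whose boundary in $\Int(D^2)$ is exactly $\bigcup_i g_i$. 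I would then treat the two cases separately.

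\textbf{Case 1.} By the injectivity of $\pi_\FF$ (Remark~\ref{r.same-end}) there is a unique leaf or face $f_\infty$ with $\pi_\FF(f_\infty)=l_\infty$. Since $f_\infty$ lies on the far side of every $l_n$ in the monotonous ordering, $f_\infty\subset D_n$ for every $n$ and hence $f_\infty\subset D_\infty$. I would then argue that $f_\infty$ sits on the near boundary of $D_\infty$: any leaf or face $f'$ of $\FF$ strictly between some $f_n$ and $f_\infty$ would have $\pi_\FF(f')$ squeezed strictly between all $l_n$ and $l_\infty$ in $L_\infty(\FF)$, but as the $l_n$ geodesically collapse to $l_\infty$ this forces $\pi_\FF(f')=l_\infty$ and hence $f'=f_\infty$ by injectivity. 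Therefore $f_\infty\in(g_i)$. Finally, $\bar f_\infty$ is a simple arc from $a$ to $b$ in $D^2$ contained in the simple arc $\bar\sigma$ from $a$ to $b$; two such arcs sharing the same endpoints must coincide, which gives $(g_i)=\{f_\infty\}$, and thus $f_n\to f_\infty$ in the Hausdorff sense.

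\textbf{Case 2.} Setting $\gamma_i=\pi_\FF(g_i)\in L_\infty(\FF)$, I would show that $l_\infty$ together with the $\gamma_i$ are the complete list of boundary components of a complementary region $\Delta$ of $L_\infty(\FF)$, and that $\Delta$ has non-empty interior. Non-crossing of $l_\infty$ with each $\gamma_i$ follows by passing to the limit in the non-crossing relations between $l_n$ and $\gamma_i$. Since each $g_i$ lies on the far side of every $f_n$, each $\gamma_i$ lies on the far side of every $l_n$, and hence strictly on the far side of $l_\infty$; thus the region $\Delta$ framed by $l_\infty$ and the $\gamma_i$ is well-defined. To rule out additional leaves of $L_\infty(\FF)$ inside $\Delta$, suppose $\gamma'\in L_\infty(\FF)\cap\Delta$ with $\gamma'\neq l_\infty$ and $\gamma'\neq\gamma_i$ for every $i$; then $g'=\pi_\FF^{-1}(\gamma')$ would lie on the far side of every $f_n$, hence $g'\in D_\infty$, and being distinct from all $g_i$ it would lie in the interior of $D_\infty$, which is strictly on the far side of every $g_i$, forcing $\gamma'$ strictly beyond every $\gamma_i$ and contradicting $\gamma'\in\Delta$. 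As $l_\infty$ is, by hypothesis, the only non-leaf boundary component, $\Delta$ is a shell with root $l_\infty$.

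\textbf{Main obstacle.} The crucial step is the simple-arc rigidity in Case 1, which converts pre-lamination convergence---a statement about endpoint pairs in $S^1$---into the topological identification of $f_\infty$ as the unique accumulation leaf or face of the sequence $f_n$. It rests jointly on the injectivity of $\pi_\FF$ (Remark~\ref{r.same-end}) and the precise description of $\bar\sigma$ as a simple arc provided by Lemma~\ref{lem-leaf-comp}. The analogous exclusion step in Case 2 is easier, as it only requires tracking relative leaf positions inside $D_\infty$.
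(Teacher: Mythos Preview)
Your proposal is correct and follows essentially the same route as the paper: both cases rest on the description of $\sigma$ and $D_\infty$ from Lemma~\ref{lem-leaf-comp}, with Case~1 reduced to the simple-arc rigidity that forces $\bar f_\infty=\sigma$, and Case~2 to checking that no leaf of $L_\infty(\FF)$ can lie strictly between $l_\infty$ and the $\gamma_i$. The paper makes the construction of $\Delta$ in Case~2 slightly more explicit---it is the open disc bounded by the simple closed curve $l_\infty\cup(\sigma\cap S^1)\cup\bigcup_i\gamma_i$, which immediately pins down the full list of boundary components---whereas you leave this implicit, but the exclusion argument is the same.
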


\begin{proof}
	First assume that $l_\infty$ is not a leaf of $L_\infty(\FF)$.  Using the notation of the proof of Lemma~\ref{lem-leaf-comp}, $a,b$ are the endpoint of $l_\infty$, $\sigma$ is the path joining $a$ to $b$, limit of the $f_n$. The intersection of $\sigma$ with the interior of $D^2$ is the union of the leaves $g_i$. We write $\gamma_i=\pi_\FF(g_i)$ and $\gamma=(\sigma\cap S^1)\cup\bigcup_i\gamma_i$. Then $\gamma$ is a path in $D^2$ joining $a$ to $b$ and intersecting $\bar l_\infty$ exactly on $\{a,b\}$. Thus $l^\infty\cup \gamma$ is a simple closed curve in $D^2$, bounding an open disc $\Delta$.

	We now check that $\Delta$ is a complementary region of $G(L_\infty(\FF))$, that is the interior of $\Delta$ is disjoint from all leaves of $G(L_\infty(\FF))$. If the interior of $\Delta$ was intersecting a leaf $l$, the $l$ would separate $l_\infty$ from at least one of the leaves $\gamma_i$. Thus the corresponding leaf or face $\pi_\FF^{-1}(l)$ of~$\FF$ would separate every $f_n$ from the leaf $g_i$, contradicting the fact that $f_n$ accumulates on $g_i$.

	Assume now that $l_\infty$ is a leaf of $L_\infty(\FF)$ and let $f_\infty$ be the corresponding leaf (or face) of~$\FF$. Then $f_\infty$ is contained in each of the discs $D_n$ bounded by $f_n$ and therefore is contained in $\bigcap_n D_n$. However, the boundary of $\bigcap_n D_n$ is the segment $\sigma$ joining $a$ to $b$, and $f_\infty$ is joining $a$ to $b$. One easily deduces that $\sigma$ is the closure of $f_\infty$. Thus, $f_\infty$ is the limit of the sequence $f_n$ concluding the proof.
\end{proof}

As a straightforward consequence  of Corollary~\ref{c.limites} one gets:

\begin{corollary}\label{c.accumulee}
	Any geodesic $l\notin L_\infty(\FF)$ accumulated on one side by leaves of $L_\infty(\FF)$ bounds on its other side a complementary region which is a shell.
\end{corollary}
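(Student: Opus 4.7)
The plan is to reduce the statement to Corollary~\ref{c.limites} by producing a bounded monotonous sequence of leaves of $L_\infty(\FF)$ accumulating on $l$ from the prescribed side, and then transport the conclusion from the level of leaves of $\FF$ back to the pre-lamination.

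First I would pick the accumulation: by hypothesis, on one side $S$ of $l$, there exists a sequence $(l_k)_{k\in\NN}$ of leaves of $L_\infty(\FF)$ with $l_k\to l$ in $\Int(D^2)$. Extracting a subsequence, I may assume $(l_k)$ is monotonous (any sequence of pairwise disjoint geodesics crossing a fixed transverse arc $\sigma$ transversely to $l$ has a monotonous subsequence, by monotonicity of the intersection points along $\sigma$). Since the limit $l$ is a geodesic of $\Int(D^2)$ and not a point of $S^1$, \thref{lem-monotone-leaf-lam} (its contrapositive for diverging sequences) implies that $(l_k)$ is bounded. Then $f_k=\pi_\FF^{-1}(l_k)$ is a bounded monotonous sequence of non-singular leaves or faces of singular leaves of~$\FF$ with $\pi_\FF(f_k)=l_k$.

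Now I apply Corollary~\ref{c.limites}. Its first case is excluded: indeed, if $l_\infty=l$ were a leaf of $L_\infty(\FF)$, the $f_k$ would converge to $\pi_\FF^{-1}(l)$ and in particular $l$ itself would be a leaf of $L_\infty(\FF)$, contrary to the hypothesis $l\notin L_\infty(\FF)$. So the second case applies: there is a family of non-singular leaves or faces $(g_i)_{i\in I}$ of $\FF$, pairwise non-separated, on which $(f_k)$ accumulates, and setting $\gamma_i=\pi_\FF(g_i)$, the geodesic $l$ together with the $\gamma_i$ bound a complementary region $\Delta$ of $L_\infty(\FF)$ which is a shell with root $l$.

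It remains to argue that $\Delta$ lies on the side of $l$ opposite to $S$, which is the only point not stated explicitly in Corollary~\ref{c.limites}. This is essentially tautological: $\Delta$ is a complementary region of $G(L_\infty(\FF))$, so its interior contains no leaf of $L_\infty(\FF)$; in particular it contains none of the $l_k$, and since $l_k\to l$ from side $S$, the region $\Delta$ cannot be on side $S$. Hence $\Delta$ is on the other side of $l$, concluding the proof. I do not anticipate a real obstacle: the work was already done in Corollary~\ref{c.limites}, and the only thing to verify is that the geodesic $l$ does not happen to be a leaf in disguise, which is exactly our hypothesis.
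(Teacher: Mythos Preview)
Your proof is correct and follows essentially the same route as the paper: extract a bounded monotonous sequence of leaves converging to $l$, then invoke the second item of Corollary~\ref{c.limites}. You supply more detail than the paper's three-line proof, notably the justification that $\Delta$ lies on the opposite side of $l$, which the paper leaves implicit.
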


\begin{proof}
	One chooses a monotonous sequences $l_n$ of leaves of $L_\infty(\FF)$ converging to $l$.  This sequence is monotonous and bounded and thus the corresponding sequences $f_n$ of leaves or faces of~$\FF$ is also monotonous and bounded. The second item of Corollary~\ref{c.limites} concludes.
\end{proof}

A pre-lamination $L$ is a \emph{a shell/star pre-lamination} if all its complementary regions are shells or stars.

As a consequence of Lemma~\ref{l.continuous}, Lemma~\ref{l.regular-separated} , Corollary~\ref{c.limites} and Corollary ~\ref{c.accumulee},  Lemma~\ref{l.leaf-boundary} describes all the possible behaviors of a leaf of $L_\infty(\FF)$ :

\begin{lemma}\label{l.leaf-boundary}
	Consider $l\in L_\infty(\FF)$, and $f=\pi_\FF^{-1}(l)$. Consider a side of $l$ and the corresponding side of $f$:
	\begin{enumerate}
		\item If $f$ is a non-singular leaf of~$\FF$ and is regular on that side, then $l$ does not bound a complementary region on that side.
		\item If $f$ is a non-singular leaf  but is not regular on that side, then $l$ bounds a shell complementary region on that side.
		\item If $f$ is a face of a singularity, that side is free and $f$ is regular on that side, then $l$ does not bound a complementary region on that side.
		\item If $f$ is a face of a singular leaf but that side is free and $f$ is not regular on that side, then $l$ bounds a shell complementary region.
		\item If $f$ is a face of a singular leaf and that side is the singular side, then $l$ bounds a star complementary region.
	\end{enumerate}
\end{lemma}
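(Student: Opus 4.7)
The five items fall into three groups, governed by three different results cited above.

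For items (1) and (3) (regular sides), I apply Lemma~\ref{l.continuous}: if $f$ is regular on the chosen side (the free side, for a face), then choosing a transverse arc $\sigma$ pointing into that side gives leaves $f_t$ of $\FF$ whose geodesic images $l_t=\pi_\FF(f_t)$ converge to $l=\pi_\FF(f)$ in $G(L_\infty(\FF))$. Suppose $l$ bounded a complementary region $\Delta$ on that side. Then $\Delta$ could not be an accumulated geodesic (such a region is a single geodesic distinct from any leaf of $L_\infty(\FF)$), so it has non-empty interior, and some open set on that side of $l$ is contained in $\Delta$; but the $l_t$ eventually enter this open set, contradicting $\Delta\cap G(L_\infty(\FF))=\emptyset$.

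For items (2) and (4) (non-regular, non-singular sides), Lemma~\ref{l.regular-separated} produces a leaf or face $g\neq f$ non-separated from $f$ on that side. I extract a monotonous sequence $f_n$ of non-singular leaves of $\FF$ accumulating on both $f$ and $g$ from that side (by ordering intersections with a common transversal). The geodesics $l_n=\pi_\FF(f_n)$ form a bounded monotonous sequence in $G(L_\infty(\FF))$; if its limit $l_\infty$ belonged to $L_\infty(\FF)$, Corollary~\ref{c.limites} (first case) would force the $f_n$ to converge in $\FF$ to a single leaf, contradicting the double accumulation on the non-separated pair $\{f,g\}$. Hence $l_\infty\notin L_\infty(\FF)$, and Corollary~\ref{c.limites} (second case) exhibits $l=\pi_\FF(f)$ as a boundary leaf of a shell complementary region with root $l_\infty$; since the $l_n$ lie on the relevant side of $l$, so does this shell.

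For item (5) (the singular side of a face), I use the $k$-prong structure. Enumerate the faces of the singular leaf $F$ cyclically as $f=f_1,f_2,\dots,f_k$, so that $f_i$ and $f_{i+1}$ share a separatrix $s_i$. Each separatrix has a single end at infinity, giving a point $v_i\in S^1$, and the $v_i$ are pairwise distinct by Remark~\ref{r.same-end}. Consequently the geodesics $e_i=\pi_\FF(f_i)$ have endpoints $v_{i-1},v_i$ and form a closed ideal $k$-gon $P$ with vertices $v_1,\dots,v_k$. No leaf of $L_\infty(\FF)$ distinct from the $e_i$ can have both endpoints in $\{v_1,\dots,v_k\}$ (again by Remark~\ref{r.same-end}: any other leaf or face would share two endpoints with $F$, which is forbidden), and no leaf can cross any $e_i$ by the pre-lamination property; hence $\Int(P)$ is a connected component of $\Int(D^2)\setminus G(L_\infty(\FF))$ bounded by precisely the $k$ edges $e_i$, i.e.\ a star. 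The topological model of the prong sends the singular side of $f_i$ in $\RR^2$ to the side of $e_i$ containing the other $e_j$, namely $\Int(P)$, so $l=e_1$ bounds this star on its singular side.

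The only subtle point is item~(5), where one must rule out any chord of $P$ between two of its vertices lying in $L_\infty(\FF)$; this uses the no-saddle-connection hypothesis through Remark~\ref{r.same-end}. The remaining items are more or less direct translations between the foliation-side notion (regularity/non-separation) and the pre-lamination-side notion (complementary region on a side) via Lemmas~\ref{l.continuous}, \ref{l.regular-separated} and Corollary~\ref{c.limites}.
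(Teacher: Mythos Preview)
Your proof is correct and follows essentially the same route as the paper's: items~(1)/(3) via Lemma~\ref{l.continuous}, items~(2)/(4) via Lemma~\ref{l.regular-separated} and Corollary~\ref{c.limites}, and item~(5) from the prong structure. Your argument for item~(5) is in fact more complete than the paper's one-line ``just notice that the vertices of the star are the endpoints of the separatrixes'': you explicitly use Remark~\ref{r.same-end} to exclude any diagonal chord of the ideal polygon from lying in $L_\infty(\FF)$, which is the point the paper leaves implicit.
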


\begin{proof}
	If $f$ is regular on that side, according to Lemma~\ref{l.continuous} one gets that $l$ is accumulated on that side by leaves of $L_\infty(\FF)$, hence $l$ does not bound a complementary region on that side. This proves items 1 and 3.

	If $f$ is non-singular on that side but is not regular on that side, then $f$ is accumulated on that side by a sequence of leaves $f_k$ accumulating at least another leaf or face, according to Lemma~\ref{l.regular-separated}. Then $l$ bounds on that side a shell whose root is the limit of the $\pi_\FF(f_k)$. This proves the items 2 and 4.

	To prove item $5$, just notice that the vertices of the star are the endpoints of the separatrixes of the singular point.
\end{proof}

\begin{lemma}\thlabel{lem-end-lam-comp}
	The pre-lamination $L_\infty(\FF)$ of a non-singular foliation~$\FF$ is a shell pre-lamination.

	The pre-lamination $L_\infty(\FF)$ of a pA-foliation~$\FF$ is a shell/star pre-lamination. Furthermore, no leaf bound a star complementary region on each of its two sides.
\end{lemma}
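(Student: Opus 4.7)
The plan is to examine each complementary region $\Delta$ of $L_\infty(\FF)$ using Lemma~\ref{l.leaf-boundary} (which classifies each leaf of $L_\infty(\FF)$ by what it bounds on each of its sides) and Corollary~\ref{c.accumulee} (which produces a shell attached to any accumulated non-leaf geodesic).

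First, I would rule out empty-interior complementary regions, i.e.\ accumulated geodesics $\gamma \notin L_\infty(\FF)$ accumulated on both sides. Indeed, if $\gamma$ is accumulated on side~$A$, Corollary~\ref{c.accumulee} yields a shell on side~$B$ having $\gamma$ as root. Around a generic point of $\gamma$ (away from its endpoints and from the other boundary leaves of that shell), a full one-sided neighborhood on side~$B$ lies in the shell's interior, hence is disjoint from $G(L_\infty(\FF))$; so no leaf can accumulate on $\gamma$ from side~$B$. Consequently every accumulated geodesic is accumulated on only one side, and every complementary region of $L_\infty(\FF)$ has non-empty interior and therefore at least three boundary components.

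Second, let $\Delta$ be a non-empty-interior complementary region. If some boundary component of $\Delta$ is an accumulated geodesic~$\gamma$, then Corollary~\ref{c.accumulee} forces $\Delta$ to be the shell with root~$\gamma$; by uniqueness of the root, $\Delta$ then has exactly one non-leaf boundary and hence is a shell. Otherwise, all boundary components of $\Delta$ are leaves of $L_\infty(\FF)$; pick one, $l$, and apply Lemma~\ref{l.leaf-boundary} to the side of $l$ facing $\Delta$. Cases~1 and~3 are impossible (in them $l$ bounds no region on that side), and cases~2 and~4 both force $\Delta$ to be a shell with an accumulated-geodesic root, contradicting our assumption. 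Hence only case~5 is possible: $f := \pi_\FF^{-1}(l)$ is a face of a singularity of $\FF$ with its singular side toward $\Delta$, and $\Delta$ is the star bounded by the finite ideal polygon formed by the faces of that singularity.

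For the non-singular case, cases~3--5 of Lemma~\ref{l.leaf-boundary} are vacuous (there are no singular faces), so the star alternative is excluded and $\Delta$ must be a shell; this gives the first assertion. In the general $pA$ case, $\Delta$ is a shell or a star, giving the second. Finally, if a leaf $l = \pi_\FF(f) \in L_\infty(\FF)$ were to bound a star complementary region on each of its two sides, then item~5 of Lemma~\ref{l.leaf-boundary} would require $f$ to have its singular side on \emph{both} of its sides; but by definition a face has exactly one singular side (the one containing the rest $F \setminus f$ of its singular leaf), a contradiction. Essentially all the hard work has been done in the prior lemmas; the main care needed here is to consistently track which side of each geodesic carries the shell, the accumulation, and the complementary region.
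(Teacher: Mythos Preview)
Your proof is correct and follows essentially the same approach as the paper's: rule out empty-interior regions, then use Corollary~\ref{c.accumulee} for the shell case and Lemma~\ref{l.leaf-boundary} for the star case. The only minor difference is in the empty-interior step: the paper argues directly that the preimage leaves $\pi_\FF^{-1}(l_n^{\pm})$ on both sides must converge to a single leaf $f$ with $\pi_\FF(f)=\gamma$, contradicting $\gamma\notin L_\infty(\FF)$, whereas you invoke Corollary~\ref{c.accumulee} to produce a shell on one side of $\gamma$, which then blocks accumulation from that side.
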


\begin{proof}
	Let~$\FF$ be a pA-foliation and $\Delta$ be a complementary region of $L_\infty(\FF)$.
	If $\Delta$ has an empty interior, it is a single geodesic that is not a leaf of $L_\infty(\FF)$ but is accumulated on its two sides by two sequences $(l^{+/-}_n)_n$ of leaves of $L_\infty(\FF)$. The preimage leaves (or faces of leaves)  $\pi_\FF^{-1}(l^{+/-}_n)$ of~$\FF$ converge toward a unique leaf $f$ of~$\FF$ which satisfies $\pi_\FF(f)=P$, contradicting the fact that $\Delta$ is not a leaf of $L_\infty(\FF)$.

	So $\Delta$ has non-empty interior. If it has a boundary component that is not a leaf (but accumulated by leaves of $L_\infty(\FF)$, then Corollary~\ref{c.accumulee} implies that it is a shell, and we are done.

	Assume now that every boundary component of $\Delta$ is a leaf of $L_\infty(\FF)$, and let $l$ be one of this leaf, and $f=\pi_\FF^{-1}(l)$.  Then Lemma~\ref{l.leaf-boundary} implies that $f$ is a face of a singular leaf and is singular on the side of $\Delta$, so $\Delta$ is a star.

	Finally, if a leaf $l$ of $L_\infty(\FF)$ bounds a star region on one side, then $\pi_\FF^{-1}(\FF)$ is free on the otherside, so $l$ does not bound a star region on its other side.
\end{proof}

\subsubsection{Bad accumulations}

Let $L$ be a shell/star pre-lamination of $S^1$. 
Recall that, among complementary region with one boundary of length at least $\epsilon>0$, all but finitely many have well-defined two long edges (see Lemma~\ref{l.limite-regions}).

A \emph{bad accumulation of shell regions} of $L$ is a sequence $(\Delta_n)_{n\in\NN}$ of complementary shells that converges  toward a geodesic $l$ of $\Int(D^2)$, and so that the two long edges of $\Delta_n$ are leaves of $L$. In other words, according to Lemma~\ref{l.limite-regions}, the lengths of their roots tend to $0$.

A \emph{bad accumulations of star regions} of $L$ is a sequence $(\Delta_n)_{n\in\NN}$ of complementary stars that converges toward a geodesic $l$ of $\Int(D^2)$, and so that the two long edges of $\Delta_n$ are not adjacent.

The lack of bad accumulations has a simple characterization:

\begin{lemma}
	Let $L$ be a shell/star pre-lamination. Then
	\begin{enumerate}
		\item $L$ has no bad accumulations of shells if and only if for all $\epsilon>0$, for all but maybe finitely many of the complementary shell $\Delta$ that have an edge of length at least $\epsilon$, the root of $\Delta$ is one of its two long edges.
		\item $L$ has no bad accumulations of stars if and only if for all $\epsilon>0$, for all but maybe finitely many of the complementary star $\Delta$ that have an edge of length at least $\epsilon$, the two long edges of $\Delta$ are adjacent.
	\end{enumerate}
\end{lemma}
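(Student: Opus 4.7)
The plan is to prove each of the two equivalences by a straightforward contrapositive argument whose engine is Lemma~\ref{l.calibrage} (equivalently, the second item of Lemma~\ref{l.limite-regions}). The key observation is that ``admitting a sequence of regions of fixed diameter $\geq \epsilon$'' is, up to extracting a convergent subsequence in the Hausdorff topology, exactly the same as ``admitting a sequence of regions converging to a geodesic''; the bad accumulation conditions are statements about which boundary components the \emph{two long edges} (well-defined for large $n$ by Lemma~\ref{l.calibrage}) happen to be.

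For item~1, I would first handle the forward direction. Assume $L$ has no bad accumulation of shells, and assume toward contradiction that some $\varepsilon>0$ admits infinitely many distinct shell regions $\Delta_n$, each with a boundary edge of length $\geq \varepsilon$, whose root is not one of its two long edges. Since $\diam(\Delta_n)\geq \varepsilon$, compactness of the Hausdorff topology on closed subsets of $D^2$ together with Lemma~\ref{l.limite-regions} let me extract a subsequence along which $\bar\Delta_n$ converges to a geodesic $\ell$ of length $\geq \varepsilon$. Lemma~\ref{l.calibrage} then ensures, for $n$ large, that $\Delta_n$ has exactly two long edges, both converging to $\ell$; by hypothesis the root is not one of them, so both long edges must be leaves of $L$. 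This is a bad accumulation of shells, contradicting the assumption. For the reverse direction, suppose the quantitative property holds, and let $(\Delta_n)$ be any sequence of shells converging to a geodesic $\ell$ of length $\delta>0$. Choose any $\varepsilon\in(0,\delta/3)$; by Lemma~\ref{l.calibrage}, for all but finitely many $n$, the two long edges of $\Delta_n$ have length $>\delta-\varepsilon>\varepsilon$, so $\Delta_n$ has an edge of length $\geq \varepsilon$. The hypothesis applied to this $\varepsilon$ then forces, for all large $n$, the root of $\Delta_n$ to coincide with one of the two long edges, so both long edges cannot both be leaves of $L$, ruling out a bad accumulation.

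Item~2 will be proved by the same structural argument, substituting ``the two long edges are adjacent'' for ``the root is one of the two long edges'', and ``the two long edges are not adjacent'' for ``both long edges are leaves''. Since adjacency of the two long edges is a property purely of the combinatorial position of these two edges in the boundary polygon of the star, no new analytic input is needed beyond Lemma~\ref{l.calibrage}, which already singles out a well-defined pair of long edges for large~$n$.

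The only point requiring care, and the likely source of any slip in the write-up, is the quantitative threshold in the forward direction: one must make sure that the chosen $\varepsilon$ is small enough relative to the length of the limit geodesic so that Lemma~\ref{l.calibrage} genuinely identifies exactly two long edges (hence gives access to the notions ``root is a long edge'' and ``long edges are adjacent''). This is arranged by shrinking $\varepsilon$ at the end, and is the only technical subtlety; everything else is a direct translation between the diameter-and-edge-length quantifiers in the statement and the Hausdorff-limit quantifiers in the definition of bad accumulation.
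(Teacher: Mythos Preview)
Your proof is correct. The paper does not supply a proof of this lemma at all (it is stated as a simple reformulation and immediately followed by the next lemma), so there is nothing to compare against; your argument via Lemma~\ref{l.limite-regions}/Lemma~\ref{l.calibrage} is exactly the intended one, and your handling of the threshold on $\varepsilon$ is fine.
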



\begin{lemma}\thlabel{cor-no-acc}
	Let~$\FF$ be a pA-foliation.
	The pre-lamination $L_\infty(\FF)$ has no bad accumulation.
\end{lemma}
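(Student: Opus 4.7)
The plan is to derive a contradiction from any hypothetical bad accumulation by producing a short transversal of~$\FF$ that meets two distinct leaves or faces of~$\FF$ at two distinct points, violating Lemma~\ref{l.separated} in the shell case or Lemma~\ref{l.1point} in the star case.

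I would start by assuming for contradiction that $(\Delta_n)_{n\in\NN}$ is a bad accumulation, converging Hausdorff to a geodesic $l\subset\Int(D^2)$ with endpoints $a,b\in S^1$; for $n$ large, by Lemma~\ref{l.limite-regions}, $\Delta_n$ has well-defined two long edges $l_{1,n},l_{2,n}$ satisfying the bad configuration. The first step is to reduce to the case where $l_{1,n}$ and $l_{2,n}$ lie on the same side of $l$: if they lay on opposite sides, the connected convex region $\Delta_n$ would straddle $l$ and contain an open arc $U_n\subset l\setminus G(L_\infty(\FF))$ in its interior of length tending to $\len(l)$; the pairwise disjointness of the interiors of distinct complementary regions, together with $U_n\subset\Int(\Delta_n)\cap l$, forces $\sum_n\len(U_n)\leq\len(l)$ and rules out this situation for all but finitely many $n$. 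After discarding those and passing to a subsequence, I may assume that $l_{1,n}$ and $l_{2,n}$ are both monotonous and bounded with limit $l$. By Corollary~\ref{c.limites} combined with Corollary~\ref{c.accumulee}, the closures $\overline{f_{1,n}}:=\overline{\pi_\FF^{-1}(l_{1,n})}$ then converge Hausdorff in $D^2$ either to $\overline{f}$ with $f=\pi_\FF^{-1}(l)$ if $l\in L_\infty(\FF)$, or otherwise to a simple curve $\gamma$ joining $a$ to $b$ whose intersection with $\Int(D^2)$ is a family $\{g_i\}$ of pairwise non-separated leaves or faces of~$\FF$ bounding the shell with root $l$; the sequence $\overline{f_{2,n}}$ converges to the same limit because it is associated with the same geodesic $l$ and the same shell.

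Next I would pick a non-singular point $q$ in the interior of $f$ (respectively of some $g_i$), together with a small transversal $\sigma\colon[-1,1]\to\Int(D^2)\setminus\Sing(\FF)$ to~$\FF$ centered at $q$; by Hausdorff convergence and the local product structure of~$\FF$ around $q$, for $n$ large enough $\sigma$ meets both $f_{1,n}$ and $f_{2,n}$ at two points close to $q$. In the shell case, $f_{1,n}$ and $f_{2,n}$ are distinct non-separated leaves or faces (being distinct boundary leaves of the same shell $\Delta_n$), so Lemma~\ref{l.separated} forbids two distinct crossing points, giving the contradiction. In the star case, $f_{1,n}$ and $f_{2,n}$ are two non-adjacent faces of the same singular leaf $s_n$ of~$\FF$, hence they share no separatrix; since $\sigma$ avoids $\Sing(\FF)$, each crossing lies on a separatrix of $s_n$, and these two separatrices are distinct, yielding $|\sigma\cap s_n|\geq 2$ and contradicting Lemma~\ref{l.1point}. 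The main obstacle will be to reconcile carefully the geometric (Hausdorff) convergence of the geodesic representatives in $G(L_\infty(\FF))$ with the topological convergence of the corresponding leaves of~$\FF$ in both subcases $l\in L_\infty(\FF)$ and $l\notin L_\infty(\FF)$; this is exactly what Lemma~\ref{lem-leaf-comp} and Corollary~\ref{c.limites} provide, and it is the crucial input making the transversal argument run.
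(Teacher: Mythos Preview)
Your proposal is correct and follows essentially the same approach as the paper's proof: assume a bad accumulation, use Lemma~\ref{l.limite-regions} to get the two long edges $l_{1,n},l_{2,n}$ converging to $l$, pull back via $\pi_\FF^{-1}$ and invoke Lemma~\ref{lem-leaf-comp}/Corollary~\ref{c.limites} to find a common accumulation leaf or face $f$, take a transversal $\sigma$ through $f$, and derive a contradiction via Lemma~\ref{l.separated} (shell case) or Lemma~\ref{l.1point} (star case). The only cosmetic difference is that you argue explicitly for the ``same side'' reduction using a length-counting argument on $l$, whereas the paper dispatches this in one line by passing to a subsequence accumulating on one side of~$\gamma$.
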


\begin{proof}
	We reason by contradiction.
	Assume that $\Delta_n$ is a sequence of complementary regions having a bad accumulation on a geodesic $\gamma$. For $n$ large the two long edges $\gamma_{1,n},\gamma_{2,n}$ of $\Delta_n$ are well-defined and are leaves of $L_\infty(\FF)$ converging to $\gamma$. Up to extracting a subsequence, one may assume that this accumulation is on one side of $\gamma$, that we call its positive side.
	Let $f_{i,n}=\pi_\FF^{-1}(\gamma_{i,n})$. Lemma~\ref{lem-leaf-comp} implies that, if $\gamma$ is a leaf, then the $f_{i,n}$ converge to the leaf $f=\pi_\FF^{-1}(\gamma)$. If $\gamma$ is not a leaf, then it bounds (on its negative side) a shell $\Delta$ and the $f_{i,n}$ accumulate on a leaf or face $f$ whose projection by $\pi_\FF$ is a boundary component of $\Delta$ which  is not the root. In both cases the sequence $f_{i,n}$ accumulate a non singular leaf or a face of singular leaf $f$. Fix an open transversal $\sigma$ to~$\FF$ that intersects $f$ (and disjoint from the singular set). As a consequence for $n$ large enough, $f_{1,n}$ and $f_{2,n}$ cross $\sigma$.

	If the $\Delta_n$ are shells, according to Lemma~\ref{l.separated}, that contradicts the fact that $f_{1,n}$ and $f_{2,n}$ are not separated. Thus $L_\infty(\FF)$ has no bad accumulations of shells.

	If the $\Delta_n$ are stars, the hypothesis of bad accumulations means that $f_{1,n}$ and $f_{2,n}$ are not successive faces of the singular leaf $f_n$ corresponding to $\Delta_n$. Thus $f_{1,n}$ and $f_{2,n}$ intersects only on the singular point. Since $\sigma$ intersects both faces $f_{i,n}$ outside the singular point of $f_n$, $\sigma\cap f_n$ contains at least $2$ points, contradicting Lemma~\ref{l.1point}. Thus $L_\infty(\FF)$ has no bad accumulations of stars.
\end{proof}

In summarizing, we proved:

\begin{proposition}\label{p.necessary}
	Let $L\subset (S^1\times S^1\setminus diag)/(x,y)\simeq (y,x)$ be a pre-lamination on the circle $S^1$. Next conditions are necessary for the existence of a pA-foliation~$\FF$ so that $L_\infty(\FF)=L$:
	\begin{itemize}
		\item (density condition) $L$ is dense
		\item (countability) For every $x\in S^1$ the set of $y\in S^1$ for which $(x,y)\in L$ is at most countable.
		\item $L$ is a shell/star pre-lamination
		\item a leaf may belong to at most one star.
		\item $L$ has no bad accumulation.
	\end{itemize}
	Furthermore the singular leaves of the foliation~$\FF$ (if~$\FF$ exists)  are in one-to-one correspondence with the stars: in particular if $L$ is a shell pre-lamination, the foliation~$\FF$ is non-singular.
\end{proposition}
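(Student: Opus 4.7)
This proposition is essentially a summary statement: each listed necessary condition and the correspondence between singular leaves and stars has been established in the preceding lemmas, so the plan is simply to assemble them.

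First, the density condition and the countability condition follow directly from items~(3) and~(2) of \thref{th-Bonatti}: item~(3) gives that the endpoints of leaves of $\FF$ are dense in $S^1$, which translates to density of $L_\infty(\FF)$; item~(2) states that only countably many rays of $\FF$ end at a given $\xi \in S^1$, which implies that for each $x \in S^1$ there are only countably many $y$ with $(x,y) \in L_\infty(\FF)$.

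Next, the shell/star property together with the statement that no leaf can bound a star on both of its sides (hence no leaf belongs to two stars) form exactly the content of \thref{lem-end-lam-comp}, which was proved via the case analysis in \thref{l.leaf-boundary}. The absence of bad accumulations is \thref{cor-no-acc}.

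It remains to argue the bijective correspondence between singular leaves of $\FF$ and stars of $L_\infty(\FF)$. In one direction, a singular leaf $F$ with $k$-prong singular point $p$ has $k$ cyclically arranged faces $f_1,\ldots,f_k$; consecutive faces share a separatrix and hence the leaves $l_i = \pi_\FF(f_i)$ share consecutive endpoints $\xi_i \in S^1$, so the $l_i$ form an ideal polygon bounding a star $\Delta_F$. In the other direction, given a star $\Delta$ with boundary leaves $l_1,\ldots,l_k$, item~(5) of \thref{l.leaf-boundary} identifies each $\pi_\FF^{-1}(l_i)$ as a face of a singular leaf whose singular side faces $\Delta$; the nontrivial step is to show all these faces belong to a single singular leaf. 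I would argue this by using that two adjacent boundary leaves $l_i,l_{i+1}$ of $\Delta$ share an endpoint $\xi \in S^1$, so the corresponding faces $f_i,f_{i+1}$ each have a separatrix ending at $\xi$; combined with the absence of saddle connections and the local cyclic structure at a $k$-prong singularity, this forces these two separatrices to coincide and hence the two faces to share the same singular point. Iterating around $\Delta$ shows all $f_i$ lie on one singular leaf. The final assertion---if $L$ is a shell pre-lamination then $\FF$ is non-singular---is then immediate, since the absence of stars in $L_\infty(\FF)$ precludes any singular leaf in $\FF$.

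The main "obstacle" is really not in this proposition itself but was already overcome in \thref{lem-end-lam-comp} and \thref{cor-no-acc}; here it suffices to combine them carefully and to fill in the short combinatorial argument for the singular-leaf/star bijection sketched above.
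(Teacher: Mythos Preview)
Your proposal is correct and matches the paper's approach: in the paper the proposition is introduced by ``In summarizing, we proved:'' and is indeed just an assembly of \thref{th-Bonatti}, \thref{lem-end-lam-comp}, \thref{l.leaf-boundary}, and \thref{cor-no-acc}, exactly as you describe.

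One small remark on the star $\leftrightarrow$ singular leaf correspondence. Your endpoint-matching argument (adjacent boundary leaves share an endpoint $\xi$, hence the corresponding separatrices coincide) is a bit roundabout and the step ``this forces these two separatrices to coincide'' is not fully justified as stated: two distinct rays may end at the same point of $S^1$ (cf.\ Remark~\ref{r.same-end}), so shared endpoints alone do not suffice. The paper's implicit route is shorter: once you know (via \thref{l.leaf-boundary}) that a given boundary leaf $l$ of a star $\Delta$ satisfies $f=\pi_\FF^{-1}(l)$ is a face of a singular leaf $F$ with $\Delta$ on its singular side, item~(5) of \thref{l.leaf-boundary} says the ideal polygon formed by \emph{all} faces of $F$ is the complementary region on that side of $l$; since there is only one complementary region on each side of $l$, this polygon equals $\Delta$, so every boundary leaf of $\Delta$ is a face of $F$. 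This gives the bijection immediately without tracking endpoints.
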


To prove Theorems~\ref{main-A} and ~\ref{main-A-sing} it remains to prove that these conditions are sufficient for the existent of the foliation~$\FF$ and to check the uniqueness (up to conjugacy) of the foliation. This proof will be much easier in the non-singular case, thanks to Kaplan’s work characterizing (non-singular) foliations up to conjugacy, using their leaf-space. In the singular case, we will need to find an equivalent to Kaplan's work.  For this reason we first present the proof in the non-singular case.

To answer Question \ref{q.induced}, that is characterizing pre-lamination corresponding to a dense subset of leaves of a foliation, we will need an extra condition, substituting the countability condition, in order to get a condition which will be preserved by completion. That is the aim of section~\ref{s.few}.


\subsection{Few common ends}\label{s.few}

The end pre-lamination of a foliation satisfies a last property.
Let $L$ be a pre-lamination. Given $\theta\in S^1$, we denote by $E_L(\theta)$ the set of points $\theta'\in S^1\setminus\{\theta\}$ that satisfy $(\theta,\theta')\in L$. We say that $L$ has \emph{few common ends} if for all $\theta\in S^1$, the set $E_L(\theta)$ is either empty or ordered as a sub-interval of $\ZZ$ (that is finite, ordered as $\NN$, $-\NN$ or $\ZZ$). Here the set $E_L(\theta)$ is naturally ordered by its endpoint other than $\theta$ in $S^1\setminus\{\theta\}$.

\begin{lemma}\thlabel{lem-end-lam-few-end}
	Let~$\FF$ be a pA-foliation of $\Int(D^2)$. Then  $L_\infty(\FF)$ has few common ends.
\end{lemma}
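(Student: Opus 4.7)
The plan is to argue by contradiction. Assume $E_L(\theta)$ is not order-isomorphic to a sub-interval of $\ZZ$; extracting a subsequence, there exist $\theta^{*}\in E_L(\theta)$ and $\theta_n\in E_L(\theta)\setminus\{\theta^{*}\}$ with $\theta_n\to\theta^{*}$ monotonically from one side. Write $l^{*}=(\theta,\theta^{*})$, $l_n=(\theta,\theta_n)$, $f^{*}=\pi_\FF^{-1}(l^{*})$ and $f_n=\pi_\FF^{-1}(l_n)$. The $l_n$ form a bounded monotonous sequence in $L_\infty(\FF)$ converging to the leaf $l^{*}$, so Corollary~\ref{c.limites} yields $\bar f_n\to\bar f^{*}$ in the Hausdorff topology on $D^2$. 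Discarding at most one index (to handle the case where $f^{*}$ and some $f_n$ are adjacent faces of a common singular leaf sharing the separatrix to $\theta$), we may assume $f_n\cap f^{*}=\emptyset$ in $\Int(D^2)$ for all $n$.

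Next, I fix a non-singular point $q''$ on the ray of $f^{*}$ pointing to $\theta$ (on the separatrix leading to $\theta$, past the singular point, when $f^{*}$ is a face), together with a small open transversal $\sigma''$ to $\FF$ at $q''=\sigma''(0)$ disjoint from the singular set. Hausdorff convergence of the $\bar f_n$ combined with local triviality of $\FF$ near $q''$ force, for large $n$, the leaf or face $f_n$ to meet $\sigma''$ at exactly one point $\sigma''(t''_n)$, with $t''_n\to 0$. The segment of $\sigma''$ between $q''$ and $\sigma''(t''_n)$, the ray of $f^{*}$ from $q''$ to $\theta$, and the ray of $f_n$ from $\sigma''(t''_n)$ to $\theta$ are pairwise disjoint outside of their shared endpoints (by Lemma~\ref{l.1point} and the previous paragraph), so together they form a Jordan curve in $\bar D^2$ closing at $\theta\in S^1$. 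Let $R$ be the bounded open region it delimits inside $\Int(D^2)$.

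The crucial step is to show that $R$ contains no singular point of $\FF$. If $p\in R$ were singular with $k\ge 3$ separatrices, then Remark~\ref{r.same-end} forces at most one of them to end at $\theta$, so at least $k-1\ge 2$ separatrices of $p$ must exit $R$ through $\partial R$. Being leaves of $\FF$, they cannot cross $f^{*}$ or $f_n$, hence they all cross $\sigma''$, contradicting the fact that the singular leaf through $p$ meets $\sigma''$ at most once (Lemma~\ref{l.1point}). So $\FF$ restricts to a regular foliation on $R$. For every point $p'$ in the open interior of the $\sigma''$-segment (an uncountable set), the leaf $g_{p'}$ of $\FF$ through $p'$ contains an arc entering $R$ whose other endpoint in $\bar D^2$ cannot lie on $\sigma''$ (Lemma~\ref{l.1point}) nor on the $f^{*}$- or $f_n$-ray (distinct leaves are disjoint), and must therefore equal $\theta$. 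The map $p'\mapsto g_{p'}$ is injective by Lemma~\ref{l.1point}, producing uncountably many leaves of $\FF$ with endpoint $\theta$, contradicting item~2 of Theorem~\ref{th-Bonatti}. The main obstacle is verifying that $R$ contains no singular point, which hinges on the prong condition $k\ge 3$ combined with Lemma~\ref{l.1point}; once this is in hand, the counting argument is immediate.
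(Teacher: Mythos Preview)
Your argument has a gap at the very first step. You claim that if $E_L(\theta)$ fails to be order-isomorphic to a sub-interval of $\ZZ$, then one can extract $\theta^*\in E_L(\theta)$ and a monotone sequence $\theta_n\in E_L(\theta)$ converging to $\theta^*$. This implication is false in general: a countable subset of $\RR$ such as $\{-1/n:n\geq 1\}\cup\{1/n:n\geq 1\}$ has every element isolated, its sole accumulation point $0$ lies outside the set, and yet between $-1$ and $1$ there are infinitely many elements, so it is not a sub-interval of $\ZZ$. Nothing you have said rules out $E_L(\theta)$ having this shape, and in that case $l^*=(\theta,\theta^*)$ is not a leaf, so $f^*=\pi_\FF^{-1}(l^*)$ is undefined and the construction collapses.

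The fix is easy and preserves your core idea. From the failure of the sub-interval property you get $a,b\in E_L(\theta)$ with infinitely many elements between them; extract a bounded monotone sequence $\theta_n\to\theta^*$ (with no claim on $\theta^*$). By Lemma~\ref{lem-leaf-comp} the $f_n=\pi_\FF^{-1}((\theta,\theta_n))$ accumulate on some leaf or face $g$; take your transversal $\sigma''$ through a point of $g$, so that $f_n$ meets $\sigma''$ for all large $n$. Now pick two large indices $N<M$ with $f_N\cap f_M=\emptyset$ in $\Int(D^2)$ and run your Jordan-curve argument with the rays of $f_N$ and $f_M$ to $\theta$ in place of those of $f^*$ and $f_n$. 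The rest --- no singularity in $R$ via the prong count and Lemma~\ref{l.1point}, then uncountably many rays to $\theta$ --- goes through verbatim.

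Once repaired, your route is correct and genuinely different from the paper's. The paper fixes arbitrary $f,g\in E_L(\theta)$, joins them by a path alternating $k{+}1$ transversal segments with $k$ leaf-arcs, and inducts on $k$ to bound the number of intermediate elements by $k{-}1$; its base case is essentially your trap. You bypass the induction by using the convergence machinery (Lemma~\ref{lem-leaf-comp}, Corollary~\ref{c.limites}) to land directly on a single transversal crossed by two leaves of $E_L(\theta)$. Your argument is shorter once those lemmas are in hand, and your treatment of singularities inside $R$ is more explicit than the paper's terse handling of the base case.
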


Lemma~\ref{lem-end-lam-few-end} is stated and proved in \cite[Lemma 4.6]{BonattiCirc}] for non-singular foliations, and the proof holds in the pA-setting with no changes. We reproduce it here for completeness.

\begin{proof}
	Up to conjugation, we may assume that $\partial D^2$ is the boundary at infinity of~$\FF$.
	Consider two leaves (or face of singular leaves) $f,g$ of~$\FF$ sharing a same end $\theta\in S^1$. It is enough to prove that only finitely many of them are between $f$ and $g$ for this order.

	For that, one considers a simple path $\sigma\colon[0,1]\to\Int(D^2)\setminus \Sing(\FF)$ joining a point in $f$ to a point in $g$, and made of a concatenation of segments $a_0,b_0,a_1 \dots b_{k-1},a_k$ where the segments $a_i$ are transverse to~$\FF$ and the $b_j$ are leaf-arcs (possibly $k=0$ and $\gamma=a_0$). We denote by $D_\sigma$ the sub-disc of $D^2$ bounded by $\sigma$ and by the two rays contained in $f$ and $g$, starting at $\sigma(0)$ and $\sigma(1)$ and ending at $\theta$.

	If a regular leaf of~$\FF$ belongs to $E_L(\theta)$ and is between $f$ and $g$, then every nearby leaf will belong to $E_L(\theta)$ so that $E_L(\theta)$ would be uncountable. Thus a ray of regular leaf entering $D_\sigma$ needs to exit $D_\sigma$.

	Any element in $E_L(\theta)$ between $f$ and $g$ crosses $\sigma$. Otherwise it would be contained in $D_\sigma$ and should have both ends at $\theta$.

	We will prove by induction on $k$ that the number of element of $E_L(\theta)$ between $f$ and $g$ ($f$ and $g$ excluded) is at most $k-1$.

	Let us see that $k=0$ is not possible. If we had $k=0$, then the segment $\sigma$ is transverse to~$\FF$ and therefore cuts every leaf in at most $1$ point. Thus every leaf crossing $\sigma$ has an end on $\theta$ so that $E_L(\theta)$ is uncountable, contradicting the definition of $L_\infty(\FF)$.

	Assume now  $k=1$.
	Reason by contradiction and assume that there exists a leaf $h\in E_L(\theta)$ between $f$ and $g$. Let $\rho\subset D_\sigma$ be the smallest leaf ray of $h$ that starts on a point $x$ on $\sigma$ and goes to $\theta$. We consider the case when $x$ belongs to $a_0$. The case $x\in a_1$ is similar, and the case $x\in b_0\setminus(a_0\cup a_1)$ is impossible (the ray would contains an endpoint of $a_0$ or $a_1$ and so would not be the smallest).
	Let $t=\sigma^{-1}(x)$ and let $l$ be a leaf ray that starts on the point $\sigma(s)$ for some $s\in[0,t[$ and that enters $D_\sigma$ at that point. Then $l$ is trapped between $f$ and $\rho$, and intersects $a_0$ no more than once, so it ends on $\theta$. It contradicts the countability of $E_L(\theta)$, so there exists no leaf between $f$ and $g$ that ends on $\theta$.

	Assume that the assertion holds up to $k-1\geq 1$. Let $h\in E_L(\theta)$ be between $f$ and $g$ and consider the ray $\rho$ defined above. As explain above, the starting point $x$ of $\rho$ belongs to one of the $a_i$ and that it is the unique intersection point of $\rho$ with $\sigma$.
	For the same reason as above, $x$ can not lie on $a_0$ or $a_k$. So it lies on $a_i$ for some $0<i<k$. Thus $x$ cuts $\sigma$ into two segments $\sigma_0$ which has $i+1$ transverse segments and $\sigma_1$ with $k-i+1$ transverse segments (with possibly one extreme transverse segment reduced to a point). Thus we may apply the induction hypothesis to the pair $f,h$ and to the pair $h,g$,
	getting at most $i-1$ elements of $E_L(\theta)$ between $f$ and $h$ and
	$k-i-1$ between $h$ and $g$.  This shows that $E_L(\theta)$ has at most $k-1$ elements between $f$ and $g$, ending the proof.
\end{proof}

\begin{remark}
	One can obtain a dense pre-lamination $L'$ that has not few common ends by starting from a dense pre-lamination $L$ of $S^1$ that has few common ends, take a segment $I\subset S^1$ which does not contain the two ends of any leaf of $L$, and contracts $I$ into a single point. The resulting space is also a circle, and the image of $L$ is a dense pre-lamination $L'$ that has uncountably many ends inside the point image of $I$. If $L$ is complete or has no bad accumulation, so is $L'$. Part of the theory developed in the next subsection still holds to $L'$.
\end{remark}

\section{The planar structure of a shell/star pre-lamination} 

In the whole section, we assume that $L$ is a pre-lamination satisfying a list of properties. Consider the properties:
\begin{enumerate}
	\item (density condition) $L$ is dense,
	\item (countability) For every $x\in S^1$ the set of $y\in S^1$ for which $(x,y)\in L$ is at most countable,
	\item $L$ is a shell pre-lamination,
	\item[3 bis.] $L$ is a shell/star pre-lamination, and a leaf may belong to at most one star,
	\item $L$ has no bad accumulation.
\end{enumerate}

The union of the properties 1 to 4 (without the 3 bis) is call the property~$\Pre$. The union of the properties 1, 2, 3 bis and 4 is called $\Pre^*$. A pre-lamination satisfying $\Pre$ or respectively $\Pre^*$ is called a \emph{$\Pre$-pre-lamination}, or respectively a \emph{$\Pre^*$-pre-lamination}.
They correspond respectively to non-singular foliation on the plane, and $pA$-foliations.

\subsection{Simple $L$-interval}

We develop the technical tools to describe the leaf space of a $\Pre^*$-pre-lamination.
Fix a dense shell/star pre-lamination $L$.

A \emph{separatrix of $L$} is an unordered pair of adjacent boundary components of a star component of $L$. That is if $\Delta$ is a star component, bounded by the leaves $f_1\cdots f_n$ in cyclic order, then $\{f_i,f_{i+1}\}$ is a separatrix.

\begin{definition}
	We call a \emph{$L^*$-leaf} any subset of leaves of $L$ that is either a singleton of a leaf that does not bound a star component, is or a separatrix of $L$.
	We denote by $\Leaf^*(L)$ the set of $L^*$-leaves.
\end{definition}

If $L$ is a shell pre-lamination, then $L=\Leaf^*(L)$.
The set $\Leaf^*(L)$ will later be given a topology, whose open subsets are given by the following notion.

\begin{definition}\label{d.interval}
	Let $L$ be a $\Pre^*$-pre-lamination. A non-empty subset $\II\subset L$ is called \emph{an open $L$-interval} if the following hold:
	\begin{itemize}
		\item given any three leaves $f,g,h\in\II$ one of them separates the two others,
		\item given $f,g$ in $\II$, every leaf $h$ of $L$ separating $f$ from $g$ belongs to $\II$,
		\item given $f$ in $\II$, there exist two leaves $g,h\in\II$ that are separated by $f$,
		\item given $f,g$ in $\II$, either there exists $h\in L$ separating $f$ from $g$, or $\{f,g\}$ is a separatrix of $L$.
	\end{itemize}
\end{definition}

A $L$-interval is illustrated in Figure \ref{fig-interval}, arising from a foliation on $\Int(D^2)$. The complementary regions are represented in ligh blue. The corresponding leaves of the foliation are contained in a blue region, for visual purpose. 

\begin{figure}
	\begin{center}
		\begin{picture}(90,38)(0,0)
			\put(0,5){\includegraphics[width=90mm]{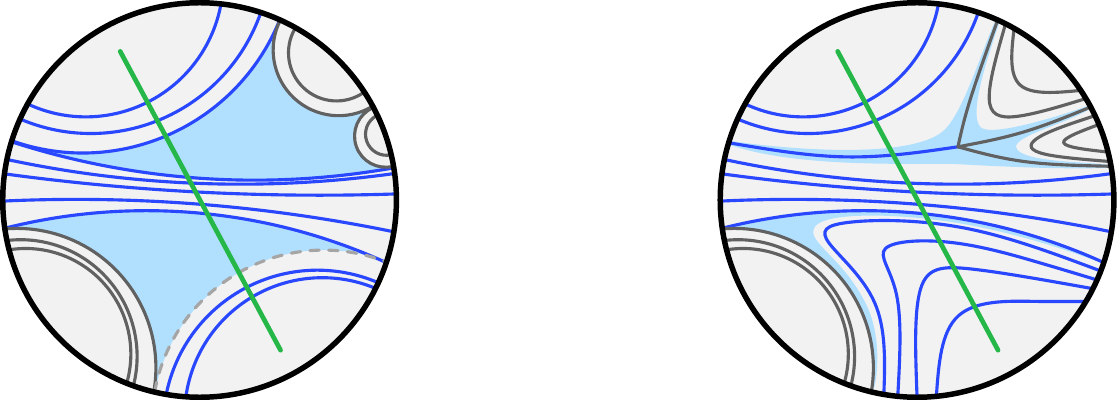}}
			\put(5.5,1){$L$-interval of}
			\put(2,-3){a pre-lamination}
			\put(56.5,1){corresponding leaves}
			\put(62.5,-3){for a foliation}
		\end{picture}
	\end{center}
	\caption{$L$-interval and transverse curve.}
	\label{fig-interval}
\end{figure}

\begin{lemma}\label{l.interval*}
	Let $\II$ be an open $L$-interval and $f\in\II$ be a leaf that bounds a star component $\Delta$. Then there is a unique boundary component $g\neq f$ of $\Delta$ for which $\{f,g\}$ is a separatrix of $L$ and $\{f,g\}$ lies in $\II$.
\end{lemma}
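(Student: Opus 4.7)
My plan is to derive the lemma from the four defining conditions of an open $L$-interval applied to the combinatorics of the star $\Delta$. Let $\Delta$ be bounded in cyclic order by $f_1,\dots,f_n$ with vertices $v_0,\dots,v_{n-1}\in S^1$, so that $f_j$ has endpoints $v_{j-1}, v_j$ (indices mod $n$), and write $f = f_i$. Since in a shell/star pre-lamination each leaf belongs to at most one star, the only separatrices of $L$ containing $f$ are $\{f_{i-1},f\}$ and $\{f,f_{i+1}\}$; the statement therefore reduces to proving that exactly one of $f_{i-1}, f_{i+1}$ lies in $\II$.

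For existence, I will apply condition~(3) of Definition~\ref{d.interval} to $f$ to produce two leaves of $\II$ separated by $f$, and retain the one, call it $h'$, lying on the $\Delta$-side of $f$. If $h'$ is itself a boundary leaf of $\Delta$, it is already some $f_j$ with $j \neq i$; otherwise $h'$ lies in the $\Delta$-side component of $\Int(D^2)\setminus f$ but outside $\overline{\Delta}$, hence is separated from $f$ by some boundary leaf $f_j$ of $\Delta$, which condition~(2) then places in $\II$. I then show $j \in \{i-1, i+1\}$: were $j$ non-adjacent to $i$, the pair $\{f, f_j\}$ would not be a separatrix (vacuously so when $n=3$), and condition~(4) would demand a leaf of $L$ separating $f$ from $f_j$; but any such leaf would cross the interior of the complementary region $\Delta$, a contradiction.

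For uniqueness, which I expect to be the more delicate step, I argue by contradiction: suppose both $f_{i-1}, f_{i+1} \in \II$. Applying condition~(3) to each yields $g_2 \in \II$ on the free side of $f_{i-1}$, with endpoints in the closed arc $[v_{i-2}, v_{i-1}]$, and $g_3 \in \II$ on the free side of $f_{i+1}$, with endpoints in $[v_i, v_{i+1}]$. Applying condition~(1) to the triple $\{g_2, f, g_3\}\subset \II$, one of them must separate the other two. A direct reading of the cyclic order on $S^1$ rules this out: the endpoints of $g_2$ and of $g_3$ all lie on the $\Delta$-side arc of $f = f_i$, so $f$ separates neither from the other; and the endpoints of $f$ and of $g_3$ all lie on the long arc of $g_2$, so $g_2$ separates neither, with the symmetric conclusion for $g_3$. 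This contradicts condition~(1).

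The main obstacle is the uniqueness step, whose substance is a careful cyclic-order bookkeeping on $S^1$, elementary once the positions of the endpoints are laid out. Existence is comparatively routine, combining conditions~(2) and~(4) with the convexity of $\Delta$ and the absence of leaves inside a complementary region. A minor subtlety, the case $n = 3$ where every pair of distinct boundary leaves of the star is a separatrix, only removes the non-adjacent sub-case in the existence argument and leaves the uniqueness analysis intact.
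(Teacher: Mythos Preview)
Your proof is correct, and your existence argument matches the paper's almost verbatim: use item~(3) to find a leaf on the $\Delta$-side of $f$, use item~(2) to extract a boundary leaf $f_j\in\II$, then use item~(4) to force adjacency (the paper leaves this last step implicit).

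Your uniqueness argument, however, is considerably more elaborate than it needs to be. The paper dispatches it in one line: if two distinct boundary leaves $g_1,g_2\neq f$ of $\Delta$ lay in $\II$, then $\{f,g_1,g_2\}\subset\II$ would violate item~(1) immediately, since any three boundary leaves of a star lie each on the $\Delta$-side of the other two and hence no one of them separates the remaining pair. Your passage through auxiliary leaves $g_2,g_3$ obtained from item~(3), followed by a cyclic-order case analysis on $S^1$, reproduces this contradiction but at the cost of several extra steps and some endpoint bookkeeping. In particular, your expectation that uniqueness is ``the more delicate step'' is inverted relative to the paper: uniqueness is the cheap half, and the only substance lies in extracting an adjacent boundary leaf for existence.
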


\begin{proof}
	From the item 4, $f$ separates two leaves $h,h'$ in~$\II$. Up to switching them, $h$ belongs on the side of $f$ that also contains $\Delta$. So either $h$ is a boundary component of $\Delta$, and we set $g=h$, or there is another boundary component $g$ of $\Delta$ that separates $f$ from $h$. By definition, $g$ belongs to $\II$, so $\{f,g\}$ lies in $\II^*$. The leaf $g$ is unique since three boundary components of a star component do not separates each others.
\end{proof}

Given a open $L$-interval $\II$, the set of $L^*$-leaves that are included in $I$ is called an \emph{open $L^*$-interval} and will be denoted by $\II^*$.

An open $L$-interval $\II$ is said \emph{simple} if there exists an open geodesic $\delta$, transverse to the leaves of $L$, that satisfies $\II=\{l\in L| l\cap\delta\neq\emptyset\}$. When a $L$-interval $\II$ is simple, we also say that $\II^*$ is a simple open $L^*$-interval. We will see later that all $L$-intervals are simple, but it is not easy to prove right away. So we use this notion as a technical tool for now.

\begin{lemma}\label{l.simple-interval-singulier}
	Let $L$ be a $\Pre^*$-pre-lamination. Every $L^*$-leaf belongs to a simple $L^*$-interval.
\end{lemma}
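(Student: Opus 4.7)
The plan is to construct, for the given $L^*$-leaf $f^*$, a transverse geodesic segment $\delta\subset\Int(D^2)$ so that $\II = \{l\in L : l\cap\delta\neq\emptyset\}$ is an open $L$-interval (Definition~\ref{d.interval}) containing $f^*$. I split into two cases: \textbf{Case A}, $f^* = \{f\}$ where $f$ bounds no star; and \textbf{Case B}, $f^* = \{f,g\}$ is a separatrix of a star $\Delta$ with shared vertex $v\in S^1$.

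In Case A, I pick $p\in f$ (away from $S^1$) and a direction $\theta$ transverse to $f$ at $p$. On each side of $f$ where $f$ bounds a shell $\Delta_\pm$ with root $r_\pm$, I constrain $(p,\theta)$ so that the geodesic ray from $p$ enters $\Delta_\pm$ and exits through $r_\pm$ (not through another leaf boundary); by convexity of $\Delta_\pm$ this is an open condition on $(p,\theta)$, and joint choice of $p$ along $f$ and $\theta$ lets the constraints for both sides be satisfied simultaneously. I then let $\delta$ be the geodesic through $p$ in direction $\theta$, truncated to length $\epsilon$ on each side, with $\epsilon$ at least large enough on any shell side to reach past $r_\pm$ and cross some leaf accumulating there. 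In Case B, I take $\delta$ to be a short geodesic near $v$ crossing both $f$ and $g$ transversally and passing briefly through $\Delta$ between them, with the analogous careful choice of direction to handle shells that $f$ or $g$ may bound on their non-$\Delta$ sides. The first two axioms of Definition~\ref{d.interval} are then immediate: leaves crossing $\delta$ are linearly ordered by their intersection points, the middle of any three separates the other two in $\Int(D^2)$, and if $h\in L$ separates two leaves of $\II$ then the arc of $\delta$ between their crossings goes from one side of $h$ to the other, hence must cross $h$ so that $h\in\II$. The third axiom holds by construction since $\delta$ crosses leaves on both sides of $f$, either from the accumulation of leaves directly onto $f$, or beyond $r_\pm$ from the accumulation of leaves onto $r_\pm$ (and in Case B, $f$ and $g$ themselves separate one another from a leaf crossed further along $\delta$).

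The main obstacle is the fourth axiom: if $l_1,l_2\in\II$ admit no leaf of $L$ separating them, then the arc of $\delta$ between their crossings lies in the closure of a single complementary region $\Delta'$ of $L$ with $l_1,l_2\in\partial\Delta'$, and I need $\Delta'$ to be a star with $\{l_1,l_2\}$ adjacent (hence a separatrix of $L$). This axiom fails exactly when $\delta$ crosses a shell via two leaf boundaries (instead of via one leaf and the root), or a star via two non-adjacent leaves. To exclude such bad crossings I invoke Lemma~\ref{l.calibrage} together with the no-bad-accumulation hypotheses: any sequence of problematic shells whose two $\delta$-crossings are leaves would yield shells converging to a geodesic with both long edges being leaves, contradicting the no-bad-accumulation of shells; likewise any sequence of problematic star crossings would yield stars converging to a geodesic with two non-adjacent long edges, contradicting the no-bad-accumulation of stars. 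Thus at any fixed scale only finitely many problematic complementary regions meet a given $\delta$, and I truncate $\delta$ on each side of $f$ to cut before the first such problematic region, while keeping $\epsilon$ above the threshold needed to preserve the third axiom. With this refined $\delta$, the set $\II$ is a simple open $L$-interval containing $f^*$, so the induced $L^*$-interval $\II^*$ contains $f^*$, as required.
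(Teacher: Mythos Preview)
Your approach is essentially the paper's: pick a transverse geodesic through $f$ (and $g$) that also crosses the roots of any adjacent shells, then shorten it using the no-bad-accumulation hypothesis so that no problematic shell or star crossing survives. The paper starts from a complete geodesic and shortens twice (once for shells, once for stars), cutting each time along a leaf of $L$; you start from a short segment and invoke Lemma~\ref{l.calibrage} for the finiteness, but the mechanism is the same.

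There is one genuine gap. You verify the third axiom of Definition~\ref{d.interval} only for the original leaf $f$ (or the pair $\{f,g\}$), but the axiom is universal: \emph{every} $h\in\II$ must lie strictly between two other leaves of $\II$. For an arbitrary $h\in\II$ and a side $S$ of $h$, if $h$ is accumulated by leaves on side $S$ then openness of $\delta$ supplies a nearby leaf in $\II$; but if $h$ bounds a shell $\Delta'$ on side $S$ you need $\delta$ to cross the root of $\Delta'$ so that leaves accumulating there lie in $\II$, and if $h$ bounds a star you need $\delta$ to cross the adjacent edge $h'$ and then recurse once on $h'$ (which cannot bound a second star, by the $\Pre^*$ hypothesis). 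This is precisely what your truncation is meant to guarantee, but you must argue it explicitly---and for this it matters that you truncate $\delta$ at a leaf crossing rather than at an arbitrary point, so that the open endpoint of $\delta$ lies on a leaf and not in the interior of some complementary region. With that choice, every complementary region that $\delta$ meets is both entered and exited, and your truncation then forces one crossing of each shell to be through the root and the two crossings of each star to be adjacent. The paper's proof makes exactly these points; once you add them, your argument is complete and coincides with the paper's.
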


\begin{proof}
	Given a $L^*$-leaf, we build a simple $L$-interval that contains it. We take a $L^*$-leaf, which is either a non-separatrix leaf $f$, or a separatrix $\{f,g\}$ of $L$.
	Let $\gamma\subset\Int(D^2)$ be a complete geodesic that satisfies the following properties:
	\begin{itemize}
		\item $\gamma$ crosses $f$,
		\item if $f$ bounds a shell component $\Delta$ on any side, $\gamma$ crosses the root of $\Delta$,
		\item if the $L^*$-leaf is a separatrix $\{f,g\}$, then $\gamma$ intersects $g$,
		\item if additionally $g$ bounds a shell on its other side, $\gamma$ crosses the root of that shell.
	\end{itemize}

	Notice that $\gamma$ is transverse to the leaves of $L$.
	Recall that $L$ admits no bad accumulations. So up to shortening $\gamma$ (by cutting it on each side along a leaf of~$L$), we may assume that for any shell $\Delta$ that intersects $\gamma$, $\gamma$ intersects the root of $\Delta$. By shortening it once more (by cutting on each side along the boundary of a star region), we may assume that for any star $\Delta$ that intersects $\gamma$, $\gamma$ intersects two adjacent boundary components of $\Delta$.

	Let $\II$ be the subset of leaves of $L$ that cross $\gamma$. The leaves in $\II$ are totally ordered from $\gamma$, so for any three leaves of $\II$, one of them separates the two others. So item 1 in the definition of $L$-interval is satisfied by $\II$.

	Given $h,h'\in \II$, every leaf separating $h$ from $h'$ crosses $\gamma$ and so belongs to~$\II$. So item 2 is satisfied.

	Let $h$ be a leaf in $\II$ and $S$ be a side of $h$. If $h$ is accumulated by leaves of $L$ on the side $S$, then some of them intersects $\gamma$ (since $\gamma$ is open) and so belongs to $\II$. If $h$ bounds a shell $\Delta$ on the side $S$, then $\gamma$ intersects the root of $\Delta$, and any leaf of $L$ close enough to the root $\Delta$ belongs to $\II$. If $h$ bounds a star $\Delta$ on the side $S$, then $\gamma$ intersects a boundary component $h'$ of $\Delta$ adjacent to $h$, and applying the same argument on the other side of $h'$, there exists a leaf of $\II$ on the side $S$. It follows that on each side of $h$, there is a leaf of $\II$, and these two leaves are separated by $h$. So item 3 is satisfied.

	Take two distinct leaves $h,h'\in\II$ and $D$ the connected component of $D^2\setminus h$ that contains $h'$. If there are leaves in $D$ that accumulate $h$, then they separate $h$ from $h'$ and cross $\gamma$. Otherwise, $h$ bounds a complementary region $\Delta\subset D$ on that side. If $\Delta$ is a shell then the root of $\Delta$ crosses $\gamma$ (by assumption on $\gamma$) and the leaves accumulating that roof cross $\gamma$ and separate $h$ from $h'$. If $\Delta$ is a star, then $\gamma$ crosses the next boundary component $h''$. Either $h'=h''$ holds, that is ${h,h'}$ is a separatrix, or $h''$ separates $h$ from $h'$. Thus item 4 is satisfied. So $\II$ is a simple open $L$-interval.
\end{proof}

Let $\II$ be an open $L$-interval and $\II^*$ the associated open $L^*$-interval. We equip $\II$ and $\II^*$ with two total orders. Assume first that $\II$ is simple, and let $\gamma\subset D^2$ be a geodesic that intersects exactly the leaves of $L$ that belong to $\II$. Let us choose an orientation on $\gamma$, and order $\II$ accordingly to that orientation. That is given two leaves $f,g\in\II$, we set $f\leq g$ if the orientation on $\gamma$ goes from $f$ to $g$. When $\II$ is not simple, we proceed as follows. We fix a based leaf $f_0\in\II$, and choose a co-orientation on $f_0$. For any leaf $f\in\II$, choose the co-orientation on $f$ for which there exists an oriented geodesic that intersects transversely $f$ and $f_0$, with two positive signs. Then given two leaves $f,g\in\II$, We set $f\leq g$ if $g$ is on the positive side of $f$.

Note that $(\II,\leq)$ may admit adjacent points, that is two distinct leaves $f,g\in\II$ that satisfy that no leaf $h\in\II$ has $f<h<g$. This is the case exactly when $\{f,g\}$ is a separatrix, that then belongs to $\II^*$. This has a consequence: no leaf has two adjacent point. So the order on $\II$ induces an well-defined total order $\leq$ on $\II^*$ that admits no adjacent pair of points. Note that for any $f,g,h\in\II^*$, $f<g<h$ holds if and only if $g$ separates $f$ and $h$ inside $D^2$.

\begin{lemma}\label{l.transversal-continuous}
	Let $\II^*$ be an open $L^*$-interval. Then there is an increasing bijection from $(\II^*,\leq)$ to $(\RR,\leq)$.
\end{lemma}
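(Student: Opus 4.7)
I would establish the bijection by verifying the four hypotheses of Cantor's order-theoretic characterization: $(\RR, \leq)$ is the unique, up to order-isomorphism, nonempty, order-dense, separable, Dedekind-complete linear order without endpoints. So the task reduces to checking these four properties on $(\II^*, \leq)$.

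\emph{No endpoints and density.} Density is exactly the observation recalled just before the lemma: the induced order on $\II^*$ has no adjacent pair. For the absence of endpoints, I pick any $[f] \in \II^*$ with representative $f \in \II$; item~3 of Definition~\ref{d.interval} produces $g, h \in \II$ with $g < f < h$. If $[g] \ne [f]$, a strictly smaller class exists; otherwise $g$ is the separatrix-partner of $f$, and a second application of item~3 at $g$ yields $g_1 < g$ with $[g_1] < [f]$. The symmetric argument works on the right side.

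\emph{Separability.} For a simple $L$-interval with transversal $\gamma$, the intersection map $l \mapsto l \cap \gamma$ is an injective, order-preserving embedding into $\gamma \simeq \RR$, and any subset of $\RR$ admits a countable order-dense subset. For general $\II$, Lemma~\ref{l.simple-interval-singulier} lets me cover $\II^*$ by a countable family of simple sub-intervals, obtained by attaching one (via Lemma~\ref{l.simple-interval-singulier}) to each leaf in a suitably chosen countable collection whose endpoints form a dense subset of $\{x \in S^1 : \exists y, (x,y) \in L\}$, and taking the union of the resulting countable dense subsets.

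\emph{Dedekind completeness.} Let $A \subset \II^*$ be nonempty and bounded above by $[u] \in \II^*$. Separability furnishes a strictly increasing cofinal sequence $[l_n]$ in $A$; I lift to representatives $l_n \in \II$, which form a bounded monotonous sequence of leaves of $G(L)$. By \thref{lem-monotone-leaf-lam}, they converge either to a leaf $m \in L$ or to an accumulated geodesic $g$. In the first case, $m$ separates $l_1$ from any representative of $[u]$, so item~2 of Definition~\ref{d.interval} puts $m \in \II$, and the $L^*$-class of $m$ on the side of approach (possibly enlarged to a separatrix if $m$ bounds a star on the far side) is the sought sup. In the second case, the shell/star hypothesis forces $g$ to be the root of a shell $\Sigma$ on the side opposite the accumulation; then item~1 of Definition~\ref{d.interval} applied to $l_1$ together with any two distinct non-root boundary leaves of $\Sigma$ rules out both from lying in $\II$ (none of the three would separate the other two), so exactly one non-root boundary leaf $m$ of $\Sigma$ lies in $\II$, and its $L^*$-class is the sup. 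Cantor's theorem then yields the increasing bijection with $(\RR, \leq)$. The main obstacle is making the separability step watertight when $\II$ is not simple: ensuring that the countable family of simple sub-intervals genuinely exhausts $\II^*$ requires delicate use of the countability and no-bad-accumulation hypotheses of $\Pre^*$ to forbid uncountable stackings of complementary regions along the order.
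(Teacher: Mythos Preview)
Your approach via Cantor's characterization is sound and closely parallels the paper's direct construction: the paper picks a countable Hausdorff-dense $Q\subset\,]f,g[$, maps it increasingly into $\QQ$, extends by suprema, and then proves bijectivity; its surjectivity argument is precisely your Dedekind-completeness step (a bounded monotone sequence of leaves converges via \thref{lem-monotone-leaf-lam} to a leaf or to the root of a shell, and in the shell case exactly one non-root boundary leaf lies in $\II$). Your treatment of endpoints, order-density, and completeness is correct.

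The one genuine gap is separability when $\II$ is not simple. Your proposed cover of $\II^*$ by simple sub-intervals around a countable collection of leaves is not justified: Lemma~\ref{l.simple-interval-singulier} produces a simple interval around each leaf but with no control on its size, so there is no reason the resulting intervals should exhaust $\II^*$. The ``delicate use'' of no-bad-accumulation you anticipate is neither spelled out nor in fact needed. The paper avoids the issue by taking $Q\subset\II$ countable and dense for the \emph{Hausdorff} topology on geodesics of $D^2$ (which exists by second-countability of $D^2$), and this $Q$ is automatically order-dense in $\II^*$: given $[h_1]<[h_2]$, use the absence of adjacent pairs twice to find $[h]<[h']$ strictly between them; item~4 of Definition~\ref{d.interval} then gives a leaf $l\in\II$ separating $h$ from $h'$; since ``separates $h$ from $h'$'' is an open condition on the pair of endpoints in $S^1$, any $q\in Q$ Hausdorff-close to $l$ also separates $h$ from $h'$, whence $[h_1]<[q]<[h_2]$. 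This one-line observation replaces your covering argument and uses nothing beyond second-countability.
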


An heuristic when $\II$ is simple, given by a geodesic $\gamma$, is as follows. Given a complementary component $I$ of $\II\cap\gamma$, denote by $\wt I$ the union of $I$ and of the end points of $I$ that lies on $\II$ (potentially only one end points lies on $\II$), and contracts $\wt I$ to a point. Then the resulting topological space is homeomorphic to $\RR$.

\begin{proof}
	Take a pair $f,g\in\II^*$ with $f<g$ and denote by $]f,g[$ the set of $h\in\II^*$ that satisfies $f<h<g$. We build an increasing function $F\colon\RR\to]f,g[$. For that, first observe that $]f,g[$ is not empty, since $\II^*$ admits no adjacent pair of points. And even $]f,g[$ is infinite using the same argument inductively. Let $Q$ be a countable and dense subset of $]f,g[$. By dense we mean that for any leaf $h\in\II$ that lies in between $f$ and $g$, there exists a sequence of points of $Q$ that converges toward $h$ for the Hausdorff topology.
	Note that $Q$ exists since any infinite subset of $D^2$ admits a dense subset.

	Since $Q$ is countable and totally ordered, there exists an increasing bijection $F_{|Q}\colon]f,g[\to\QQ$. Then for any element $h\in]f,g[$, we have
	$$\sup_{x\in]f,h[\cap Q}F_{|Q}(x)\leq\inf_{x\in]f,h[\cap Q}F_{|Q}(x)$$
	and we actually have equality since $F_{|Q}$ has a dense image. We set $F(x)=\sup_{x\in]f,h[\cap Q}F_{|Q}(x)$.
	By construction, $F$ is non-decreasing. The facts that $F$ is non-decreasing and $Q\xrightarrow{F}\QQ$ is bijective implies that $F$ is injective.

	To see that it is surjective, take $x\in\RR\setminus\QQ$ and $y_n\in\QQ$ a sequence that converges toward $x$ from below. Then the $L^*$-leaf $f_n=F^{-1}(y_n)$ converges inside $D^2$ on a geodesic $\gamma$. We consider several cases on $\gamma$.

	Assume first that $\gamma$ is a leaf of $L$. If $\gamma$ does not bound a star component, then $\gamma$ belongs to $\II'$ as it separates $f$ and $g$. Then $F(\gamma)$ is the supremum of $F(f_n)=y_n$, that is $F(\gamma)=x$ holds. When $\gamma$ bounds a star component $\Delta$, there exists another boundary component $\gamma'$ of $\Delta$ that satisfies that $\{\gamma,\gamma'\}$ lies inside $\II^*$, and thus inside $]f,g[$. Then $F(\{\gamma,\gamma'\})=x$ follows from the same argument.

	Assume now that $\gamma$ is not a leaf of $L$. It is then the root of a shell component $\Delta$. One boundary component $h$ of $\Delta$ separates $\gamma$ and $g$. Then $h$ belongs to $\II$, and the same argument as above show that the image of $h$ (or a separatrix that contains $h$) by $F$ is equal to $x$. As a conclusion, $F$ is surjective.

	We proved that given any two distinct $L^*$-leaves $f,g\in\II^*$, the set of leaves of $\II^*$ between $f$ and $g$ is ordered as $\RR$. The item 4 in the definition of open $L$-interval implies that $\II^*$ has no maximum nor minimum. It additionally admits a dense subset, so it is ordered as $\RR$.
\end{proof}

\begin{lemma}
	Any open $L$-interval is simple.
\end{lemma}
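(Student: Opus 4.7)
The approach is to construct the required open geodesic $\delta$ explicitly as a geodesic chord of $D^2$, choosing its two endpoints on $S^1$ from the limiting positions of the endpoints of leaves of $\II$.

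First, I transport the order $\II^* \cong (\RR,\leq)$ given by Lemma~\ref{l.transversal-continuous} onto $\II$, and label the two endpoints $a_f, b_f \in S^1$ of each leaf $f \in \II$ coherently, so that $f \mapsto a_f$ and $f \mapsto b_f$ are monotone on $\II$ going in opposite directions on $S^1$ (possible because leaves of $\II$ are pairwise non-crossing chords). The monotone limits $a^\pm = \lim_{f \to \pm\infty} a_f$ and $b^\pm = \lim_{f \to \pm\infty} b_f$ exist on $S^1$, and the closures $A = \overline{\{a_f\}_{f \in \II}}$ and $B = \overline{\{b_f\}_{f \in \II}}$ lie in two disjoint closed arcs of $S^1$ whose complement is the disjoint union of two open gap arcs $G^+$ (bounded by $\{a^+, b^+\}$) and $G^-$ (bounded by $\{a^-, b^-\}$). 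At each end of $\II$ where $a^\pm \neq b^\pm$, the corresponding limit chord $(a^\pm, b^\pm)$ is, by Lemma~\ref{lem-monotone-leaf-lam} combined with item~3 of Definition~\ref{d.interval} (so that $\II$ has no maximum/minimum), either a leaf of $L$ not in $\II$, or the root of a shell complementary region of $L$ on the side of $(a^\pm, b^\pm)$ opposite $\II$.

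Next, I set $\delta = (p, q)$ with $p = a^+ \in \overline{G^+}$ and $q = a^- \in \overline{G^-}$ (replacing $a^\pm$ by $b^\pm$ symmetrically if needed to ensure $p \neq q$; item~3 of Definition~\ref{d.interval} guarantees that $\II$ is non-trivial, so such a choice is always possible). I then verify $\II = \{\ell \in L \mid \ell \cap \delta \neq \emptyset\}$. For every $f \in \II$, the endpoints $a_f \in A$ and $b_f \in B$ lie on opposite arcs of $S^1 \setminus \{p, q\}$ by construction, so $\delta$ and $f$ cross in $\Int(D^2)$. Conversely, if $h = (c, d) \in L$ crosses $\delta$, then $\{c, d\}$ interleaves $\{p, q\}$ on $S^1$, and a case analysis on the arcs containing $c$ and $d$, combined with the pre-lamination property of $L$ (no two leaves cross) and the convexity of $\II$ (item~2 of Definition~\ref{d.interval}), forces $h \in \II$.

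The main obstacle is this last verification, and specifically the handling of the leaves of $L \setminus \II$ that lie ``close to $\II$'' on $S^1$. These fall into three classes: (i) leaves of $L$ sharing an endpoint on $S^1$ with a limit leaf of $L$ at an end of $\II$; (ii) boundary leaves of a shell complementary region of $L$ whose root is the limit chord at an end of $\II$; (iii) non-adjacent boundary leaves of a star complementary region of $L$ situated at a separatrix gap $\{f_i, f_{i+1}\}$ inside $\II$. One checks that in each case the endpoints of such a leaf are confined to a single closed arc of $S^1 \setminus \{p, q\}$: in type (ii) both endpoints lie in $\overline{G^+}$ or $\overline{G^-}$, and in type (iii) both endpoints lie in the $B$-arc between $b_{f_i}$ and $b_{f_{i+1}}$, since the star's adjacent pair $\{f_i, f_{i+1}\}$ shares an endpoint on the $A$-side. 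The critical choice $p = a^+$ (sharing its $S^1$-endpoint with the limit chord) ensures that $\delta$ meets the ``type (i)/(ii)'' leaves only on $S^1$, never in $\Int(D^2)$; and the no-bad-accumulation hypothesis on $L$ rules out the geometric pathologies that would otherwise let such boundary leaves accumulate uncontrollably along $\delta$.
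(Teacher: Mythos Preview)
Your approach is correct and shares the paper's skeleton --- both identify the two limiting gap arcs on $S^1$ and pick a chord with one endpoint in each --- but the specific choice of $\delta$ differs. The paper takes $\theta^\pm$ in the \emph{interior} of the intersections $I^\pm=\bigcap_f I_f^\pm$ (when these are nondegenerate), forms $\delta_0=(\theta^+,\theta^-)$, and then restricts to the convex hull $\delta\subset\delta_0$ of the intersection points $\delta_0\cap\II$; the convex-hull step immediately discards any leaves of $L\setminus\II$ that $\delta_0$ might meet in the gap regions, so the converse inclusion needs essentially no case analysis. The degenerate case ($I^\pm$ a point) is treated separately via the countability axiom. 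You instead take $p=a^+$, $q=a^-$ on the \emph{boundary} of the gap arcs and use the full chord. This has the pleasant feature of handling the degenerate and nondegenerate cases uniformly (the same countability argument shows no $f\in\II$ has $a_f\in\{a^+,a^-\}$), but it replaces the paper's one-line convex-hull step by your longer case analysis on leaves of $L\setminus\II$.

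Two small corrections. First, in your type~(iii) the shared vertex of a separatrix pair $\{f_i,f_{i+1}\}$ can just as well lie on the $B$-side; in that case the remaining star edges have both endpoints in the $A$-arc between $a_{f_i}$ and $a_{f_{i+1}}$, and they still miss $\delta$ for the same reason. Second, the no-bad-accumulation hypothesis is not used here: the lemma needs only the countability axiom of $\Pre^*$ together with items~2 and~3 of Definition~\ref{d.interval}.
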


\begin{proof}
	Any leaf $f$ of $\II$ ends on two points which delimit in $S^1$ two compact intervals $I_f^+$ and $I_f^-$. We may choose these two intervals uniformly on $f$ so that given two leaves $f_1<f_2$ in $\II$, we have $I_{f_1}^+\subset I_{f_2}^+$ and $I_{f_2}^-\subset I_{f_1}^-$. Then the two intersections
	$$\bigcap_{f\in\II}I_f^+ \text{ and } \bigcap_{f\in\II}I_f^-$$
	are monotonous, so the intersections are non-empty and compact. Denote by $I^+$ and $I^-$ these intersections. We choose a point $\theta^\pm$ inside $I^\pm$. If $I^\pm$ is not reduced to a point, we choose $\theta^\pm$ in its interior. Then denote by $\delta_0$ the open geodesic that goes from $\theta^+$ to $\theta_-$. Assume temporally that the two sets $I^\pm$ are not reduced to a points, then $\delta_0$ intersects all the leaves of $\II$. In that case, denote by $\delta$ the convex hull (in $\delta_0$) of the intersection points of $\delta_0$ with the leaves of $\II$. From the definition of $L$-intervals, $\delta$ is an open interval inside $\delta_0$, and it intersects exactly the leaves of $L$ that are in $\II$. So $\II$ is simple.

	We need to prove the same conclusion when one or two of $I^\pm$ is reduced to a point. Assume first that only $I^-$ is reduced to a point, which is equal to $\theta^-$. We claim that no leaves of $\II$ end on $\theta^-$. Indeed if it was the case, let say $f\in\II$ ends on $\theta^-$, then any leaf $h\in\II$ on the side of $f$ that bounds $I_f^-$ also end on $\theta^-$. Then Lemma~\ref{l.transversal-continuous} implies that there is an uncountable amount of leaves ending on $\theta^-$, which contradicts the property $\Pre^*$. Thus no leaf of $\II$ ends on $I$, and all leaves of $\II$ intersects transversely $\delta_0$. The same argument as above concludes this case.

	Assume now that the two intervals $I^\pm$ are reduced to a point. The same arguments shows that no leaves of $\II$ end on $\theta^\pm$, and that additionally $\theta^-$ and $\theta^+$ are separated by any leaves of $\II$. So the same argument concludes the proof.
\end{proof}

\subsection{The $L^*$-interval topology}

In this section, we equip the set of $L^*$-leaves with a topology, whose basis are the open $L^*$-intervals.

\begin{lemma}\label{l.intersection-intervals}
	Given to open $L^*$-intervals $\II^*_1,\II^*_2$, the intersection $\II^*_1\cap \II^*_2$ is either empty of an open $L^*$-interval itself.
\end{lemma}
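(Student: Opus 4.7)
Suppose $\II_1^* \cap \II_2^* \neq \emptyset$. My plan is to construct an open $L$-interval $\JJ'$ whose $L^*$-leaves are exactly $\II_1^* \cap \II_2^*$. The natural candidate is $\JJ := \II_1 \cap \II_2$, viewed as a set of leaves of $L$. I will first check that $\JJ$ is convex in the separation order of either $\II_i$ using axiom~2 of $\II_i$ (a leaf separating two leaves of $\II_i$ is itself in $\II_i$), from which axiom~1 of Definition~\ref{d.interval} follows directly. Axiom~4 will follow by taking the separating leaf provided by axiom~4 of $\II_1$ and observing that, being between two leaves of $\II_2$, it lies in $\II_2$ by axiom~2. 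So axioms 1, 2, 4 for $\JJ$ are inherited with essentially no work.

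The delicate point is axiom~3. I will show that, in the common linear order on $\JJ$, it has at most two endpoints, and each such endpoint $f^\pm$ must bound a star component on the side where $\JJ$ terminates. To do this, I fix simple realizations $\delta_1, \delta_2$ of $\II_1, \II_2$ by geodesic transversals, and I analyze the structure of $L$ on the ``below'' side $S^-$ of a putative minimum $f^-$ of $\JJ$. The $\Pre^*$ hypothesis imposes a trichotomy: each side of $f^-$ is either regular, bounds a shell, or bounds a star. If $f^-$ is regular on $S^-$, leaves of $L$ accumulate $f^-$ geometrically and those sufficiently close cross both $\delta_1, \delta_2$, producing leaves of $\JJ$ on the forbidden side --- contradiction. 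If $f^-$ bounds a shell on $S^-$, both transversals must exit the shell through its root (axiom~4 applied to two distinct boundary leaves of a shell fails, since no leaf of $L$ outside the shell can separate two of its boundary leaves), and the no-bad-accumulation hypothesis produces leaves of $L$ accumulating the root which cross both $\delta_1, \delta_2$ near their exit points --- again contradiction. Hence $f^-$ must bound a star on $S^-$, and symmetrically for a maximum $f^+$.

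Next I will show that such a star-bounding extremum contributes no $L^*$-leaf to $\II_1^* \cap \II_2^*$. Let $h_1^-$ and $h_2^-$ denote the star-adjacent leaves crossed by $\delta_1, \delta_2$ respectively; adjacency is forced by the same axiom~4 argument applied to a star. If $h_1^- = h_2^-$, their common value would lie in $\JJ$ on $S^-$, contradicting the minimality of $f^-$, so $h_1^- \neq h_2^-$. Consequently the separatrices $\{f^-, h_1^-\} \in \II_1^*$ and $\{f^-, h_2^-\} \in \II_2^*$ are distinct, so neither belongs to $\II_1^* \cap \II_2^*$; and since $f^-$ bounds a star, $\{f^-\}$ is not itself an $L^*$-leaf. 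So no $L^*$-leaf containing $f^-$ lies in $\II_1^* \cap \II_2^*$, and the same analysis applies to $f^+$.

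Finally I set $\JJ'$ to be $\JJ$ with its minimum and/or maximum removed. Axioms 1, 2, 4 remain valid on $\JJ'$, since the removed leaves sit at the extremal order positions and cannot appear as intermediate separators. For axiom~3 at any $g \in \JJ'$, I will invoke the trichotomy again on the \emph{non-star} side of each removed endpoint $f^\pm$ --- which must be either regular or bound a shell, since $\Pre^*$ forbids a leaf from bounding two stars --- to produce sequences of leaves of $\JJ$ accumulating $f^\pm$ from the interior, so $\JJ'$ has neither minimum nor maximum. The equality $\JJ'^* = \II_1^* \cap \II_2^*$ then concludes. The main obstacle throughout is the shell-root accumulation argument in paragraph 2, where the no-bad-accumulation hypothesis is essential to transfer density of leaves near the root of the shell across both transversals simultaneously.
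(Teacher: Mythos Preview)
Your argument is correct. One small slip: in the shell case you invoke the no-bad-accumulation hypothesis to produce leaves accumulating the root, but this is not needed---the root of a shell is by definition a boundary component that is \emph{not} a leaf, hence is an accumulated geodesic, so leaves of $L$ already accumulate it from outside $\Delta$. No-bad-accumulation concerns sequences of complementary regions and plays no role at this step.

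Your route differs genuinely from the paper's. The paper works directly at the $L^*$-level: it takes the order-bijection $F\colon\II_1^*\to\RR$ from Lemma~\ref{l.transversal-continuous}, notes that $I=F(\II_1^*\cap\II_2^*)$ is an interval by axiom~2, and then shows $I$ is open by contradiction. If $I$ had a maximum, the corresponding $L^*$-leaf would, on its forward side, be either regular or bound a shell---the star case is implicitly excluded, since an $L^*$-leaf is either a non-separatrix leaf (which bounds no star) or a separatrix (whose outer side cannot bound a second star by the $\Pre^*$ condition). In either case a few lines produce leaves of $\II_1^*\cap\II_2^*$ beyond the supposed maximum. What the paper's approach buys is brevity: working with $L^*$-leaves absorbs the star bookkeeping, and the bijection to $\RR$ reduces everything to ``$I$ is an open interval''.

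Your approach, by contrast, descends to the $L$-level with concrete transversals $\delta_1,\delta_2$, so the star case genuinely appears at the extrema of $\JJ=\II_1\cap\II_2$ and must be handled by the trimming step $\JJ\rightsquigarrow\JJ'$. What this buys is self-containment: you verify all four axioms for $\JJ'$ by hand and never invoke Lemma~\ref{l.transversal-continuous}. Both arguments use the same trichotomy (regular/shell/star) on each side of a boundary leaf; the paper simply arranges for the star branch never to arise.
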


\begin{proof}
	Let $\II^*_1,\II^*_2$ be two open $L^*$-intervals, and $\II_1,\II_2$ the corresponding open $L$-intervals. We assume that $\II^*_1\cap \II^*_2$ is not empty.

	Let $F\colon\II^*_1\to\RR$ be an increasing bijection given by Lemma~\ref{l.transversal-continuous}. It follows from the item 2 in the definition of open $L$-intervals that $I=F(\II^*_1\cap\II^*_2)$ is a non-empty interval inside $\RR$. We first prove by contradiction that $I$ is open. So assume that $I$ admits a maximum $x=\max I$ and write $f=F^{-1}(x)$. We denote by $D$ the connected component of $D^2\setminus f$ that contains $F^{-1}(]x,+\infty[)$. Let us consider two cases on $f$ and $D$.

	First assume that $f$ is accumulated by leaves of $L$ inside $D$. From item 3, there exists a leaf $g$ in $D$ that lies inside $\II_2$. Take a leaf $h$ in $D$ very close to $f$. Then it separates $f$ and $g$, so it belongs to $\II_2$. Similarly if $h$ is close enough to $f$, it belongs to $\II_1$. Since $L$ has at most countably many separatrix, $h$ may be chosen to be a non-separatrix. Then $F(h)>F(f)=x$ holds true, which contradicts our assumption. So this case is not possible.

	Secondly, assume that $f$ bounds a shell region $\Delta$ inside $D$. Then both $\II_1$ and $\II_2$ contains any leaf of $L$ that are close enough to the root of $\Delta$. The same argument yields a contradiction in this case too.

	I follows from above that $I$ has no maximum. Similarly, $I$ has no minimum, and it is an non-empty open interval. It easily follows that $\II^*_1\cap\II^*_2$ is an open $L^*$-interval. We let the details to the reader.
\end{proof}

We equip $\Leaf^*(L)$ with the topology generated by open $L^*$-intervals: an open subset is a union of $L^*$-intervals.
An immediate corollary of Lemma~\ref{l.transversal-continuous} and Lemma~\ref{l.intersection-intervals} is the following.

\begin{lemma}\label{l.R}
	Every open $L^*$-interval $\II^*$ is homeomorphic to $\RR$.
\end{lemma}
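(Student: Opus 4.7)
By Lemma~\ref{l.transversal-continuous} there is an order-isomorphism $F\colon(\II^*,\le)\to(\RR,\le)$, and by Lemma~\ref{l.intersection-intervals} the subspace topology on $\II^*\subset\Leaf^*(L)$ admits, as a basis, the open $L^*$-intervals contained in $\II^*$. The plan is to identify this subspace topology with the order topology of $(\II^*,\le)$; once this is done $F$ becomes automatically a homeomorphism onto $\RR$, since $\RR$ has no extrema and its order topology is the usual one.

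First I will show that every open $L^*$-subinterval $\mathcal{J}^*\subset\II^*$ is an open order-interval of $(\II^*,\le)$. The absence of extrema in $\mathcal{J}^*$ comes directly from property~(3) of Definition~\ref{d.interval}. For order-convexity, take $f,g\in\mathcal{J}^*$ and $h\in\II^*$ with $f<h<g$. Letting $f'$ (resp.~$g'$) be the upper leaf of $f$ (resp.~the lower leaf of $g$) when it is a separatrix, and the leaf itself otherwise, one has $f',g'\in\mathcal{J}$. Each leaf underlying $h$ then separates $f'$ from $g'$ in $D^2$ --- when $h$ is a separatrix $\{h_1,h_2\}$ this uses that the two adjacent boundary leaves of a common star region both separate any two external points lying on opposite sides of that star. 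Property~(2) of Definition~\ref{d.interval} applied to $\mathcal{J}$ then places every leaf underlying $h$ in $\mathcal{J}$, so $h\in\mathcal{J}^*$.

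Conversely, I will show that every open order-interval $\mathcal{K}^*=\{k\in\II^*\mid f_a<k<f_b\}$ (with $f_a<f_b$ in $\II^*$) is an open $L^*$-interval. Define $a\in\II$ to be $f_a$ itself if $f_a$ is a singleton and the upper of its two leaves if $f_a$ is a separatrix, and define $b\in\II$ in the symmetric way from $f_b$. Set
\[
  \mathcal{K}=\{k\in\II\mid a<k<b\ \text{in the order of }\II\}.
\]
Properties~(1), (2), (4) of Definition~\ref{d.interval} for $\mathcal{K}$ follow from the corresponding properties of $\II$, using that strict order on $\II$ coincides with $D^2$-separation. Property~(3) requires showing that for each $k\in\mathcal{K}$, neither $a$ nor $b$ is adjacent to $k$ in $\II$; otherwise $\{a,k\}$ (say) would be a separatrix of $L$ and $a$ would belong to a star, contradicting either the definition of a singleton $L^*$-leaf (if $f_a$ is a singleton, whose definition forbids it from bounding a star) or condition~(3\,bis) of $\Pre^*$ that no leaf lies in two distinct stars (if $f_a$ is a separatrix, since $a$ already lies in the star bounded by $f_a$). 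A direct case analysis on singletons versus separatrixes then identifies the $L^*$-leaves included in $\mathcal{K}$ with exactly the order-interval $\{k\in\II^*\mid f_a<k<f_b\}$, so $\mathcal{K}^*$ is indeed an open $L^*$-interval.

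Combining the two steps, the subspace topology and the order topology on $\II^*$ share a common basis, hence coincide, and $F$ becomes a homeomorphism $\II^*\simeq\RR$. The main delicate point is the second step: condition~(3\,bis) of $\Pre^*$ (``a leaf belongs to at most one star'') is invoked precisely to exclude parasitic adjacencies at the endpoints $a$, $b$ of the candidate $L$-interval $\mathcal{K}$; without it, property~(3) of Definition~\ref{d.interval} could fail.
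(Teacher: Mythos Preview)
Your proof is correct and is precisely the argument the paper compresses into the single line ``immediate corollary of Lemma~\ref{l.transversal-continuous} and Lemma~\ref{l.intersection-intervals}'': you use the order-bijection $F$ from the first lemma and the basis property from the second, then verify that the subspace topology on $\II^*$ coincides with its order topology so that $F$ becomes a homeomorphism. Your Step~3 (showing every bounded order-interval of $\II^*$ is itself an open $L^*$-interval, invoking condition~(3\,bis) to rule out endpoint adjacencies) is exactly the detail the paper leaves implicit.
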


We start to build a singular planar structure on $\Leaf^*(L)$.

\begin{lemma}\label{l.singular-leafspace}
	Let $L$ be a $\Pre^*$-pre-lamination. Then $\Leaf^*(L)$ is a connected non-Hausdorff $1$-manifold with the following properties:
	\begin{itemize}
		\item every non-separatrix point in $\Leaf^*(L)$ disconnects $\Leaf^*(L)$,
		\item for every star component with $n$ edges, the corresponding set of $n$ separatrixes disconnects $\Leaf^*(L)$ in $n$ components,
		\item two separatrixes (pair of leaves) sharing a leaf are non-separated.
	\end{itemize}
\end{lemma}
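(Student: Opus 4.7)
The plan is to first establish that $\Leaf^*(L)$ is a non-Hausdorff $1$-manifold in the sense of Section~\ref{ss.tame}: the local Euclidean structure follows from Lemma~\ref{l.simple-interval-singulier} (every $L^*$-leaf lies in a simple $L^*$-interval) together with Lemma~\ref{l.R} (every open $L^*$-interval is homeomorphic to $\RR$), while a countable basis is obtained by covering $\Leaf^*(L)$ with the simple $L^*$-intervals defined by a countable family of geodesics of $\Int(D^2)$ with endpoints in a countable dense subset of $S^1$ (each such interval being itself second countable as it is homeomorphic to $\RR$). For connectedness, I would connect any two $L^*$-leaves $\alpha,\beta$ by picking representatives and a piecewise geodesic path in $\Int(D^2)$ from one to the other with pieces transverse to $L$; the corresponding chain of simple $L^*$-intervals overlaps pairwise and produces a connected chain from $\alpha$ to $\beta$.

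For the first property, let $\alpha=\{f\}$ be a non-separatrix (so $f$ bounds no star). The leaf $f$ divides $\Int(D^2)$ into two open components $D_+$ and $D_-$, and every leaf $g\in L\setminus\{f\}$ lies entirely in one of the $D_\pm$ since leaves of $L$ do not cross. This yields a partition of $\Leaf^*(L)\setminus\{\alpha\}$ into two subsets, both non-empty by density of $L$. Each is open: given an $L^*$-leaf in, say, the $+$ side, the transverse geodesic in Lemma~\ref{l.simple-interval-singulier} can be shortened so as not to cross $f$, producing a simple $L^*$-interval disjoint from $\{\alpha\}$ and contained in the $+$ side. For the second property, consider a star $\Delta$ with boundary leaves $f_1,\dots,f_n$ (cyclically) and separatrixes $\sigma_i=\{f_i,f_{i+1}\}$. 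The complement of $\bar\Delta$ in $\Int(D^2)$ has exactly $n$ open connected components $U_1,\dots,U_n$, and every leaf of $L\setminus\{f_1,\dots,f_n\}$ lies entirely in one $U_i$. The resulting partition of $\Leaf^*(L)\setminus\{\sigma_1,\dots,\sigma_n\}$ into $n$ classes is again non-empty (density), open (same shortening argument restricted to $U_i$), and connected by chains of simple intervals inside each $U_i$, yielding exactly $n$ components.

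For the third property, fix two separatrixes $\sigma=\{f,g\}$ and $\sigma'=\{g,h\}$ sharing the leaf $g$ of a common star $\Delta$. Since $g$ belongs to at most one star (property $\Pre^*$), on its non-star side $g$ is either regular or bounds a shell, and in either case one can pick a sequence $h_n\in L$ of non-separatrix leaves accumulating on $g$ from that side. For any simple $L^*$-interval $\II^*$ containing $\sigma$, the defining geodesic $\gamma$ of $\II$ crosses both $f$ and $g$ by the construction of Lemma~\ref{l.simple-interval-singulier}, hence extends on the non-star side of $g$; for $n$ large, $h_n$ crosses $\gamma$ and so $[h_n]\in\II^*$. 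Analogously $[h_n]$ lies in any simple neighborhood of $\sigma'$ for $n$ large, so the sequence $[h_n]$ converges in $\Leaf^*(L)$ to both $\sigma$ and $\sigma'$, showing they are non-separated.

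The main technical obstacle lies in the shortening operations: whenever a simple interval must be restricted to a prescribed open region $D_+$ or $U_i$, one needs to verify that the shortened geodesic still satisfies the ``no bad accumulation'' cut-offs required to define a simple $L$-interval in Lemma~\ref{l.simple-interval-singulier}, and still contains the prescribed leaves or separatrixes. A related point is confirming that the partition in the star case has \textit{exactly} $n$ components rather than merely at least $n$, which requires a careful chain-of-simple-intervals argument inside each $U_i$ relying on the connectedness of $U_i$ in $\Int(D^2)$.
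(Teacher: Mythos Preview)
Your proposal is essentially correct and follows the same overall architecture as the paper's proof, but the execution differs in two places worth noting.

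For connectedness, the paper avoids your chain-of-intervals argument entirely. Instead, for each open $L$-interval $\II$ it defines $D_\II\subset\Int(D^2)$ as the union of the leaves in $\II$ together with the complementary regions they bound, checks that $D_\II$ is open and connected, and then for each connected component $A$ of $\Leaf^*(L)$ sets $D_A=\bigcup_{\II\subset A}D_\II$. The $D_A$ form an open partition of $\Int(D^2)$, so connectedness of the disc forces a single component. This sidesteps the case analysis your chain argument would need (what happens when the path crosses a shell through two non-root leaves, through the root, through a star, etc.).

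For the ``exactly $n$'' part of the star property, the paper also argues locally rather than proving connectedness of each $U_i$: it picks a simple $L^*$-interval $\II_i$ through each separatrix $\sigma_i=\{f_i,f_{i+1}\}$, observes that $\II_i$ and $\II_{i+1}$ overlap on the free side of $f_{i+1}$, and concludes that the neighborhood $\bigcup_i\II_i$ minus the $n$ separatrixes has exactly $n$ pieces. Your approach via connectedness of each $U_i$ would work too, but again needs the same chain machinery.

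One simplification for your ``shortening'' concern: you do not need the shortened geodesic to define a new simple interval. If $\beta$ lies in a simple interval $\JJ^*\ni\alpha=\{f\}$, then $\JJ^*\cong\RR$ and $\JJ^*\setminus\{\alpha\}$ already has two open components, the one containing $\beta$ being an open neighborhood of $\beta$ entirely on one side of $f$. No re-verification of the $L$-interval axioms is needed. The same trick handles the star case.
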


\begin{proof}
	Every point in $L^*$ admits a neighborhood which is a a simple open $L^*$-interval, homeomorphic to $\RR$, and one easily checks that $L^*$ has a countable basis of its topology. Thus $\Leaf^*(L)$ is a non-Hausdorff $1$-dimensional manifold.

	We prove that $\Leaf^*(L)$ is connected.
	Let $\II$ be an open $L$-interval. We denote by $D_\II$ the union of the leaves in $\II$, and of the complementary region that are bounded by a leaf of $\II$. Observes that $D_\II$ is connected, since any leaf or complementary region that separates two leaves of $\II$ are actually inside $D_\II$. Also $D_\II$ is open. Indeed for any $x\in D_\II$, if $x$ belongs to a complementary region $\Delta$, then $\Delta$ is a neighborhood of $x$ and it is included in $D_\II$. If $x$ is a leaf of $L$, then any leaf, and also any complementary region, close enough to $x$ lies in $D_\II$.

	Given a connected component $A\subset\Leaf^*(L)$, denote by $D_A$ the union of the sets $D_\II$ for all open $L$-interval $\II$ that are included in $A$. If $X$ is the set of connected components of $\Leaf^*(L)$, then the set of $D_A$ for $A\in X$ yields a partition of $D^2$ in open subsets. It follows from the connectedness of $D^2$ that $\Leaf^*(L)$ is connected.

	We now start to prove item A in the lemma.
	Every non-separatrix $f\in\Leaf^*(L)$ corresponds to a leaf of $L$ that disconnects the disc in two discs $D_1,D_2$. Each $D_i$ containing some leaf of $L$, so denote by $L_{D_i}$ the set of leaves of $L$ that lie inside $D_i$. For any $L$-interval $\II$, either $\II$ lies inside one of the $L_{D_i}$, or $\II\setminus\{f\}$ splits in two $L$-interval, each of them is contained inside $L_{D_i}$. It follows that $L_{D_i}$ projects on an open subset of $\Leaf^*(L)$. Thus $\Leaf^*(L)\setminus\{f\}$ has at least two connected components. As $f$ splits a chart in two components, $f$ splits $\Leaf^*(L)$ in exactly two components.

	The third point in the Lemma~follow from Lemma~\ref{l.simple-interval-singulier}.

	Let $\Delta$ be a star component with $n$ edges. The boundary $\partial\Delta$ splits the disc in $n+1$ connected components, one of them contains no leaf of $L$ (the interior of $\Delta$), and the $n$ other are bounded each by one leaf pf $L$. The same argument as above yields that each of these $n$ components project on an open subset of $\Leaf^*(L)$. Thus the set of separatrixes of $\Delta$ splits $\Leaf^*(L)$ in at least $n$ connected components.
	Let $f_i$ be the leaves on the boundary of $\Delta$, cyclically ordered.
	For each separatrix $(f_i,f_{i+1})$, take an $L$-interval $\II_i$ that contains that separatrix.
	The union $\cup_i\II_i$ is a neighborhood of the set of separatrixes of $\Delta$ inside $\Leaf^*(L)$.
	Observe that $\II_i$ and $\II_{i+1}$ intersects on the free side $f_{i+1}$.
	So the set of $n$ separatrixes disconnects $\cup_i\II_i$ in exactly $n$ open-subsets, one for the free side of each $f_i$.
	It follows that the set of separatrixes of $\Delta$ splits $\Leaf^*(L)$ in $n$ components.
\end{proof}

\begin{corollary}\label{c.leafspace}
	If $L$ is a $\Pre$-pre-lamination, then $\Leaf^*(L)$ is a connected simply connected non-Hausdorff $1$-manifold.
\end{corollary}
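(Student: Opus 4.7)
The plan is to deduce this as a direct corollary of Lemma~\ref{l.singular-leafspace} together with the Haefliger characterization of simply connected non-Hausdorff $1$-manifolds recalled at the beginning of Section~3.2.

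First, observe that since $L$ is a $\Pre$-pre-lamination, property~3 (not 3 bis) is in force: $L$ is a shell pre-lamination, so no complementary region of $L$ is a star. Consequently, by the definition of $L^*$-leaves in Section~5.1, there are no separatrix $L^*$-leaves: every element of $\Leaf^*(L)$ is a singleton leaf of $L$, and in particular $\Leaf^*(L) = L$ as a set, with no separatrix points in the sense of Lemma~\ref{l.singular-leafspace}.

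Next, applying Lemma~\ref{l.singular-leafspace} (which is stated for $\Pre^*$-pre-laminations, and $\Pre$ is a stronger assumption than $\Pre^*$ once we interpret the absence of stars correctly), we already know that $\Leaf^*(L)$ is a connected non-Hausdorff $1$-manifold. The first bullet of that lemma states that every non-separatrix point of $\Leaf^*(L)$ disconnects $\Leaf^*(L)$. Since there are no separatrix points in the $\Pre$ setting, this means that \emph{every} point of $\Leaf^*(L)$ disconnects $\Leaf^*(L)$.

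Finally, I invoke Haefliger's characterization (recalled at the start of Section~3.2, following \cite[p.~113]{haefliger1957}): a connected non-Hausdorff $1$-manifold is simply connected if and only if every one of its points is disconnecting. Combining this with the previous step gives that $\Leaf^*(L)$ is simply connected, which completes the proof. The main subtlety, though essentially notational, is to verify that the shell hypothesis really eliminates all cyclic/branching pathologies that could create a non-trivial loop; but this is precisely the content of the fact that stars are the only source of cyclic branching in $\Leaf^*(L)$, and they are absent here.
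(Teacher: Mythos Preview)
Your proof is correct and is precisely the argument the paper has in mind: the corollary is stated without proof immediately after Lemma~\ref{l.singular-leafspace}, and the intended deduction is exactly the one you give --- no stars means no separatrix $L^*$-leaves, so by the first bullet of the lemma every point disconnects, and Haefliger's criterion (recalled at the start of Section~3.2) yields simple connectedness.
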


\begin{remark}[Addendum to Lemma~\ref{l.singular-leafspace}]\label{r.leafspace}
	In the proof of Lemma~\ref{l.singular-leafspace} we proved that every non-separatrix leaf $l\in \Leaf^*(L)$ cuts $\Leaf^*(L)$ in two open subsets which are the sets of leaves contained in each of the connected components of $\Int(D^2)\setminus l$.
\end{remark}

\subsection{The branchings of $\Leaf^*(L)$}

Let $L$ be a $\Pre^*$-pre-lamination and $\Leaf^*(L)$ its leaf space. Take two $L^*$-leaves $f,g\in\Leaf^*(L)$. By definition, $f$, $g$ are non separated if every neighborhood of $f$ intersects every neighborhood of $g$, or equivalently, every open $L^*$-interval $\II^*_f$,$\II^*_g$ containing $f$ and $g$, respectively, have a non trivial intersection. According to Lemma~\ref{l.intersection-intervals} the intersection of two $L^*$-intervals is an interval: thus one may choose
$\II^*_f$ and $\II^*_g$ so that they coincide on one side of $f$ and $g$, and are disjoint on the other side. One says that $f$ and $g$ are \emph{non-separated on that side}. This shows that $\Leaf^*(L)$ is tame, so that the notion of branchings is well-defined.

\begin{lemma}\label{l.non-separated}
	Let $x,y\in\Leaf^*(L)$ be two distinct $L^*$-leaves. Then $x$ and $y$ are non separated points if and only if there are leaves $f,g$ of $L$ in $x$ and $y$ so that either:
	\begin{itemize}
		\item $f$ and $g$ are boundary component of the same shell,
		\item or $x$ and $y$ are separatrixes and $f=g$.
	\end{itemize}
\end{lemma}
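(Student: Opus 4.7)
The strategy is to prove both implications by analyzing sequences of leaves of $L$ witnessing non-separation, systematically using the $\RR$-order on simple $L^*$-intervals provided by Lemma~\ref{l.transversal-continuous}.

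For the ``if'' direction, in Case 1 pick a sequence $(l_n)$ of leaves of $L$ accumulating the root $\gamma$ of the shell from the side opposite to $\Delta$, which exists since the root of a shell is accumulated by leaves. Choose simple $L^*$-neighborhoods $\II^*_x, \II^*_y$ via transversals $\delta_x, \delta_y$ that cross $f$ and $g$ respectively, and both arranged to also cross $\gamma$. Then $\delta_x$ passes from outside $\Delta$ through $\gamma$ and exits $\Delta$ through $f$, meeting no leaf of $L$ in between; in the induced $\RR$-order on $\II^*_x$, the $l_n$ are therefore immediately adjacent to the $L^*$-leaf containing $f$, so $l_n \to x$ in $\Leaf^*(L)$, and the same argument yields $l_n \to y$. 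In Case 2, where $x=\{f,f'\}$ and $y=\{f,f''\}$ are two separatrixes sharing $f$ in a common star, take $(l_n)$ either accumulating $f$ from its non-star side (when $f$ is accumulated there) or accumulating the root of the shell bounded by $f$ on that side (by $\Pre^*$ one of these situations must occur); the same order analysis yields $l_n \to x$ and $l_n \to y$.

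For the ``only if'' direction, let $x \neq y$ be non-separated on a common side. In shrinking intersecting neighborhoods of $x,y$ pick non-separatrix $L^*$-leaves $z_n$, which exist because separatrixes form a countable set. The $z_n$ are geodesics of $D^2$; by compactness of endpoint pairs, a subsequence has endpoints converging on $S^1$ to some $a,b$. These limits must be distinct: otherwise $z_n$ shrinks to a single point of $S^1$, the crossings $z_n \cap \delta_x$ accumulate on an endpoint of $\delta_x$, and the position of $z_n$ in the $\RR$-order of $\II^*_x$ escapes to $\pm\infty$, contradicting $z_n \to x$. Hence $z_n \to l_\infty$ for a geodesic $l_\infty$ which, for the same reason, crosses $\delta_x$ and $\delta_y$ transversely.

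The key dichotomy is whether $l_\infty \in L$. If $l_\infty$ is a leaf, then $z_n \cap \delta_x \to l_\infty \cap \delta_x$ forces $z_n$ to approach, in the $\RR$-order on $\II^*_x$, the $L^*$-leaf representing $l_\infty$; hence both $x$ and $y$ contain $l_\infty$. Since $x \neq y$ and $l_\infty$ lies in at most one star, $x$ and $y$ must be the two separatrixes of that star sharing $l_\infty$, yielding Case 2. If $l_\infty \notin L$, then since every complementary region of $L$ is a shell or a star, $l_\infty$ is necessarily the root of some shell $\Delta$; the $z_n$ lie outside $\Delta$ as its interior contains no leaves. The transversal $\delta_x$ enters $\Delta$ through $l_\infty$ and, being a geodesic, must exit through a boundary leaf $f$ of $\Delta$; in the $\RR$-order on $\II^*_x$ the shell gap lies between the $z_n$ and the $L^*$-leaf at $f$, so that $L^*$-leaf is the limit of the $z_n$, and thus $f \in x$. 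The analogous analysis on $\II^*_y$ yields a boundary leaf $g$ of the same shell $\Delta$ with $g \in y$. If $f \neq g$ this is Case 1; if $f = g$, the distinct $L^*$-leaves $x, y$ both contain $f$, forcing them to be two separatrixes sharing $f$, giving Case 2.

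The main subtlety is that convergence in $\Leaf^*(L)$ is strictly weaker than convergence of geodesic realizations in $D^2$: a sequence of leaves may converge in $D^2$ to a non-leaf accumulated geodesic (a shell root) while, through the $\RR$-order identification of Lemma~\ref{l.transversal-continuous}, converging in $\Leaf^*(L)$ to a boundary leaf of the shell on the opposite side. The delicate step is precisely to keep these two notions aligned and to identify correctly which exit-leaf of the transversal plays the role of the limit in $\Leaf^*(L)$.
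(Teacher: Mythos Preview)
Your proof is correct but takes a different route from the paper's. For the converse direction, the paper argues more directly: since $x,y$ are non-separated, no non-separatrix leaf of $L$ can separate a leaf in $x$ from a leaf in $y$ (otherwise that leaf would disconnect $\Leaf^*(L)$ via Remark~\ref{r.leafspace}, separating $x$ from $y$); hence some leaf in $x$ and some leaf in $y$ lie on the boundary of a common complementary region $\Delta$. The case-split is then on whether $\Delta$ is a shell or a star, and in the star case one invokes Lemma~\ref{l.interval*} to show that non-successive separatrixes of a star are separated. Your approach instead manufactures $\Delta$ explicitly by extracting a geodesic limit $l_\infty$ of a witnessing sequence and splitting on whether $l_\infty \in L$ (leading to the star case) or $l_\infty \notin L$ (leading to the shell case). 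This is more hands-on and makes the geometry visible, at the cost of the delicate bookkeeping you flag between Hausdorff convergence in $D^2$ and order-convergence in $\II^*_x$; the paper's argument sidesteps that entirely by never naming a limit geodesic. Both approaches rely on the same structural fact---that the shell/star hypothesis forces any ``gap'' between non-separated $L^*$-leaves to be a single complementary region---but the paper extracts it combinatorially while you extract it analytically.
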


\begin{proof}
	Given a shell $\Delta$, any interval containing a leaf that bounds $\Delta$ also contains the leaves accumulating the root of $\Delta$. This implies that the boundary components of $\Delta$ belongs to a same branching of $\Leaf^*(L)$. Two successive separatrixes of a same star share a leaf $f\in L$. Thus, any $L$-interval that contains $f$ also contains the leaves accumulating $f$, or the leaves accumulating the root of $\Delta$ when $f$ bounds a shell $\Delta$.

	We now prove the converse. Let $x,y\in L^*$ be non-separated points. Then there is a complementary region $\Delta$ that contains leaves in both $x$ and $y$.
	If $\Delta$ is a shell, then the first item in the Lemma~is satisfied.

	Assume that $\Delta$ is a star, so $x$ and $y$ are two separatrixes of $\Delta$. Take two open $L$-intervals $\II_x,\II_y$ that contains $x$ and $y$ respectively, and which are not disjoint since $x$ and $y$ are non-separated. Note that $\II_x$ intersects the boundary of $\Delta$ only on $x$ (see Lemma~\ref{l.interval*}), and $\II_x\setminus\partial\Delta$ is contained in the two discs bounded by $x$ that are disjoint with $\Delta$. Thus if $x$ and $y$ were not successive separatrix, then the corresponding four discs would be disjoint, and so would be $\II_x$ and $\II_y$. But $\II_x$ and $\II_y$ are not disjoint, so $x$ and $y$ are successive separatrixes, and they contain a common leaf of $L$.
\end{proof}

As a straightforward corollary, one gets:

\begin{lemma}\label{l.branching}
	Let $L$ be a $\Pre^*$-pre-lamination. Each branching of $\Leaf^*(L)$ is either:
	\begin{itemize}
		\item the set $\mu$ of leaves or separatrixes which bounds a same shell $\Delta$ (a separatrix bounds $\Delta$ if on of its two leaves bounds $\Delta$),
		\item or a pair of successive separatrixes of a common star.
	\end{itemize}

	Additionally, every cyclic branching is the set of separatrixes of a star.
\end{lemma}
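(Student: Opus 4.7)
The plan is to derive the lemma from Lemma~\ref{l.non-separated} by carefully tracking the ``side'' on which non-separation happens, together with a planarity argument for the cyclic branchings.

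First, to classify branchings, I would fix a branching $\mu$ and a pair $x, y \in \mu$, then apply Lemma~\ref{l.non-separated} to produce two cases. In case~(a), where leaves $f \in x$, $g \in y$ bound a common shell $\Delta$, I would argue that any $L^*$-leaf associated with a boundary leaf of $\Delta$ is non-separated from $x$ on the $\Delta$-side and hence lies in $\mu$, while conversely any $z \in \mu$ must share with $x$ either a shell or a separatrix leaf; since a given leaf bounds at most one shell on each side, this forces $z$ to be associated with a boundary leaf of $\Delta$, and the ``matching side'' condition compels the same conclusion in the separatrix-sharing sub-case. This yields the first bullet. In case~(b), if no pair in $\mu$ falls into~(a), then all elements of $\mu$ must share a common leaf $f$ as successive separatrixes at some star; since only two separatrixes at a star contain $f$, we conclude $\mu = \{x, y\}$, giving the second bullet.

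For the cyclic branching claim, I would argue that given a cyclic branching $\nu = (x_0, \ldots, x_{n-1})$, the $L^*$-leaves together with the shared complementary regions between consecutive ones trace a closed loop in $D^2$ whose interior is a complementary region of $G(L)$ bounded exactly by leaves of the $x_i$'s. This interior cannot be a shell: that would place all the $x_i$ in a single shell branching, contradicting the cyclic-branching condition $x_j \notin \mu_i$ for $|i - j| > 1$. Hence the interior is a star $\Sigma$, and each $x_i$ is a separatrix of $\Sigma$; since a leaf belongs to at most one star (item~3~bis of the $\Pre^*$ properties), all the $x_i$ come from the same $\Sigma$, and the cycle exhausts its separatrix set. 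The hardest step is this planar-closure argument --- carefully justifying that a cyclic branching truly encloses a single complementary region of $G(L)$ --- and the main subtlety throughout is matching, for each separatrix $L^*$-leaf, which of its two constituent leaves accounts for which of its two sides in $\Leaf^*(L)$.
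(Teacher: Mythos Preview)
Your treatment of the branching classification is correct and matches the paper, which presents Lemma~\ref{l.branching} as an immediate corollary of Lemma~\ref{l.non-separated}; your side-tracking case analysis is exactly the intended mechanism.

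The cyclic-branching argument, however, has a genuine gap at the step you yourself flag as hardest. Asserting that the traced loop in $D^2$ encloses a \emph{single} complementary region of $G(L)$ presupposes what you want to prove: a priori, consecutive $x_i, x_{i+1}$ could fall under the first alternative of Lemma~\ref{l.non-separated} via \emph{distinct} leaves bounding a common shell, and then your ``loop'' passes through that shell and its interior is not one complementary region but a patchwork of several. Ruling this configuration out is precisely the substance of the claim, so you cannot assume it up front. A cleaner route avoids the $D^2$-geometry and works inside $\Leaf^*(L)$ via Lemma~\ref{l.singular-leafspace} and Remark~\ref{r.leafspace}: a non-separatrix leaf $l$ disconnects $\Leaf^*(L)$ into two pieces corresponding to the half-discs of $\Int(D^2)\setminus l$, so if some $x_i$ were a non-separatrix then $x_{i-1}$ and $x_{i+1}$ would lie in opposite pieces, yet walking the cycle $x_{i-1}, x_{i-2}, \ldots, x_{i+1}$ through the branchings (never through $x_i$) keeps you in one piece, since non-separated points share a component. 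Hence every $x_i$ is a separatrix. Then apply the same trapping argument with the disconnection of $\Leaf^*(L)$ by the full set of separatrixes of the star $\Sigma$ containing $x_0$ (second item of Lemma~\ref{l.singular-leafspace}): once the cycle leaves into one of the free-side components it cannot return to $x_0$ on the correct side, forcing every $x_i$ to be a separatrix of $\Sigma$; consecutivity plus distinctness then give that $\nu$ is the full separatrix set of $\Sigma$.
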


\subsection{Ordering the branchings}

We equip branchings and cyclic branchings of $\Leaf^*(L)$ with orders.

Let $\Delta$ be a star component of $L$. Its separatrixes form a cyclic branching $\nu_\Delta$. The positive (anticlockwise) orientation of the boundary $\Delta$ induces a cyclic order of the leaves in $\partial\Delta$, which induces a cyclic order $\preceq^{\circlearrowleft}_{\nu_\Delta}$ on $\nu_\Delta$.

Let now $\Delta$ be a shell component of $L$, and $\gamma$ be its root. The set of leaves in the boundary of $\Delta$ projects in $\Leaf^*(L)$ on a branching $\mu=\mu_\Delta$. 
Let $I\subset\partial D^2$ be the closed interval that joins the endpoints of $\gamma$ and that contains all the endpoints of the leaves in $\partial\Delta$. The clockwise cyclic orientation of $\partial D^2$ induces an orientation of $I$, which induces a total order $\preceq_\Delta$ on the leaves in $\partial\Delta$. It does not induce yet an order on the $L^*$-leaves that bound $\Delta$, since two adjacent separatrixes that bounds $\Delta$ are not order that way. Take a boundary component $f$ of $\Delta$ that bounds a star $\Sigma$ on its other side. Then $f$ is contained in $2$ separatrixes $\{f,g\}$ and $\{f,g'\}$ of $\Leaf^*(L)$ which are successive for $\preceq^{\circlearrowleft}_{\nu_\Sigma}$. If $\{f,g\}$ comes just before $\{f,g'\}$ for $\preceq^{\circlearrowleft}_{\nu_\Sigma}$, then we will set $\{f,g\}\preceq_{\mu}\{f,g'\}$.

More generally, given two $L^*$-star distinct $x,y\in\mu$, we set $x\preceq_\mu y$ if one of the following holds:
\begin{itemize}
	\item the leaf $x\cap\partial\Delta$ is less than $y\cap\partial\Delta$ for the order $\preceq_\Delta$,
	\item $x,y$ are two successive separatrixes of a common star $\Sigma$, and $x\preceq_{\nu_\Sigma} y$ holds.
\end{itemize}

Notice that $\preceq_\mu$ is a total order. It makes $(\Leaf^*(L),\{\prec_\mu\})$ a singular planar structure, which we say \emph{associated to $L$}. If $L$ has no stars, then $\Leaf^*(L)$ is a (non-singular) planar structure.

\begin{remark}\label{r.induced}
	Let $L_1,L_2$ be two $\Pre^*$-pre-laminations and $\psi\colon S^1\to S^1$ an orientation preserving homeomorphism conjugating $L_1$ to $L_2$. Then $\psi$ maps leaves on leaves, faces on faces, root of shells on root of shells, stars on stars, etc. Thus $\psi$ induces an isomorphism $\varphi\colon \Leaf^*(L_1)\to \Leaf^*(L_2)$ of singular planar structures, called \emph{the isomorphism induced by $\psi$}.
\end{remark}

\subsection{$\Pre^*$-pre-laminations with the same planar structure}

Recall that given a $\Pre^*$-pre-lamination $L$, $\Leaf^*(L)$ is the set $L^*$ endowed with the $L^*$-interval topology and with orders on the branching.

\begin{theorem} \label{t.prelam-planar}
	Let $L_1,L_2$ be two $\Pre^*$-pre-laminations and $\varphi$ an isomorphism of singular planar structures from $\Leaf^*(L_1)$ to $\Leaf^*(L_2)$.
	Then, there is a unique orientation preserving homeomorphism $\psi$ of $S^1$ which satisfies $\psi(L_1)=L_2$ and so that $\varphi$ is the isomorphism induced by $\varphi$ (see Remark~\ref{r.induced}).
\end{theorem}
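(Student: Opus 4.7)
The strategy is to define $\psi$ on the countable dense subset $E(L_1)\subset S^1$ consisting of the endpoints of leaves of $L_1$, and then extend by continuity; in fact the values of $\psi$ on $E(L_1)$ will be forced by $\varphi$, yielding both existence and uniqueness at the same time.

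First I would associate to each $L^*$-leaf $x\in\Leaf^*(L_1)$ the finite subset $V(x)\subset S^1$ consisting of the endpoints of the leaves contained in $x$: this set has two elements when $x$ is a non-separatrix singleton, and three elements when $x=\{f,g\}$ is a separatrix (the leaves $f$ and $g$ share exactly the vertex of the star between them). The complement of $V(x)$ in $S^1$ is a disjoint union of open arcs, and each of these arcs corresponds to a specific local side of $x$ in the non-Hausdorff $1$-manifold $\Leaf^*(L_1)$ together, when relevant, with the adjacent complementary regions. Because $\varphi$ is a homeomorphism of $1$-manifolds that preserves the orders on branchings and on cyclic branchings, it matches the arcs at $x$ with the arcs at $\varphi(x)$ in a way compatible with the cyclic order of $S^1$ (which is encoded precisely by those orders, since on shells the order $\preceq_\mu$ is the clockwise order on $\partial D^2$ and on stars the cyclic order $\preceq_\nu^\circlearrowleft$ is the anticlockwise order on $\partial\Delta$). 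This gives a canonical bijection $V(x)\to V(\varphi(x))$, which is the candidate definition of~$\psi$ on $V(x)$.

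The main step is to check that these candidate bijections glue coherently: if $x_1\neq x_2$ are two $L^*$-leaves of $L_1$ whose underlying leaves share a common endpoint $e\in S^1$, the two images of $e$ given by the bijections at $x_1$ and at $x_2$ must coincide. I would argue this by analyzing the (at most countable) set of leaves of $L_1$ ending at $e$: they are linearly ordered by the cyclic position of their other endpoints on $S^1$, and two consecutive such leaves are exactly those that appear as consecutive elements of either a shell-branching or of a cyclic branching of a star at $e$; since $\varphi$ preserves all branchings and cyclic branchings with their orders, this pairwise local compatibility propagates and forces a single well-defined value $\psi(e)$. Once defined, $\psi\colon E(L_1)\to E(L_2)$ is then a cyclic-order-preserving bijection between dense subsets of $S^1$, so extends uniquely to an orientation-preserving homeomorphism $\psi\colon S^1\to S^1$.

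I would finally check that $\psi(L_1)=L_2$ and that the isomorphism of singular planar structures induced by $\psi$ coincides with $\varphi$ (both are immediate by construction, since each leaf of $L_1$ is mapped to the leaf of $L_2$ whose endpoints are the $\psi$-images of its own). Uniqueness is automatic: any other $\psi'$ satisfying the conclusion restricts to the same forced bijection on $E(L_1)$, hence equals $\psi$ by density. The main obstacle I expect is the compatibility of the candidate bijections at a common endpoint of $S^1$; a priori the two bijections at $x_1$ and at $x_2$ use unrelated local data, and one must carefully exploit the hypotheses encoded in $\Pre^*$ (shell/star alternative, no leaf in two stars, countability) to convert the preservation of branching orders by $\varphi$ into an equality of images in $S^1$.
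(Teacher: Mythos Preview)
Your plan is sound and would work, but the paper takes a slightly different route that neatly sidesteps the obstacle you correctly single out. Instead of defining $\psi$ directly on the set $E(L_1)$ of all leaf endpoints and then fighting the compatibility problem at shared endpoints, the paper first lifts everything to the set $\cR(L_i)$ of \emph{germs of rays} of leaves (Lemma~\ref{l.prelam-planar}): there the map $\psi_\cR$ is built interval by interval (Observation~\ref{o.interval} and Lemma~\ref{l.homeo-bijection}) and shown to be cyclically increasing. Only afterwards is $\psi_\cR$ pushed down to $S^1$, and this is done by restricting to the endpoints of \emph{regular} leaves: Lemma~\ref{l.regular-leaves} shows $\reg(L_i)$ is dense and that each such point is the endpoint of a unique leaf, so no gluing at shared endpoints is ever needed. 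Your approach confronts the compatibility directly; it works because two elements of a shell-branching are $\preceq_\mu$-consecutive iff the underlying leaves share an endpoint (this uses density of $L$ and is essentially the content of Lemma~\ref{l.few}), but you should make that equivalence explicit rather than leaving it inside ``pairwise local compatibility propagates''.

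Two small corrections. First, $E(L_1)$ is in general uncountable (the set of leaves of a $\Pre^*$-pre-lamination typically has the cardinality of the continuum); what you need and have is only density. Second, for a non-separatrix $L^*$-leaf $x$ the set $V(x)$ has two points and the matching of the two sides of $x$ with the two sides of $\varphi(x)$ does not by itself pick out a bijection $V(x)\to V(\varphi(x))$; you must also invoke the requirement that $\psi$ be orientation preserving on $S^1$ (so that once the arcs are matched, the start/end of each arc for the fixed orientation of $S^1$ are matched as well). The paper's interval-based construction handles this automatically, which is part of what it buys.
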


Let $L$ be a pre-lamination of $S^1$. A ray is a proper embedding of $[0,+\infty[$ in $\Int(D^2)$, that converges toward a point in the boundary at $+\infty$.
We denote by $\cR(L)$ the set of \emph{germs at infinity of rays} contained in a leaf of the geodesic realization $G(L)$. Note that $\cR(L)$ is canonically cyclically ordered, as follows. Given any three distinct rays $r_1,r_2,r_3$ and any $d<1$ close enough to 1, the circle $d\cdot S^1$ (of radius $d$) intersects the three rays $r_i$ at one point $x_i$ each, and the cyclic order of $x_1,x_2,x_3$ does not depend on $d$. The germs of rays are ordered using these cyclic order for any $d$ close enough to 1.
Recall that a $L^*$-leaf is either a leaf or a separatrixes, so it has either two or three germs of rays.

\begin{lemma}\label{l.prelam-planar}
	With the hypotheses of Theorem~\ref{t.prelam-planar}, there is an increasing bijection of $\psi_\cR\colon \cR(L_1)\to\cR(L_2)$ with the following property:

	Given a $L^*$-leaf $x\in\Leaf^*(L_1)$, $\psi_\cR$ maps the germs of rays contained in $x$ to the germs of rays contained in $\varphi(x)\in\Leaf^*(L_2)$.
\end{lemma}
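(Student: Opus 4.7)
The plan is to construct $\psi_\cR$ separately on the germs of each $L^*$-leaf, by exploiting a canonical bijection between the ``sides'' of an $L^*$-leaf $x$ in $\Leaf^*(L_i)$ and the cyclic arcs cut out of $\cR(L_i)$ by its germs, which is preserved by~$\varphi$.

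For each $L^*$-leaf $x \in \Leaf^*(L_1)$, its germs of rays separate $\cR(L_1)$ cyclically into as many arcs as there are germs. When $x = f$ is a non-separatrix leaf, the two germs yield two arcs, matching the two sides of $f$ in $\Leaf^*(L_1)$, which correspond to the two connected components of $\Int(D^2) \setminus f$ (Remark~\ref{r.leafspace}). When $x = \{f_1,f_2\}$ is a separatrix of a star $\Delta$ with shared vertex $\theta$, the three germs (at $\theta_{f_1}, \theta, \theta_{f_2}$) yield three arcs: two correspond to the two local sides of $x$ in $\Leaf^*(L_1)$, leading towards the two adjacent separatrixes $\{f_n,f_1\}$ and $\{f_2,f_3\}$ of $\Delta$ that are non-separated with $x$, while the third corresponds to the $S^1$-arc opposite $\theta$ and contains the germs of the remaining boundary leaves of~$\Delta$.

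Since $\varphi$ preserves the underlying topology, the branching orders, and the cyclic branching orders, it canonically identifies the sides of $x$ with those of $\varphi(x)$, and thereby matches the cyclic arcs of $\cR(L_1) \setminus \{\text{germs of } x\}$ with those of $\cR(L_2) \setminus \{\text{germs of } \varphi(x)\}$. Because the germs of $x$ are the cyclic corners between consecutive arcs, this arc-bijection uniquely determines a bijection of germs compatible with cyclic orientation; I take this to be the definition of $\psi_\cR$ on the germs of~$x$. By construction, $\psi_\cR$ is well-defined (each germ belongs to a unique $L^*$-leaf) and bijective (matching germ counts and bijectivity of $\varphi$ on~$\Leaf^*$).

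It remains to check that $\psi_\cR$ is cyclically increasing, which I would do by analyzing triples $r_1, r_2, r_3 \in \cR(L_1)$ in $L^*$-leaves $x_1, x_2, x_3$: when all three $x_i$ coincide (possible only for a separatrix with three germs), preservation is built into the local construction; when exactly two coincide, the third germ lies in the arc corresponding to the side of the common $L^*$-leaf containing the third $L^*$-leaf, which is matched correctly by~$\varphi$. The main obstacle will be the case when the three $x_i$ are pairwise distinct: here one must show that the cyclic order of the germs is entirely determined by the relative positions of $x_1, x_2, x_3$ in $\Leaf^*(L_1)$ -- via simple $L^*$-intervals containing pairs of them together with the branching and cyclic-branching orders -- and in particular that the refinement of the $S^1$-cyclic order on germs sharing a common endpoint (controlled by the ``few common ends'' property of $\Pre^*$-pre-laminations) is also encoded locally, at the corresponding shell or star, in the planar structure that $\varphi$ preserves.
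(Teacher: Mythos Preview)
Your construction route differs from the paper's. The paper first lifts $\varphi^*$ to a bijection $\varphi\colon L_1\to L_2$ on \emph{leaves} (Lemma~\ref{l.homeo-bijection}), then defines $\psi_\cR$ interval by interval using Observation~\ref{o.interval}: on each open $L_1$-interval $\II$ there is a unique cyclically increasing bijection $\cR(\II)\to\cR(\varphi(\II))$ inducing $\varphi$, and item~3 of that observation forces agreement on overlaps. This sidesteps any bookkeeping about $L^*$-leaves versus leaves. Your arc-matching per $L^*$-leaf is a plausible alternative, but two points break well-definedness as written. First, a separatrix $\{f_1,f_2\}$ contributes \emph{four} germs to $\cR(L)$, not three: the germ of $f_1$ at the shared vertex $\theta$ and the germ of $f_2$ at $\theta$ are distinct elements of $\cR(L)$ (they lie in different leaves), cyclically adjacent with an empty arc between them; collapsing them to one leaves $\psi_\cR$ undefined on one of the two. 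Second, your justification ``each germ belongs to a unique $L^*$-leaf'' is false: every boundary leaf $f$ of a star lies in exactly two separatrixes, so each germ of $f$ is assigned an image twice by your recipe, and you have not checked that these assignments agree. They do, but the verification uses that $\varphi^*$ respects the cyclic-branching orders; the paper's passage to leaves avoids this entirely.

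For increasingness, the case of three germs in pairwise distinct $L^*$-leaves is where the content lies, and ``simple $L^*$-intervals containing pairs of them'' does not decide the cyclic order of three germs when no leaf separates the other two. The paper's mechanism here is to prove that any three leaves of $L_1$, none separating the other two, are separated into three components by a single complementary region $\Delta$; the cyclic order of the three germs is then read off from the order $\preceq_\mu$ (if $\Delta$ is a shell) or $\preceq^\circlearrowleft_\nu$ (if $\Delta$ is a star), both preserved by $\varphi^*$. This existence claim about $\Delta$ is the key step in that case, and your sketch does not supply it.
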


The map $\psi$ above can not easily be used to recover a map between the circles, since several germs of rays can end up on the same point. The next lemma simplify this idea by restricting $\psi$ to a simpler set.

Denote by $\reg(L)\subset S^1$ the set of endpoints of regular leaves (accumulated on both sides) of $L$.

\begin{lemma}\label{l.regular-leaves}
	Given a $\Pre^*$-pre-lamination $L$. Then $\reg(L)$ is a dense in $S^1$ and every point of $\reg(L)$ is the endpoint of an unique leaf in $L$.
\end{lemma}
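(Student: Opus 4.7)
The plan is to prove the two assertions of the lemma separately, beginning with uniqueness and then tackling density.

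\textbf{Uniqueness.} I will assume $\theta\in\reg(L)$ is an endpoint of a regular leaf $f$ with other endpoint $\alpha$, and that some leaf $g\in L$ distinct from $f$ also ends at $\theta$ with other endpoint $\beta$, and then derive a contradiction. The chords $f$ and $g$ share the vertex $\theta$ and bound a wedge region $W\subset\Int(D^2)$ whose closure meets $S^1$ exactly in the closed arc $[\alpha,\beta]$ not containing $\theta$. Since $f$ is regular, some sequence $f_n$ of leaves of $L$ accumulates on $f$ from the side of $g$; for large $n$ the $f_n$ enter $W$, and since two leaves of $L$ cannot cross, each $f_n$ is entirely contained in $W$, so its two endpoints lie in $[\alpha,\beta]$. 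But $f_n\to f$ forces one endpoint of $f_n$ to tend to $\theta\notin[\alpha,\beta]$, the desired contradiction.

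\textbf{Density.} Let $E\subset S^1$ denote the set of endpoints of leaves of $L$, dense by the $\Pre^*$ axioms, and $E_{\mathrm{nr}}\subset E$ the subset of endpoints of non-regular leaves. The key intermediate statement is that $E_{\mathrm{nr}}$ is nowhere dense in $S^1$. First I will check that $E_{\mathrm{nr}}$ is countable: a leaf is non-regular iff it lies on the boundary of some complementary region of $G(L)$, and under $\Pre^*$ each such region is a shell or star with non-empty interior; so complementary regions are at most countable, each contributes countably (shell) or finitely (star) many boundary leaves, hence non-regular leaves and their endpoints form a countable set.

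Next, suppose for contradiction that $E_{\mathrm{nr}}$ is dense in some open arc $J\subset S^1$ and pick a sequence $(\theta_n)$ of pairwise distinct points of $E_{\mathrm{nr}}\cap J$ dense in $J$, together with, for each $n$, a complementary region $\Delta_n$ bounded by a non-regular leaf through $\theta_n$. After extracting a subsequence, either the $\Delta_n$ are all equal to a fixed $\Delta$, in which case $(\theta_n)$ accumulates only at the $\leq 2$ endpoints of the root of $\Delta$ (shell case) or lies in a finite vertex set (star case), or the $\Delta_n$ are pairwise distinct, in which case $\overline{\Delta_n}$ converges in the Hausdorff topology to a compact set $K$. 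By Lemma~\ref{l.limite-regions}, $K$ is either a point of $S^1$ or a geodesic: if $K$ is a point then $\theta_n\to K$; if $K$ is a geodesic, the \emph{no bad accumulation} hypothesis combined with Lemma~\ref{l.limite-regions} guarantees that every boundary component of $\Delta_n$ (the two long ones converge to $K$, the other lengths tend to $0$) has endpoints converging to one of the two endpoints of $K$. In every case $(\theta_n)$ has at most two accumulation points, contradicting density in $J$.

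Therefore $E_{\mathrm{nr}}$ is nowhere dense, and for every open arc $J\subset S^1$ the open set $J\setminus\overline{E_{\mathrm{nr}}}$ is non-empty and meets $E$ by density of $L$, producing a point $\theta\in J\cap(E\setminus E_{\mathrm{nr}})$; every leaf ending at $\theta$ is then regular, so $\theta\in\reg(L)$, which proves density, and uniqueness of the leaf at $\theta$ is the first part. The hard part will be the second step of the density argument: without the no bad accumulation axiom, distinct shells or stars could a priori have boundary leaf endpoints accumulating densely in an arc, and it is the combination of that axiom with the precise Hausdorff-limit description of Lemma~\ref{l.limite-regions} that does the real work.
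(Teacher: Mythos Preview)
Your uniqueness argument has a small but genuine gap: the closure of the wedge $W$ meets $S^1$ in $\{\theta\}\cup[\alpha,\beta]$, not just in $[\alpha,\beta]$, because the two chords $f,g$ share the vertex $\theta$. Hence a leaf $f_n\subset W$ may end at $\theta$, and then one endpoint of $f_n$ converging to $\theta$ is no contradiction at all. The repair is precisely what the paper does: once there is one leaf between $f$ and $g$, the $L^*$-interval structure (Lemma~\ref{l.R}) gives uncountably many leaves separating $f$ from $g$; every such separating leaf is forced to end at $\theta$, contradicting the countability axiom in $\Pre^*$.

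Your density argument has a more serious problem: the intermediate claim that $E_{\mathrm{nr}}$ is nowhere dense is \emph{false} in general. For the pre-lamination induced by any foliation whose branchings are dense in the leaf space (for instance the universal foliation $\FF_0$ of Section~\ref{s.universal-foliation}), non-regular leaves are dense in $\Leaf^*(L)$, and since the endpoint map is continuous at regular leaves, the endpoints of non-regular leaves are dense in $S^1$. Your subsequence argument cannot exclude this: after you extract a subsequence with all $\Delta_n$ equal, or with $\overline{\Delta_n}\to K$, you only learn that \emph{that subsequence} has at most two accumulation points. This says nothing about the full sequence $(\theta_n)$, which was the one assumed dense in $J$; different subsequences yield different limit sets $K$, and no global constraint survives. (Also, even in the case $\Delta_n\equiv\Delta$ a shell, the endpoints of its boundary leaves need not accumulate only at the two root endpoints.)

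The paper's route avoids $E_{\mathrm{nr}}$ entirely. Given an arc $I$, it produces uncountably many leaves with an endpoint in $I$: either some leaf has both endpoints in $I$, and then the open set of leaves on one of its sides does too; or not, and a monotone sequence of leaves with one end in $I$ eventually sits inside a single $L^*$-interval, again giving uncountably many leaves ending in $I$. Since non-regular leaves are only countable, one of these uncountably many leaves is regular. The tool you are missing in both halves of the lemma is the uncountability of $L^*$-intervals (Lemma~\ref{l.R}).
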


We will prove the Lemma~further down. For now, let us assume them.

\begin{proof}[Proof of Theorem~\ref{t.prelam-planar} assuming the Lemma~\ref{l.prelam-planar} and ~\ref{l.regular-leaves}]
	The map $\psi_\cR$ given by Lemma~\ref{l.prelam-planar} induces a (cyclically) increasing bijection $\wt\psi$ between $\reg(L_1)$ and $\reg(L_2)$. As each subset $\reg(L_1)$ and $\reg(L_2)$ is dense in $S^1$, $\wt\psi$ extends in a unique way as a homeomorphism $\psi\colon S^1\to S^1$, satisfying all the announced properties. Additional $\psi$ is orientation preserving since the map in Lemma~\ref{l.prelam-planar} is increasing.

	To prove the uniqueness of $\psi$, take $\psi'$ another map as in the conclusion of the theorem. Then $\psi^{-1}\circ\psi$ is an increasing homeomorphism of $S^1$ which induces the identity of $\Leaf^*(L_1)$. It preserves the set of endpoints of any leaf of $L_1$, and thus is equal to the identity since it is increasing.
\end{proof}

\begin{proof}[Proof of Lemma~\ref{l.regular-leaves}]
	Let us first prove that a regular leaf $f$ of $L$ does not share any endpoint with any other leaf. Let $g$ be another leaf of $L$.
	Any leaf close enough to $f$, and on its side that contains $g$, separates $f$ and $g$. Since $f$ is regular, there are infinity many leaves that separates $f$ and $g$, and even uncoutanbly many according to Lemma~\ref{l.R}. And since at most countably many of them share an end with $f$, $g$ does not share an end with $g$.

	We now prove that the endpoints of leaves in $E$ are dense in $S^1$.
	Let $I\subset S^1$ be a non-empty interval.
	Recall that there are at most countably many non-regular leaves.
	We consider two cases.
	Assume first that there exists a leaf having both endpoints in $I$ then its bound in $\Leaf^*(L)$ an open set $\Omega$ of leaves having both endpoints in $I$. Then $\Omega$ is uncountable, it as at most countably many non-regular leaves and thus it contains a leaf in $E$.

	Assume now that no leaves have its two endpoints inside $I$. The endpoints of the leaves of $L$ are dense in $I$, so there exists a monotonous sequence of leaves $f_n$ with an end in $I$, and that accumulates on some leaf or some root of shell of $L$. In the two cases, there exists an open $L$-interval that contains the leaf $f_n$ for all $n$ large enough. Recall that open $L^*$-intervals are uncountable. So there exist unaccountably many leaves that separate two of the leaf $f_n,f_m$ with large $n,m$, and so that end on $I$. It follows from the same argument that there is a leaf of $E$ that ends inside $I$.
\end{proof}

\begin{observation}\label{o.interval}
	Let $\II^*_i$, $i=1,2$ be $L^*_i$-intervals and $\varphi^*\colon \II^*_1\to \II^*_2$ be a homeomorphism mapping bijectively separatrixes in $\II^*_1$ onto separatrixes in $\II^*_2$, and let $\II_1,\II_2$ be the corresponding $L_i$-intervals. Then the following holds:
	\begin{enumerate}
		\item there exists a unique monotonous bijection $\varphi\colon\II_1\to\II_2$ induced by $\varphi^*$. That is for any for any $L_1^*$-leaf $f$ of $\II_1^*$, $\varphi$ maps the (one or two) leaves $f$ onto the leaves of $\varphi^*(f)$.
		\item 
		      There is a unique bijection $\psi\colon\cR(\II_1)\to\cR(\II_2)$ increasing for the cyclic orders and that induces $\varphi$. That is for any leaf $f\in\II_1$, $\psi$ maps the two germs of rays at infinity of $f$ into the ones of the leaf $\varphi(f)$.
		\item let $A\subset \II_1$ be any subset with at least $2$ elements. Assume that the map $\psi_A\colon \cR(A)\to \cR(\varphi(A))$ is an increasing bijection for the cyclic order, and that it induces $\varphi_{|A}$. Then $\psi_A$ is the restriction of $\psi$.
	\end{enumerate}
\end{observation}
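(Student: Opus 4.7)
The plan is to exploit the total order on each $\II_i$ (inherited from any choice of orientation of a transversal realizing simplicity of the $L$-interval) together with the explicit description of the cyclic order on $\cR(\II_i)$ as the concatenation of two monotone sequences.

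For item~1, orient transversals $\delta_i$ to each $\II_i$; these induce total orders $\leq_i$ on $\II_i$ which descend to total orders on $\II^*_i$ whose only adjacent pairs are the separatrixes. Since $\varphi^*$ is a homeomorphism of $L^*$-intervals, each homeomorphic to $\RR$ by Lemma~\ref{l.transversal-continuous}, it is monotone; reversing $\delta_2$ if necessary, assume it is increasing. Define $\varphi\colon \II_1 \to \II_2$ by $\varphi(f) = \varphi^*(f)$ whenever $f$ is not part of a separatrix of $\II^*_1$, and for a separatrix $\{f,g\} \in \II^*_1$ with $f <_1 g$, writing $\varphi^*(\{f,g\}) = \{f',g'\}$ with $f' <_2 g'$, set $\varphi(f) = f'$ and $\varphi(g) = g'$. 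This $\varphi$ is the increasing bijection induced by $\varphi^*$; any monotone bijection inducing $\varphi^*$ must match adjacent pairs in the order-preserving way, yielding uniqueness.

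For item~2, for each leaf $f \in \II_i$, the oriented transversal $\delta_i$ crosses $f$ transversely and its orientation designates the two endpoints of $f$ on $S^1$ as $f^+$ and $f^-$. Simplicity of $\II_i$ then gives a clean description of the cyclic order on $\cR(\II_i)$: traversing $S^1$ cyclically one first meets the sequence $(f^+)_{f \in \II_i}$ in the order $\leq_i$, then the sequence $(f^-)_{f \in \II_i}$ in the reverse order. Setting $\psi(f^\pm) = \varphi(f)^\pm$ therefore produces an increasing bijection of cyclic sets that induces $\varphi$. Uniqueness: any candidate $\psi$ must send $\cR(f)$ bijectively onto $\cR(\varphi(f))$, leaving only two options per leaf; applying the ``swap'' on some leaf $f$ while keeping a neighbor $g$ unchanged would reverse the relative cyclic order of the four germs $f^\pm, g^\pm$ compared to their images, contradicting cyclic monotony.

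For item~3, fix two leaves $f,g \in A$ with $f <_1 g$: the four germs are cyclically ordered as $(f^+, g^+, g^-, f^-)$ in $\cR(A)$, and similarly for their images in $\cR(\varphi(A))$. An increasing bijection on a cyclic $4$-set has four possible ``rotations'', but among them only the identity one sends $\{f^+,f^-\}$ onto $\cR(\varphi(f))$ and $\{g^+,g^-\}$ onto $\cR(\varphi(g))$ simultaneously; any other rotation would mix the two pairs. Hence $\psi_A$ agrees with $\psi$ on $\cR(\{f,g\})$, and by compatibility with the global cyclic order this extends to all of $\cR(A)$. The main obstacle is the bookkeeping in item~2: ensuring that the labeling $f \mapsto (f^+, f^-)$ interacts coherently with the cyclic order as $f$ varies over $\II_i$, including across separatrixes and near shell complementary regions; this is where the explicit description of $\cR(\II_i)$ as two monotone arcs, made possible by simplicity of $\II_i$, does the work.
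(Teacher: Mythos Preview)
The paper states this result as an Observation and gives no proof, treating all three items as immediate consequences of the order structure on $L$-intervals and of the description of the cyclic order on germs of rays. Your argument is correct and supplies exactly the details the authors omit: the key ingredients---that every $L$-interval is simple and hence carries a total order from a transversal, that the cyclic order on $\cR(\II)$ decomposes into two monotone arcs indexed by the $+$ and $-$ ends of the leaves, and that a cyclic-order-preserving bijection respecting the fibers $\cR(f)\mapsto\cR(\varphi(f))$ is already rigid on any pair of leaves---are the natural ones, and your uniqueness arguments in items~2 and~3 are the expected verifications.
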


As a consequence of Observation~\ref{o.interval} one gets

\begin{lemma}\label{l.homeo-bijection}
	Let $\varphi^*\colon\Leaf^*(L_1)\to \Leaf^*(L_2)$ be an isomorphism of the singular planar structures of two $\Pre^*$-pre-lamination $L_1,L_2$. Then $\varphi$ induces a bijection $\varphi\colon L_1\to L_2$ which maps open $L_1$-intervals onto open $L_2$-intervals as a one-to-one correspondence, and is monotonous in restriction to any $L_1$-interval.
\end{lemma}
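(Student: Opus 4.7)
The plan is to define $\varphi$ leafwise from $\varphi^*$, check that it is a bijection, and then deduce the interval and monotony statements by combining the local structure of $\Leaf^*$ with Observation~\ref{o.interval}.

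First I would exploit the fact that $\varphi^*$ is an isomorphism of singular planar structures, so it preserves cyclic branchings together with their cyclic orders. By Lemma~\ref{l.branching}, cyclic branchings of $\Leaf^*(L_i)$ are exactly the sets of separatrixes of stars, and by property 3\,bis of $\Pre^*$ a leaf of $L_i$ lies in at most one star. Hence $\varphi^*$ sends singleton $L^*$-leaves to singletons and separatrixes to separatrixes, and restricts to a bijection between stars of $L_1$ and stars of $L_2$. This gives the leafwise definition of $\varphi\colon L_1\to L_2$. When $f\in L_1$ lies in no star, $\varphi^*(\{f\})$ is a singleton $\{h\}$ and I set $\varphi(f):=h$. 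When $f$ bounds a star $\Delta$, then $f$ belongs to exactly two separatrixes $\{f,g_-\},\{f,g_+\}$, which are adjacent in the cyclic branching $\nu_\Delta$; their images under $\varphi^*$ are two adjacent separatrixes in the cyclic branching $\varphi^*(\nu_\Delta)$ associated to a star $\Delta'$ of $L_2$, and these two separatrixes share a unique leaf $h\in L_2$, which I set as $\varphi(f)$. Applying the same construction to $(\varphi^*)^{-1}$ yields an inverse, so $\varphi$ is a bijection.

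Next I would fix an open $L_1$-interval $\II_1$ with associated $L_1^*$-interval $\II_1^*$, and set $\II_2:=\varphi(\II_1)$, $\II_2^*:=\varphi^*(\II_1^*)$. By Lemma~\ref{l.R}, $\II_1^*$ is homeomorphic to $\RR$, so $\II_2^*$ is an open, connected subset of $\Leaf^*(L_2)$ homeomorphic to $\RR$. I would then verify that $\II_2$ satisfies the four axioms of Definition~\ref{d.interval}. For axiom~1, separation of a triple of non-separatrix leaves in $D^2$ is encoded, by Remark~\ref{r.leafspace}, as disconnection of $\Leaf^*$ by the middle $L^*$-leaf, a property $\varphi^*$ preserves; the analogous statement when some leaves lie inside stars uses Lemma~\ref{l.|A|} (cyclic branchings disconnect $\Leaf^*$ into as many pieces as their degree), again preserved by $\varphi^*$. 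Axioms 2 and 3 follow from the homeomorphism property of $\varphi^*$ together with the description of open $L^*$-intervals as basic open charts. Axiom~4 is the delicate one, since it requires that two $L_2$-leaves in $\II_2$ that are not separated by any leaf of $L_2$ form a separatrix; this is exactly the content of the cyclic-branching preservation of $\varphi^*$ and of the leafwise definition of $\varphi$ in the separatrix case.

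Finally, once $\II_2$ is identified as an open $L_2$-interval, Observation~\ref{o.interval}(1) applied to the homeomorphism $\varphi^*_{|\II_1^*}\colon\II_1^*\to\II_2^*$ delivers a unique monotonous bijection $\II_1\to\II_2$ inducing $\varphi^*_{|\II_1^*}$; by comparing with the leafwise formula above this monotonous bijection coincides with $\varphi_{|\II_1}$, yielding simultaneously the preservation of intervals and the monotony. The main obstacle is the verification of axiom~4 for $\II_2$: one must show that whenever two $\varphi$-images of leaves in $\II_1$ are adjacent (i.e. not separated by a third $L_2$-leaf), they do form a separatrix of the same star, and this is precisely what the cyclic ordering preserved by $\varphi^*$ ensures through the adjacency of separatrixes in $\nu_\Delta$.
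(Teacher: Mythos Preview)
Your approach is correct and genuinely different from the paper's. The paper works locally: for each open $L_1^*$-interval $\II^*$ it applies Observation~\ref{o.interval}(1) to obtain a monotonous bijection $\varphi_\II\colon\II\to\II'$, and then spends most of the proof checking that $\varphi_\II$ and $\varphi_\JJ$ agree on overlaps (the delicate case being a leaf $f$ that lies in $\II\cap\JJ$ but belongs to two different separatrixes, one in $\II^*$ and one in $\JJ^*$). Only after gluing does a global $\varphi$ emerge. You instead define $\varphi$ globally from the outset, using that adjacent separatrixes in a cyclic branching share a unique leaf, and then verify the axioms of Definition~\ref{d.interval} for $\varphi(\II_1)$ directly.

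Two comments on your write-up. First, your justification of axioms~2 and~3 (``follow from the homeomorphism property of $\varphi^*$'') is too terse: the actual content is that $\varphi$ preserves separation of leaves in $D^2$, which you correctly identify as encoded in $\Leaf^*$ via Remark~\ref{r.leafspace} for non-star leaves and via Lemma~\ref{l.|A|} for star-bounding leaves; once separation is preserved, all four axioms transfer immediately. Second, your final invocation of Observation~\ref{o.interval}(1) is slightly circular as written: the Observation gives a unique \emph{monotonous} bijection inducing $\varphi^*$, so to conclude it equals your $\varphi_{|\II_1}$ you already need to know that $\varphi_{|\II_1}$ is monotonous. But this follows directly from betweenness preservation (a betweenness-preserving bijection of a totally ordered set is monotonous), so the appeal to the Observation is in fact unnecessary. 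The paper's route avoids this wrinkle because monotonicity is built in from the start; yours gains a more explicit global formula for $\varphi$ at the cost of redoing the interval axioms.
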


\begin{proof}
	Note that $\varphi^*$ maps cyclic branchings onto cyclic branchings, so it maps separatrixes onto separatrixes (see Lemma~\ref{l.branching}).

	Let $\II^*$ be an open $L_1^*$-interval, $\II$ its corresponding open $L_1$-interval, and $\II'$ the open $L_2$-interval that corresponds to $\varphi^*(\II^*)$.
	Observation \ref{o.interval} item 1 asserts that there exists a unique monotonous bijection $\varphi_\II\colon \II\to \II'$.

	Let $\JJ^*$ be another $L_1^*$-interval, and $\JJ$ be as above, so that $\II\cap\JJ$ is not empty. Note that $\II^*\cap\JJ^*$ is not empty. Indeed take a leaf $f\in \II\cap\JJ$. If $f$ does not bound a start, then $\{f\}$ belongs to $\II^*\cap\JJ^*$. If $f$ bounds a star, then $\II^*\cap\JJ^*$ contains the leaves that accumulates on $f$, or on the root of $\Delta$ when $f$ bounds a shell $\Delta$ on its other side. So $\II^*\cap\JJ^*$ is a non empty open $L_1^*$-interval. The uniqueness in Observation \ref{o.interval} item 1 implies that $\varphi_\II$ and $\varphi_\JJ$ coincide on the leaves that appear in $\II^*\cap\JJ^*$.

	There are at most two leaves in $\II\cap\JJ$ that do not appear in $\II^*\cap\JJ^*$. Take $f$ one of them. Then $f$
	lies in two successive separatrixes $\{f,g_\II\}\subset \II^*$ and $\{f,g_\JJ\}\subset\JJ^*$. So $\varphi_\II(\{f,g_\II\})$ and $\varphi_\II(\{f,g_\JJ\})$ are two successive of $\varphi^*(\II^*)$. Denote by $f'$ their common leaf of $L_2$.
	Up to changing the orientation, $f$ is the largest leaf of $\II$ that is smaller that all the leaves in $\II\cap\JJ^*$. Thus $\varphi_\II(f)$ is the largest leaf of $\II'$ that is smaller than all the leaves in $\varphi^*(\II^*\cap\JJ^*)$. Thus we have $\varphi_\II(f)=f'$, and similarly $\varphi_\JJ(f)=f'=\varphi_\II(f)$.

	It follows from above that $\varphi_\II$ and $\varphi_\JJ$ coincide on there common domain. Thus the map $\varphi\colon L_1\to L_2$ defined as the common value of the maps $\varphi_\II$ satisfies the conclusion of the lemma.
\end{proof}

\begin{proof}[Proof of Lemma~\ref{l.prelam-planar}]
	Let $L_1,L_2$ be two $\Pre$-pre-laminations and $\varphi^*\colon\Leaf^*(L_1)\to\Leaf^*(L_2)$ be a homeomorphism which is increasing on every branching for the corresponding orders. According to Lemma~\ref{l.homeo-bijection}, $\varphi^*$ induces a bijection $\varphi\colon L_1\to L_2$ that maps open $L_1$-intervals onto open $L_2$-intervals, and that is monotonous on any open $L_1$-interval.

	For any open $L_1$-interval $\II$, Observation \ref{o.interval} item 2 yields a unique increasing bijection $\psi_\II\colon\cR(\II)\to\cR(\varphi(\II))$ that induces $\varphi_{|\II}$. According to the third item of the observation, for any other open $L_1$-interval $\JJ$, the maps $\psi_\II,\psi_\JJ$ coincide on their common domain. So there exists a map $\psi\colon\cR(L_1)\to\cR(L_2)$ that coincide with all $\psi_\II$, and so in particular that induces $\varphi$. By construction,
	$\psi$ in bijective

	We prove now that $\psi$ is increasing. Take three distinct points $x_i$ in $\cR(L_1)$ for $i=1,2,3$, so that $(x_1,x_2,x_3)$ is positively ordered, and $f_i$ a leaf of $L_1$ that ends on $x_i$. We prove that $(\psi(x_1),\psi(x_2),\psi(x_3))$ is positively ordered by considering several cases. First assume that one the leaves $f_i$, let say $f_1$, separates the two others. We co-orient $f_1$ so that $f_2$ in on the negative side of $f_1$, and $f_3$ on the positive side. We also assume that $x_1$ is on the left of $f_1$ (the other case is similar). It induces a co-orientation of $\varphi^*(f_1)$ for which $\varphi^*(f_2)$ is on the negative side, and $\varphi^*(f_3)$ on the positive side. Note that by construction of $\psi_\II$, for any open $L_1$-interval $\II$ that contains $f_1$, $\psi(x)$ lies on the left of $\varphi^*(f_1)$. It follows that $(\psi(x_1),\psi(x_2),\psi(x_3))$ is positively ordered.

	Assume now that two of the leaves, let say $f_1$ and $f_2$, are equal. Choose a co-orientation of $f_1$ so that $x_1$ is on the left of $f_1$, and $x_2$ on the right. Also assume that $x_3$ is on the positive side (the other case is similar). The same argument as above yields that these relative positions are preserved by $\varphi$ and $\psi$. Thus $(\psi(x_1),\psi(x_2),\psi(x_3)$ is positively ordered.

	Assume now that the three leaves are distinct, and non of theme separates the other. We claim that there exists a complementary region $\Delta$ that separates the three leaves $f_i$ in three connected components. Consider the set of leaves $A$ that separates $f_1$ and $f_2$. It is ordered as follows: $g,h\in A$ satisfy $g<h$ if $g$ if $f_1$ is closer to $g$ than to $h$. For $i=1,2$, let $A_i$ be the set of leaves $f\in A$ that separates $f_i$ and $f_3$. Since a leaf $f$ in $A$ have of the leaves $f_i$ on on side, and the two other leaves on its other side, $\{A_1,A_2\}$ is a partition of $A$. If $A_1$ is empty, then $f_1$ is not accumulated by leaves of $L$ on its side that contains $f_2$. Thus It bounds a complementary region $\Delta$. For $j=2,3$, let $\gamma_j$ be the boundary component of $\Delta$ that separates $f_1$ and $f_j$.
	If $\gamma_2=\gamma_3$ holds, then $\gamma_2$ can not be in $A$ since otherwise it would be in $A_1=\emptyset$. Thus $\Delta$ is a shell and $\gamma_2$ is its root. It follows that any leaf close enough to $\gamma_2$ separates $f_1$ and $f_3$, which contradicts $A_1=\emptyset$.
	Thus $\gamma_2$ and $\gamma_3$ and distinct, and $\Delta$ separates the three leaves $f_i$.

	When $A_1$ is not empty, it admits a supremum, which is a geodesic $\wt f_1$ in $\Int(D^2)$. The same argument as above applied to $\wt f_1,f_2,f_3$ yields a complementary region that separates $\wt f_1,f_2,f_3$, and thus $f_1,f_2,f_3$ too. Thus the claim holds true in any cases.

	The complementary region $\Delta$ corresponds either branching or a cyclic branching of $\Leaf^*(L_1)$ (see Lemma~\ref{l.branching}). So it corresponds a complementary region $\Delta'$ of $L_2$ that separates the leaves $\psi(x_1),\psi(x_2),\psi(x_3)$.
	By assumption, $\varphi$ preserves that order on the branching and cyclic branching. It follows immediately that $(\psi(x_1),\psi(x_2),\psi(x_3))$ is positively ordered. Therefore $\psi$ is increasing.
\end{proof}

\subsection{Few common ends}

The aim of this section is to show Lemma~\ref{l.few} below. Recall that $L$ has few common ends if of any $\theta\in S^1$, the set $E_L(\theta)$ of leaves ending on $\theta$ is ordered as an interval of~$\ZZ$.

\begin{lemma} \label{l.few}
	Let $L$ be a $\Pre^*$-pre-lamination. Then $L$ satisfies the few common ends property.
	Furthermore, two successive leaves in $E_L(\theta)$ either
	\begin{itemize}
		\item are two successive boundary components of a shell, and any shell contains at most $2$ boundary components in $E_L(\theta)$.
		\item or they form a separatrix of a star, and a star has at most one separatrix in $E_L(\theta)$.
	\end{itemize}
\end{lemma}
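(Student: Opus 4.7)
The plan is to prove few common ends by contradiction using the $\Pre^*$ countability property, then analyze the structure of successive pairs via the complementary regions on the V-side of the two leaves. Suppose, for contradiction, that $E_L(\theta)$ fails the interval-of-$\ZZ$ order condition: then there exist $f,g\in E_L(\theta)$ and infinitely many $h_n\in E_L(\theta)$ strictly between them in the order by second endpoint. Extract a monotonous subsequence; by Lemma~\ref{lem-monotone-leaf-lam} applied to $(h_n)$ (bounded by $f$ and $g$), the sequence converges in $\overline{\Int(D^2)}$ to a geodesic $\ell$ with endpoint $\theta$, which is either a leaf of $L$ or the root of a shell. Pick a short Euclidean geodesic $\gamma$ transverse to $\ell$ and shorten it using the recipe of Lemma~\ref{l.simple-interval-singulier} (to avoid bad intersections with shells and stars), keeping a neighborhood of $\gamma\cap\ell$; this produces a simple open $L$-interval $\II$ with $h_n\in\II$ for all sufficiently large $n$.

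The argument then rests on a key geometric fact: if two leaves $h_1,h_2\in L$ share the endpoint $\theta\in S^1$ and a third leaf $h'\in L$ separates them in $\Int(D^2)$, then $h'$ must also have $\theta$ as an endpoint. Indeed, the endpoints of $h'$ partition $S^1$ into two open arcs; the endpoints of $h_i$ lie in distinct arcs, so the common endpoint $\theta$ lies in the closure of both arcs and must be a partition point, i.e.\ an endpoint of $h'$. Combining this with Lemma~\ref{l.R} ($\II^*\cong\RR$) and Lemma~\ref{l.countable} (at most countably many separatrixes in $\Leaf^*(L)$), pick $N<M$ with $h_N,h_M\in\II$: the open interval between $[h_N]$ and $[h_M]$ in $\II^*\cong\RR$ is uncountable, and after removing the countably many separatrix $L^*$-leaves, uncountably many non-separatrix $L^*$-leaves remain, each corresponding to a leaf of $L$ separating $h_N,h_M$ in $\Int(D^2)$ by the order-separation correspondence. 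By the geometric fact, each such leaf belongs to $E_L(\theta)$, contradicting the countability of $E_L(\theta)$ given by property $\Pre^*$.

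For the structure of successive pairs, let $f<g$ be successive in $E_L(\theta)$. The geometric fact together with successivity forces that no leaf of $L$ separates $f$ from $g$ in $\Int(D^2)$ (such a separating leaf would lie in $E_L(\theta)$, strictly between $f$ and $g$). Denote by $R$ the V-shaped region bounded by $f$, $g$ and the arc of $S^1$ between their other endpoints not passing through $\theta$. If some sequence of leaves of $L$ accumulated $f$ from the $R$-side, any near-tail would sit inside $R$ and separate $f$ from $g$, a contradiction; hence $f$ bounds a complementary region $\Delta_f$ on the $R$-side, and similarly $g$ bounds $\Delta_g$. If $\Delta_f\neq\Delta_g$, then a path in $R$ from $\Delta_f$ to $\Delta_g$ would cross $L\cap R$ along a leaf that separates $f$ from $g$ in $\Int(D^2)$, again a contradiction. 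So $\Delta_f=\Delta_g=:\Delta$, a complementary region bounded by both $f$ and $g$, which by the $\Pre^*$-assumption is either a shell or a star.

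The count is then straightforward. If $\Delta$ is a shell, three leaves $f_1,f_2,f_3$ all ending at $\theta$ would partition a half-disc neighborhood of $\theta$ into three sectors, and a connected region $\Delta$ can occupy at most one such sector, hence be adjacent to at most two of them, so $\Delta$ has at most two boundary components in $E_L(\theta)$. If $\Delta$ is a star, two edges $f,g$ both ending at $\theta$ must share the $S^1$-vertex $\theta$ of the ideal polygon and hence be adjacent edges forming a separatrix $\{f,g\}$; since the vertices of an ideal polygon are distinct, at most one vertex of $\Delta$ equals $\theta$, yielding at most one separatrix of $\Delta$ in $E_L(\theta)$. The main technical obstacle I expect is the careful construction of the $L$-interval $\II$ in the first step, where the shortening procedure from Lemma~\ref{l.simple-interval-singulier} must preserve enough of the transverse segment near $\gamma\cap\ell$ to still contain a tail of the accumulating sequence $(h_n)$.
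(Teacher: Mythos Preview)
Your argument is correct and follows essentially the same route as the paper: both reduce the few-common-ends property to the observation that two elements of $E_L(\theta)$ cannot lie in a common open $L^*$-interval (else uncountably many leaves would end at $\theta$), then deduce that successive leaves in $E_L(\theta)$ bound a common complementary region, and finally bound the multiplicity via the fact that among three leaves sharing $\theta$ one separates the other two. Your flagged ``technical obstacle'' is not a real issue: applying Lemma~\ref{l.simple-interval-singulier} to the limit leaf $\ell$ (or, when $\ell$ is a root, to a boundary leaf of the corresponding shell) already yields an interval that by construction contains all leaves sufficiently close to $\ell$, hence a tail of the $h_n$; no delicate control of the shortening is needed.
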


\begin{proof}
	Take $\theta$ in $S^1$.
	Observe that $E_L(\theta)$ intersects an open $L^*$-interval in at most one $L^*$-leaf. Indeed if a $L^*$-interval $\II$ was intersecting in two $L^*$-leaves $x,y$, the uncountably many leaves in $]x,y[\subset\II$ would end on $\theta$, which would contradicts the countable assumption in the definition of $\Pre^*$-pre-lamination.

	As a consequence, leaves in $E_L(\theta)$ cannot accumulate on a geodesic (either a leaf or a root of a shell) because any converging sequence contains infinitely many leaves in a same interval of $\Leaf^*(L)$. So $E_L(\theta)$ contains at most finitely many leaves between $f_0$ and $f_1$. It follows that $E_L(\theta)$ is ordered as an interval of $\ZZ$.

	Two leaves that are successive in $E_L(\theta)$ are not separated by any leaf of $L$. According to Lemma~\ref{l.separated} this implies that they are either:
	\begin{itemize}
		\item two boundary components of the same shell $\Delta$, and as they share an endpoint, they are successive for $\prec_\Delta$.
		\item or two boundary components of the same star, and again, as they share an endpoint they are successive and the pair is a separatrix.
	\end{itemize}

	Finally, given $3$ distinct leaves in $E_L(\theta)$, one of them separates the two others, which prevents them from bounding the same complementary component.
\end{proof}

\subsection{Orientations of a $\mathfrak{P}$-pre-lamination}

Given an oriented pair $(a,b)\in S^1\times S^1$ and $f$ its geodesic realization (oriented from $a$ to $b$), its positive side is the side on the left of $f$. The other component is its negative side. We co-orient $f$ from its negative side to its positive side. With this choice, the orientation of the disc is the orientation of $f$ concatenated with the co-orientation of $f$.

\begin{definition}
	Let $L$ be a pre-lamination. \emph{An orientation of $L$} is a subset $\overrightarrow{L}\subset S^1\times S^1\setminus\Diag$ that contains exactly one oriented leaf $\overrightarrow{f}$ for any non-oriented leaf $f$ of $L$, and so that for every open $L$-interval $\II$ of $L$, for every $f_1,f_2\in\II$, the positive (respectively negative) sides of $\overrightarrow{l_1}$ and $ \overrightarrow{l_2}$ have non-empty intersection.
\end{definition}

\begin{corollary}\label{c.orientation}
	A $\Pre$-pre-lamination has exactly two orientations.
\end{corollary}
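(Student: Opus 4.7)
The plan is to identify orientations of $L$ with orientations of the leaf space $\Leaf^*(L)$. For a $\Pre$-pre-lamination there are no star components, so every $L^*$-leaf is a single leaf and $\Leaf(L):=\Leaf^*(L)$ coincides with the set $L$ endowed with the $L$-interval topology. By Corollary~\ref{c.leafspace}, $\Leaf(L)$ is a connected, simply connected non-Hausdorff $1$-manifold, hence (as recalled in the remark following Lemma~\ref{l.simply-tame}) orientable with exactly two orientations. The goal is therefore to exhibit a natural bijection between orientations of $L$ and orientations of $\Leaf(L)$.

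Using the fixed orientation of $D^2$, an orientation $\overrightarrow{f}$ of a leaf $f\in L$ is equivalent to a choice of a co-orientation $\nu_f$ transverse to $f$: the positive side of $\overrightarrow{f}$ is the one on the left of $\overrightarrow{f}$. Given a simple open $L$-interval $\II$ with a chosen orientation of a transverse geodesic $\delta$, I would check that the positive sides of two oriented leaves $\overrightarrow{f_1},\overrightarrow{f_2}\in\II$ pairwise intersect if and only if $\delta$ crosses $f_1$ and $f_2$ from the negative to the positive side in the same way; equivalently, the co-orientations $\nu_{f_1},\nu_{f_2}$ agree with a single orientation of $\delta$. Thus an orientation of $L$ restricted to the leaves in $\II$ is the same data as an orientation of the chart $\II\simeq\RR$ of $\Leaf(L)$.

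Next I would verify compatibility. If two $L$-intervals $\II_1,\II_2$ have a non-empty intersection, then $\II_1\cap\II_2$ is again an open $L$-interval (Lemma~\ref{l.intersection-intervals}), and the local orientations of the two charts agree on it, because both are determined by the orientations $\overrightarrow{f}$ of the shared leaves $f\in\II_1\cap\II_2$. Hence the family of orientations of charts glues into a global orientation of the $1$-manifold $\Leaf(L)$. The reverse construction is immediate: an orientation of $\Leaf(L)$ gives, on each chart $\II$, an oriented transversal, and thus for each $f\in\II$ a distinguished co-orientation, hence a distinguished $\overrightarrow{f}$; the compatibility on intersections of charts is exactly what ensures that $\overrightarrow{f}$ does not depend on the chart chosen, and the resulting subset of $S^1\times S^1$ is automatically an orientation of $L$.

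The main obstacle is purely bookkeeping: matching the three interlocked orientations (ambient orientation of $D^2$, orientation of a leaf, orientation of a transversal) so that the condition ``positive sides intersect'' in the definition of an orientation of $L$ translates precisely into coherence of co-orientations. Once this dictionary is set up, the bijection is tautological, and the conclusion follows from the fact, already used above, that a connected simply connected non-Hausdorff $1$-manifold admits exactly two orientations.
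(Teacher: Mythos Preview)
Your proposal is correct and follows essentially the same approach as the paper: both reduce the question to the fact that $\Leaf^*(L)$ is a connected, simply connected non-Hausdorff $1$-manifold (Corollary~\ref{c.leafspace}), using that orientations of $L$ correspond to orientations of $\Leaf^*(L)$ via the co-orientation dictionary. The paper's proof is more compressed---it just says that the orientation of one leaf propagates along intervals, and that connectedness plus simple connectedness of $\Leaf^*(L)$ then give at most two and at least two orientations respectively---while you spell out the chart-compatibility argument explicitly; the remark immediately following the corollary in the paper makes exactly the bijection you describe.
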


\begin{proof}
	The orientation of a leaf determines the orientation of the leaves in any intervals containing it. The connectedness of $\Leaf^*(L)$ (see Corollary \ref{c.leafspace}) ensures that there is at most two orientations, and the simple connectedness of $\Leaf^*(L)$ ensures that an orientation of $L$ is uniquely determined by the co-orientation of any leaf of $L$.
\end{proof}

\begin{remark}
	An orientation of $L$ induces a co-orientation of each leaf, which induces an orientation of each open $L^*$interval, and thus induces an orientation of $\Leaf^*(L).$ Reciprocally an orientation of $\Leaf^*(L)$ induces a co-orientation of every leaf, hence an orientation of every leaf, and this orientation is coherent on each interval and so induces an orientation of $L$.
\end{remark}

\section{Universal foliations and planar structures} \label{sec-univ-fol}

Kaplan prove that for any planar structure, there is a foliation on the plane that induces that planar structure. We need to prove the same result for singular planar structure. For that, we develop the notion of universal (singular or not) foliation, that contains all other foliations of the plane. We reprove Kaplan's result in the process.

\subsection{Universal foliations}\label{s.universal-foliation}
We say that a co-oriented foliation~$\FF$ on $\RR^2$ is \emph{a universal foliation} if, given any co-oriented foliation $\mathcal{G}$ of $\RR^2$, there is an orientation preserving embedding $\psi\colon\RR^2\to \RR^2$ that satisfies
\begin{itemize}
	\item $\mathcal{G}=\psi^{-1}(\FF)$,
	\item the image $\psi(\RR^2)$ is an open subset of $\RR^2$ saturated for~$\FF$,
	\item $\psi$ maps the co-orientation of $\mathcal{G}$ on the one of~$\FF$. 
\end{itemize}
In other words, $\mathcal{G}$ is conjugated to the restriction of~$\FF$ to a connected simply connected,~$\FF$-saturated region of $\RR^2$.

We define now a foliation which we prove universal later.
Let $C\subset[0,1]$ be the standard dyadic Cantor set. 
We define a set $K\subset\RR^2$ as the union
$$K= \bigcup_{\substack{p,n\in\ZZ\\ q\in\NN_{>0}\\ |n|\geq q}}\left(C+2n,\tfrac{p}{q}\right).$$
It can be described as: for all $\tfrac{p}{q}\in\QQ$, the intersection of $K$ and of the line $\{y=\tfrac{p}{q}\}$ is the union of $\ZZ$-copies of the Cantor set $C$, all shifted by a multiple of 2, with a hole centered at $(x,y)$ of size $2n$.
The condition $|n|\geq q$ is chosen so that the following holds.

\begin{lemma}
	$K$ is closed inside $\RR^2$.
\end{lemma}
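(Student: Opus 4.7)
The plan is to prove closedness sequentially: take any convergent sequence $(x_k,y_k) \to (x,y)$ in $\RR^2$ with $(x_k,y_k) \in K$, and show $(x,y) \in K$. By definition of $K$, for each $k$ we may pick integers $p_k, n_k$ and $q_k \in \NN_{>0}$ with $|n_k| \geq q_k$, $y_k = p_k/q_k$, and $c_k := x_k - 2n_k \in C$. The whole game is to show that both $n_k$ and $q_k$ stay bounded along the sequence: once they do, the standard closedness of $C$ plus the discreteness of rationals of bounded denominator will finish the job.

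For the bound on $n_k$: since $(x_k,y_k)$ converges, $(x_k)_k$ is bounded, and $C \subset [0,1]$ forces $|2n_k - x_k| \leq 1$, so $|n_k| \leq \tfrac{|x_k|+1}{2}$ is bounded. This is where I use the crucial condition $|n| \geq q$ in the definition of $K$: the denominator is automatically controlled, $q_k \leq |n_k|$, so $q_k$ is bounded too. Up to extracting a subsequence I may thus assume $n_k = n$ is constant, and $q_k$ takes values in $\{1,\dots,|n|\}$.

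With $n_k = n$, the identity $c_k = x_k - 2n$ yields $c_k \to x - 2n$, and since $C$ is closed, $c := x - 2n \in C$. On the other hand, the values $y_k$ lie in the locally finite set $\bigcup_{q=1}^{|n|} \tfrac{1}{q}\ZZ$ (contained in $\tfrac{1}{|n|!}\ZZ$), and the convergent subsequence $(y_k)$ lives in a bounded portion of this discrete set, hence is eventually constant equal to $y$. In particular $y = p/q$ for some $p\in \ZZ$, $q \in \NN_{>0}$, with $q \leq |n|$. Therefore $x = c + 2n$ with $c \in C$, and $|n| \geq q$, so by definition $(x,y) \in K$, which completes the proof.

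I do not anticipate any real obstacle here; the one conceptual point worth highlighting (and what the specific condition $|n|\geq q$ in the definition of $K$ is tailor-made for) is that the size of the ``hole'' grows with the denominator $q$, so a sequence in $K$ staying in a bounded horizontal strip can only use rationals with bounded denominator.
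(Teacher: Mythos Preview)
Your proof is correct and follows essentially the same approach as the paper: bound $|n_k|$ using $C\subset[0,1]$ and the convergence of $x_k$, then use $q_k\le|n_k|$ to bound the denominators, and conclude by discreteness. Your write-up is slightly more detailed (extracting a constant $n$ and invoking discreteness of $\tfrac{1}{|n|!}\ZZ$), but the argument is the same.
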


\begin{proof}
	Let $(x_k,y_k)$ be a sequence in $K$ that converges toward a point $(x,y)\in\RR^2$. We write $x_k=z_k+2n_k$ and $y_k=\tfrac{p_k}{q_k}$ with $z_k\in C$ and $p_k,q_k,n_k$ as in the definition of $K$. 
	By construction, we have:
	$$q_k\leq|n_k|=\tfrac{|x_k-z_k|}{2}\leq\tfrac{|x|+1}{2}$$
	when $k$ is large enough. So $q_k$ is bounded. It follows that the sequences $q_k,n_k,p_k$ are also bounded. So up to extraction, they converges. It immediately implies that $x$ lies in $K$.
\end{proof}


The subset $\RR^2\setminus K$ is connected and open. Equip $\RR^2\setminus K$ with the foliation by horizontal leaves.
Denote by $S$ the universal cover of $\RR^2\setminus K$, endowed with the lifted foliation $\FF_0$. The main objective of this section is :

\begin{theorem}\label{t.universal} 
	The foliation $\FF_0$ defined above is a universal foliation. 
\end{theorem}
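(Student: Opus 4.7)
The strategy is to reduce universality of the foliation $\FF_0$ to universality of its leaf space $\LL_0 := \Leaf(\FF_0)$ as a planar structure, and then invoke \thref{th-univ-leaf}. Given any co-oriented foliation $\mathcal{G}$ of $\RR^2$, its leaf space $\Leaf(\mathcal{G})$ is a planar structure (in the non-singular setting of the paper), so universality of $\LL_0$ as a planar structure produces an increasing embedding $\Leaf(\mathcal{G}) \hookrightarrow \LL_0$ whose image is open and saturated. A Kaplan-type correspondence between co-oriented foliations of the plane and planar structures then lifts this embedding of leaf spaces to the required embedding $\psi\colon \RR^2 \hookrightarrow S$ with $\mathcal{G} = \psi^{-1}(\FF_0)$, with $\psi(\RR^2)$ saturated for $\FF_0$ and co-orientations matching.

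The heart of the proof is therefore checking that $\LL_0$ is maximally branched in the sense of \thref{d.maximally}. I would start by describing $\LL_0$ explicitly. A leaf of $\FF_0$ is a connected component of the preimage in $S$ of a connected component of a horizontal line in $\RR^2 \setminus K$. At an irrational height $y$ the horizontal line meets $\RR^2 \setminus K$ in a full line, yielding a family of separated leaves in $S$; at a rational height $y = p/q$ (in lowest terms) the line is cut by the Cantor sets $C + 2n$ (for $|n| \geq q$) into a countable family of intervals (gaps of each Cantor copy, intervals between successive Cantor copies, and two unbounded rays), each of whose lifts to $S$ is a leaf of $\FF_0$. Approaching $p/q$ from above (respectively below) by irrational heights produces in $S$ a branching $\mu^+$ (respectively $\mu^-$) whose elements are the lifts of those intervals, ordered by horizontal position. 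Now the three conditions of maximally branched fall out: density of branchings, because rational heights are dense in any vertical transversal in $S$; two branchings at every non-separated leaf, since one approaches a rational-height leaf from above and from below independently, yielding distinct $\mu^+$ and $\mu^-$; and maximality of every branching, because the order on $\mu^\pm$ is a countable unbounded dense linear order (Cantor gaps interleaved with the jumps between Cantor copies $C+2n$), hence isomorphic to $(\QQ, \leq)$. Then \thref{th-univ-leaf} gives that $\LL_0$ is a universal planar structure.

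The main obstacle is the realization step: promoting a planar-structure embedding $\Leaf(\mathcal{G}) \hookrightarrow \LL_0$ onto an open saturated subset into a foliated embedding $\psi\colon \RR^2 \hookrightarrow S$. This is the functorial form of Kaplan's theorem, stating that a co-oriented foliation of the plane is determined by its planar structure and that an embedding of planar structures onto an open saturated subset lifts uniquely (up to the evident ambiguity) to a foliated embedding of the planes. I would either cite Kaplan or construct $\psi$ by hand: pick a base leaf of $\mathcal{G}$ and a transversal arc, propagate a local conjugacy along flow boxes, and use the planar structure morphism to decide coherently how to cross each branching, so the local conjugacies glue into a global embedding. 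A secondary subtlety is orientation: one must check that $\psi$ is orientation preserving and that co-orientations match, which is automatic once the planar structure embedding is taken to be increasing for the branching orders induced by the respective co-orientations.
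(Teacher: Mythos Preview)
Your proposal is correct and follows essentially the same route as the paper: reduce to showing that $\LL_0=\Leaf(\FF_0)$ is a universal planar structure by checking it is maximally branched (Lemma~\ref{lem-Lo-univ}) and applying \thref{th-univ-leaf}, then use Kaplan's correspondence (\thref{th-Kaplan} together with \thref{lem-Kaplan-realisation}) to promote a planar-structure embedding to a saturated foliated embedding. The paper packages your ``realization step'' as the lemma that a co-oriented foliation is universal if and only if its planar structure is, but the content is exactly what you describe.
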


As a straightforward corollary one gets next result, which may be classical but maybe not so well known: 

\begin{corollary}
	Any foliation on the plane is the pullback of the horizontal foliation of $\RR^2$ by a submersion. 
\end{corollary}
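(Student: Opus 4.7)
The plan is to deduce the corollary directly from Theorem~\ref{t.universal}, using the fact that $\FF_0$ itself is, by construction, the pullback of the horizontal foliation by an explicit local homeomorphism.

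First, I would reduce to the co-oriented setting. Any foliation $\mathcal{G}$ on $\RR^2$ has a normal line bundle over the simply connected space $\RR^2$, so this bundle is trivial and $\mathcal{G}$ is co-orientable; fix a co-orientation once and for all. Then apply Theorem~\ref{t.universal} to obtain an orientation preserving embedding $\psi\colon \RR^2\to S$ whose image is an open $\FF_0$-saturated subset of $S$ and which satisfies $\mathcal{G}=\psi^{-1}(\FF_0)$. By the very construction of $\FF_0$ in Section~\ref{s.universal-foliation}, the foliation $\FF_0$ is the pullback under the universal covering map $\pi\colon S\to \RR^2\setminus K$ of the horizontal foliation of $\RR^2\setminus K$, which is itself the restriction of the horizontal foliation $\mathcal{H}$ on $\RR^2$. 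Composing, I would set
\[
h\;\coloneqq\;\pi\circ\psi\colon \RR^2\longrightarrow \RR^2,
\]
with image contained in the open subset $\RR^2\setminus K$.

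It remains to verify the two required properties of $h$. Since $\psi$ is a homeomorphism onto an open subset of $S$ and $\pi$ is a covering map, their composition is a local homeomorphism from $\RR^2$ onto an open subset of $\RR^2$, hence a (topological) submersion between equi-dimensional planes. Moreover,
\[
h^{-1}(\mathcal{H})\;=\;\psi^{-1}\!\left(\pi^{-1}(\mathcal{H})\right)\;=\;\psi^{-1}(\FF_0)\;=\;\mathcal{G},
\]
which is exactly the statement that $\mathcal{G}$ is the pullback of the horizontal foliation of $\RR^2$ by $h$. There is essentially no obstacle in this argument: once Theorem~\ref{t.universal} is granted, the corollary is a purely formal composition, the only additional input being the automatic co-orientability of foliations on the simply connected plane.
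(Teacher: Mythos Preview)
Your proof is correct and follows exactly the approach the paper intends: the corollary is stated without proof, as a ``straightforward corollary'' of Theorem~\ref{t.universal}, and your argument---composing the embedding $\psi$ from universality with the covering projection $\pi\colon S\to\RR^2\setminus K$ that realizes $\FF_0$ as the lift of the horizontal foliation---is precisely the straightforward derivation the authors have in mind.
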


\subsection{Planar structure of a (non singular) foliation} \label{sec-leaf-space}

We denote by~$\FF$ a (non singular) foliation on $\RR^2$. It is always  orientable and co-orientable, so let us fix an orientation  and a co-orientation for~$\FF$, so that the orientation followed with the co-orientation gives back the usual orientation of $\RR^2$. 

The leaf space of the foliation~$\FF$, denoted by $\Leaf(\FF)$, is a well studied object (see \cite{KaplanI, KaplanII} for instance). As any transverse segment crosses each  leaf in at most $1$ point, one gets that every point of $\Leaf(\FF)$ admits a neighborhood which is a segment: in other words,  $\Leaf(\FF)$ is a (potentially) \emph{non-Hausdorff 1-manifold},  that is it admits a countable covering by open subset homeomorphic to~$\RR$ (see \cite{KaplanI}). Moreover, the transverse orientation of~$\FF$ induced an orientation of $\Leaf(\FF)$. For convenience, we will speak about non-Hausdorff 1-manifold even when the 1-manifold is Hausdorff. Finally, the simple connectedness of $\RR^2$ implies that $\Leaf(\FF)$ is simply-connected.

Given a connected simply-connected non-Hausdorff 1-manifold $\LL$, Kaplan proved that $\LL$ is the leaf space of some foliation of $\RR^2$, but the foliation is not unique in general. One gets back the uniqueness by adding an order at each branching.


Let~$\FF$ be a foliation and $\mu$ be a branching of~$\FF$. There are two kind of connected components of $\RR^2\setminus \mu$: the components bounded by one leaf in $\mu$ and a unique component bounded by all the leaves in $\mu$.  Take a leaf $f$ of~$\FF$ that belongs to the connected component of $\RR^2\setminus\mu$ that is bounded by all the leaves in $\mu$. Denote by $R$ the simply connected region of $\RR^2$ that is bounded by $\mu\cup l$. Given two distinct leaves $f_1,f_2$ in $\mu$, we set $f_1\preceq_\mu f_2$ if $(f, f_1, f_2)$ is clockwise cyclically ordered, as boundary components of the region $R$. One can see that it does not depend on the choice of $f$. It yields a total order $\preceq_\mu$ on the branching $\mu$. Call \emph{left-right order}  the order $\preceq_\mu$.   Equip the leaf space $\Leaf(\FF)$ with the planar structure $(\Leaf(\FF),\preceq_\cdot)$, where $\preceq_\cdot$ denotes the collections of the orders $\preceq_\mu$ for all the branchings $\mu$ of $\Leaf(\FF)$.

\begin{theorem}[Kaplan \cite{KaplanII}]\thlabel{th-Kaplan}	
	Let $(\LL,\preceq_\cdot)$ be a planar structure, there is a co-oriented foliation~$\FF$ of $\RR^2$ and an isomorphism $f\colon\LL\to\Leaf(\FF)$ of planar structures. 
	
	Additionally $(\FF,f)$ is unique up to conjugation. To be more precise, for any co-oriented foliation $\FF'$ on $\RR^2$ and for any isomorphism $f'\colon\LL\to\Leaf(\FF')$ of planar structures, there is an orientation preserving homeomorphism $h\colon\RR^2\to\RR^2$ conjugating ~$\FF$ to $\FF'$ and inducing an isomorphism $\wt h\colon(\Leaf(\FF),\preceq_\cdot)\to(\Leaf(\FF'),\preceq_\cdot)$ of planar structures so that $\wt h\circ f=f'$.
\end{theorem}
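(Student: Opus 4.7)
The plan is to derive Kaplan's theorem from the two universality results developed just above: the universal foliation $\FF_0$ of Theorem~\ref{t.universal} and the universality of maximally branched planar structures in Theorem~\ref{th-univ-leaf}.

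\emph{Existence.} First I would verify that the planar structure $(\Leaf(\FF_0),\preceq_\cdot)$ is maximally branched in the sense of Definition~\ref{d.maximally}. This should follow from the combinatorics defining $K$: ramification points occur at every rational height $p/q$ (with the condition $|n|\geq q$ ensuring that $K$ is closed), making the set of branchings dense in every chart; each non-separated leaf of $\FF_0$ is sandwiched between two Cantor gaps and hence lies on a branching on each of its two sides; and the dyadic structure gives each branching the order type of $\QQ$. Granting this, Theorem~\ref{th-univ-leaf} produces an increasing embedding $\varphi\colon\LL\hookrightarrow\Leaf(\FF_0)$. Lifting $\varphi(\LL)$ under the projection $S\to\Leaf(\FF_0)$ yields a connected $\FF_0$-saturated open subset $U\subset S$. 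A standard argument (an open saturated subset of a foliated plane whose leaf space is a simply-connected $1$-manifold is itself homeomorphic to $\RR^2$) shows $U\cong\RR^2$, and $\FF:=\FF_0|_U$ with the induced co-orientation has planar structure isomorphic to $(\LL,\preceq_\cdot)$.

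\emph{Uniqueness.} Given two realizations $(\FF,f)$ and $(\FF',f')$, form the isomorphism $g=f'\circ f^{-1}\colon(\Leaf(\FF),\preceq_\cdot)\to(\Leaf(\FF'),\preceq_\cdot)$ and construct a conjugating homeomorphism $h\colon\RR^2\to\RR^2$ inducing $g$ by a back-and-forth on flow boxes. Choose a countable family of lines $(U_i)_{i\in\NN}$ covering $\Leaf(\FF)$ with each $V_i=U_0\cup\cdots\cup U_i$ connected, and lift each $U_i$ to a strip $B_i\subset\RR^2$ (and $g(U_i)$ to a strip $B_i'\subset\RR^2$ under $\FF'$), each homeomorphic to $\RR^2$ with the horizontal foliation. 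Inductively define $h$ on $W_i=B_0\cup\cdots\cup B_i$, extending at step $i+1$ by a leaf-preserving homeomorphism $B_{i+1}\to B_{i+1}'$ that agrees with the already-built $h$ on the open overlap $B_{i+1}\cap W_i$. The common value $h=\bigcup_i h_i$ is the desired homeomorphism, and by construction it induces $g$ on leaf spaces.

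The hard part will be the inductive extension in the uniqueness argument. The intersection $B_{i+1}\cap W_i$ is open in $B_{i+1}$, but its closure in $B_{i+1}$ picks up finitely many branching leaves where components of $V_i$ abut $U_{i+1}$; continuity of $h$ at such a branching forces the non-separated leaves on the $W_i$-side to be mapped, in a prescribed order, to non-separated leaves on the $W_i'$-side. It is precisely the preservation of each left-right order $\preceq_\mu$ by $g$ that makes this prescription consistent with a leaf-preserving homeomorphism on $B_{i+1}$; without it a continuous extension could be forced to reverse sides at a branching and thereby fail. A secondary, more technical task is the verification that $\Leaf(\FF_0)$ is maximally branched, which requires unpacking the arithmetic of the definition of $K$ and checking each condition of Definition~\ref{d.maximally}.
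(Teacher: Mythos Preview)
Your existence argument is exactly the paper's: Lemma~\ref{lem-Lo-univ} shows $\Leaf(\FF_0)$ is maximally branched, Theorem~\ref{th-univ-leaf} then gives the embedding of $\LL$, and Lemma~\ref{lem-Kaplan-realisation} turns the image into a foliated plane. The paper's verification that $\LL_0$ is maximally branched is in fact briefer than your sketch --- it just observes that each branching projects to the complement of a union of translated Cantor sets in a rational horizontal line, hence has the order type of $\QQ$.

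For uniqueness the situation is different: the paper does not prove it. Theorem~\ref{th-Kaplan} is stated as Kaplan's result with a citation to \cite{KaplanII}, and only the realization part is re-proven via the universal foliation; the uniqueness is taken as known and is later \emph{used} as a black box in the proof of the singular version (Theorem~\ref{th-Kaplan-sing}), where the argument lifts to the universal cover of $\RR^2\setminus\Sigma$, invokes Kaplan's uniqueness on the resulting non-singular foliated planes, and then adjusts near the non-origin separatrices to make the map descend. So your back-and-forth on strips is a genuine addition rather than a reproduction of the paper. The outline is sound, but two points deserve sharpening: first, in the non-singular simply-connected case $U_{i+1}\cap V_i$ is a \emph{single} interval (this is Lemma~\ref{l.simply-tame}, as used in the proof of Theorem~\ref{th-univ-leaf}), so the closure in $U_{i+1}$ picks up at most two boundary leaves, not an unspecified finite family; second, the extension step on a strip amounts to choosing, for each new leaf, a homeomorphism onto the corresponding leaf of $\FF'$ that varies continuously up to and including the boundary leaf of the overlap --- you correctly identify that preservation of $\preceq_\mu$ is exactly what makes the limiting behavior at a branching consistent, but turning this into a continuous extension on $\RR^2$ still requires an explicit choice of transverse coordinates and a gluing argument that you have not written down.
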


Next lemma is very natural and left to the reader:

\begin{lemma}\thlabel{lem-Kaplan-realisation}
	Let $\LL_1\xhookrightarrow{f}\LL_2$ be an embedding of planar structures, where $\LL_2=\Leaf(\FF)$ is the planar structure of a foliation~$\FF$ on~$\RR^2$. Then the union of leaves of~$\FF$ 
	$$P=\bigcup_{l\in\LL_1}f(l)\subset\RR^2$$
	is homeomorphic to $\RR^2$ and there exists an isomorphism $h\colon\Leaf(\FF_{|P})\to\LL_1$ of planar structure that satisfies $h\circ f=\id_{\LL_1}$. 
\end{lemma}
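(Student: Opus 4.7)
The plan is to show that $P$ is an open, connected, simply-connected, $\FF$-saturated subset of $\RR^2$, and then identify the planar structure of $\Leaf(\FF_{|P})$ with $\LL_1$ via $f^{-1}$.

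First I would verify that $f(\LL_1)$ is open in $\LL_2$. Since $\LL_1$ and $\LL_2$ are $1$-manifolds and $f$ is a continuous injection, in local charts $f$ reads as an injective continuous map $\RR \to \RR$, hence is strictly monotone and open; so $f$ is an open map and $f(\LL_1)$ is open in $\LL_2 = \Leaf(\FF)$. Pulling back along the open quotient map $\RR^2 \to \Leaf(\FF)$ shows that $P$ is open and saturated in $\RR^2$. Connectedness is immediate from the connectedness of $\LL_1$ and of each leaf of $\FF$.

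For simple connectedness, note that $\RR^2 \setminus P$ is a union of leaves of $\FF$ not belonging to $f(\LL_1)$. Each such leaf is a properly embedded copy of $\RR$, hence unbounded, so every connected component of $\RR^2\setminus P$ contains an entire unbounded leaf and is itself unbounded. Since $P$ is an open subset of $\RR^2$ whose complement has only unbounded components, $P$ is simply connected, and therefore homeomorphic to $\RR^2$.

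To identify the planar structures, observe that $P$ being open and saturated implies that the quotient topology on $\Leaf(\FF_{|P})$ coincides with the subspace topology on $f(\LL_1) \subset \LL_2$. Hence $h \coloneqq f^{-1}\colon\Leaf(\FF_{|P}) \to \LL_1$ is a homeomorphism satisfying $h \circ f = \id_{\LL_1}$. Since $f(\LL_1)$ is open in $\LL_2$, non-separation in $f(\LL_1)$ and non-separation in $\LL_2$ coincide, so branchings of $\Leaf(\FF_{|P})$ correspond bijectively (via $f^{-1}$) to branchings of $\LL_1$. The left-right order on a branching depends only on the orientation of the ambient plane, which is the same for $P$ and for $\RR^2$; so the order on a branching of $\Leaf(\FF_{|P})$ matches the restriction of the order on the enclosing branching of $\LL_2$, which in turn matches the order on $\LL_1$ by the morphism property of $f$.

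The main technical point, in my view, is this last compatibility of orders on branchings; the topological facts (openness of $f(\LL_1)$, simple connectedness of $P$) are standard, and the identification $\Leaf(\FF_{|P}) \cong f(\LL_1)$ is formal once openness is known. It is at this step that the planar-structure data carried by $f$ is genuinely used, rather than merely its topological injectivity.
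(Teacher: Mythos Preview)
Your proof is correct. The paper does not give its own proof of this lemma; it states that the lemma ``is very natural and left to the reader'', so there is nothing to compare against beyond confirming that your argument supplies the missing details. Your sequence of steps (openness of $f(\LL_1)$ by $1$-dimensional invariance of domain, hence $P$ open and $\FF$-saturated; simple connectedness via unboundedness of complementary components; identification of $\Leaf(\FF_{|P})$ with the open subspace $f(\LL_1)\subset\LL_2$) is exactly the natural route. The only place one might want an extra line is the compatibility of the left-right orders: a branching $\mu$ of $\Leaf(\FF_{|P})$ sits inside a branching $\mu'$ of $\Leaf(\FF)$, and one should note that the approaching family of leaves and the region used to define $\preceq_{\mu}$ in $P$ can be taken inside $P$, so that the cyclic order of boundary components read off in $P$ agrees with that read off in $\RR^2$. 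You gesture at this with ``the orientation of the ambient plane is the same'', which is the right reason; spelling it out would make the argument airtight.
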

\subsection{Universal planar structure and universal foliation}

\begin{lemma} 
	A oriented, transversely oriented foliation~$\FF$ of $\RR^2$ is universal if and only if its planar structure $(\LL,\preceq)$ is universal.  
\end{lemma}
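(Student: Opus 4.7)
The plan is to establish both implications using Kaplan's correspondence (\thref{th-Kaplan}) and its addendum on realization of sub-planar-structures (\thref{lem-Kaplan-realisation}), which together translate questions about foliations of $\RR^2$ into questions about planar structures.

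For the direction ``$\FF$ universal $\Rightarrow$ $(\LL, \preceq_\cdot)$ universal'', I would start with an arbitrary oriented planar structure $(\tilde\LL,\tilde\preceq_\cdot)$ and apply Kaplan's theorem to produce a co-oriented foliation $\mathcal{G}$ of $\RR^2$ together with an isomorphism $(\Leaf(\mathcal{G}), \preceq_\cdot) \simeq (\tilde\LL, \tilde\preceq_\cdot)$ of planar structures. Universality of $\FF$ yields an orientation-preserving embedding $\psi\colon \RR^2 \to \RR^2$ with $\mathcal{G} = \psi^{-1}(\FF)$ and $\psi(\RR^2)$ open and $\FF$-saturated. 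The map $\psi$ sends each leaf of $\mathcal{G}$ bijectively onto a leaf of $\FF$, so it descends to an injection $\psi_*\colon \Leaf(\mathcal{G}) \hookrightarrow \Leaf(\FF) = \LL$; preservation of co-orientation gives monotonicity for the $\LL$-orientation, and the fact that $\psi$ is an orientation-preserving homeomorphism onto a saturated region forces the left-right order on each branching to be preserved (the cyclic arrangement of leaves around a branching in $\RR^2$ is an orientation-preserving invariant).

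For the direction ``$(\LL,\preceq_\cdot)$ universal $\Rightarrow \FF$ universal'', I would take any co-oriented foliation $\mathcal{G}$ on $\RR^2$, form its planar structure $(\tilde\LL,\tilde\preceq_\cdot) = (\Leaf(\mathcal{G}), \preceq_\cdot)$, and apply universality of $\LL$ to obtain an injective morphism of planar structures $\varphi\colon \tilde\LL \hookrightarrow \LL$. By \thref{lem-Kaplan-realisation}, the union $P = \bigcup_{l\in\tilde\LL} \varphi(l) \subset \RR^2$ is an open, $\FF$-saturated region homeomorphic to $\RR^2$, and the planar structure $(\Leaf(\FF_{|P}), \preceq_\cdot)$ is isomorphic to $(\tilde\LL, \tilde\preceq_\cdot)$. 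Applying the uniqueness part of Kaplan's theorem to the two co-oriented foliations $\mathcal{G}$ and $\FF_{|P}$ (both living on planes, both carrying the same planar structure $\tilde\LL$) yields an orientation-preserving homeomorphism $h\colon \RR^2 \to P$ conjugating $\mathcal{G}$ to $\FF_{|P}$ and compatible with co-orientations. Post-composing with the inclusion $P \hookrightarrow \RR^2$ produces the desired universal embedding $\psi$.

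The main subtlety I expect is bookkeeping of orientations and co-orientations: Kaplan's theorem is stated for co-oriented foliations, and the orders $\preceq_\mu$ on branchings are defined with reference to the chosen co-orientation and the fixed orientation of $\RR^2$. I will need to check carefully that an isomorphism of oriented planar structures precisely corresponds, under Kaplan, to a conjugation that is simultaneously orientation-preserving on $\RR^2$ and co-orientation-preserving on the foliations, so that the two notions of ``universality'' match exactly (as opposed to differing by a reversal of one of the orientations). Once this is done, the equivalence follows immediately from the two constructions above, and no further delicate argument is needed.
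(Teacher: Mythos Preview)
Your proposal is correct and follows essentially the same route as the paper's proof: both directions pass through Kaplan's theorem (\thref{th-Kaplan}), translating universality of the foliation into universality of the planar structure and back, with \thref{lem-Kaplan-realisation} playing exactly the role you describe in the converse. Your bookkeeping remark on orientations is apt but routine; the paper handles it in a single sentence.
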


\begin{proof}
	Assume that~$\FF$ is universal.  Consider a planar 
structure $(\tilde\LL,\tilde\preceq_\cdot)$ and $\tilde\FF$ the oriented and transversely oriented foliation associated to it by Theorem~\ref{th-Kaplan}. Let $\psi$ be the inclusion of $\tilde\FF$ in a~$\FF$-saturated connected simply connected region of $\RR^2$, given by the universality of~$\FF$. It induces an inclusion of the leaf space $\tilde \LL$ of $\tilde \FF$ into the leaf space $\LL$ of~$\FF$, preserving the orientation and the orientation of the branchings. This shows that $(\LL,\preceq_\cdot)$ is universal. 

Conversely assume that $(\LL,\preceq_\cdot)$ is universal and consider an oriented and transversally oriented foliation $\tilde \FF$ and its planar structure $(\tilde \LL,\tilde\preceq_\cdot) $ that we may assume to be included in $(\LL,\preceq_\cdot)$ by the universality of $(\LL,\preceq_\cdot)$.  Consider the set of leaves of~$\FF$ which are points of $\tilde \LL\subset\LL$.  It is a connected simply connected region $U$ of $\RR^2$, foliated by~$\FF$ and the corresponding planar structure is $(\tilde \LL,\tilde\preceq_\cdot) $. Now Theorem~\ref{th-Kaplan} provides an orientations preserving conjugacy of $\tilde F$ with the restriction of~$\FF$  to $U$, proving the universality of~$\FF$. 
\end{proof}

\subsection{The planar structure of $\FF_0$ is universal}
Recall that $C\subset[0,1]$ denotes the standard Cantor set. Notice for later that $[0,1]\setminus C$ is a countable union of open intervals, so that the set of complementary interval, for the order induced by the usual order on~$\RR$, is in increasing bijection with the ordered set $(\QQ,\leq)$. 

We denote by $\LL_0=\Leaf(\FF_0)$ the planar structure of the foliation $\FF_0$ defined in Section~\ref{s.universal-foliation}.

\begin{lemma}\thlabel{lem-Lo-univ}
	The planar structure $\LL_0$ is maximally branched (see Definition~\ref{d.maximally}).
\end{lemma}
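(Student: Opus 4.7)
My plan is to verify the three defining conditions of a maximally branched planar structure (Definition~\ref{d.maximally}) by transferring the branching analysis from $S$ down to the horizontal foliation $\FF_K$ on $\RR^2 \setminus K$.

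First I would identify the leaves of $\FF_K$ and their non-separated pairs. The leaves of $\FF_K$ are the full horizontal lines $\{y = y_0\}$ for $y_0 \in \RR \setminus \QQ$, together with the connected components of $\{y = p/q\} \setminus K$ for each rational $p/q$ written in lowest terms. The key observation is that any two distinct components $L_1, L_2$ of $\{y = p/q\} \setminus K$ are non-separated on both sides. For the ``above'' side, any bounded neighborhood of $p/q$ contains only finitely many rationals with denominator $\leq q$, so for $\epsilon > 0$ small each line $\{y = p/q + \epsilon\}$ is either irrational (a full-line leaf) or has denominator $q' > q$; in the latter case its ``big hole'' --- the open interval $(-2q'+1,\, 2q')$ on that line --- is a single leaf of $\FF_K$ whose $x$-extent covers $L_1 \cup L_2$ as soon as $q'$ is large enough. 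Letting $\epsilon \to 0^+$ through such values yields leaves simultaneously converging to both $L_1$ and $L_2$ from above; the ``below'' side is symmetric. Conversely, irrational-line leaves are transversely isolated and hence regular, so these rational-line components are the only non-separated pairs of $\FF_K$.

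Next I would lift this picture to $S$. By standard path-lifting, for any non-separated pair $(L_1, L_2)$ in $\FF_K$ and any chosen lift $\wt L_1$ of $L_1$, there is exactly one lift $\wt L_2$ of $L_2$ non-separated from $\wt L_1$ on the same side: it is the endpoint of the lift of a rectangular path that goes from $L_1$ up through the simply connected big-hole strip and back down to $L_2$, starting on $\wt L_1$. Conversely every non-separated pair of $\LL_0$ projects to a non-separated pair of $\FF_K$. Hence each branching of $\LL_0$ at $\wt L_1$ is in canonical bijection with the set of components of $\{y = p/q\} \setminus K$, and I would check that the planar order $\preceq_\mu$ on this branching agrees with the left-to-right order by $x$-coordinate downstairs, since the big-hole strip lifts homeomorphically into $S$ and therefore preserves the cyclic order around any upper witnessing leaf.

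Then I would verify the three conditions. For density, any open set of $\LL_0$ projects to an open set of $\RR^2 \setminus K$ containing an open rectangle that meets some rational line $\{y = p/q\}$, which contributes a non-separated leaf lying on a branching. For two-sided non-separation, every non-separated leaf of $\LL_0$ projects to a component of some $\{y = p/q\} \setminus K$, which is non-separated on both sides downstairs, and this lifts to two-sided branchings by the path-lifting argument. For maximal ordering, each branching is in order-preserving bijection with the components of some $\{y = p/q\} \setminus K$ ordered by $x$-coordinate; this set is countable, has no endpoints (Cantor sets $C + 2n$ occur at every $|n| \geq q$, extending to $\pm \infty$), and is densely ordered (between any two components lies a point of $K$ in some Cantor set $C + 2n$, onto which complementary intervals of $C$ accumulate from both sides, producing infinitely many intermediate components), and hence by Cantor's back-and-forth theorem is order-isomorphic to $(\QQ, \leq)$.

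The main obstacle will be the order-preserving bijection between upstairs branchings and downstairs components: one must show both that exactly one lift of each downstairs component lies in the branching with a given $\wt L_1$, and that the cyclic order induced by the planar structure of $\LL_0$ agrees with the $x$-coordinate order. Both facts reduce to the observation that the upper big-hole strip is simply connected and lifts homeomorphically into $S$, providing a canonical witness for the non-separation that simultaneously selects the matching lift and positions it consistently in the order.
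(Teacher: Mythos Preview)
Your proposal is correct and follows essentially the same approach as the paper: identify each branching of $\LL_0$ with the set of connected components of $\{y=p/q\}\setminus K$ for some rational $p/q$, and then observe that this set, with its left-to-right order, is order-isomorphic to $\QQ$. Your write-up is considerably more detailed than the paper's three-line sketch---in particular you explicitly verify density, two-sided branching, and the well-definedness of the lift-level bijection via the simply-connected thin strip (which works because for any bounded $x$-range and small enough $\epsilon$, all rationals $p'/q'\in(p/q,\,p/q+\epsilon)$ have $q'$ so large that the Cantor sets $C+2n'$, $|n'|\ge q'$, lie outside that range)---but the underlying idea is the same.
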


\begin{proof}
	Every branching of $\FF_0$ projects down in $\RR^2$ to the complement of $\bigcup_{|n|\geq q} (C+2n)\times\{\tfrac{p}{q}\}$ inside $\RR\times \{\frac pq\}$ for some $p\in\ZZ,q\in\NN, p\wedge q=1$. So the induced branching of $\LL_0$ corresponds to the connected component of that complement. It follows that it naturally ordered (for the induced order on $\RR\times\{\tfrac{p}{q}$) as $\QQ$.
\end{proof}

Theorem~\ref{th-univ-leaf} and  Lemma~\ref{lem-Lo-univ} imply that $\LL_0$ is universal. We can now reprove Kaplan's realization theorem.


\begin{proof}[Proof of the realization part of \thref{th-Kaplan}] 
	Consider any planar structure $(\LL,\preceq_\cdot) $. As $\LL_0$ is universal, we can assume that $\LL$ it as a subset of $\LL_0$. Consider the union of leaves of $\FF_0$ corresponding the points in $\LL$ and one gets a foliation of a simply connected region of $\RR^2$  whose planar structure is $\LL,\preceq_\cdot$.
\end{proof}

\section{From $\mathfrak{P}$-pre-lamination to foliation} \label{sec-lam-fo}
The aim of this section is to prove the sufficient part of \thref{main-A}, the necessary part has been proven in Section~\ref{sec-end-pre-lam}. Given a $\Pre$-pre-lamination $L$ with no star, its leaf space $\Leaf^*(L)$ is a (non-singular) planar structure. This planar structure corresponds to a foliation~$\FF$, and we will see that $L$ is conjugated to the end pre-lamination $L_\infty(\FF)$ of $\FF$.


\subsection{For $\Pre$-pre-lanimations}

Let $S_1,S_2$ be two circles, $L_1$ a pre-lamination of $S_1$, and $f\colon S_1\to S_2$ be a homeomorphism. Then
$$L_2=(f\times f)(L_1)\subset S_2\times S_2\setminus\Delta$$
is a pre-lamination of $S_2$. We denote by $L_1\xrightarrow{\wb f}L_2$ and $\Leaf^*(L_1)\xrightarrow{\ub f}\Leaf^*(L_2)$ the \emph{induced maps}.
\thref{th-good-lam-are-end-fol} below is the main step for \thref{main-A} :

\begin{theorem}\thlabel{th-good-lam-are-end-fol}
	Let $L$ be a $\Pre$-pre-lamination of $S^1$.
	\begin{itemize}
		\item Existence:
		      There exists a foliation~$\FF$ of $\RR^2$ and an increasing homeomorphism $g\colon\partial_\infty\FF\to S^1$ conjugating the pre-lamination $L_\infty(\FF)$ to $L$.
		\item Uniqueness: For any foliation $\FF'$ of $\RR^2$ and any increasing homeomorphism $g'\colon\partial_\infty\FF'\to S^1$ conjugating $L_\infty(\FF')$ to $L$, there is an orientation preserving homeomorphism $h$ of $\RR^2$ conjugating~$\FF$ to $\FF'$ so that the induced homeomorphism $\wt h\colon \partial_\infty\FF\to \partial_\infty\FF'$ satisfies $g=g'\circ\wt h$.
	\end{itemize}
\end{theorem}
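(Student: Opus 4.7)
The plan is to assemble three pieces already established in the paper: the planar structure associated to a $\Pre$-pre-lamination (Corollary~\ref{c.leafspace}), Kaplan's equivalence between planar structures and foliations (\thref{th-Kaplan}), and the reconstruction of a pre-lamination from its planar structure (Theorem~\ref{t.prelam-planar}).

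\textbf{Existence.} Since $L$ is a $\Pre$-pre-lamination, Corollary~\ref{c.leafspace} ensures that $\Leaf^*(L)$ is a connected, simply-connected non-Hausdorff $1$-manifold, and the orders $\preceq_\mu$ constructed in Section~5 turn it into a planar structure. Applying \thref{th-Kaplan}, we obtain a cooriented foliation $\FF$ of $\RR^2$ together with an isomorphism of planar structures $\Phi\colon\Leaf^*(L)\to\Leaf(\FF)$. By \thref{lem-end-lam-comp} and \thref{cor-no-acc}, the end pre-lamination $L_\infty(\FF)$ is again a $\Pre$-pre-lamination (no stars appear since $\FF$ is non-singular), so $\Leaf^*(L_\infty(\FF))$ is also a planar structure. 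The natural map $\pi_\FF\colon\Leaf(\FF)\to L_\infty(\FF)=\Leaf^*(L_\infty(\FF))$ sending each leaf to its pair of endpoints is a bijection by Remark~\ref{r.same-end}. I will argue that it is an isomorphism of planar structures; composing with $\Phi$ then gives an isomorphism of planar structures $\Leaf^*(L)\to\Leaf^*(L_\infty(\FF))$, and Theorem~\ref{t.prelam-planar} produces a unique orientation preserving homeomorphism $g\colon\partial_\infty\FF\to S^1$ with $g(L_\infty(\FF))=L$.

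\textbf{Identification of the planar structures via $\pi_\FF$.} The branching and order data match because of \thref{lem-end-lam-comp} combined with Corollary~\ref{c.limites}: a sequence of leaves is monotonous on one side iff it is monotonous on the other, and non-separated leaves of $\FF$ correspond precisely to leaves of $L_\infty(\FF)$ bounding a common shell. Both orders on such a branching are encoded by the planar orientation---left-right on the $\FF$ side, and the order $\preceq_\Delta$ inherited from $S^1=\partial D^2$ on the $L_\infty(\FF)$ side---which are matched because the compactification of Theorem~\ref{th-Bonatti} is orientation preserving. For the topology, basic neighborhoods in $\Leaf(\FF)$ are given by transverse arcs $\sigma\subset\RR^2$ (each leaf crosses $\sigma$ at most once, the classical non-singular analogue of Lemma~\ref{l.1point}), while basic neighborhoods in $\Leaf^*(L_\infty(\FF))$ are simple $L$-intervals. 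In one direction, the image of a transverse arc under $\pi_\FF$ is an open $L_\infty(\FF)$-interval: this is exactly the content of Lemma~\ref{l.continuous} at each leaf, combined with Corollary~\ref{c.limites} to handle the shells encountered by $\sigma$. In the other direction, any simple $L_\infty(\FF)$-interval is given by a geodesic chord in $D^2$ transverse to $L_\infty(\FF)$; by the shell and no-bad-accumulation hypotheses together with density of endpoints, such a chord can be deformed inside $\Int(D^2)$ into an $\FF$-transverse arc hitting the same leaves. Continuity of $\pi_\FF$ and $\pi_\FF^{-1}$ on non-separated leaves is then a direct consequence of Lemma~\ref{lem-leaf-comp} and Corollary~\ref{c.limites}.

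\textbf{Uniqueness.} Let $(\FF',g')$ satisfy the same conclusion. The circle homeomorphism $(g')^{-1}\circ g\colon\partial_\infty\FF\to\partial_\infty\FF'$ conjugates $L_\infty(\FF)$ to $L_\infty(\FF')$, so by Remark~\ref{r.induced} it induces an isomorphism of planar structures $\Leaf^*(L_\infty(\FF))\to\Leaf^*(L_\infty(\FF'))$. Using the identifications from the previous paragraph (applied to both $\FF$ and $\FF'$), we obtain an isomorphism of planar structures $\Leaf(\FF)\to\Leaf(\FF')$. The uniqueness part of \thref{th-Kaplan} then yields an orientation preserving homeomorphism $h\colon\RR^2\to\RR^2$ conjugating $\FF$ to $\FF'$ and realizing this leaf-space isomorphism. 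By the uniqueness of the compactification in Theorem~\ref{th-Bonatti}, $h$ extends to a homeomorphism of the closed discs, inducing $\wt h\colon\partial_\infty\FF\to\partial_\infty\FF'$. By construction $\wt h$ realizes the same isomorphism of planar structures as $(g')^{-1}\circ g$, and the uniqueness part of Theorem~\ref{t.prelam-planar} forces $\wt h=(g')^{-1}\circ g$, giving $g=g'\circ\wt h$.

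\textbf{Main obstacle.} The technical heart of the argument is checking that $\pi_\FF$ is a homeomorphism of leaf spaces: the bijection and branching data are essentially immediate from the results of Section~\ref{sec-end-pre-lam}, but matching the two topologies requires the delicate interplay between $\FF$-transverse arcs in $\RR^2$ and geodesic chords in $\Int(D^2)$ transverse to $L_\infty(\FF)$, where the shell and no-bad-accumulation hypotheses enter crucially to ensure that geodesic chords can be realized by honest transverse arcs of $\FF$ and vice versa.
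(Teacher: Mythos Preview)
Your proof is correct and follows essentially the same route as the paper: build $\Leaf^*(L)$, apply Kaplan to get $\FF$, identify $\Leaf(\FF)$ with $\Leaf^*(L_\infty(\FF))$ via $\pi_\FF$, then invoke Theorem~\ref{t.prelam-planar} for both existence and uniqueness of the circle conjugacy. The paper packages the identification $\pi_\FF\colon\Leaf(\FF)\to\Leaf^*(L_\infty(\FF))$ as a separate statement (\thref{prop-end-leaf-hom}), but the logical skeleton is identical.

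The one place where you work harder than necessary is in showing continuity of $\pi_\FF$. You propose to take a geodesic chord realizing a simple $L_\infty(\FF)$-interval and deform it into an $\FF$-transverse arc hitting the same leaves; this is plausible but not fully justified, and you flag it yourself as the ``main obstacle''. The paper's argument is much lighter and avoids any geometric deformation: given an open $L_\infty(\FF)$-interval $\II$ and $f\in\pi_\FF^{-1}(\II)$, pick $g,h\in\II$ with $f$ separating them (item~3 of Definition~\ref{d.interval}); then any leaf $f'$ of $\FF$ sufficiently close to $f$ still separates $g$ from $h$, so $\pi_\FF(f')\in\II$ by item~2. This shows $\pi_\FF^{-1}(\II)$ is open directly from the axioms of $L$-intervals, with no need to realize $\II$ by a transverse arc at all. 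In particular the shell and no-bad-accumulation hypotheses play no role in this step; they are only used upstream to ensure $\Leaf^*(L)$ is a planar structure.
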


The theorem allows one to go from pre-laminations on the circle to foliations on the plane. The proof goes from pre-laminations to planar structre, then to foliation.

Take a (non-singular) foliation $\FF$ on $\Int(D^2)$. Given a leaf $f$ of $\FF$, it corresponds a unique leaf of $L_\infty(\FF)$, which is an element of $\Leaf^*(L)$ since $L_\infty(\FF)$ has no star. It describes a natural map from $\Leaf(\FF)$ to $\Leaf^*(L_\infty(\FF))$, which we denote by~$\pi_\FF$.

\begin{proposition}\thlabel{prop-end-leaf-hom}
	For any (non singular) foliation~$\FF$ on $\Int(D^2)$, the map
	$$\Leaf(\FF)\xrightarrow{\pi_\FF}\Leaf^*(L_\infty(\FF))$$
	is an isomorphism of planar structure.
\end{proposition}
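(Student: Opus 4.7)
The plan is to verify three items in sequence: $\pi_\FF$ is a bijection, it is a homeomorphism of the underlying non-Hausdorff 1-manifolds, and it respects the order on every branching. Since $\FF$ is non-singular, \thref{lem-end-lam-comp} implies that $L_\infty(\FF)$ has no star regions, so $\Leaf^*(L_\infty(\FF)) = L_\infty(\FF)$ and every branching of the latter corresponds to a shell. Bijectivity is then immediate: surjectivity holds by the very definition of $L_\infty(\FF)$, and injectivity follows from Remark~\ref{r.same-end} since two distinct leaves share at most one endpoint.

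For the homeomorphism, I would show that $\pi_\FF$ is both open and continuous. For openness, I would take a transverse arc $\sigma$ to $\FF$ and show that the image under $\pi_\FF$ of the corresponding chart $U_\sigma \subset \Leaf(\FF)$ is an open $L$-interval, by checking the four conditions of \thref{d.interval} directly. The crucial condition is (2): if a leaf $h$ of $L_\infty(\FF)$ separates $\pi_\FF(f_s)$ from $\pi_\FF(f_t)$, then $\pi_\FF^{-1}(h)$ separates $f_s$ from $f_t$ in $\Int(D^2)$ (because separation of two curves with endpoints on $S^1$ is determined by those endpoints), and hence $\pi_\FF^{-1}(h)$ crosses $\sigma$ by a connectedness argument. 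For continuity, given an open $L$-interval $\II$ containing $l_0 = \pi_\FF(f_0)$, I would use condition (3) of \thref{d.interval} to find $l_1, l_2 \in \II$ separated by $l_0$ and apply \thref{l.continuous} on continuity of endpoints to choose a short transverse arc $\sigma$ through $f_0$ such that every leaf of $\FF$ crossing $\sigma$ has $\pi_\FF$-image separating $l_1$ from $l_2$, hence in $\II$ by condition (2).

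For the order on branchings, \thref{l.leaf-boundary} implies that $\pi_\FF$ sends a branching $\mu \subset \Leaf(\FF)$ (a family of pairwise non-separated leaves) bijectively onto the set of boundary leaves of the corresponding shell $\Delta$ of $L_\infty(\FF)$. To verify that the orders agree, I would fix the base leaf $f \in \Leaf(\FF)$ in the component of $\RR^2 \setminus \mu$ bounded by all of $\mu$ (used to define the left-right order $\preceq_\mu$) and note that the clockwise cyclic order of the boundary components $(f, f_1, f_2, \ldots)$ of the region $R \subset \RR^2$ corresponds, via the orientation-preserving compactification $\RR^2 \hookrightarrow D^2$, to the clockwise cyclic order of their endpoints on $\partial D^2 = S^1$; restricted to the arc containing the endpoints of the boundary components of $\Delta$, this is precisely the order $\preceq_\Delta$.

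The main obstacle I expect is establishing the openness of $\pi_\FF$: one must translate between the separation of leaves of $\FF$ in $\Int(D^2)$ and the combinatorial separation of leaves of $L_\infty(\FF)$ in the geodesic realization $G(L_\infty(\FF))$. This is facilitated by the fact that both notions are controlled by the endpoints on $S^1$, so the challenge is setting up the correspondence carefully rather than any deep topological obstruction.
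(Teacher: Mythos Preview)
Your proposal follows the same three-step strategy as the paper (bijection, homeomorphism, order on branchings), and the bijectivity, openness, and order-preservation parts are correct and essentially identical to the paper's treatment.

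The continuity step has a gap. Lemma~\ref{l.continuous} says $t\mapsto\pi_\FF(f_t)$ is continuous at $0$ \emph{only when $f_0$ is regular on that side}. If $f_0$ lies in a branching $\mu$, the endpoints of $\pi_\FF(f_t)$ on that side converge to the endpoints of the root of the associated shell, not to those of $l_0$, so endpoint continuity cannot be invoked to place $\pi_\FF(f_t)$ between $l_1$ and $l_2$. The paper sidesteps this by arguing purely through separation: having chosen $g=\pi_\FF^{-1}(l_1)$ and $h=\pi_\FF^{-1}(l_2)$ with $f_0$ separating them, one observes that any leaf $f'$ sufficiently close to $f_0$ separates $g$ from $h$ in $\Int(D^2)$; since separation of leaves is determined by their endpoints on $S^1$, $\pi_\FF(f')$ separates $l_1$ from $l_2$, and condition~(2) of Definition~\ref{d.interval} then forces $\pi_\FF(f')\in\II$. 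This is the very conclusion you wrote down, reached without Lemma~\ref{l.continuous}. One hidden point (implicit in the paper as well) is that the separation claim for $f'$ near a non-regular $f_0$ relies on $l_1$ not being another boundary leaf of the same shell, nor lying beyond one; this is guaranteed because two distinct boundary leaves of a shell admit no separating leaf, so by condition~(4) they cannot both belong to the open $L$-interval $\II$.
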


\begin{proof}
	Up to conjugating $\FF$ with an orientation preserving homeomorphism, we may assume that $\partial D^2$ is the boundary at infinity of $\FF$.

	Let us first prove that $\pi_\FF$ is a homeomorphism. It is a bijection by construction.
	We prove that $\pi_\FF^{-1}$ is continuous.
	Given a close curve $\sigma\colon[0,1]\to\RR^2$ transverse to $\FF$, denote by $A_\sigma\subset\Leaf(\FF)$ the set of leaves that intersects $\sigma(]0,1[)$.
	The sets $A_\sigma$ form a basis of the topology of $\Leaf(\FF)$. Denote by $f_0$ and $f_1$ the leaves of $L_\infty(\FF)$ that contains the image by $\pi_\FF$ of $\sigma(0)$ and $\sigma(1)$. The image of $\pi_\FF(A_\sigma)$ is the set of leaves of $L_\infty(\FF)$ that separates $f_0$ and $f_1$, so it is an open $L_\infty(\FF)$-interval. It follows that $\pi_\FF(A_\sigma)$ is open.

	Conversely, let $\II$ be an open $L_\infty(\FF)$-interval and $f,g,h$ be leaves of $\FF$ in $\pi_\FF^{-1}(\II)$. By definition of open $L$-interval, we may choose $g,h$ so that they are separated by $f$. Take an open curve $\sigma$ transverse to $\FF$ and that intersects $f$. Take a leaf $f'$ very close to $f$. Then it separates $g$ and $h$, so $\pi_\FF(f')$ separates $\pi_\FF(g)$ and $\pi_\FF(h)$. It follows from the definition of open $L$-intervals that $\pi_\FF(f')$ belongs to $\II$. Hence, $\pi_\FF^{-1}(\II)$ is a neighborhood of $f$. It follows that $\pi_\FF^{-1}(\II)$ is open, and $\pi_\FF$ is continuous. Hence, it is a homeomorphism.

	Given a branching $\mu$ of $\Leaf(\FF)$, we prove that $\pi_\FF$ is increasing on $\mu$.
	Take a leaf $f$ in the connected component of $\Int(D^2)\setminus\mu$ that is adjacent to all the leaves of $\mu$, and $D$ the region in $\Int(D^2)$ that is bounded by $\mu\cup\{f\}$. Recall that the order on $\mu$ is given by the cyclic order on the boundary component of $D$.
	Denote by $\Delta$ the shell of $L_\infty(\FF)$ that corresponds to $\mu$. Also take $I\subset S^1$ the closed interval whose ends are the endpoints of the root of $\Delta$, and so that $I$ contains all the endpoints of $\Delta$. Given two leaves $g,h$ in $\mu$, we have $g<h$ if and only if $(f,g,h)$ is positively cyclically ordered, if and only if the end points or $g$ lies before the ones of $h$ inside $I$, if and only if $\pi_\FF(g)<\pi_\FF(h)$ holds true. Thus, $\pi_\FF$ preserves the order on the branchings.
\end{proof}

\begin{proof}[Proof of \thref{th-good-lam-are-end-fol}]
	Let us prove that existence first.
	Let $L$ be a $\Pre$-pre-lamination of $S^1$. Then $\Leaf^*(L)$ is a (non-singular) planar structure so it is isomorphic to the planar structure of a foliation~$\FF$ on $\RR^2$ (unique up to orientation preserving homeomorphism).
	Let $\ub u\colon\Leaf(\FF)\to\Leaf^*(L)$ denote the isomorphism of planar structure.

	According to Proposition~\ref{prop-end-leaf-hom}, the planar structure $\Leaf^*(L_\infty(\FF))$ is isomorphic to $\Leaf(\FF)$, and hence to $\Leaf^*(L)$ too. According to Theorem~\ref{t.prelam-planar}, this implies that there is an orientation preserving homeomorphisms $g\colon S^1\to S^1$ mapping $L_\infty(\FF)$ onto $L$, ending the proof.

	We now prove the uniqueness.
	Let $\FF'$ be a foliation on $\RR^2$ and $g'\colon\partial_\infty\FF'\to S^1$ be an increasing continuous map that induces a bijection $L_\infty(\FF')\xrightarrow{\wb g'}L$. Then the induced maps $\Leaf^*(L_\infty(\FF))\xrightarrow{\ub g}\Leaf^*(L)$ and $\Leaf^*(L_\infty(\FF')\xrightarrow{\ub g'}\Leaf^*(L)$ are isomorphism of planar structures. Thus, $\ub {g'}^{-1}\circ\ub g$ is an isomorphism of planar structure, and Kaplan (see \thref{th-Kaplan}) proved that this isomorphism is induced by an orientation preserving homeomorphism of foliations $h\colon(\RR^2,\FF)\to(\RR^2,\FF')$. Then the uniqueness in Theorem~\ref{t.prelam-planar} implies that $h$ induces a map $\wt h\colon \partial_\infty\FF\to \partial_\infty\FF'$ satisfying $g=g'\circ\wt h$.
\end{proof}

\subsection{Completing pre-laminations} 

We answer the following question: given a pre-lamination $L_1$ on the circle, is there a pre-lamination $L_2\supset L_1$ that satisfies the condition of \thref{th-good-lam-are-end-fol}, meaning it is the end pre-lamination of a foliation~$\FF$, and so that $L_1$ corresponds to a dense subset of leaves of $\Leaf(\FF)$. Recall that the two long boundary components of a complementary region with diameter larger than some $\epsilon>0$ is well-defined except maybe for finitely many complementary regions.

\begin{theorem}\label{t.non-complete}
	Let $L_1$ be a pre-lamination of $S^1$. Then there exists a $\Pre$-pre-lamination $L_2\supset L_1$, so that the image of $L_1$ into $\Leaf^*(L_2)$ is dense if and only if $L_1$ satisfies the following properties:
	\begin{itemize}
		\item $L_1$ is a dense pre-lamination,
		\item any complementary region of $L_1$ has at least one boundary component which is not a leaf of $L_1$,
		\item $L_1$ has no bad accumulation, 
		\item $L_1$ has few common ends.
	\end{itemize}
\end{theorem}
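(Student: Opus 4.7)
The plan is to argue the two directions separately, using the interplay between complementary regions of $L_1$, those of $L_2$, and the leaves of $L_2 \setminus L_1$. For the necessary direction, assume $L_2 \supset L_1$ is a $\Pre$-pre-lamination in which $L_1$ is dense in $\Leaf^*(L_2)$. Density of $L_1$'s endpoints in $S^1$ follows from the density of $L_2$'s endpoints together with the fact that every leaf of $L_2$ is a Hausdorff limit of leaves of $L_1$ (from density in the leaf space combined with the structure of $L_2$-intervals given by Lemma~\ref{l.transversal-continuous}). Any complementary region $\Delta$ of $L_1$ with only leaf boundaries cannot be a shell of $L_2$, so $L_2$ must contain leaves inside $\Delta$; these form a nontrivial open subset of $\Leaf^*(L_2)$ disjoint from $L_1$, contradicting density. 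The same mechanism yields few-common-ends for $L_1$: a gap in $E_{L_1}(\theta)$ would correspond to a leaf $\ell \in L_2 \setminus L_1$ ending at $\theta$ and trapped inside a complementary region of $L_1$, and a transverse segment through $\ell$ produces an $L_2^*$-subinterval populated only by leaves of $L_2 \setminus L_1$, again contradicting density. For no bad accumulation of $L_1$: given $\Delta_n \to l$ with both long boundaries $\gamma_{i,n}$ leaves of $L_1 \subset L_2$, a transversal $\sigma$ to $l$ enters $\Delta_n$ through $\gamma_{1,n}$ and exits through $\gamma_{2,n}$, traversing a chain of $L_2$-shells; following this chain one finds a $L_2$-shell with both long boundaries being leaves of $L_2$, which is bad accumulation in $L_2$.

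For the sufficient direction, construct $L_2$ as follows. Complementary regions of $L_1$ with empty interior (accumulated geodesics) are added as leaves of $L_2$. For each complementary region $\Delta$ of $L_1$ with nonempty interior and multiple non-leaf boundaries $\nu_1, \nu_2, \dots$, choose one (say $\nu_1$) to remain as the root and add all the others as leaves of $L_2$. No leaves are added in the interiors of complementary regions of $L_1$. The resulting $L_2$ is a pre-lamination (the added geodesics are boundary components of $L_1$-complementary regions, so they do not cross existing leaves); each complementary region of $L_2$ (equal to a complementary region of $L_1$) has exactly one non-leaf boundary in $L_2$, hence is a shell; and $L_1$ is dense in $\Leaf^*(L_2)$ because every leaf of $L_2 \setminus L_1$ is an accumulated geodesic of $L_1$, hence a limit of $L_1$-leaves in $\Leaf^*(L_2)$. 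Density of $L_1$ in $S^1$ follows by hypothesis.

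The main obstacle is verifying that $L_2$ has no bad accumulation and few common ends. For no bad accumulation: any bad sequence of $L_2$-shells $\Delta'_n \to l$ has both long boundaries leaves of $L_2$, each being either a leaf of $L_1$ or a non-leaf of $L_1$ accumulated from outside $\Delta'_n$ by $L_1$-leaves; replacing the latter by nearby accumulating $L_1$-leaves yields a bad accumulation of $L_1$, contradicting the hypothesis. For few common ends: $E_{L_2}(\theta)$ consists of $L_1$-leaves and accumulated geodesics of $L_1$ ending at $\theta$; each added geodesic is a limit of $L_1$-leaves whose endpoints converge to $\theta$ from one side of the existing ordering, so it slots into the interval structure of $E_{L_1}(\theta)$ without creating gaps, yielding an interval of $\ZZ$ for $E_{L_2}(\theta)$.
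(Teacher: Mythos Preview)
Your sufficient-direction construction has a genuine gap: you leave the choice of root arbitrary among the non-leaf boundaries of each $L_1$-complementary region, but this choice is what controls bad accumulation for $L_2$. Suppose $\Delta_n \to l$ is a sequence of $L_1$-complementary regions whose two long boundaries $\alpha_n,\beta_n$ are both accumulated geodesics (not $L_1$-leaves) and which also have a short accumulated-geodesic boundary $\gamma_n$. This is fully consistent with ``$L_1$ has no bad accumulation'' since the long edges are not $L_1$-leaves. If your construction picks $\gamma_n$ as root, the resulting $L_2$-shell has both long edges as $L_2$-leaves and a short root: a bad accumulation for $L_2$. Your proposed repair---replace $\alpha_n,\beta_n$ by nearby accumulating $L_1$-leaves---does not manufacture a bad accumulation of $L_1$: those nearby $L_1$-leaves sit outside $\Delta_n$ and do not bound any common $L_1$-complementary region. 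The paper avoids all this by \emph{choosing} the excluded boundary to be one of the two long ones (possible for all but finitely many regions by Lemma~\ref{l.calibrage}), after which no-bad-accumulation for $L_2$ is immediate.

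Your few-common-ends argument for $L_2$ is likewise incomplete: saying an added geodesic ``slots into'' the $\ZZ$-interval $E_{L_1}(\theta)$ does not exclude the scenario where the $l_i\in E_{L_1}(\theta)$ converge to an accumulated geodesic $\gamma$ ending at $\theta$ that then gets added to $L_2$, giving $E_{L_2}(\theta)$ a limit point. The paper rules this out using the no-bad-accumulation hypothesis on $L_1$ (which your argument never invokes): consecutive $l_i,l_{i+1}$ share a common $L_1$-complementary region $\Delta_i$ whose only boundaries at $\theta$ are $l_i,l_{i+1}$; as $\Delta_i\to\gamma$ these become its two long edges, a bad accumulation. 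In the necessary direction your arguments are over-engineered: few-common-ends for $L_1$ is immediate from $L_1\subset L_2$ and Lemma~\ref{l.few}; and for no-bad-accumulation one observes directly that $\gamma_{1,n},\gamma_{2,n}$ eventually lie in a single open $L_2$-interval, so the nonempty subinterval between them contains no $L_1$-leaf (any such leaf would lie in the $L_1$-complementary region $\Delta_n$), contradicting density of $L_1$ in $\Leaf^*(L_2)$.
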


Next example explains why we use here of the "few common ends" condition instead of the "at most countably many leaves shares the same endpoint".

\begin{example}
	Take $L=\{(1,\theta=e^{2q\pi})\in S^1\times S^1, q\in\QQ\cap ]0,2\pi[\}$. Then $L$ is a dense pre-lamination, the boundary of each complementary component consists in two accumulated geodesics, $L$ has no bad accumulations, and the set of leaves having a common end is countable as $L$ is countable. However, $L$ has not few ends, and $L$ is not contained in a $\Pre$-pre-lamination.
\end{example}



\begin{proof}[Proof of the direct implication.]
	We build $L_2$ by adding leaves to $L_1$. First add to $L_2$ all the geodesics that are complementary regions with empty interior of $L_1$. Then for any complementary region $\Delta$ of $L_1$ with non-empty interior, add to $L_2$ all but one the boundary component of $\Delta$. We can choose the excluded boundary component to be one of the two long boundary components, expect for at finitely many possible $\Delta$. Then $L_2$ is a dense pre-lamination. It has no accumulation since the complementary regions of $L_2$ are exactly the complementary regions of $L_1$ with non-empty interior, with maybe some boundary components removed. Additionally, $L_2$ has no star component. Note that by construction, all the leaves in $L_2\setminus L_1$ are accumulated by leaves in $L_1$.

	We prove that $L_2$ has few common ends, which implies that it is a $\Pre$-pre-lamination with no star. Take $\theta\in S^1$ and recall that $E_{L_i}(\theta)$ is the set of leaves of $L_i$ that ends on $\theta$.
	Take $l^-,l^+$ two distinct leaves of $L_2$ that have a common endpoint $\theta\in S^1$.

	We prove by contradiction that there are at most finitely many leaves of $L_2$ that separates the $l^-$ and $l^+$. So let us assume that it is not the case.
	Any leaf $l\in E_{L_2}(\theta)$ that separates $l^-$ and $l^+$ is either a leaf of $L_1$, or accumulated by leaves of $L_1$. In the second case, it is accumulated by leave of $L_1$ that separates $l^-$ and $l^+$, so they also are in $E_{L_1}(\theta)$. It follows that $E_{L_1}(\theta)$ is not finite.

	Note that $E_{L_1}(\theta)$ is ordered as an interval of $\ZZ$, and it is not finite by assumption. Let us then denote by $l_i$ these leaves, with $i$ in $\NN$ or $\ZZ$, so that $i\mapsto l_i\in E_{L_1}(\theta)$ is increasing. The sequence $l_i$ is monotonous and bounded by either $l^-$ or $l^+$, so it converges when $i$ goes to $+\infty$, and its length is bounded below away from zero. Note that $l_i$ and $l_{i+1}$ bound a common complement region $\Delta_i$. When $i$ goes to $\infty$, the area of $\Delta_i$ goes to zero, so the length of all its boundary components outside $l_i$ and $l_{i+1}$ go to zero. Hence, for $i$ large enough, $l_i$ and $l_{i+1}$ are the two long edges of $\Delta_i$. It follows that $\Delta_i$ is a bad accumulation of $L_1$, which contradicts the hypothesis. Therefore, there are at most finitely many leaves between $l^-$ and $l^+$, so $L_2$ has few common ends.
\end{proof}

\begin{remark}
	In the previous proof, we only add to $L_2$ some accumulated boundary of complementary regions. This is not the only possibility. For instance if a complementary region $P$ has four boundary components, two of which are not leaves of $L_1$, we can add one diagonal of $P$ that separates the two accumulated boundary components of $P$. When $P$ has more boundary components, wilder completion are possible.
\end{remark}

This remark makes the converse trickier to prove that one may expect.

\begin{proof}[Proof of the converse]
	Assume that such a pre-lamination $L_2$ exists. Then $L_1$ has few common ends since it is a subset of $L_2$.

	Assume (arguing by contradiction) that there is a complementary region $\Delta$ of $L_1$, with non-empty interior and whose boundary geodesic are all leaves of $L_1$. Since $L_2$ satisfies $\Pre$ with no stars, $\Delta$ is not a complementary region of $L_2$. So there exists a leaf $l_1$ of $L_2$ which lies in the interior of $P$. Let $\II$ be an open $L_2$-interval containing $l_1$. By definition of open $L_2$-intervals, $\II$ contains another leaf $l_2$ of $L_2$ contained in $\Delta$. Then the $L_2$-interval $[l_1,l_2]$ inside $\Leaf^*(L_2)$ has non-empty interior and is disjoint from the image of $L_1$. It contradicts the density assumption on the leaves of $L_1$. Thus, all complementary region of $L_1$ with non-empty interior have at least one accumulated boundary.

	We now prove that $L_1$ has no bad accumulation. Assume (arguing by contradiction) that there exist $\epsilon>0$ and a sequence of $\epsilon$-large complementary regions $\Delta_n$ of $L_1$ whose two long boundary components are leaves of $L_1$, that accumulates on a geodesic $\gamma$. Either $\gamma$ is a leaf of $L_2$, or it is the root of a complementary shell in $L_2$. In the two cases, there exists an open $L_2$-interval $\II$ contains all the leaves close enough to $\gamma$. Thus $\II$ contains the two large boundary components of $\Delta_n$ for all large enough $n$. Similarly as above, there are two leaves of $L_2$ inside a common region $\Delta_n$ ($n$ large) that intersect $\II$. And similarly to above, the interval of leaves of $L_2$ between them does not contain any leaf of $L_1$, which contradicts the density hypothesis.
\end{proof}

\section{Singular planar structures and $pA$-foliations} \label{sec-lam-fo}

In this section we consider $pA$-foliations. Next remark motivates our hypothesis of non-existence of connection between singular points.

\subsection{The singular planar structure of a $pA$-foliation}

Let~$\FF$ be a $pA$-foliation of $\RR^2$, and $\Sigma$ its singular set. We denote by $\Leaf(\FF)$ the set of leaves of $\RR^2\setminus\Sigma$, equipped with the quotient topology. Given an open curve $\sigma\subset\RR^2\setminus\Sigma$ transverse to $\FF$, every leaf of $\FF$ cuts $\sigma$ in at most $1$ point (see Lemma~\ref{l.1point}). It yields an embedding from $\sigma$ to $\Leaf(\FF)$. As the images of such curves cover the whole space $\Leaf(\FF)$, this proves:

\begin{lemma}
	$\Leaf(\FF)$ is a connected non-Hausdorff $1$-dimensional manifold.
\end{lemma}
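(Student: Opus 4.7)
The plan is to verify in turn the three defining properties of a (possibly non-Hausdorff) $1$-manifold: connectedness, existence of charts homeomorphic to $\RR$, and existence of a countable basis for the topology. The preamble of the lemma has already sketched most of the argument through Lemma~\ref{l.1point}; what remains is to assemble these ingredients with some care about the quotient topology.

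\textbf{Connectedness.} Since $\Sigma$ is discrete in $\RR^2$, the complement $\RR^2 \setminus \Sigma$ is (path)-connected. The projection $\pi \colon \RR^2 \setminus \Sigma \to \Leaf(\FF)$ is continuous and surjective by definition of the quotient topology, so $\Leaf(\FF)$ is connected.

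\textbf{Local charts.} Fix a leaf $f \in \Leaf(\FF)$ and a point $x \in f$. Since $x \notin \Sigma$, there exists an open neighborhood $U$ of $x$ in $\RR^2 \setminus \Sigma$ together with a foliated chart $\phi \colon U \to (-1,1)^2$ sending $\FF_{|U}$ to the horizontal foliation. Set $\sigma = \phi^{-1}(\{0\} \times (-1,1))$, an open transversal to $\FF$ passing through $x$. By Lemma~\ref{l.1point}, every leaf of $\FF$ meets $\sigma$ in at most one point, so the restriction $\pi_{|\sigma}$ is injective. I will check that $\pi_{|\sigma}$ is an open map onto its image $V := \pi(\sigma)$, and therefore a homeomorphism: for any open $W \subset \sigma$, the $\FF$-saturation of $\phi^{-1}(\phi(W))$ inside $U$ is $\phi^{-1}((-1,1)\times\mathrm{pr}_2(\phi(W)))$, which is open in $U$ and hence open in $\RR^2 \setminus \Sigma$. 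Its image under $\pi$ equals $\pi(W)$, which by definition of the quotient topology is open in $\Leaf(\FF)$. Thus $V$ is an open neighborhood of $f$ homeomorphic to $\sigma \cong \RR$.

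\textbf{Countable basis.} Since $\RR^2$ is second countable and $\Sigma$ is discrete, the set $\RR^2 \setminus \Sigma$ admits a countable cover $\{U_i\}_{i \in \NN}$ by foliated charts as above, with associated transversals $\sigma_i$ homeomorphic to open intervals of $\RR$. Each $V_i := \pi(\sigma_i)$ is an open chart of $\Leaf(\FF)$, and in its parametrization it has a countable basis (e.g.\ the image of rational subintervals). The union over $i$ of these countable bases gives a countable basis for the topology of $\Leaf(\FF)$.

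Together, these three points establish that $\Leaf(\FF)$ is a connected non-Hausdorff $1$-manifold in the sense of Subsection~\ref{ss.tame}. The only mildly subtle step is the verification that $\pi_{|\sigma}$ is an open embedding, which relies crucially on the uniqueness of leaf intersections with transversals (Lemma~\ref{l.1point}); without this, distinct points of $\sigma$ could be identified in $\Leaf(\FF)$ and the map would fail to be a chart. Since Hausdorffness is not claimed (and indeed fails in general, as branchings of non-separated leaves will be studied later), no further property needs to be checked.
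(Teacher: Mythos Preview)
Your proof is correct and follows the same approach as the paper's (very terse) argument, which simply notes that transversals embed as charts via Lemma~\ref{l.1point} and that such charts cover $\Leaf(\FF)$; you have merely supplied the details the paper omits (connectedness, second countability, and the openness check for the chart map).

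One minor slip in justification: in the chart argument you exhibit an open set $U'\subset\RR^2\setminus\Sigma$ with $\pi(U')=\pi(W)$ and conclude that $\pi(W)$ is open ``by definition of the quotient topology''. That is not what the quotient topology says: a set $V$ is open iff $\pi^{-1}(V)$ is open, not iff $V$ is the image of an open set. What you actually need is that $\pi$ is an \emph{open} map, equivalently that the $\FF$-saturation of any open set is open in $\RR^2\setminus\Sigma$. This is standard for foliations (cover a leaf segment by finitely many product charts and slide the local product neighbourhood along), so the conclusion stands, but the phrase ``by definition'' should be replaced by this observation.
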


We regroup simple facts about $pA$-foliations:

\begin{enumerate}
	\item If two leaves $\ell_1,\ell_2$ of~$\FF$ on $\RR^2\setminus\Sigma$ are non-separated, then there are segments
	      $\sigma_i\colon [-1,1]\to \RR^2\setminus\Sigma$, $i=1,2$ transverse to~$\FF$, so that $\sigma_i(0)\in \ell_i$, and so that $\sigma_1(t)$ and $\sigma_2(t)$ lie on the same leaf for $t\in [-1,0[$, and no leaf cuts both $\sigma_1(]0,1])$ and $\sigma_2(]0,1])$. This implies that $\Leaf(\FF)$ is a tame $1$-dimensional manifold (see Definition~\ref{d.tame}).
	\item Every non-singular leaf $l$ of~$\FF$ cuts $\RR^2\setminus\Sigma$ in two connected components. Thus $l$ cuts $\Leaf(\FF)$ in two connected components.
	\item Let $\ell_i$, $i=1,\dots,k$ be the separatrixes of a singular point. Then $\ell_i,\ell_{i+1}$ are on a same branching $\mu_i$, and $\mu_{i-1}\neq\mu_i$. Furthermore, $\ell_{i-1}\cup \ell_{i+1}$ disconnects $\ell_i$ from the other separatrixes. One deduces that $\ell_i$, $i=1,\dots,k$	form a cyclic branching.
	\item As a leaf in a cyclic branching does not disconnect $\Leaf(\FF)$, a leaf that is not a separatrix cannot belong to a cyclic branching.
\end{enumerate}

Every branching $\mu$ of $\Leaf(\FF)$ is ordered as follows: let $R$ be the (unique) connected components of $\RR^2\setminus(\mu\cup\Sigma)$ that contains a sequence of leaves that converges on all the leaves of $\mu$. Denote by $l$ a (non-separatrix) leaf in $R$ and $R'$ the components of $R\setminus l$ that is adjacent to $\mu$. For two leaves $f_1,f_2\in\mu$, we set $f_1\preceq f_2$ if $(l,f_1,f_2)$ is clockwise cyclically ordered as boundary components of $R'$ (or if $f_1=f_2)$. It yields a total order $\preceq_\mu$ on $\mu$, independent of the choice of the leaf $l$, and called the \emph{left-right order}.

\begin{lemma}\thlabel{lem-cyrc-branch-fol} \label{l.cyclic-singular}
	Let~$\FF$ be a $pA$-foliation of $\RR^2$. The cyclic branchings of $\Leaf(\FF)$ are the sets of separatrixes of the singularities, ordered clockwise or anti-clockwise.
\end{lemma}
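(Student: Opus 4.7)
Plan:

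The forward direction is the third item listed just before the lemma: for a singular point $p$, the $k$ separatrixes $\ell_1,\ldots,\ell_k$ of $p$ form a cyclic branching of $\Leaf(\FF)$, cyclically ordered by rotation around $p$. I focus on the converse.

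Let $\nu=\{x_0,\ldots,x_{n-1}\}$ be a cyclic branching of $\Leaf(\FF)$. By the fourth listed item, every $x_i$ is a separatrix of some singular point $p_i$. My plan is to show $p_0=p_1$; iterating around the cycle will then give $p_0=\cdots=p_{n-1}=:p$. Once this is done, the third item (only cyclically adjacent separatrixes of a single singular point are non-separated) together with the alternation $\mu_{i-1}\neq\mu_i$ and the ``no common branching for non-adjacent pairs'' axiom will force $\nu$ to coincide with the complete separatrix set of $p$, traversed once in clockwise or anti-clockwise order, and consequently $n=k$.

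To prove $p_0=p_1$ I argue by contradiction, supposing $p_0\neq p_1$. Fix a sequence $(f_n)$ of leaves realising the non-separation of $x_0,x_1$ on the common branching side, and let $F_0,F_1$ denote the faces of the singular leaves at $p_0,p_1$ adjacent to $x_0,x_1$ respectively on the accumulation side, bounded by $x_i\cup\{p_i\}\cup x_i^+$ for $i=0,1$, where $x_i^+$ is the neighbouring separatrix of $p_i$. Since a leaf cannot cross another leaf or pass through a singular point, $f_n\subset F_0\cap F_1$ for large $n$. Accumulation of $f_n$ on $x_1$ at a transversal $\tau_1$ forces $\tau_1\cap x_1\in\overline{F_0}$; the no-saddle-connection hypothesis together with $p_0\neq p_1$ makes $x_1$ disjoint from $\partial F_0=x_0\cup\{p_0\}\cup x_0^+$, so the limit lies in the open face $F_0$. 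By connectedness of $x_1$ and the impossibility of crossing $\partial F_0$, one gets $x_1\subset F_0$, and similarly $x_1^+\subset F_0$ and $p_1\in F_0$, hence $\partial F_1\subset F_0$. In the ``nested'' configuration $F_1\subset F_0$, the symmetric step ($f_n\subset F_1$ accumulating on $x_0$ at $\tau_0$) gives $\tau_0\cap x_0\in\overline{F_1}$, which is contradicted by $x_0\cap\overline{F_1}=\emptyset$ (using $x_0\subset\partial F_0$ to get $x_0\cap F_1\subset x_0\cap F_0=\emptyset$, and $x_0\cap\partial F_1=\emptyset$ from the no-connection hypothesis).

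The main obstacle is the ``interlocked'' configuration in which $F_1$ is not contained in $F_0$ but is instead the large component of $\RR^2\setminus\partial F_1$ that contains $\partial F_0$; there $x_0\subset F_1$, and the symmetric step does not directly yield a contradiction. Ruling out this case requires a finer argument using the full cyclic-branching structure: in this scenario the branching $\mu_0$ containing $x_0,x_1$ must also contain the adjacent separatrixes $x_0^+,x_1^+$ (all four pairwise non-separated on the same side), creating incompatibilities with the cyclic-branching axiom that non-adjacent elements of $\nu$ lie in no common branching when one tries to close the cycle. Throughout, the no-saddle-connection hypothesis of $pA$-foliations is crucial, as it prevents different singular points from sharing a separatrix and is what makes every disjointness argument go through.
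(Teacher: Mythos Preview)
Your approach diverges from the paper's and contains a genuine gap that you yourself flag: the ``interlocked'' configuration is not ruled out. Your suggested resolution --- that having $x_0^+,x_1^+$ in the same branching $\mu_0$ would eventually contradict the cyclic-branching axioms ``when one tries to close the cycle'' --- is not an argument but a hope. Concretely, in the interlocked case one has two disjoint face-boundaries $L_0=\partial F_0$ and $L_1=\partial F_1$ with the accumulating leaves $f_n$ trapped in the strip $M=F_0\cap F_1$ between them; nothing you have written forces $f_n$ to accumulate on $x_0^+$ or $x_1^+$, so you cannot conclude those separatrixes lie in $\mu_0$, and even if they did, you have not explained which axiom is violated. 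Moreover, the step ``similarly $x_1^+\subset F_0$'' already needs more care than you give it: the $f_n$ accumulate on $x_1$, not a priori on $x_1^+$; the correct justification is that $p_1\in\overline{x_1}\subset\overline{F_0}$, hence $p_1\in F_0$ by the no-connection hypothesis, and then $x_1^+$ cannot exit $F_0$ without crossing $\partial F_0$.

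The paper sidesteps this case analysis entirely with a much cleaner idea. It first observes (Lemma~\ref{l.loop}) that any cyclic branching $\nu$ supports a closed transverse loop $\gamma$ in $\Leaf(\FF)$ meeting each $x_i$ exactly once and avoiding any prescribed compact set disjoint from $\nu$. Then, fixing the base singularity $p$ of some $x_i\in\nu$, if a separatrix $y$ of $p$ were \emph{not} in $\nu$, one takes $\gamma$ avoiding $y$: the arc $\gamma\setminus\{x_i\}$ then connects the two components of $\RR^2\setminus(x_i\cup\{p\}\cup y)$, a contradiction. Hence \emph{all} separatrixes of $p$ lie in $\nu$, and the non-separation constraints (only cyclically adjacent pairs share a branching) force $\nu$ to be exactly the separatrix set of $p$ traversed in order. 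This argument never needs to compare two distinct candidate singularities $p_0,p_1$, which is precisely where your difficulty arises.
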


In the proof we will use the following observation.

\begin{lemma} \label{l.loop}
	For any cyclic branching $\nu=\{x_i\}_{i\in \ZZ/n\ZZ}$ and any compact subset $K\subset\Leaf(\FF)$ disjoint from $\nu$, there exists a continuous loop $\gamma\subset\Leaf(\FF)\setminus K$ that intersects each point in $\nu$ exactly once, and transversely.
\end{lemma}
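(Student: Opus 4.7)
I would construct the loop $\gamma$ directly from the non-separation structure encoded in the cyclic branching $\nu$, without first identifying $\nu$ with the separatrices of a single singular point (which is not justified a priori by the preceding material). For each consecutive pair $x_i, x_{i+1}$, non-separated in $\Leaf(\FF)$ on a common side, the tame structure of $\Leaf(\FF)$ supplies transverse segments $\tau_i, \tau_i'$ in $\RR^2 \setminus \Sigma$ through points of $x_i$ and $x_{i+1}$ whose projections to $\Leaf(\FF)$ coincide on that common side. I pick a non-separatrix leaf $\ell_i$ cut by both $\tau_i$ and $\tau_i'$, and form a smooth arc $c_i \subset \RR^2 \setminus \Sigma$ by concatenating the portion of $\tau_i$ from $x_i$ to $\ell_i$, an arc along $\ell_i$ between its intersections with the two transversals, and the portion of $\tau_i'$ from $\ell_i$ to $x_{i+1}$. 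Projecting the cyclic concatenation $c_1 \cdot c_2 \cdots c_n$ to $\Leaf(\FF)$ yields a continuous loop crossing each $x_i$ exactly once, transversely (each crossing induced by the endpoint of a transverse segment on $x_i$).

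To arrange $\gamma \subset \Leaf(\FF) \setminus K$, I exploit the key observation that a compact subset of a tame second-countable non-Hausdorff 1-manifold contains only finitely many pairwise non-separated leaves: otherwise, charts at each of them, chosen to contain no other leaf of the common branching (using bullet~2, since any non-separatrix leaf between two non-separated leaves in a chart would separate them globally), would yield an open cover of $K$ with no finite subcover. Consequently, the sets $F_i := K \cap (\mu_{i-1} \cup \mu_i) \setminus \{x_i\}$ and $F := \bigcup_i F_i$ are finite. I then shorten the transverse segments $\tau_i, \tau_i'$ and position them in $\RR^2$ to miss the finitely many properly embedded curves of $F$ (away from $x_i, x_{i+1}$), and I choose $\ell_i \notin F$, so that $q(\tau_i)$ and $q(\tau_i')$ contain no element of $F$.

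The main obstacle is then to rule out contributions to $\gamma$ from $K \setminus F$, namely leaves of $K$ separated from every element of $\nu$. Since $\Leaf(\FF)$ is not regular, this cannot be achieved by a single separation argument. Instead, I argue by contradiction: if after every further shrinking of $\tau_i$ the chart $q(\tau_i)$ still met $K$, one would obtain a sequence $(\ell_n) \subset K \setminus F$ with $\ell_n \to x_i$ in $\Leaf(\FF)$; sequential compactness of $K$ (from second countability) then yields a subsequence $\ell_{n_k} \to y^* \in K$, and $y^*$, being a limit of the same sequence as $x_i$, cannot be separated from $x_i$ by disjoint opens, so $y^* \in F_i$. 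Combined with the finiteness of $F$ and the realization of its leaves as properly embedded curves in $\RR^2$, a final perturbation of $\tau_i$ near $x_i$ that excises small neighborhoods of each leaf in $F_i$ forces the approaching $\ell_{n_k}$ to be cut away as well, yielding the contradiction and producing the desired loop. This compactness argument in the non-Hausdorff setting of $\Leaf(\FF)$ is the most delicate step of the proof.
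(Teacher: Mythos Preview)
The paper's proof is a single sentence: ``To build $\gamma$, simply concatenate small enough segments around the points in $\nu$.'' It relies on the observation that each $x_i$ lies in a chart whose two sides are shared with the charts of $x_{i-1}$ and $x_{i+1}$, so short intervals glue into a loop; the phrase ``small enough'' is meant to ensure these intervals avoid $K$. You are correct to notice that this last point is not automatic for a general compact $K$ in a non-Hausdorff manifold, and your elaborate argument is an attempt to fill that gap.

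However, your final step does not work. Suppose the branching $\mu_i$ contains a third element $z \notin \nu$, and let $y_n$ be leaves on the common side with $y_n \to x_i$ and $y_n \to z$ in $\Leaf(\FF)$. Then $K = \{y_n : n \geq 1\} \cup \{z\}$ is compact and disjoint from $\nu$, yet every chart interval around $x_i$ contains $y_n$ for all large $n$. Your proposed ``perturbation of $\tau_i$ that excises small neighborhoods of each leaf in $F_i$'' cannot help: by Lemma~\ref{l.separated}, no transversal through a point of $x_i$ crosses $z$ at all, so there is nothing to excise near $z$; and any transversal through a point of $x_i$ must cross the leaf $y_n$ for all sufficiently large $n$, since $y_n \to x_i$. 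Hence $q(\tau_i)$ meets $K$ regardless of how $\tau_i$ is shortened or moved, and the contradiction you seek does not materialize. In fact this example suggests the statement may simply fail for general compact $K$ whenever some $\mu_i$ has more than two elements.

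The resolution is that the lemma is only ever applied with $K=\{y\}$ a single point (in the proof of Lemma~\ref{l.cyclic-singular}). For finite $K$ the paper's one-liner is fully justified: in any chart around $x_i$, each point of $K$ distinct from $x_i$ sits at positive distance from $x_i$, so a short enough interval avoids all of them. Your construction of the loop itself is correct and essentially identical to the paper's; it is only the attempt to handle arbitrary compact $K$ that breaks down.
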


\begin{proof}
	To build $\gamma$, simply concatenate small enough segments around the points in $\nu$.
\end{proof}

\begin{proof}[Proof of Lemma~\ref{lem-cyrc-branch-fol}]
	We take $\nu=(x_i)_{i\in\ZZ/n\ZZ}$ a cyclic branching of $\LL$. By definition, no leaf $x_i$ disconnects $\RR^2\setminus\Sigma$, so they all are separatrixes of~$\FF$.

	Denote by $p\in\Sigma$ the based point of the separatrix $x_i$, and let $y$ be a separatrix of~$\FF$ based on $p$. If $y$ doesn't belong to $\nu$, Lemma~\ref{l.loop} provides a continuous loop $\gamma$ that intersects every point in $\nu$ exactly once and transversely, and that avoids $y$.
	But then $\gamma\setminus\{x_i\}$ connects the two distinct connected components of $\RR^2\setminus(x_i\cup y\cup\{p\})$. This contradiction implies that all separatrixes based at $p$ are contains in $\nu$.

	Since $x_i$ is separated from all $x_j$, $j\neq i$, excepts for $x_{i-1}$ and $x_{i+1}$, these two separatrixes are adjacent to $x_i$. And by induction on $i$, $\nu$ is exactly the set of separatrixes of $p$ in clockwise or anti-clockwise order.
\end{proof}

As a $pA$-foliation does not admit saddle connections, it follows that:

\begin{corollary}
	The cyclic branching of $\Leaf(\FF)$ are pairwise disjoint.
\end{corollary}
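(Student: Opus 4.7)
The plan is very short because Lemma~\ref{lem-cyrc-branch-fol} has already done the work of identifying cyclic branchings with sets of separatrixes at singular points. Specifically, that lemma tells us that every cyclic branching $\nu$ of $\Leaf(\FF)$ is exactly the set of separatrixes based at a single singular point $p\in\Sing(\FF)$, cyclically ordered. So the corollary reduces to showing that distinct singular points give rise to disjoint sets of separatrixes.

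I would argue by contradiction: suppose $\nu_1,\nu_2$ are two distinct cyclic branchings, based at singular points $p_1\neq p_2$, and let $\ell\in\nu_1\cap\nu_2$. Then $\ell$ is a separatrix of both $p_1$ and $p_2$, i.e. the same non-singular leaf of $\FF^*$ limits onto both $p_1$ and $p_2$. But by definition a $pA$-foliation has no saddle connections, meaning exactly that no two distinct singular points share a separatrix. This contradiction forces $p_1=p_2$, and then Lemma~\ref{lem-cyrc-branch-fol} applied a second time gives $\nu_1=\nu_2$ (since each singular point determines a unique cyclic branching, namely the ordered list of its separatrixes).

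There is no real obstacle here: the entire content is the translation of the hypothesis ``no saddle connection'' into the language of $\Leaf(\FF)$ via the previous lemma. The only thing to be a little careful about is that the definition of ``$p_1$ and $p_2$ share a separatrix'' in the paper is stated in terms of leaves of $\FF^*$, not elements of $\Leaf(\FF)$; but since each separatrix of $p$ is by definition a certain non-singular leaf adjacent to $p$, the two notions coincide, and the invocation of ``no saddle connection'' is direct.
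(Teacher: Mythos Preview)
Your argument is correct and is exactly the paper's approach: the paper's entire proof is the single sentence ``As a $pA$-foliation does not admit saddle connections, it follows that\ldots'', which is precisely the translation you spell out via Lemma~\ref{lem-cyrc-branch-fol}. Your added care about the case $p_1=p_2$ is fine but unnecessary, since by that lemma the set of separatrixes at a given singular point is a single cyclic branching.
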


Observe that since two adjacent separatrixes, at a common singularity $p$, are not separable from one side, they are ordered for the left/right order on the branching that contains them.
For any cyclic branching $\nu$ of~$\FF$, the anti-clockwise cyclic ordering of its separatrixes is compatible with the left/right order on branchings of~$\FF$.

\begin{lemma}
	Let~$\FF$ be a $pA$-foliation of $\RR^2$. The set $\Leaf(\FF)$ equipped with the left-right order on branching and the anti-clockwise cyclical order on cyclic branching is a singular planar structure.
\end{lemma}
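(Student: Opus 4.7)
The plan is to verify the five conditions of Definition~\ref{d.singular-planar} in turn, relying on the facts and lemmas just established. Connectedness and tameness of $\Leaf(\FF)$ are already in hand from fact~1 in the preceding list. Conditions~1 and~2 follow immediately from Lemma~\ref{l.cyclic-singular}: the cyclic branchings are exactly the sets of separatrixes at singular points, so they are pairwise disjoint by the no-saddle-connection hypothesis, and each has cardinal at least~$3$ because every singularity of a $pA$-foliation is a $k$-prong with $k\geq 3$. For condition~5, a non-separatrix point of $\Leaf(\FF)$ is a non-singular leaf $f$ which, by fact~2, cuts $\RR^2\setminus\Sing(\FF)$ into exactly two $\FF$-saturated components $U^\pm$; these project to two disjoint open subsets of $\Leaf(\FF)\setminus\{f\}$, each connected by joining any two leaves through a concatenation of transverse segments and leaf-arcs, and together they exhaust $\Leaf(\FF)\setminus\{f\}$.

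For condition~4, I fix a singular point $p\in\Sing(\FF)$ with cyclic branching $\nu=\{x_1,\dots,x_k\}$ and consider the $k$ faces $F^1,\dots,F^k$ of the singular leaf $\{p\}\cup\bigcup_i x_i$ in $\RR^2$. Each $F^j$ is open, $\FF^*$-saturated, and disjoint from $\Sing(\FF)$ (no saddle connection), so it projects to an open subset $A^j$ of $\Leaf(\FF)\setminus\nu$. The $A^j$ are pairwise disjoint by saturation, connected by the same transverse-plus-leaf-arc argument, and their union is $\Leaf(\FF)\setminus\nu$ because every non-singular leaf outside $\nu$ lies in some face. So the $A^j$ are exactly the $k$ connected components of $\Leaf(\FF)\setminus\nu$.

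The main obstacle is condition~3, and in particular the compatibility of the two orders. If $x_i\in\mu\cap\nu$ for some branching $\mu$, then because $x_i$ is non-separated only from $x_{i-1}$ and $x_{i+1}$ and these lie on opposite sides of $x_i$, the intersection $\mu\cap\nu$ consists of $x_i$ together with exactly one of $x_{i\pm 1}$, namely the one on the same side as $\mu$; say $\mu\cap\nu=\{x_i,x_{i+1}\}$, which are consecutive in both orders. The delicate point is matching the successors. The plan is to pick a non-singular leaf $l$ very close to $p$ inside the face $F$ bounded by $x_i$, $p$, $x_{i+1}$ on the side of $\mu$, and take $R'$ to be the component of $F\setminus l$ whose closure contains $p$. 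Setting up local coordinates with $p$ at the origin, $x_i$ along angle $\theta_i$, and $x_{i+1}$ along angle $\theta_{i+1}>\theta_i$ (so that $x_{i+1}$ is the anti-clockwise successor of $x_i$ around $p$), the region $R'$ is a small wedge with tip at $p$, and its clockwise boundary circuit starting from $l$ reads $l,x_i,p,x_{i+1}$; thus $x_i\preceq_\mu x_{i+1}$, matching the anti-clockwise successor relation on $\nu$. This single orientation check — the fact that the orientation reverses when passing from ``looking at $p$ from outside $R'$'' to ``tracing $\partial R'$ clockwise'' — closes condition~3 and completes the verification.
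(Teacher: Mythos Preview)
Your argument is correct and mirrors the paper's (terser) proof, which cites \thref{lem-cyrc-branch-fol} for conditions 1--2, reads conditions 4--5 off the corresponding decompositions of $\RR^2\setminus\Sigma$, and leaves condition 3 to the observation paragraph preceding the lemma; you have simply made that orientation check explicit. One slip: the faces $F^j$ need not be disjoint from $\Sing(\FF)$ --- other singular points may well lie inside a face, since ``no saddle connection'' only constrains separatrixes --- but replacing $F^j$ by $F^j\setminus\Sing(\FF)$ (still open, connected, and $\FF^*$-saturated) repairs this immediately without affecting the rest of your argument.
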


\begin{proof}
	It follows from \thref{lem-cyrc-branch-fol} that the cyclic branching in $\Leaf(\FF)$ are disjoint and of order at least 3. Any non-separatrix leaf disconnects $\RR^2\setminus\Sigma$ in two connected components. So it also disconnects $\Leaf(\FF)$ in two connected components. Similarly, every cyclic branching $\nu$ separates $\RR^2\setminus\Sigma$ into $|\nu|$ connected components, and therefore $\Leaf(\FF)$, in $|\nu|$ connected components.
\end{proof}

\begin{lemma}\label{l.droite-plongee}
	Let $\LL$ be a $pA$-foliation, and $f\colon\RR\to\LL$ be a continuous map that is locally injective. Then $f$ is injective.
\end{lemma}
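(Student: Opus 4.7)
My plan is to argue by contradiction: assuming $f$ is not injective, I would extract a simple loop $\gamma\colon [a,c] \to \LL$ with $\gamma(a) = \gamma(c)$ and $\gamma$ injective on $[a,c)$, and then show that its image must lie in the set of separatrixes of $\FF$. Since this set is countable whereas the embedded image of a nondegenerate interval is uncountable, this will give the desired contradiction.

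To extract the simple loop: by compactness and local injectivity, on any compact interval there is a uniform $\delta > 0$ such that $f$ is injective on every subinterval of length at most $\delta$. If $f(s) = f(t)$ for some $s < t$, then the set $P = \{(u,v) : s \leq u < v \leq t,\, f(u) = f(v)\}$ is nonempty, closed, and contained in $\{v - u \geq \delta\}$; minimizing $v - u$ on this compact set yields a pair $(a,c)$ with the desired properties (any collision inside $[a,c)$ with $v - u \geq \delta$ would contradict minimality, and no collision with $v - u < \delta$ exists by local injectivity).

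Next I would show that every point of $\gamma([a,c])$ is a separatrix. If $x = \gamma(t_0)$ with $t_0 \in (a,c)$ were not a separatrix, then by item~5 of Definition~\ref{d.singular-planar} the complement $\LL \setminus \{x\}$ has two components $A$ and $B$. The set $\gamma([a,t_0)) \cup \gamma((t_0, c])$ lies in $\LL \setminus \{x\}$ (by injectivity on $[a,c)$) and is connected (the two pieces share $\gamma(a) = \gamma(c)$), hence sits in one component, say $A$. But local injectivity at $t_0$ makes $\gamma$ a local homeomorphism there, so its image meets both local sides of $x$ in a chart $V \cong \RR$; those two sides must lie in distinct global components (otherwise the component not meeting $V$ would be clopen in the connected space $\LL$), contradicting that $\gamma(t_0 \pm \epsilon)$ both lie in $A$. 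A parallel argument handles the basepoint $y = \gamma(a) = \gamma(c)$: if $y$ were not a separatrix, $\gamma((a,c))$ would lie in one component $A$ of $\LL \setminus \{y\}$, and the monotone embeddings $\gamma|_{[a,a+\delta]}$ and $\gamma|_{[c-\delta,c]}$ into a chart $V \cong \RR$ at $y$ would both enter the $A$-side; their image intervals $[0, d_1]$ and $[0, d_2]$ would overlap nontrivially near $y$, producing distinct $t_1 \in (a,a+\delta]$ and $t_2 \in [c-\delta, c)$ with $\gamma(t_1) = \gamma(t_2)$, contradicting injectivity on $[a,c)$.

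It then follows that $\gamma([a,c])$ is contained in the set of separatrixes, which is countable since $\Sing(\FF)$ is a closed discrete subset of the second-countable space $\RR^2$ and each singularity carries only finitely many separatrixes. On the other hand $\gamma|_{[a,a+\delta]}$ is a topological embedding, so $\gamma([a,c])$ is uncountable, a contradiction. The main obstacle I expect is carefully exploiting the disconnection axioms of the singular planar structure (items 4 and 5 of Definition~\ref{d.singular-planar}) to control how the simple loop behaves near non-separatrix points, despite the non-Hausdorff topology of $\LL$; once this is handled, the countability argument closes the proof.
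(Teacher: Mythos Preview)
Your overall strategy---extract a simple loop, show every point on it must be a separatrix using the disconnection axiom (item~5 of Definition~\ref{d.singular-planar}), then derive a countability contradiction---is genuinely different from the paper's proof. The paper instead realizes $f$ as a path $\wt f\colon\RR\to\RR^2\setminus\Sigma$ topologically transverse to~$\FF$ and then invokes Lemma~\ref{l.1point}. Your route is attractive because it works purely in the leaf space and, if it went through, would apply to any abstract singular planar structure rather than only to $\Leaf(\FF)$; the second half of your argument (every point of a simple loop is a separatrix, and the cardinality contradiction) is correct and cleanly done.

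There is, however, a real gap in the extraction of the simple loop. You assert that $P=\{(u,v):s\le u<v\le t,\ f(u)=f(v)\}$ is closed and then minimize $v-u$ over it. But $\LL$ is not Hausdorff, so the diagonal $\Delta_\LL\subset\LL\times\LL$ is not closed, and $P=(f\times f)^{-1}(\Delta_\LL)$ need not be closed either: if $(u_n,v_n)\in P$ converges to $(a,c)$, continuity only tells you that the single sequence $w_n:=f(u_n)=f(v_n)$ converges to both $f(a)$ and $f(c)$, which in a non-Hausdorff space merely forces $f(a)$ and $f(c)$ to be \emph{non-separated}, not equal. Thus your minimizer could land in $\bar P\setminus P$, and you would have no simple loop. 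It is plausible this can be repaired within your framework---for instance $\bar P\setminus P$ lies over pairs of non-separated points, which are countable by Lemma~\ref{l.countable}, and one might argue that a limit point in $\bar P\setminus P$ forces a strictly smaller pair in $P$---but this needs a genuine additional argument that you have not supplied. The paper's approach sidesteps the issue entirely by passing to the Hausdorff space $\RR^2\setminus\Sigma$.
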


\begin{proof} A classical argument implies that a locally injective map $f$ can be realized as a path $\wt f\colon \RR\to \RR^2\setminus \Sigma$ topologically transverse to~$\FF$. Now the lemma follows from Lemma~\ref{l.1point}.
\end{proof}

\subsection{Generalization of Kaplan's theorem to the singular case} 

The aim of the next sections is to prove the following generalization of Kaplan's Theorem to $pA$-foliations.

\begin{theorem}\thlabel{th-Kaplan-sing}
	Let $(\LL,\preceq_\cdot)$ be a singular planar structure. Then there exists a $pA$-foliation~$\FF$ of $\RR^2$, and an isomorphism $f\colon(\LL,\preceq_\cdot)\to(\Leaf(\FF),\preceq_\cdot)$ of singular planar structures. This foliation is unique in the following sens:

	Let $\FF'$ be a prong foliation and $f'\colon(\LL,\preceq_\cdot)\to\left(\Leaf(\FF'),\preceq_\cdot\right)$ be an isomorphism a singular planar structure. There exist an orientation preserving homeomorphism $h$ of $\RR^2$ sending~$\FF$ onto $\FF'$, whose induced isomorphism $\wt h$ from $(\Leaf(\FF),\preceq_\cdot)$ to $(\Leaf(\FF'),\preceq_\cdot)$ satisfies $\wt h\circ f=f'$.
\end{theorem}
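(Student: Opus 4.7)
The plan is to follow the template set up for the non-singular case (Theorem~\ref{th-Kaplan}) and exploit the universality machinery already developed. First I would construct a \emph{universal $pA$-foliation} $\FF_0^*$ of $\RR^2$ whose singular planar structure is pre-universal in the sense of Definition~\ref{d.pre-universal-planar}. A natural recipe: start with the universal non-singular foliation $\FF_0$ built in Section~\ref{s.universal-foliation}, choose a closed discrete set $X \subset \RR^2 \setminus K$ with a multiplicity function $\varphi\colon X \to \NN_{\geq 3}$ that hits every integer $k\geq 3$ infinitely often, and take the ramified cover of Definition~\ref{d.ramified}. The lifted foliation $\FF_0^*$ is then a $pA$-foliation on the plane (the ramified cover is a plane by the discussion after Definition~\ref{d.ramified}), and its singularities are exactly the preimages of $X$, with the prescribed orders. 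Choosing $X$ so that, for each $k\geq3$, the points of multiplicity $k$ project onto a set that meets both sides of every horizontal chart in a dense set, one verifies the five items of Definition~\ref{d.pre-universal-planar}; the underlying density of branchings of $\FF_0$ already handles items~(1)--(4), and item~(5) follows from the choice of $X$.

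Second, given an arbitrary singular planar structure $(\LL,\preceq_\cdot)$, Theorem~\ref{th-univ-sing-leaf} applied to the pre-universal structure of $\FF_0^*$ produces an order-preserving embedding $\iota\colon (\LL,\preceq_\cdot) \hookrightarrow (\Leaf(\FF_0^*),\preceq_\cdot)$. Let
\[
U = \bigcup_{x \in \LL} \iota(x) \;\cup\; \{\,p \in \Sing(\FF_0^*) : \text{every separatrix of } p \text{ lies in } \iota(\LL)\,\} \subset \RR^2.
\]
Mimicking Lemma~\ref{lem-Kaplan-realisation}, one checks that $U$ is open, connected, simply-connected (using that $\iota(\LL)$ is a connected subset of the leaf space and Lemma~\ref{l.droite-plongee} to rule out loops) and hence homeomorphic to $\RR^2$. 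The restriction $\FF := \FF_0^*|_U$ is then a $pA$-foliation of $\RR^2$ whose singular planar structure is canonically isomorphic to $(\LL,\preceq_\cdot)$ via $\iota$, giving the existence part.

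Third, for uniqueness, suppose $\FF'$ is another $pA$-foliation with an isomorphism $f'\colon (\LL,\preceq_\cdot) \to (\Leaf(\FF'),\preceq_\cdot)$. Applying the same universal embedding machinery to $\FF'$ produces $\iota'\colon \Leaf(\FF') \hookrightarrow \Leaf(\FF_0^*)$ realizing $\FF'$ as the restriction of $\FF_0^*$ to some simply connected saturated open set $U' \subset \RR^2$. The composition $\iota' \circ f' \circ \iota^{-1}$ is then a self-isomorphism of singular planar structures from $\iota(\LL)$ to $\iota'(\LL)$ inside $\Leaf(\FF_0^*)$, and it extends to a global self-isomorphism of $\Leaf(\FF_0^*)$ by universality (applied in both directions). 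One then promotes this abstract isomorphism to an orientation-preserving homeomorphism of $\RR^2$ sending $\FF_0^*|_U$ to $\FF_0^*|_{U'}$: on each simple open $L$-interval of leaves, the transverse structure is a line $\RR$ decorated by the projections of branchings and cyclic branchings (as in Lemma~\ref{l.transversal-continuous}), and an order-preserving bijection between such decorated lines is unique up to isotopy. These local conjugacies glue across branchings (by the order $\preceq_\mu$) and across cyclic branchings (by the cyclic order and the fact that prong neighborhoods are determined up to conjugacy by their order), producing the required homeomorphism $h$.

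The main obstacle will be the construction and verification of $\FF_0^*$: picking $X$ and $\varphi$ so that the resulting ramified cover not only is a plane but has the correct density of branchings \emph{and} separatrices of each order on each side of each chart, in a way compatible with the orientation of $\RR^2$ (so that the anti-clockwise cyclic order on prongs matches the cyclic branching order of $\LL_0^*$). Once this is in place, steps two and three are straightforward adaptations of Kaplan's original scheme, the only genuinely new ingredient being the gluing across cyclic branchings, which is handled by the standard local model of a $k$-prong singularity.
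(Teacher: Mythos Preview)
Your existence argument has a real gap: a ramified cover of degree $k$ over a point of a non-singular foliation produces a singularity with $2k$ prongs, not $k$ prongs. So the foliation $\FF_0^*$ you construct has only singularities with an \emph{even} number of separatrices, and its singular planar structure cannot be pre-universal (item~(5) of Definition~\ref{d.pre-universal-planar} fails for odd $k$). The paper faces exactly this obstacle and devotes Sections~8.3--8.6 to overcoming it: one first builds the ramified cover $\FF_\theta$ (which, as you describe, has only even-order singularities), and then performs a surgery that deletes one quadrant at each singularity destined to have odd order, gluing the two bounding separatrices together. The delicate point is ensuring these surgeries can be done simultaneously without destroying the plane topology; this is what the ``origin/opposite separatrix'' machinery (Lemma~\ref{l.opposite}, Corollary~\ref{cor-non-or-sep}) is for.

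Your uniqueness argument also diverges from the paper and has its own gap. Universality gives you \emph{embeddings} of arbitrary structures into the universal one, but it does not say that a given isomorphism between two substructures extends to a self-isomorphism of the universal structure; your sentence ``extends to a global self-isomorphism \ldots\ by universality (applied in both directions)'' is not justified. The paper instead reduces to the non-singular Kaplan theorem: pass to the universal cover $P_i$ of $\RR^2\setminus\Sing(\FF_i)$, where the lifted foliations are non-singular, lift the isomorphism $f$ to the covers, apply Theorem~\ref{th-Kaplan} to get a conjugating homeomorphism $\Phi\colon P_1\to P_2$, and then descend. The descent is again non-trivial---one cuts each plane along one non-origin separatrix per singular point, uses the disjoint neighborhoods from Corollary~\ref{cor-non-or-sep} to adjust $\Phi$ near these separatrices so that the two boundary lifts are matched equivariantly, and glues. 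This bypasses any need to promote leaf-space isomorphisms to plane homeomorphisms directly in the singular setting.
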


The strategy for proving Theorem~\ref{th-Kaplan-sing} is the same as for the non-singular case. We will build a $pA$-foliation $\FF_{univ}$ whose singular planar structure is universal, in the sense that it contains any singular planar structures (see Definitions~\ref{d.pre-universal-planar} and~\ref{d.universal-planar}). Then any singular planar structure is the singular planar structure of the $pA$-foliation obtained as the set of leaves of $\FF_{univ}$ corresponding to the sub-planar structure.

This construction is much easier if in restriction to orientable foliations: in that case the universal oriented $pA$-foliation is a ramified cover over a non-singular universal foliation. The non-orientable case is obtained from the orientable case by surgeries, removing some quadrants of singularities.
Such surgeries need some preliminary works, which will be done in the next sections.


\subsection{Origin/opposite separatrixes} \label{s.origin-quadrant}

Let~$\FF$ be a prong foliation on $\RR^2$, and $\bp$ a based point in $\RR^2$ that does not lie on a separatrix. Let $p$ be a singularity of~$\FF$ and $S_p$ be the union of the separatrixes at $p$ (that is, the singular leaf through $p$). We call \emph{quadrant} based at $p$ the connected components of $\RR^2\setminus S_p$, and \emph{origin quadrant} the one that contains $\bp$. We call \emph{origin separatrixes} of $p$ the two separatrixes that bound the origin quadrant at $p$, and \emph{non-origin separatrix} every separatrix that is not an origin separatrix.

\begin{lemma}\label{l.opposite}
	Any path $\gamma\colon\RR\to\RR^2\setminus\Sigma$ transverse to~$\FF$ intersects at most one non-origin separatrix.
\end{lemma}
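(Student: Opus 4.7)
I would argue by contradiction. Suppose $\gamma$ intersects two distinct non-origin separatrixes $s_1$ and $s_2$, at times $t_1<t_2$. By Lemma~\ref{l.1point} applied to each singular leaf $S_p$ (which $\gamma$ meets in at most one point), $s_1$ and $s_2$ must belong to distinct singular points $p_1\neq p_2$.

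The first key step is a structural observation. For each $i\in\{1,2\}$, denote by $o_i,o_i'$ the two origin separatrixes of $p_i$ and set $A_i := o_i\cup\{p_i\}\cup o_i'$: a properly embedded topological line in $\RR^2$, dividing it into two open connected components, namely the origin quadrant $Q_0(p_i)\ni\bp$ and the complementary region $R_i=\RR^2\setminus\overline{Q_0(p_i)}$ (which contains every non-origin quadrant and hence every non-origin separatrix of $p_i$). The no-saddle-connection hypothesis of a $pA$-foliation gives $A_1\cap A_2=\emptyset$. Since $\gamma$ is transverse to $\FF$ and meets $S_{p_i}$ only at $\gamma(t_i)\in s_i\subset R_i$, it never crosses the origin separatrixes, so $\gamma(\RR)\cap A_i=\emptyset$; by connectedness and the fact that $\gamma(t_i)\in R_i$, we deduce $\gamma(\RR)\subset R_i$. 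A symmetric chain of connectedness arguments --- using that each of $s_j$, $o_j$, $o_j'$ is connected and disjoint from $A_i$ for $i\neq j$, that $\gamma(t_j)\in s_j\cap R_i$, and that small neighborhoods of $p_j$ meet $s_j\subset R_i$ --- yields in turn $s_j\subset R_i$, then $p_j\in R_i$, and finally $A_j\subset R_i$. In particular $A_1\subset R_2$ and $A_2\subset R_1$.

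The contradiction then comes from a purely topological observation about the open set $U:=Q_0(p_1)\cap Q_0(p_2)$: it is nonempty since $\bp\in U$, yet contained in $Q_0(p_1)\subsetneq\RR^2$. I claim $\partial U=\emptyset$ in $\RR^2$. Indeed $\partial U\subset\partial Q_0(p_1)\cup\partial Q_0(p_2)=A_1\cup A_2$, and for any $x\in A_1\subset R_2=\RR^2\setminus\overline{Q_0(p_2)}$, the openness of $R_2$ furnishes a neighborhood of $x$ disjoint from $\overline{Q_0(p_2)}$, hence disjoint from $U\subset Q_0(p_2)$; so $x\notin\overline{U}$, in particular $x\notin\partial U$. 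The symmetric argument handles $x\in A_2$. Thus $U$ is both open and closed in the connected space $\RR^2$, forcing $U\in\{\emptyset,\RR^2\}$, which contradicts $\bp\in U\subsetneq\RR^2$.

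The step I expect to require the most care is the propagation of inclusions leading to $A_j\subset R_i$; this genuinely relies both on the absence of saddle connections (so that the arcs $A_i$ and $A_j$ never meet) and on the local model of $\FF$ near $p_j$ (so that $p_j$ cannot sit on the boundary arc $A_i$). Once this package is in place, the final topological punchline is immediate.
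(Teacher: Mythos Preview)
Your proof is correct. The paper takes a related but distinct route: rather than the origin separatrixes, it uses the two separatrixes $f_{x,\pm1}$ \emph{adjacent to the crossed one} $f_{x,0}$, and for every singularity $x$ whose singular leaf meets $\gamma$ (not just the two hypothetical non-origin ones) it defines $\Delta_x$ as the half-plane bounded by $f_{x,-1}\cup\{x\}\cup f_{x,1}$ on the side away from $f_{x,0}$. Since $\gamma$ meets neither $f_{x,-1}$ nor $f_{x,1}$, it lies entirely outside each $\Delta_x$; the paper then argues that the $\Delta_x$ are pairwise disjoint and observes that $f_{x,0}$ is non-origin exactly when $\bp\in\Delta_x$. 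Your origin quadrants $Q_0(p_i)$ are subregions of the corresponding $\Delta_{p_i}$ (with equality only for $3$-prongs), and your clopen argument on $Q_0(p_1)\cap Q_0(p_2)$ plays the role of the paper's disjointness claim. The paper's formulation yields a bit more---disjointness of the $\Delta_x$ across \emph{all} crossed singularities---which feeds conveniently into the subsequent corollary about disjoint neighborhoods of non-origin separatrixes; your argument is more direct for the lemma as stated and makes the role of $\bp$ explicit. The propagation $s_j\subset R_i\Rightarrow p_j\in R_i\Rightarrow A_j\subset R_i$ that you flag is indeed the delicate point, and it goes through once one notes that $p_j\notin A_i$ (singular points do not lie on separatrixes of other singularities), whence $p_j\in\overline{R_i}\setminus A_i=R_i$.
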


\begin{proof}
	Denote by $\Sigma_\gamma$ the set of singularities that admits a separatrix that intersects $\gamma$. For every $x\in\Sigma_\gamma$, let $f_{x,0}$ be the separatrix based on $x$ and crossed by $\gamma$. Also denote by $f_{x,-1},f_{x,1}$ the two separatrixes of $x$ adjacent to $f_{x,0}$. Then $f_{x,-1}\cup\{x\}\cup f_{x,1}$ bound two half planes, denote by $\Delta_x$ that one that is disjoint from $f_{x,0}$. Since $\gamma$ intersects at most one separatrix of a given singularity, $\gamma$ is disjoint from the separatrixes $f_{x,-1}$ and $f_{x,1}$. So $\gamma$ remains in $\Delta_x$.

	It implies that the sets $\Delta_x$ are pairwise disjoint. The separatrix $f_{x,0}$ is non-origin when $\bp$ belongs to $\Delta_x$. As the $\Delta_x$ are pairwise disjoint, $\bp$ belongs to at most one of the $\Delta_x$. Thus, at most one separatrix $f_{x,0}$ is non-origin.
\end{proof}

Lemma~\ref{l.opposite} prevent any accumulation point in $\RR^2$ of non-origin separatrixes. That is the union of the non-origin quadrants is closed. In particular one deduces:

\begin{corollary}\thlabel{cor-non-or-sep}
	There exists a family of pairwise disjoint open subsets $U_s\subset\RR^2$, where $s$ varies over the set of non-origin separatrixes, so that $U_s$ is a neighborhood of $s$ (minus the singular point).
\end{corollary}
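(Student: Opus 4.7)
Let $F$ denote the union of all non-origin separatrices. My plan is to derive the corollary from Lemma~\ref{l.opposite} in two steps: first show that for each non-origin separatrix $s$, the set $F \setminus s$ is closed in $\RR^2 \setminus \Sigma$; then apply a distance-function construction to obtain the required pairwise disjoint open neighborhoods $U_s$.

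For the closedness step, I would take a sequence $y_n \in F \setminus s$ converging to some $y \in \RR^2 \setminus \Sigma$, and let $s_n \neq s$ be a non-origin separatrix containing $y_n$. Working in a foliation chart around $y$ in which $\FF$ is horizontal and $y$ is the origin, Lemma~\ref{l.1point} applied to a short vertical (transverse) segment $\tau$ through $y$ forces each $s_n$ to have at most one plaque in the chart, a horizontal line passing through $y_n$. If only finitely many distinct separatrices $s_n$ appear, then some $s' \neq s$ contains infinitely many of the $y_n$; since a single separatrix is closed in $\RR^2 \setminus \Sigma$ (by the same single-plaque argument), one deduces $y \in s' \subset F \setminus s$. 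Otherwise, infinitely many distinct $s_n$ have plaques crossing $\tau$; extending $\tau$ to a continuous transverse path $\gamma\colon \RR \to \RR^2 \setminus \Sigma$ would yield a transverse path meeting infinitely many non-origin separatrices, contradicting Lemma~\ref{l.opposite}.

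For the construction, I would equip $\RR^2$ with the Euclidean distance $d$ and set
$$U_s = \bigl\{\, x \in \RR^2 \setminus \Sigma : d(x, s) < d(x, F \setminus s) \,\bigr\}$$
for each non-origin separatrix $s$. The closedness of $F \setminus s$ ensures that both $d(\cdot, s)$ and $d(\cdot, F \setminus s)$ are continuous on $\RR^2 \setminus \Sigma$, so $U_s$ is open in $\RR^2$. Every point of $s$ (minus the singular point of $s$) lies in $U_s$ since there $d(x, s) = 0 < d(x, F \setminus s)$; and if $x$ lay in $U_s \cap U_{s'}$ for distinct non-origin separatrices $s, s'$, then $s' \subset F \setminus s$ would give $d(x, s) < d(x, F \setminus s) \le d(x, s')$ together with the symmetric inequality, a contradiction. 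The main obstacle is the chart argument of the second paragraph: one must verify carefully via Lemma~\ref{l.1point} that any single leaf has at most one plaque in a foliation chart, that the plaques of the distinct $s_n$ all cross the chosen transverse segment $\tau$, and that $\tau$ can be extended to a continuous transverse path on all of $\RR$ in order to legitimately invoke Lemma~\ref{l.opposite}.
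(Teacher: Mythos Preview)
Your proposal is correct and follows essentially the same idea the paper has in mind: the paper does not give a proof of this corollary at all, merely stating (just before the statement) that Lemma~\ref{l.opposite} ``prevent[s] any accumulation point in $\RR^2$ of non-origin separatrixes,'' from which the corollary is asserted to follow. Your argument makes this precise by first showing $F\setminus s$ is closed in $\RR^2\setminus\Sigma$ (exactly the non-accumulation statement) and then using a Voronoi-type distance construction to produce the disjoint $U_s$; the obstacles you flag are all minor---in particular, the domain issue for Lemma~\ref{l.opposite} disappears by reparametrizing the open transverse segment $\tau$ via a homeomorphism $(-\varepsilon,\varepsilon)\cong\RR$, since the proof of that lemma uses nothing about the parameter domain.
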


Corollary~\ref{cor-non-or-sep} will allow us to cut and past simultaneously along all non-origin separatrixes and ensure that the results is still a surface (and then a plan) by using the neighborhoods given above for the set of charts after cut-and-pasting.

\subsection{Construction by ramified cover} 

Let us recall two notations from Section \ref{s.universal-foliation}.
Denote by $C\subset[0,1]$ the standard cantor set (with $\{0,1\}\subset C$), and $K\subset\RR^2$ the closed set
$$K=\bigcup_{\substack{p,n\in\ZZ\\ q\in\NN_{>0}\\ |n|\geq q}}\left(C+2n,\frac{p}{q}\right).$$

We build a $pA$-foliation by taking a ramified covering of $\RR^2\setminus K$ endowed with the horizontal foliation. We chose the ramification points to be on a dense subset of non-separated leaves. 

The set $\pi_0([0,1]\setminus C)$ of connected components of $[0,1]\setminus C$, inherit from $\RR$ an order, and this order is in increasing bijection with $\QQ$.
In other words, one choose an increasing indexation
$$\pi_0\left([0,1]\setminus C\right) = \{c_{s}, s\in\QQ\}.$$

We choose a (noncontinuous) function $\psi\colon \QQ\to \NN\setminus \{1,2\}$ so that, for any $i$ the set $\QQ_i=\psi^{-1}(i)$ is dense in $\QQ$: for instance, $\psi(p/q)= 0$ if $q$ is even or $q=1$ and $\psi(p/q)= 1+i\geq 3$ if $q$ is odd and the lower prime number of $q$ in the decomposition of $q$ into prime numbers is the $i^{th}$ prime number ($3$ is the second prime number).

Fix a rationnal $r\in\QQ$, which we write $r=\tfrac{p}{q}$ with $p\wedge q=1$. Denote by $K_r= K\cap(\RR\times\{r\})$ the slice of $K$ at level $y=r$. We denote by $d_r$ the connected component $d_r=]-2q+1,2q[\times\{r\}$ of $(\RR\times\{r\})\setminus K_r$. Define a map
$$\varphi_r\colon \pi_0((\RR\times\{r\})\setminus K_r)\to\NN\setminus \{1,2\}$$
by:
\begin{itemize}
	\item if $c=d_r$ holds, we set $\varphi_r(c)=\psi(r)$.
	\item if $c=(2n+c_{s})\times\{r\}$ holds for some $n\in\ZZ$ and $s\in\QQ$, then we set $\varphi_r(c)=\psi(s)$. Note that we have $n\geq q$ in that case.
	\item if $c=]2k-1,2k[\times\{r\}$ holds for some $k\leq q$ or $k\geq q+1$, we set $\psi_r(c)=0$.
\end{itemize}

We let the reader check that the possible intervals $c$ given above are indeed connected components of $(\RR\times\{r\})\setminus K_r$, and that the three cases cover all connected components.

Given any component $c\in \pi_0(\RR\times\{r\}\setminus K_r)$ for which $\varphi_r(c)\neq 0$, we choose a point $x_c\in c$ as follows:

\begin{itemize}
	\item if $c=d_r$ holds, we set $x_{c}=(-2q+2,r)$ if $p$ is even and $x_{c}=(2q-1,r)$ if $p$ is odd.
	\item otherwise $x_c$ is the middle point of $c$.
\end{itemize}

\begin{remark}\label{r.density-degree-lr}
	As the central component $d_r$ is arbitrarily large for a dense subset of value of $r$, one deduces the following property.
	Given any open curve $\sigma\colon\RR\to\RR^2\setminus\Sigma$ transverse to the horizontal foliation, and given any $i\geq 3$, there is a dense set of values of $t\in\RR$ so that if we write
	$\sigma(t)=(x,r)$, we have the following properties:
	\begin{itemize}
		\item $r\in\QQ$ and $\psi(r)=i$,
		\item $\sigma(t)\in d_r$,
		\item $x_{d_r}$ is at the right of $\sigma(t)$.
	\end{itemize}
	The same statement holds for $x_{d_r}$ on the left of $\sigma(t)$.
\end{remark}

We denote $$X_r=\{x_c, c\in \pi_0(\RR\times\{r\}\setminus K_r) \mbox{ and }\varphi_r(c)\neq 0\} \quad \mbox{ and }\quad X=\bigcup_{r\in \QQ} X_r.$$
We still denote by $\varphi_r$ the restriction of $\varphi_r$ to $X_r$ and by $\varphi\colon X\to \NN\setminus\{1,2\}$ the map that coincides with $\varphi_r$ on $X_r$ for any $r\in\QQ$.
Notice that any accumulation point of $X$ belongs to $K$, so $K\cup X$ is a closed subset of $\RR^2$, and $X$ is a closed discrete set in the open surface $\RR^2\setminus K$.
We denote by $\theta\colon X\to \NN\setminus\{0,1\}$ the map defined by $\theta(x)=\tfrac{1}{2}\varphi(x)$ if $\varphi(x)$ is even and by $\theta(x)=\tfrac{1}{2}(\varphi(x)+1)$ if $\varphi(x)$ is odd.

We consider the universal cover $\Pi_{\theta}\colon\widetilde{\RR^2\setminus K}_{\varphi}\to \RR^2\setminus K$ of $\RR^2\setminus K$ ramified along $X$ with ramification degree $\theta$ (see Definition~\ref{d.ramified}). To simplify the notation, write $\PP_e=\widetilde{\RR^2\setminus K}_{\theta}$ ($e$ for even). Then $\PP_e$ is diffeomorphic to the plane $\RR^2$. We endow $\PP_e$ with the lift $\FF_\theta$ of the horizontal foliation on $\RR^2\setminus K$. Its singular is $\Sing(\FF_\theta)=\Pi_{\theta}^{-1}(X)$.

\begin{remark} The foliation $\FF_\theta$ has only singularities with an even number of separatrixes (prongs) and therefore is an orientable and co-orientable foliation.
	In particular, its leaf space $\Leaf(\FF_\theta)$ is an orientable $1$-dimensional non-Hausdorff manifold.
	Proposition~\ref{t.orientable-universal} below can be understood as: $\Leaf(\FF_\theta)$ is universal for orientable singular planar structures.
\end{remark}

Because of the even number of prongs, the planar structure $\Leaf(\FF_\theta)$ is not universal. We will remove by surgery on some separatrixes, in order to get a universal singular planar structure.
Recall that a singular planar structure is pre-universal (see Definition \ref{d.pre-universal-planar}) if roughly speaking all types of branchings appear as dense subsets. We give below the technical tool that allows use to prove that the singular planar structure of the foliation, after surgery, is pre-universal.

Let $A$, $B_i$, $i\geq 3$, and $B_0$ denote the following subsets of $\Leaf(\FF_\theta)$: $A$ is the union of the branchings, $B_i$ is the union of cyclic branchings of degree $i\geq 3$, and $B_0=A\setminus\bigcup_{i\geq 3}B_i$.

\begin{proposition} \label{t.orientable-universal}
	The planar structure $\Leaf(\FF_\theta)$ satisfies the following properties:
	\begin{enumerate}
		\item every element in $A$ is on a branching on its two sides,
		\item if $i$ is odd, then $B_i=\emptyset$,
		\item if $i\geq 2$ then $B_{2i}$ splits in $B_{2i}=B_{2i}^+\coprod B_{2i}^-$ where $\varphi\equiv2i$ on $B_{2i}^+$ and $\varphi\equiv2i-1$ on $B_{2i}^-$,
		\item for any two points $a_1\prec_\mu a_2$ on a branching $\mu$ and that are not on a common cyclic branching, the set $\{x\in\mu,a_1<_\mu x<_\mu a_2\}$ intersects $B_0$ and $B_{2i}^\pm$ for all $i\geq 2$.
		\item for every branching $\mu$ and every $i$, $\mu\cap B_{2i}^\pm$ has no maximum and no minimum for $\preceq_\mu$,
		\item for every open subset $U$ of $\Leaf(\FF_\theta)$, the set $B_0\cap U$ is dense in $U$,
		\item for every oriented open interval $U\simeq\RR$ in $\Leaf(\FF_\theta)$ and every $k\geq 2$, the two sets of separatrixes in $B_{2k}^\pm\cap U$ that lies on the left/right of $U$ are dense in $U$.
	\end{enumerate}
\end{proposition}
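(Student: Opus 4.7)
The plan is to analyze the branching structure of $\Leaf(\FF_\theta)$ through the ramified projection $\Pi_\theta\colon \PP_e \to \RR^2\setminus K$. Each (non-cyclic) branching of $\Leaf(\FF_\theta)$ projects to a branching of the horizontal foliation in the base, which corresponds to the set of horizontal leaves at some height $y = r \in \QQ$, indexed by $\pi_0((\RR\times\{r\}) \setminus K_r)$. Each cyclic branching of $\Leaf(\FF_\theta)$ lies over a ramification point $x \in X$ and arises from the local $\theta(x)$-fold cyclic cover, producing a singularity with exactly $2\theta(x)$ separatrices.

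First I would verify items 1, 2, 3 directly from the construction. For item 1, each horizontal branching at height $r \in \QQ$ is accumulated on both sides by leaves at heights $r \pm \epsilon$ (which are full lines $\RR\times\{r\pm\epsilon\}$ whenever $r\pm\epsilon \notin \QQ$), so the corresponding branching of $\Leaf(\FF_\theta)$ has branchings on both sides. Item 2 is immediate since $2\theta(x)$ is always even. For item 3 I would set $B_{2i}^+$ to consist of the cyclic branchings over points $x \in X$ with $\varphi(x) = 2i$ and $B_{2i}^-$ to consist of those with $\varphi(x) = 2i - 1$; by the definition $\theta(x) = \lceil \varphi(x)/2 \rceil$, both types produce $2i$-prong cyclic branchings, so this is a partition of $B_{2i}$.

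The density items 4, 5, 6 reduce to density properties in the base. Two points $a_1 \prec_\mu a_2$ in a branching $\mu$ project to components $c_1, c_2$ of $(\RR\times\{r\})\setminus K_r$ for some $r = p/q$, and any component $c$ strictly between $c_1$ and $c_2$ in the $x$-ordering corresponds to an element of $\mu$ strictly between $a_1$ and $a_2$. Between $c_1$ and $c_2$ one finds infinitely many Cantor-type components $c = (C + 2n)\times\{r\}$ (for $n$ large enough), and for each such one, any connected component of $(C+2n)\setminus\{c_s : s \in \QQ\}$ contributes a point $x_c$ with $\varphi(x_c) = \psi(s)$; by density of $\psi^{-1}(j)$ in $\QQ$ for each $j$, all values of $\varphi$ are realized densely, giving items 4 and 5. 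Item 6 follows because the interval-type components $(2k-1, 2k)\times\{r\}$ with $\varphi \equiv 0$ are dense in the base branching at every $r \in \QQ$, and $\QQ$ is dense in $\RR$, so the corresponding non-cyclic-branching points are dense in $\PP_e$.

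The main obstacle is item 7, which requires controlling the left-right position of each cyclic branching with respect to an oriented chart. For a Cantor-type component $c = (C+2n)\times\{r\}$, the midpoint $x_c$ divides $c$ symmetrically, so it gives no preferred left-right orientation. The control comes from the central components $d_r$: by construction, $x_{d_r}$ is placed near either the left or the right endpoint of $d_r$ depending on the parity of the numerator $p$ of $r = p/q$, and this position determines on which side of any transverse curve through $d_r$ the lifted cyclic branching lies. Remark~\ref{r.density-degree-lr} is exactly tailored to this situation: for any transversal $\sigma$ to the horizontal foliation in $\RR^2 \setminus K$, any $i \geq 3$, and any prescribed side, there is a dense set of parameters $t$ such that $\sigma(t) \in d_r$ with $\psi(r) = i$ and $x_{d_r}$ on the prescribed side of $\sigma(t)$. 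Lifting $\sigma$ to any oriented chart $U$ of $\Leaf(\FF_\theta)$ and applying the remark to both sides and both parities of $\varphi$ value ($2k$ and $2k-1$) yields the density of left-biased and right-biased elements of $B_{2k}^\pm$ on $U$, completing the proof.
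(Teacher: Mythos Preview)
Your approach is the same as the paper's: project via $\Pi_\theta$, identify each branching of $\Leaf(\FF_\theta)$ with an interval of components of $(\RR\times\{r\})\setminus(K_r\cup X_r)$ at some rational height $r$, use the density of each $\psi^{-1}(j)$ for items 4--5, and invoke Remark~\ref{r.density-degree-lr} for the left/right control in item~7. Items 1--5 and 7 are handled correctly.

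Your argument for item~6 does not work as written. First, the big-gap components $(2k-1,2k)\times\{r\}$ are not dense in the branching at height $r$: between any two consecutive such gaps lies an entire translate $C+2n$, contributing countably many Cantor-type components. Second, and more seriously, density of $B_0$ \emph{within} each branching (a statement at a fixed height) is not what item~6 asks for; item~6 asks for density in the transverse direction. An open interval $U\subset\Leaf(\FF_\theta)$ corresponds to a bounded transverse curve $\sigma$ in the base, and $B_0\cap U$ corresponds to those rational heights $r$ at which the \emph{single} component hit by $\sigma$ has $\varphi_r=0$. Since $\sigma$ is bounded, for $r=p/q$ with $q$ large it lands in the central component $d_r$, never in the far-away big gaps. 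The correct argument thus parallels your item~7: for the dense set of such $r$ with $\psi(r)=0$ one has $\varphi_r(d_r)=0$, placing the corresponding leaf in $B_0$. (Remark~\ref{r.density-degree-lr} is stated for $i\geq 3$, but the same reasoning gives the case $i=0$.)
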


\begin{proof}
	Take two leaves $l_1,l_2\in \Leaf(\FF_\theta)$ in a branching $\mu$. There are transverse curves $\sigma_1,\sigma_2\colon[-1,1]\to\PP_e$ with $\sigma_i(0)\in l_i$ and so that for every $t<0$, $\sigma_1(t)$ and $\sigma_2(t)$ belong to the same leaf.
	Then the projection by $\Pi_{\theta}$ of $\sigma_1(t)$ and $\sigma_2(t)$ are on the same horizontal line in $\RR^2$.
	Thus, the projection of $l_1$, $l_2$ also lie on the same horizontal line $\RR\times \{r\}$ with $r\in \QQ$.

	Conversely, given $r\in\QQ$, and given any compact segment $I\subset\RR$ there is $\varepsilon>0$ so that $(K\cup X)\cap (I\times [-\varepsilon, \varepsilon])= (K\cup X)\cap I\times \{r\}$.
	It implies that $\Pi_{\theta}$ is a homeomorphism in restriction to any connected component of $\Pi^{-1}_{\theta}((I\times [-\varepsilon,0])\setminus (K\cup X))$ and of $\Pi^{-1}_{\theta}((I\times [0,\varepsilon])\setminus K)$. We deduce that :

	\begin{itemize}
		\item every point in a branching belongs to a branching on both sides;
		\item $\Pi_{\theta}$ induces an increasing bijection between any branching $\mu$ with a subset $(\RR\times\{r\})\setminus(K\cup X)$ for some $r\in\QQ$.
	\end{itemize}

	Between any two connected components of $(\RR\times\{r\})\setminus K$, the map $\varphi$ takes all the possible values. Take a component $c$ of $\RR\times\{r\} \setminus K$ for which $\varphi\neq 0$. It contains two components of $\RR\times\{r\} \setminus (K\cup X)$ (which are delimited by $x_c$) which lift to the $2\theta(c)$ separatrixes of a cyclic branching of degree $\theta(c)$.

	This proves items 1 to 5.

	Consider now an oriented open interval $U$ in $\Leaf(\FF_\theta)$. It lifts to an open curve $\gamma\subset\RR^2\setminus(K\cup X)$, transverse to the foliation. Up to making $U$ smaller, we may assume that $\gamma$ remains in a compact subset of $\RR^2$. Then according to Remark~\ref{r.density-degree-lr}, there is a dense set of points $(x,r)\in\gamma$ with $r\in\QQ$, $\psi(r)=2i$, and so that $x_{d_r}$ lies on the left (respectively right) of $(x,t)$. The preimage of that set in $U$ lies in $B_{2i}$, and the corresponding cyclic branching are on the left (respectively right) of $U$.
	This proves items 5 and 6, ending the proof.
\end{proof}

\subsection{Construction by surgeries}

For convenience, we denote by $\theta$ and $\varphi$ the lifted maps $\theta\circ \Pi_{\theta}$ and $\varphi\circ \Pi_{\theta}$ from $\Sing(\FF_\theta)$ to $\NN_{\geq 3}.$ Furthermore, for $x\in \Sing(\FF_\theta)$ one has :
\begin{itemize}
	\item if $\varphi(x)$ is even, then $x$ has $2\theta(x)=\varphi(x)$ separatrixes.
	\item if $\varphi(x)$ is odd, then $x$ has $2\theta(x)=\varphi(x)+1$ separatrixes.
\end{itemize}
In particular every point in $\Sing(\FF_\theta)$ as an even number of separatrixes, and hence an even number of quadrants (see Section~\ref{s.origin-quadrant} for the notion of quadrants).

We will now cut a quadrant of each singular point $x$ in $\Pi_{\theta}^{-1}(x)$ for which $\varphi(x)$ is odd.
Choose a based point $\bp\in \PP_e$ that does not lie on a separatrix, which determines for each singularity its origin quadrant (see Section~\ref{s.origin-quadrant}). Any singular point $x$ has an even number of quadrants, and these quadrants are naturally clockwise cyclically ordered. We call the \emph{opposite quadrant} of $x$ its quadrant that is opposite to its origin quadrant. We also call \emph{odd-opposite quadrant} the opposite quadrant of a singularity $x$ that satisfies that $\phi(x)$ is odd.

Note that two quadrants with non-empty intersection but with disjoint interior are successive quadrants of the same singular point. Thus, two opposite quadrants are either disjoint or their interiors have non-empty intersection. Since quadrants are bounded by two adjacent separatrixes, two quadrants that intersects on their interior are included one in the other. A \emph{maximal odd-opposite quadrant} is an odd-opposite quadrant that is not included in the interior of any another odd-opposite quadrant.

Lemma~\ref{l.opposite} implies that odd-opposite quadrants do not accumulate on the plane. One deduces:

\begin{lemma}
	Any odd-opposite quadrant is included in a maximal odd-opposite quadrant.
\end{lemma}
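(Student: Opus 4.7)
My plan is to apply a Zorn-type argument to the collection $\mathcal{F}$ of odd-opposite quadrants containing the given quadrant $Q$. First I would observe, using the paragraph preceding the lemma (two opposite quadrants with intersecting interiors are nested), that $\mathcal{F}$ is totally ordered by inclusion. It then suffices to show that $\mathcal{F}$ is finite, from which a maximum element exists and is the desired maximal odd-opposite quadrant.

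Suppose by contradiction there is an infinite strictly increasing chain $Q = Q_0 \subsetneq Q_1 \subsetneq Q_2 \subsetneq \cdots$ in $\mathcal{F}$. For each $i$, denote by $x_i$ the singularity at the vertex of $Q_i$ and by $s_i^-,s_i^+$ its two bounding separatrixes, which are non-origin separatrixes by the very definition of an opposite quadrant (as the opposite quadrant at $x_i$ is bounded by separatrixes different from those bounding the origin quadrant, whenever the singularity has at least $4$ separatrixes, which is the case here). Since the opposite quadrant of a singularity is unique, the $x_i$ are pairwise distinct. Using that a separatrix of $x_i$ cannot meet a separatrix of $x_j$ for $j\ne i$ (no saddle connection), one checks that $x_i$ lies in the interior of $Q_{i+1}$, and similarly $s_i^\pm\setminus\{x_i\}\subset Q_{i+1}$.

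The key input is Corollary~\ref{cor-non-or-sep}, providing pairwise disjoint neighborhoods $U_s$ of the non-origin separatrixes, equivalently, the non-origin separatrixes do not accumulate in $\PP_e$. I would complete the contradiction by fixing a boundary point $q\in s_0^+\setminus\{x_0\}$, which by the above lies in $Q_i$ for every $i\geq 1$, and considering a small transverse arc $\sigma$ through $q$ extended maximally. By Lemma~\ref{l.opposite}, any such extension crosses at most one non-origin separatrix, namely $s_0^+$ itself, so $\sigma$ can never cross the boundary of any $Q_i$ ($i\geq 1$), i.e.\ $\sigma\subset Q_i$ for all $i\geq 1$. Combined with the strict nesting $Q_0 \subsetneq Q_1 \subsetneq \cdots$, the boundary separatrixes $s_i^\pm$ are all distinct but all contained in the closure of a neighborhood of $q$, producing an accumulation of non-origin separatrixes, which contradicts Corollary~\ref{cor-non-or-sep}.

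The main obstacle is precisely this last step: the non-accumulation statement holds in $\PP_e$, whereas a priori the singularities $x_i$ and the separatrixes $s_i^\pm$ of an infinite nested chain might escape to infinity rather than cluster in a compact region. The delicate point is to exploit the fact that every $Q_i$ contains the fixed subquadrant $Q_0$, and in particular a fixed boundary point of $Q_0$, in order to trap either the $s_i^\pm$ or a sequence of points chosen on them inside a compact set, thereby forcing the contradiction with the local finiteness of non-origin separatrixes given by Corollary~\ref{cor-non-or-sep}. Once $\mathcal{F}$ is finite, its maximum element is a maximal odd-opposite quadrant containing $Q$, concluding the proof.
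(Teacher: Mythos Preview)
Your overall plan—assume an infinite strictly increasing chain $Q_0\subsetneq Q_1\subsetneq\cdots$ and derive an accumulation of non-origin separatrixes contradicting Lemma~\ref{l.opposite}/Corollary~\ref{cor-non-or-sep}—is exactly the paper's. The gap is where you locate it, and the transversal argument does not close it: from Lemma~\ref{l.opposite} you correctly get $\sigma\subset Q_i$ for all $i\ge1$, but since the $Q_i$ are increasing this only says $\sigma\subset Q_1$ and carries no information about where $\partial Q_i$ sits for large $i$. The assertion that the $s_i^\pm$ are ``contained in the closure of a neighborhood of $q$'' is unsupported.

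The paper closes the gap by passing to complements: with $Q_n^c=\overline{\PP_e\setminus Q_n}$, the intersection $K=\bigcap_n Q_n^c$ is a closed $\FF_\theta$-saturated set containing $\bp$ and missing $Q_0$, hence a proper closed subset of the connected plane, so $\partial K\neq\emptyset$. Any point of $\partial K$ is then accumulated by the $\partial Q_n$ (using that $\overline{Q_n}\subset Q_{n+1}$, which follows from the absence of saddle connections), giving the contradiction. A shorter fix in the spirit of your ``trap in a compact set'' idea also works: take any compact arc $\gamma$ from a point of $Q_0$ to $\bp$; since $Q_0\subset Q_i$ and $\bp\notin Q_i$, the arc meets every $\partial Q_i$, and the resulting intersection points (lying on pairwise disjoint boundaries) accumulate in the compact set $\gamma$, again contradicting Corollary~\ref{cor-non-or-sep}. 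Your transversal fails only because it never leaves $Q_1$, whereas a path aimed at $\bp$ is forced to exit every $Q_i$.
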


\begin{proof}
	Reason by contradiction and assume that there exists an infinite increasing sequence $Q_n$ of odd-opposite quadrants. Denote by $Q_n^c$ the closure of the complement of $Q_n$. Then $Q_n^c$ is a decreasing sequence which contains $\bp$, so it is not empty. In particular $\bigcap_{n}Q_n^c$ is a saturated set containing $\bp$. The boundary of $\bigcap_{n}Q_n^c$ is accumulated by $\partial Q_n^c$, which is the union of two non-origin separatrixes and a singular point. Which contradicts Lemma~\ref{l.opposite}, that is these non-origin separatrixes have no accumulation in $\PP_e$.
\end{proof}

As a consequence of \thref{cor-non-or-sep} one gets:

\begin{lemma}\thlabel{lem-quadrant}
	Any compact subset of $\PP$ intersects at most finitely many odd-opposite quadrants.
\end{lemma}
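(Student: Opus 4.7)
The plan is to argue by contradiction: assume some compact $K\subset\PP$ meets infinitely many distinct odd-opposite quadrants $Q_n$. The key input is \thref{cor-non-or-sep}, which I reinterpret as saying that non-origin separatrices form a locally finite family in $\PP$: any compact of $\PP$ meets only finitely many of them. (If not, picking $y_n$ on distinct non-origin separatrices $s_n$ inside a compact and extracting a limit $y$, a small transverse segment through $y$ would cross infinitely many of the $s_n$, contradicting \ref{l.opposite}.)

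The main argument, in the pairwise disjoint case, runs as follows. Each $Q_n$ lies in a unique maximal odd-opposite quadrant $\tilde Q_n$, and distinct maximal odd-opposite quadrants are pairwise disjoint; if infinitely many $\tilde Q_n$ are distinct, I replace $Q_n$ by $\tilde Q_n$ to obtain a pairwise disjoint family of odd-opposite quadrants still meeting $K$. Extract $x_n\in K\cap Q_n$ with $x_n\to x\in K$. Openness and pairwise disjointness of the $Q_n$ preclude $x$ from lying in any single $Q_n$ (else $x_k\in Q_n$ eventually, contradicting $x_k\in Q_k$ for $k\neq n$). Choose a connected compact neighborhood $V$ of $x$ disjoint from the discrete set $\Sing(\FF_\theta)$. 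For all but finitely many $n$, $V\cap Q_n\neq\emptyset$ while $V\not\subset Q_n$; connectedness of $V$ then yields $V\cap\partial Q_n\neq\emptyset$, and since $\partial Q_n=s_n\cup s'_n\cup\{p_n\}$ with $V\cap\Sing(\FF_\theta)=\emptyset$, we obtain $V\cap(s_n\cup s'_n)\neq\emptyset$ for infinitely many $n$. Hence $V$ meets infinitely many distinct non-origin separatrices, contradicting the local finiteness above.

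The main obstacle is the chain case, occurring when only finitely many of the $\tilde Q_n$ are distinct: some maximal $\tilde Q$ then contains an infinite strictly increasing chain of odd-opposite quadrants, all meeting $K$, with pairwise distinct singular points $p_n\in\tilde Q$ that, by discreteness of $\Sing(\FF_\theta)$ in $\PP$, must escape to infinity. Here the nesting breaks the argument $x\notin\bigcup_n Q_n$. I would handle this case by choosing $x_n\in K\cap(Q_n\setminus Q_{n-1})$, which lies between the consecutive bounding separatrices of $Q_{n-1}$ and $Q_n$, and checking that the small-neighborhood argument above still produces infinitely many distinct non-origin separatrices $s_n,s'_n$ meeting $V$, yielding the same contradiction with \thref{cor-non-or-sep}.
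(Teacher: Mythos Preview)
Your first case (pairwise disjoint maximal quadrants) is correct and captures exactly the mechanism the paper has in mind: the paper's ``proof'' is simply the remark that the lemma is a consequence of \thref{cor-non-or-sep}, i.e.\ of the local finiteness of non-origin separatrices, and your argument via $V\cap\partial Q_n\neq\emptyset$ makes this explicit.

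The gap is in your ``chain case''. If only finitely many of the maximal quadrants $\tilde Q_n$ are distinct, you conclude that some fixed $\tilde Q$ contains an \emph{infinite strictly increasing chain} of odd-opposite quadrants meeting $K$. This is not justified: the odd-opposite quadrants inside $\tilde Q$ form a forest under inclusion (any two are nested or disjoint), and an infinite subfamily of a forest need not contain an infinite chain --- it could, for instance, consist of infinitely many pairwise disjoint sub-quadrants. Your sketch ``choose $x_n\in K\cap(Q_n\setminus Q_{n-1})$'' presupposes a chain structure and a non-emptiness that you have not established.

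The cleanest fix eliminates the dichotomy entirely. Replace the small neighborhood $V$ of a limit point by a compact \emph{connected} set $K'\supset K$ that also contains the base point $\bp$. Since $\bp$ lies in the origin quadrant of every singularity, $\bp\notin Q$ for every odd-opposite quadrant $Q$; hence $K'\not\subset Q$ whenever $K'\cap Q\neq\emptyset$, and connectedness gives $K'\cap\partial Q\neq\emptyset$. As $\partial Q$ consists of two non-origin separatrices and one singular point, and each non-origin separatrix bounds at most one odd-opposite quadrant, local finiteness (your reading of \thref{cor-non-or-sep}) finishes the proof with no case analysis.
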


Let $\PP_e'$ be the plane $\PP_e$ minus the interior of all maximal odd-opposite quadrants. Given a singular point $x\in \PP_e'$ with odd $\varphi(x)$ (in fact, $x\in\partial \PP_e'$), denote by $l^-,l^+$ the two separatrixes of $x$ that bound the odd-opposite quadrant (in $\PP_e)$ based at $x$. We glue these two separatrixes using any homeomorphism $f\colon l^-\cup\{x\}\to l^+\{x\}$. Denote by $\PP_{\varphi}$ the topological space obtained after these identifications for all singularity $x\in\PP_e'$ with odd $\varphi(x)$. Also denote by $\pi\colon\PP_e'\to \PP_{\varphi}$ the projection. Denote by $\FF_\varphi$ the image of the foliation $\FF_\theta$ after identification. Next lemma immediately follows from \thref{lem-quadrant}.

\begin{lemma}
	The space $\PP_{\varphi}$ is homeomorphic to a plane, and $\FF_\varphi$ is a prong foliation. The singularities of $\FF_\varphi$ are exactly the image of the singularities of $\FF_\theta$ that do not lie in the interior of any odd-opposite quadrant. Additionally, the degree of a singularity $x$ in $\FF_\varphi$ is equal to $\varphi(x)$.
\end{lemma}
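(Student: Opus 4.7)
The plan is to verify successively that $\PP_\varphi$ is a Hausdorff topological surface, that it is connected, simply connected, and non-compact, and then to conclude by the classification of surfaces; the foliated structure will be read off from local charts built along the way. The two tools that carry most of the weight are the local finiteness of odd-opposite quadrants (\thref{lem-quadrant}) and the disjoint neighborhoods given by \thref{cor-non-or-sep}.

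First I would check that $\PP_e'$, with the subspace topology from $\PP_e$, is a topological $2$-manifold with boundary. Its interior $\PP_e\setminus\bigcup_Q\wb{Q}$ is open by local finiteness, and its boundary is the disjoint union over maximal odd-opposite quadrants $Q$ of the sets $\{x_Q\}\cup l^-_Q\cup l^+_Q$, which have pairwise disjoint neighborhoods by \thref{cor-non-or-sep}. Next I would construct charts on $\PP_\varphi=\PP_e'/\!\sim$. Interior points inherit charts from $\PP_e$. For a non-singular point that is the common image of $y^-\in l^-_Q$ and $y^+\in l^+_Q$, I would paste together two small flowboxes of $\FF_\theta$ on the admissible sides of $l^-_Q$ and $l^+_Q$ via the gluing homeomorphism, using \thref{cor-non-or-sep} to guarantee that the flowboxes can be taken small enough that no further identifications occur on them; the result is a disk chart with the image foliation given by parallel lines. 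At the image of a singular point $x_Q$ with $\varphi(x_Q)$ odd, the $\varphi(x_Q)+1$ separatrices collapse, via the identification of the two bounding $l^\pm_Q$ into one, to $\varphi(x_Q)$ separatrices arranged in the standard prong model. Singularities of $\FF_\theta$ with $\varphi$ even, or that lie in the interior of some odd-opposite quadrant, are left untouched or removed respectively, so the description of $\Sing(\FF_\varphi)$ and of the degrees follows.

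For the global topology, Hausdorffness and second countability follow from the local finiteness of the identifications; connectedness is inherited from $\PP_e'$, which contains the based point $\bp$ and is path-connected; non-compactness is immediate since the image in $\PP_\varphi$ of a leaf of $\FF_\theta$ escaping every compact set remains proper. For simple connectedness I would reverse the construction: reinserting a closed half-plane $\wb{Q}$ along the glued ray $\pi(l^-_Q)=\pi(l^+_Q)$ is a homotopy equivalence because $\wb{Q}$ is contractible, so performing all such insertions simultaneously, which is allowed by local finiteness, recovers $\PP_e$ up to homotopy; hence $\pi_1(\PP_\varphi)=\pi_1(\PP_e)=1$. Classification of simply connected non-compact surfaces then gives $\PP_\varphi\cong\RR^2$. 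The main obstacle, in my view, is controlling Hausdorffness and simple connectedness \emph{simultaneously} across an infinite family of surgeries; both reduce, via \thref{lem-quadrant} and \thref{cor-non-or-sep}, to the single-quadrant case, where an explicit check in polar coordinates shows that gluing the two boundary rays of a closed half-plane produces a topological plane with a single prong at the apex.
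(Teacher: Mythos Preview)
Your proof is correct and takes essentially the same approach as the paper, in the sense that both rest on the local finiteness statement of \thref{lem-quadrant} and the disjoint neighborhoods of \thref{cor-non-or-sep}. The paper, however, gives no argument at all: it simply declares that the lemma ``immediately follows from \thref{lem-quadrant}''. Your write-up supplies exactly the details the paper suppresses---the chart construction on $\PP_\varphi$, the Hausdorffness check, and the homotopy argument for simple connectedness via collapsing each contractible $\wb Q$ to its image ray---so there is nothing to correct; you have written out what the authors took for granted.
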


\subsection{Universal $pA$-foliation}

A $pA$-foliation is said \emph{universal} if its singular planar structure is universal.

\begin{theorem}\thlabel{th-univ-sing-fol-ex}
	The $pA$-foliation $\FF_\varphi$ is universal.
\end{theorem}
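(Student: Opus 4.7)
The plan is to show that the singular planar structure $\Leaf(\FF_\varphi)$ is pre-universal in the sense of Definition~\ref{d.pre-universal-planar}, and then invoke \thref{th-univ-sing-leaf} to conclude that it is universal, hence that $\FF_\varphi$ is a universal $pA$-foliation. The strategy is to transfer each axiom of pre-universality from $\Leaf(\FF_\theta)$, where it is provided by Proposition~\ref{t.orientable-universal}, to $\Leaf(\FF_\varphi)$ via the surgery projection.

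First, I would set up the comparison between the two leaf spaces. The quotient $\pi\colon\PP_e'\to\PP_\varphi$ induces a surjection $\bar\pi\colon\Leaf(\FF_\theta|_{\PP_e'})\to\Leaf(\FF_\varphi)$ which is a local homeomorphism away from the pairs of identified separatrices bounding maximal odd-opposite quadrants. Writing $A', B_0', B_{2i}'^{\pm}$ for the subsets of $\Leaf(\FF_\theta)$ appearing in Proposition~\ref{t.orientable-universal} and $A, B_0, B_k$ for the corresponding subsets of $\Leaf(\FF_\varphi)$, the surgery behavior at singular points translates into the following dictionary: singularities with $\varphi(x)=2i$ are unaltered and contribute to $B_{2i}$, so $\bar\pi$ maps $B_{2i}'^{+}$ (restricted to $\PP_e'$) onto $B_{2i}$; singularities with $\varphi(x)=2i-1$ that survive the surgery have one pair of adjacent separatrices identified, reducing their degree by one, so $\bar\pi$ maps $B_{2i}'^{-}$ (restricted to $\PP_e'$) onto $B_{2i-1}$; and $B_0'\cap\PP_e'$ maps into $B_0$. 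By \thref{lem-quadrant} odd-opposite quadrants are locally finite, so $\bar\pi$ respects the left/right position of cyclic branchings relative to any transverse curve that avoids the identified separatrices.

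Next, I would verify each axiom of Definition~\ref{d.pre-universal-planar} for $\Leaf(\FF_\varphi)$ by lifting through $\bar\pi$ and applying Proposition~\ref{t.orientable-universal}. Axiom~1 (two-sided branchings) holds because any non-separatrix leaf of $\FF_\varphi$ lifts to a leaf of $\FF_\theta$ on a horizontal line $\RR\times\{r\}$ with $r\in\QQ$ that has open arcs of $\PP_e'$ on both sides. Axioms~2, 3 (density and absence of extrema in a branching $\mu$) follow because Proposition~\ref{t.orientable-universal}(4,5) provides such density in the lifted branching $\tilde\mu$ for each of $B_0', B_{2i}'^{\pm}$; the locally finite removal of singularities inside maximal odd-opposite quadrants cannot exhaust a dense countable set, so the surviving points project densely under $\bar\pi$. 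Axioms~4, 5 (density of $B_0$ and of left/right cyclic branchings in oriented charts) follow similarly: any open chart in $\Leaf(\FF_\varphi)$ lifts piecewise to charts in $\Leaf(\FF_\theta)$ where the corresponding density is provided by Proposition~\ref{t.orientable-universal}(6,7) together with Remark~\ref{r.density-degree-lr}, and the preservation of left/right under $\bar\pi$ completes the transfer.

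The main obstacle will be the careful bookkeeping in this transfer: in particular, one must check that between two given points $a_1\prec_\mu a_2$ of a branching $\mu$ in $\Leaf(\FF_\varphi)$, not only does the lifted interval in the corresponding branching $\tilde\mu$ of $\Leaf(\FF_\theta)$ hit every $B_0', B_{2i}'^{+}, B_{2i}'^{-}$ (which is provided by Proposition~\ref{t.orientable-universal}(4)), but also at least one such hit survives the removal of points inside maximal odd-opposite quadrants. This again reduces to the local finiteness of such quadrants and the fact that each type of branching appears densely along every horizontal line $\RR\times\{r\}$ with $r\in\QQ$. Once pre-universality is established, \thref{th-univ-sing-leaf} concludes.
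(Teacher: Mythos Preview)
Your approach is the same as the paper's: verify pre-universality of $\Leaf(\FF_\varphi)$ by transferring each axiom from $\Leaf(\FF_\theta)$ via Proposition~\ref{t.orientable-universal}, and conclude by \thref{th-univ-sing-leaf}. The dictionary $B_{2i}'^{+}\to B_{2i}$, $B_{2i}'^{-}\to B_{2i-1}$, $B_0'\to B_0$ is correct.

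The one place where your write-up takes a detour is the ``survival'' discussion. You frame the obstacle as checking that, among the points of a lifted branching $\tilde\mu$ hitting each $B_{2i}'^{\pm}$, at least one avoids the interior of a maximal odd-opposite quadrant, and you appeal to local finiteness. That is not the right mechanism (removed singularities are \emph{not} locally finite in the relevant sense), and it is also unnecessary. The paper's cleaner observation is this: by Lemma~\ref{l.opposite}, any transverse arc $\sigma$ in $\PP_\varphi$ crosses at most one identified separatrix, so after shrinking (and possibly deleting one point) its lift $\pi^{-1}(\sigma)$ is a transverse arc in $\PP_e$ disjoint from all odd-opposite quadrants. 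Since $\PP_e'$ is closed and $\FF_\theta$-saturated, the leaves $\pi^{-1}(\sigma(t))$ and all their accumulation leaves lie in $\PP_e'$; hence the full $\FF_\theta$-branching $\tilde\mu$ is contained in $\PP_e'$ and $\pi$ induces an increasing bijection $\tilde\mu\to\mu$. No leaf of $\tilde\mu$ is ever removed, so the density of each $B_{2i}'^{\pm}$ and $B_0'$ in $\tilde\mu$ transfers wholesale to $\mu$. The same at-most-one-crossing argument handles oriented charts for axioms~4 and~5. With this replacement for your ``survival'' step, the proof goes through exactly as you outline.
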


\begin{proof}
	We will prove that $\FF_\varphi$ is pre-universal as defined in Definition \ref{d.pre-universal-planar}, which implies that it is universal.

	Let us describe the branchings of $\FF_\varphi$. Let $\ell$ be a leaf of $\FF_\varphi$ and $\sigma\colon ]-1,1[\to \PP_{\varphi}$ be a transverse curve, with $\sigma(0)\in \ell$. Consider $\pi^{-1}(\sigma)\subset \PP_e'$. As transverse curves may cross at most one odd-opposite separatrix, up to shrinking $\sigma$, one may assume that $\pi^{-1}(\sigma([-1,0[])$ and $\pi^{-1}(]0,1])$ are disjoint from all the odd-opposite quadrants. Thus, either $\pi^{-1}(\sigma)$ is a transverse segment disjoint from the odd-opposite quadrants, or $\pi^{-1}(\sigma(0))$ consists in two points on the two separatrixes bounding a same maximal odd-opposite quadrant.
	In the two cases, the leaves passing through $\pi^{-1}(\sigma(t))$ for $t\in]-1,0[$, are not in odd-opposite quadrant, so their limit as $t\to 0$ are not in the interior of odd-opposite quadrants. This shows that the leaf $\ell$ belongs to a branching for $\FF_\varphi$ on the negative side of $\sigma$ if and only if the leaf passing through
	$\pi^{-1}(\sigma(0_-))$ belongs to a branching, and $\pi$ induces an increasing bijection between these branching. Thus, item 3 in the definition of pre-universal is satisfied.

	If $\pi^{-1}(\sigma(0))$ is not in the boundary of an odd-opposite quadrant, one gets that $\ell$ is either in no branching or in two branchings, one on each side. If $\pi^{-1}(\sigma(0))$ is two points on the two separatrixes bounding a maximal odd-opposite quadrant, then $\ell$ belongs to a branching on each side, as discuss above. So item 1 is satisfied.

	Denote by $A$ the union of the branchings of $\Leaf(\FF_\varphi)$.
	It follows that any point in $A$ belongs to two branchings, one in each side.
	Furthermore, every branching $\mu$ of $\FF_\varphi$ is in increasing bijection with a branching $\wt \mu$ of $\FF_\theta$ contained in $\PP_e$. Hence, item 3 is satisfied.

	Proposition~\ref{t.orientable-universal} implies that between any two points in $\wt \mu$ which are not successive points, there are points in $\wt \mu$ with all possible values of $\varphi$. This remains true for $\mu$. However, now, points $x\in \Leaf(\FF_\varphi)$ with $\varphi(x)\geq 3$ belong to a cyclic branching of degree $\varphi$. Notice that the cyclic order on the cyclic branching are still compatible with the order on the branchings.

	As already said, any interval $U$ in $\Leaf(\FF_\varphi)$ crosses at most one separatrix obtained by identification of two odd-opposite separatrixes. As we are concerned with a dense subset of $U$, we can assume that $U$ is disjoint from these separatrixes, and hence corresponds to a transverse segment in $\Leaf(\FF,{X,\theta})$. For every $i$, Proposition~\ref{t.orientable-universal} provides points in $U$ with all possible values of $\varphi=i$ (hence points in cyclic branching of degree $i$), at the right as well as at the left of the chart. It proves item 4 and 5.
\end{proof}

\subsection{Proof of \thref{th-Kaplan-sing}} 

We can now prove the singular version of Kaplan's theorem: any singular planar structure is the leaf space of a unique $pA$-foliation.

\begin{proof}[Proof of \thref{th-Kaplan-sing}, existence] Let $\FF_\varphi$ be the universal $pA$-foliation defined in \thref{th-univ-sing-fol-ex}. Let $(\LL,\preceq)$ be a singular planar structure.
	By definition of universality, there exists an embedding of singular planar structure $f\colon\LL\to\Leaf(\FF_\varphi)$. The union of leaves of $\FF_\varphi$ in $f(\LL)$ and of the singularities of $\FF_\varphi$ for which all separatrixes are in $f(\LL)$, is an open subset of $\RR^2$ homeomorphic to a plane equipped with a $pA$-foliation whose singular planar structures is isomorphic to~$\LL$.
\end{proof}

\begin{proof}[Proof of \thref{th-Kaplan-sing}, uniqueness]
	We now prove the uniqueness. Take two foliations $\FF_1,\FF_2$ of $\RR^2$ and an isomorphism $f\colon\Leaf(\FF_1)\to\Leaf(\FF_2)$ of singular planar structures. We will build a homeomorphism $h\colon\RR^2\to\RR^2$ that send $\FF_1$ to $\FF_2$, and whose induced map $\Leaf(\FF_1)\xrightarrow[]{h^*}\Leaf(\FF_2)$ is equal to $f$.

	Let $S_i\subset \RR^2$ be the set of singular points of $\FF_i$. We denote by $P_i$ the universal covering of $\RR^2\setminus\Sigma_i$ and $\wt\FF_i$ the lift of $\FF_i$ to $P_i$, which is a foliation without singularity. The covering map $P_i\to\setminus\Sigma_i$ induces a covering map $\Leaf(\wt\FF_i)\to\Leaf(\FF_i)$. Since $\wt\FF_i$ is a foliation on a plane, $\Leaf(\wt\FF_i)$ is connected and simply connected, so that $\Leaf(\wt\FF_i)\to\Leaf(\FF_i)$ is the universal covering map of $\Leaf(\FF_i)$. It is also clear that this map preserves the ordering at the branchings.

	Chose an origin $\bpx_i\in \RR^2$, not on a singular leaf of $\FF_i$, and $\ell_i$ the leaf that contains $\bpx_i$, so that $f(\ell_1)=\ell_2$. We choose lifts $\wt\ell_1,\wt \ell_2$ of $\ell_1,\ell_2$ on $\Leaf(\wt\FF_1)$ and $\Leaf(\wt\FF_2)$, respectively. Now the map $f$ lifts uniquely on the universal cover as a map
	$\wt f\colon \Leaf(\wt\FF_1)\to\Leaf(\wt\FF_2)$ so that $\wt f(\wt \ell_1)=\wt \ell_2$ and this map is a homeomorphism which is increasing for the order, that is, is an equivalence of the planar structures of $\wt\FF_1$ and $\wt\FF_2$. According to Kaplan theorem, there is a homeomorphism $\wt \Phi\colon P_1\to P_2$ with $\Phi(\wt\FF_1)=\wt\FF_2$ and so that $\Phi$ induces $\wt f$ on the planar structures.


	For every singular point $p$ of $\FF_1$ one chooses a non-origin separatrix $\ell_p$, and thus $f(\ell_p)$ is a non-origin separatrix for $f(p)$. Notice that:
	$$\Delta_1 =\RR^2\setminus \bigcup_{p\in \Sing(\FF_1)} \ell_p$$
	is connected, and simply connected, that is, is diffeomorphic to $\RR^2$. Let $\wt\Delta_1$ be the lift of $\Delta_1$ on $\PP_1$ that contains $\bpx_1$.
	Note that its boundary is the union of two lifts of each of the leaves $\ell_p$.

	Let $\wt\Delta_2=\Phi(\wt\Delta_1)$ be the corresponding set on $\PP_2$.
	It is a connected, simply connected, and its contains exactly one lift of each leaf of $\wt\FF_2$ excepts for the leaves $f(\ell_p)$. Thus, $\wt\Delta_2$ projects down on the set $\Delta_2=\RR^2\setminus \bigcup_p f(\ell_p)$.

	The restriction $\wt\Delta_1\xrightarrow{\Phi}\wt\Delta_2$ induces a homeomorphism $h_0\colon\Delta_1\to\Delta_2$ that conjugates the foliations $\FF_i$. It has no reason to extend continuously on the whole set $\RR^2$. To extend it, we first deform $h_0$ in neighborhoods of the leaves $f(\ell_p)$.

	Denote by $\ell^+_p,\ell^-_p$ the lifts of $\ell_p$ that bound $\wt\Delta_1$. Since they lie above the same separatrix on the base space, there is an element $\gamma_p$ in the fundamental domain of $\Leaf(\FF_1)$ that satisfies $\gamma_p\cdot\ell^+_p=\ell^-_p$. The map $f$ induces an isomorphism $f^*$ between the fundamental groups of $\Leaf(\FF_1)$ and $\Leaf(\FF_2)$.	So we have $f^*(\gamma_p)\cdot\Phi(\ell^+_p)=\Phi(\ell^-_p)$.
	We can extend continuously $h_0$ to the leaf $\ell_p$ if and only if
	$\Phi\circ\gamma_p$ coincides with $f^*(\gamma_p)\circ\Phi$ on $\ell^+_p$. These two maps are isotopic through homeomorphism from $\ell^+_p$ to $f(\ell^-_p)$.
	Corollary~\ref{cor-non-or-sep} asserts that the opposite separatrixes $\ell_p$, for all singular point $p$, admits disjoint neighborhoods $U_p$. Let $\wt U_p$ be the preimage of $U_p$ inside $\wt\Delta_1$. Then one can build a homeomorphism $g_p$ of $\wt U_p$ which is equal to the identity on $\partial\wt U_p\setminus\ell^+$, which preserves the leaves of the foliations, and so that $\Phi\circ\gamma_p\circ g_p$ coincides with $f^*(\gamma_p)\circ\Phi\circ g_p$ on $\ell^+_p$.

	We define $h\colon\RR^2\to\RR^2$ to be equal to the projection of $\Phi\circ g_p$ inside $U_p$, for all singularity $p$, to be equal to $h_0$ on $\Delta_1\setminus\cup_p U_p$, and that sends the singularity of $\ell_p$ to the singularity of $f(\ell_p)$. From the discussion above, it is well-defined. It is a homeomorphim which conjugates $\FF_1$ to $\FF^2$, and it induces $f$ on the leaf spaces.
\end{proof}

\subsection{From $\Pre^*$-pre-lamination to $pA$-foliation} 

Let~$\FF$ be a $pA$-foliation on a plane, with singularity locus $\Sigma$. Denote by $\partial_\infty\FF$ the circle at infinity of~$\FF$, and by $L_\infty(\FF)$ the \emph{end pre-lamination} of~$\FF$ (see the definition in Section~\ref{s.pre-lam-def}).

We prove the following theorem, which is a more precise version of the sufficient part of \thref{main-A-sing} (the necessary part is already proved).

Given a pre-lamination $L$ of the circle, a \emph{realization} is a pair $(\FF,g)$ where~$\FF$ a $pA$-foliation and $g\colon\partial_\infty\FF\to S^1$ is a cyclically non-decreasing continuous map inducing a bijection $L_\infty(\FF)\xrightarrow{\wb g}L$.

\begin{theorem}\thlabel{th-good-lam-are-end-sing-fol}
	Let $L$ be a $\Pre^*$-pre-lamination $L$ of $S^1$. Then there exists a $pA$-foliation $\FF$ on the plane and an increasing homeomorphism $g\colon\partial_\infty\FF\to S^1$ that sends the end lamination $L_\infty(\FF)$ of $\FF$ onto $L$.

	Moreover, if $(\FF',g')$ is another pair as above, then there exists a homeomorphism $h\colon(\RR^2,\FF)\to(\RR^2,\FF')$ whose induced homeomorphism $\wt h\colon \partial_\infty(\FF)\to\partial_\infty(\FF')$ satisfies $g'\circ\wt h=g$ and that sends $L_\infty(\FF)$ onto $L_\infty(\FF')$.

\end{theorem}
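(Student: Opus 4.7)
The strategy parallels the non-singular proof of \thref{th-good-lam-are-end-fol} but substitutes \thref{th-Kaplan-sing} for Kaplan's theorem and requires upgrading Proposition~\ref{prop-end-leaf-hom} to the singular setting. The plan has three steps.

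\textbf{Step 1: Singular analog of Proposition~\ref{prop-end-leaf-hom}.} The bulk of the work is to show that for any $pA$-foliation $\FF$ on $\Int(D^2)$, there is a canonical isomorphism of singular planar structures
$$\pi_\FF\colon \Leaf(\FF)\longrightarrow \Leaf^*(L_\infty(\FF)).$$
I would define $\pi_\FF$ as follows. If $f$ is a non-singular leaf of $\FF$, then the corresponding leaf $\pi(f)\in L_\infty(\FF)$ does not bound a star (Lemma~\ref{l.leaf-boundary}), so $\{\pi(f)\}$ is an $L^*$-leaf. Set $\pi_\FF(f):=\{\pi(f)\}$. If $s$ is a separatrix at a singular point $p$, with adjacent separatrices $s^-, s^+$ at $p$, the two faces $(s^-,s)$ and $(s,s^+)$ project to two leaves of $L_\infty(\FF)$ which are adjacent boundary components of the star associated to $p$ (Lemma~\ref{lem-end-lam-comp}); set $\pi_\FF(s):=\{\pi((s^-,s)),\pi((s,s^+))\}$, a separatrix $L^*$-leaf. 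This is a bijection, and I would check continuity in both directions using transverse segments and simple $L$-intervals (Lemma~\ref{l.simple-interval-singulier}), exactly as in the proof of Proposition~\ref{prop-end-leaf-hom}. To check preservation of branching/cyclic-branching orders: a branching of $\Leaf(\FF)$ is the set of leaves (or faces) accumulated by a common sequence, corresponding by Lemma~\ref{l.branching} to a shell of $L_\infty(\FF)$, and the left-right order on both sides is induced by the same clockwise cyclic orientation along the root interval of the shell on $S^1$. A cyclic branching of $\Leaf(\FF)$ is, by \thref{lem-cyrc-branch-fol}, the anti-clockwise cyclic set of separatrices at a singularity $p$; its image under $\pi_\FF$ is the cyclic set of separatrix $L^*$-leaves of the star at $p$, with the matching cyclic order inherited from $\partial D^2$.

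\textbf{Step 2: Existence.} Given a $\Pre^*$-pre-lamination $L$, Lemma~\ref{l.singular-leafspace} together with the orders on branchings and cyclic branchings defined in Section~4.4 make $\Leaf^*(L)$ a singular planar structure. \thref{th-Kaplan-sing} yields a $pA$-foliation $\FF$ on $\RR^2$ and an isomorphism of singular planar structures $\Leaf^*(L)\to\Leaf(\FF)$. Viewing $\FF$ as a foliation of $\Int(D^2)$ via \thref{th-Bonatti}, Step 1 provides an isomorphism $\Leaf(\FF)\to\Leaf^*(L_\infty(\FF))$, and composing gives an isomorphism $\Leaf^*(L_\infty(\FF))\to\Leaf^*(L)$. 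By Theorem~\ref{t.prelam-planar} this isomorphism is induced by a unique orientation-preserving homeomorphism $g\colon\partial_\infty\FF\to S^1$ sending $L_\infty(\FF)$ onto $L$.

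\textbf{Step 3: Uniqueness.} Given another realization $(\FF',g')$, the homeomorphisms $g,g'$ induce isomorphisms of $\Leaf^*(L_\infty(\FF))$ and $\Leaf^*(L_\infty(\FF'))$ with $\Leaf^*(L)$. Composing with the isomorphisms from Step~1 yields an isomorphism $\varphi\colon\Leaf(\FF)\to\Leaf(\FF')$ of singular planar structures. The uniqueness part of \thref{th-Kaplan-sing} produces an orientation-preserving homeomorphism $h\colon(\RR^2,\FF)\to(\RR^2,\FF')$ inducing $\varphi$, which extends uniquely to $D^2$ by \thref{th-Bonatti}; the uniqueness assertion in Theorem~\ref{t.prelam-planar} then gives $g'\circ\wt h=g$ on $\partial_\infty\FF$, and $\wt h$ sends $L_\infty(\FF)$ onto $L_\infty(\FF')$ as required.

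\textbf{Main obstacle.} Step~1 is the substantive new ingredient. The cyclic-branching part is the delicate point: one must verify that the cyclic order on the separatrices at a singular point $p$ (as prescribed by \thref{lem-cyrc-branch-fol}) matches, under $\pi_\FF$, the cyclic order on boundary leaves of the star component of $L_\infty(\FF)$ that corresponds to $p$. This requires unpacking the correspondence between successive separatrices at $p$ and adjacent faces, and tracking how these adjacent faces share an endpoint on $\partial D^2$, so that the cyclic arrangement matches that inherited from the ideal polygon bounding the star.
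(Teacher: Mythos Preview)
Your proposal is correct and follows essentially the same approach as the paper: your Step~1 is precisely the paper's Theorem~\ref{t.identification-planaire} (with the same map, there called $\psi^*$), and Steps~2--3 match the paper's derivation of the existence and uniqueness from that theorem via \thref{th-Kaplan-sing} and Theorem~\ref{t.prelam-planar}. The paper organizes the proof of Step~1 slightly differently, proving the homeomorphism via the equivalence ``$X$ is an open interval in $\Leaf(\FF)$ iff $\psi^*(X)$ is an open $L_\infty^*$-interval,'' but this is the same content as your transverse-segment argument.
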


Let $L$ be a $\Pre^*$-pre-lamination, and $\Leaf^*(L)$ be the planar structure associated to~$L$.
According to Theorem~\ref{th-Kaplan-sing}, there is a $pA$-foliation~$\FF$ whose planar structure $\Leaf(\FF)$ is isomorphic to $\Leaf^*(L)$.
Let $L_\infty$ denote the end pre-lamination of~$\FF$. It is a $\Pre^*$-pre-lamination.

Let~$\FF$ be a $pA$-foliation and $L_\infty$ be its induced pre-lamination on $\partial_\infty\FF$.
Given a non-singular leaf of a face $f$ of $\FF$, denote by $\psi(f)$ the leaf of $L_\infty$ between the two endpoints of $f$ in $\partial_\infty\FF$.
Define the map $\psi^*\colon\Leaf(\FF)\to\Leaf(L_\infty)$ on a leaf $f\in\Leaf(\FF)$ as follows:
\begin{itemize}
	\item if $f$ is a non-singular leaf of~$\FF$ then $\psi^*(f)=\psi(f)$,
	\item if $f$ is a separatrix of~$\FF$, then $f$ is the intersection of two adjacent faces $f_1,f_2$ of~$\FF$, and $\psi^*(f)$ is the separatrix $\{\psi(f_1),\psi(f_2)\}$ of $L_\infty$.
\end{itemize}

\begin{theorem}\label{t.identification-planaire}
	Let~$\FF$ be a $pA$-foliation and $L_\infty$ its induced pre-lamination on $\partial_\infty\FF$.
	Then the map $\psi^*\colon\Leaf(\FF)\to\Leaf(L_\infty)$ given above is an isomorphism of singular planar structures.
\end{theorem}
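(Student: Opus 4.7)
The plan is to proceed along the same lines as Proposition~\ref{prop-end-leaf-hom}, handling in addition the singular data (cyclic branchings and separatrix pairs). First I would establish that $\psi^*$ is a bijection. The map $\psi$ sending a non-singular leaf or face $f$ to the pair of its endpoints is injective by Remark~\ref{r.same-end} and surjective onto $L_\infty$ by definition. Lemma~\ref{lem-end-lam-comp} matches the star regions of $L_\infty$ with the singular leaves of $\FF$: each singularity with its $k$ separatrices corresponds to a star with $k$ boundary leaves, and two adjacent faces of the singular leaf map to two boundary leaves of the star that share an endpoint. Thus separatrices of $\Leaf(\FF)$ (pairs of adjacent faces at a singular point) are carried bijectively onto separatrices of $\Leaf^*(L_\infty)$ (pairs of adjacent boundary leaves of a star).

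Next I would show that $\psi^*$ is a homeomorphism. The topology on $\Leaf(\FF)$ has a basis given by sets $A_\sigma$ of leaves crossing a transverse open curve $\sigma\subset \Int(D^2)\setminus \Sing(\FF)$, whereas the topology on $\Leaf^*(L_\infty)$ has a basis of simple open $L^*$-intervals. The construction of simple intervals in Lemma~\ref{l.simple-interval-singulier} uses a geodesic transverse to $L_\infty$ which crosses the appropriate roots and adjacent star-edges, matching exactly what a transverse segment does for $\FF$ (by Lemma~\ref{l.leaf-boundary}, the leaves of $\FF$ near a non-regular leaf on its non-regular side are leaves accumulating a root, and near a face on its singular side are leaves circling the singular point). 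Concretely, given $\sigma$, $\psi^*(A_\sigma)$ is a simple $L^*$-interval, and given an $L^*$-interval $\II^*$ arising from a transverse geodesic $\delta$, one can build a transverse segment $\sigma$ to $\FF$ through successive short transversals whose image is $\II^*$; continuity in both directions follows using Lemma~\ref{l.continuous} to control the ends of leaves along $\sigma$.

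Then I would check preservation of the singular planar structure data. By Lemma~\ref{l.branching}, every branching of $\Leaf^*(L_\infty)$ is either the set of leaves/separatrices bounding a common shell, or a pair of successive separatrices of a common star. By the classical description of branchings of a $pA$-foliation (non-separated leaves/faces) and Corollary~\ref{c.limites}, such branchings correspond precisely, through $\psi^*$, to branchings of $\Leaf(\FF)$. For the left-right order, in the shell case, the order on $\mu_\Delta$ is read off the clockwise cyclic order of endpoints on $S^1$ inside the interval $I\subset\partial D^2$ delimited by the endpoints of the root; on the $\FF$-side, the left-right order on a branching of non-separated leaves is exactly the cyclic order of the corresponding ends of leaves on $\partial_\infty\FF$ (as in the proof of Proposition~\ref{prop-end-leaf-hom}), so the two agree. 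For cyclic branchings, Lemma~\ref{l.cyclic-singular} identifies them in $\Leaf(\FF)$ with the sets of separatrices at a singularity, cyclically ordered anti-clockwise around the singular point; on the $L_\infty$-side they are the separatrix-pairs of a star, cyclically ordered by the anti-clockwise boundary orientation. Since the embedding $\RR^2\hookrightarrow D^2$ is orientation preserving and the endpoints of the separatrices lie on $\partial D^2$ in the same cyclic order as around the singular point, $\psi^*$ preserves the cyclic order on each cyclic branching.

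The main obstacle I anticipate is the continuity half of the argument in the singular regime: one must show that any small $L^*$-interval around a separatrix $\{\psi(f_1),\psi(f_2)\}$ of $L_\infty$ lifts through $\psi^*$ to a neighborhood in $\Leaf(\FF)$ of the corresponding separatrix of $\Leaf(\FF)$, which requires verifying that a transverse geodesic crossing a star at two adjacent edges can be replaced by a transverse segment in $\RR^2$ that crosses the two corresponding adjacent faces at the same singular point. This is precisely the local model of a $k$-prong singularity, and once it is handled the rest follows by straightforward bookkeeping along the lines of Proposition~\ref{prop-end-leaf-hom}.
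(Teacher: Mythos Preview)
Your proposal is correct and follows the same overall strategy as the paper: establish bijectivity, then homeomorphism via the correspondence between transverse segments and $L^*$-intervals, then check the orders on branchings and cyclic branchings. The order-preservation part is essentially identical to what the paper does.

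The one genuine difference is in the backward continuity argument, and it is worth noting because the paper's route neatly avoids the obstacle you flag. You propose, given a simple $L^*$-interval coming from a transverse geodesic $\delta$, to \emph{construct} a transverse curve $\sigma$ in $\RR^2\setminus\Sing(\FF)$ realizing it; you correctly identify the delicate point as the local model near a separatrix of a star. The paper instead works abstractly: it shows that if $\psi^*(X)$ is an open $L^*$-interval then $X$ is open (using item~3 of Definition~\ref{d.interval} to find leaves on both sides and conclude nearby leaves along any short transversal lie in $X$), connected (items~1--2), and Hausdorff (item~4), hence an open interval of the non-Hausdorff $1$-manifold $\Leaf(\FF)$. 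This purely order-theoretic argument bypasses any explicit construction of $\sigma$ and in particular the singular local model. Your approach would also go through, but the paper's is shorter and sidesteps precisely the technical point you were worried about.
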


Let us assume the theorem for now.

\begin{proof}[Proof of Theorem~\ref{th-good-lam-are-end-sing-fol}] According to
	Theorem~\ref{t.identification-planaire} the planar structure $\Leaf(L_\infty)$ is canonically isomorphic to $\Leaf(\FF)$ and hence to $\Leaf(L)$. Note that $\Leaf(L_\infty)$ has few common ends. Thus Theorem~\ref{t.prelam-planar} provides a non-decreasing map of topological degree $1$ of the circle which is a semi conjugacy of $L_\infty$ ,with $L$. Furthermore, if $L$ has few common ends, then this semi-conjugacy is a conjugacy, ending the proof of Theorem~\ref{th-good-lam-are-end-sing-fol}
\end{proof}

\begin{lemma}
	A subset $X\subset\Leaf(\FF)$ is an open interval if and only if $\psi^*(X)$ is an open $L_\infty^*$-interval.
\end{lemma}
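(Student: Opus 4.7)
The plan is to exploit the realization of open intervals by transversals: in $\Leaf(\FF)$ an open interval is the set of leaves of $\FF^*$ crossed by a transverse arc $\sigma\colon\RR\to\RR^2\setminus\Sing(\FF)$, each crossed exactly once by Lemma~\ref{l.1point}; in $\Leaf^*(L_\infty)$ an open $L^*$-interval is realized, by the simpleness of $L$-intervals proven earlier in Section~4, as the set of $L^*$-leaves whose underlying leaves meet a geodesic $\delta\subset\Int(D^2)$ transverse to $L_\infty$. Both sides thus encode a totally ordered sequence of crossings, and the map $\psi^*$ is designed to match these orderings. I also use that $\psi^*$ is bijective on the level of sets (non-singular leaves of $\FF$ correspond to leaves of $L_\infty$ outside any star, while separatrices of $\FF$ at a singularity $p$ correspond bijectively to separatrix pairs of the star of $L_\infty$ at $p$).

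For the forward direction, let $X$ be an open interval in $\Leaf(\FF)$ realized by a transverse arc $\sigma$, put $Y=\psi^*(X)$, and let $\II\subset L_\infty$ be the union of all leaves appearing in $L^*$-leaves of~$Y$. I would verify the four axioms of Definition~\ref{d.interval} for $\II$, which together with $Y=\II^*$ (obtained by tracing $\psi^*$ on non-singular leaves versus separatrices) gives the conclusion. Axiom~(ii) follows because if $h\in L_\infty$ separates $l,l'\in\II$, then $f_h=\pi_\FF^{-1}(h)$ separates the associated $\FF$-objects in $\RR^2$; since $\sigma$ visits an object attached to each, it must cross $f_h$ itself (if non-singular) or one of the separatrices bounding $f_h$ (if a face), placing the crossed object in $X$ and hence $h\in\II$. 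Axiom~(iii) uses that for $t$ with $\sigma(t)$ on the object associated to $l$, the leaves through $\sigma(t\pm\varepsilon)$ yield elements of $\II$ on each side of $l$. The critical Axiom~(iv) rules out non-separatrix unseparated pairs: if $l,l'\in\II$ are not separated in $L_\infty$ and do not form a separatrix pair, they must bound a common shell of $L_\infty$ (by the classification of branchings in Lemma~\ref{l.branching}); then the underlying $\FF$-objects $f,f'$ are non-separated in $\FF$, but $\sigma$ meets objects of both inside $f\cup f'$, contradicting Lemma~\ref{l.separated}. Axiom~(i) is obtained from the linear order along $\sigma$: if $g_1,g_2,g_3\in X$ are the objects associated to $l_1,l_2,l_3$ and are crossed at times $t_1<t_2<t_3$, then $g_2$ separates $g_1$ from $g_3$ in $\RR^2$, and after a case analysis on whether each $g_i$ is a non-singular leaf or a separatrix sharing an endpoint with its face, the two endpoints of $l_2$ on~$S^1$ are shown to separate the endpoint pairs of $l_1$ and $l_3$.

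For the reverse direction, suppose $\psi^*(X)$ is an open $L^*$-interval, corresponding to a simple open $L$-interval $\II$ realized by a geodesic $\delta$ transverse to the geodesic realization of $L_\infty$. Since $\psi^*$ is a bijection, $X=(\psi^*)^{-1}(\II^*)$ is determined. I would construct a transverse arc $\tau\colon\RR\to\RR^2\setminus\Sing(\FF)$ in $\FF$ crossing exactly the leaves and separatrices in $X$, by concatenating, in the order induced by $\delta$, short transverse segments near each $\FF$-leaf or separatrix whose $\psi^*$-image lies in $\psi^*(X)$. The no bad-accumulation property (Lemma~\ref{cor-no-acc}), together with the fact that $\II$ has no adjacent pairs except separatrices, ensures these segments glue together into a single embedded transverse arc crossing exactly the required objects, realizing $X$ as an open interval in $\Leaf(\FF)$.

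The main obstacle is Axiom~(i) in the forward direction near singularities: when some $g_i$ are separatrices whose adjacent faces $f_i$ have $\psi(f_i)=l_i$, the leaves $l_i$ share endpoints in~$S^1$ with neighboring leaves, and the separation statement on $S^1$ is no longer the immediate translation of separation in~$\RR^2$. Untangling which $l_i$ actually strictly separates the other two requires a careful case analysis using the cyclic order of separatrix endpoints at each involved singularity.
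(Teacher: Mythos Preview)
Your forward direction follows the same plan as the paper (realize $X$ by a transversal $\sigma$ and check Definition~\ref{d.interval}), but your treatment of axiom~(iv) has a gap: if $l,l'\in\II$ are not separated by any leaf of $L_\infty$ and are not a separatrix pair, they need not bound a common shell; they may be two non-adjacent sides of a star, a case Lemma~\ref{l.branching} does not rule out. The paper avoids this case analysis entirely: two faces/leaves in $Y$ either cross $\sigma$ at distinct parameters (and then any leaf through $\sigma$ in between separates them), or at the same parameter, in which case they are adjacent faces of the same singular leaf and $\{l,l'\}$ is a separatrix. This also makes your worry about axiom~(i) largely evaporate: since $\sigma$ meets each singular leaf in at most one point (Lemma~\ref{l.1point}), two faces in $Y$ belonging to the same singular leaf are automatically adjacent, and the separation statement for $l_1,l_2,l_3$ follows directly from the order of the crossing times.

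Your reverse direction takes a genuinely different route from the paper, and here the gap is more serious. You propose to build a transversal $\tau$ in $\RR^2\setminus\Sing(\FF)$ by ``concatenating short transverse segments'' in the order given by a realizing geodesic $\delta$; but $\II^*$ is an uncountable continuum, so no literal concatenation makes sense, and producing a continuous section $t\mapsto\tau(t)\in f_t$ presupposes that $t\mapsto f_t$ varies continuously in $\Leaf(\FF)$, which is exactly the statement to be proved. The paper bypasses this entirely with a purely topological argument: using axioms~(ii) and~(iii) of Definition~\ref{d.interval} one shows $X$ is open in $\Leaf(\FF)$; axiom~(ii) then forces $X$ to be connected (any point of $\partial X$ inside $X$ would separate two elements of $X$ and hence lie in $X$); and axiom~(iv) forces $X$ to be Hausdorff. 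A Hausdorff connected open subset of a non-Hausdorff $1$-manifold is homeomorphic to $\RR$. This abstract approach is both shorter and avoids the delicate geometric construction you sketch.
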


\begin{proof}
	We denote by $Y$ the set of non-separatrix leaves in $X$ union the faces of $\FF$ that contains a separatrix of $X$. Then $\psi(Y)$ is the set of leaves of $L_\infty$ that appear in $\psi^*(X)$. So $\psi^*(X)$ is an open $L_\infty^*$-interval if and only if $\psi(Y)$ is an open $L_\infty$-interval, which then induces $\psi^*(X)$.

	First assume that $X$ is an open interval embedded in $\Leaf(\FF)$, and prove that $\psi(Y)$ is an open $L_\infty$-interval.
	Let $\sigma\in\RR^2\setminus \Sing(\FF)$ be an open curve transverse to~$\FF$ so that $Y$ is the set of non-singular leaves and faces that cross $\sigma$. Using $\sigma$, one easily see that the items 1, 2 and 3 in Definition \ref{d.interval} hold true.

	We prove the fourth item.
	Let $f,h$ be distinct leaves or faces in $Y$. If $f$ and $h$ intersects $\sigma$ in two distinct points, then there exists a leaf $g\in Y$ that intersects $\sigma$ in between $f$ and $h$. Then $\psi(g)$ separates $\psi(f)$ and $\psi(h)$.
	If they intersect $\sigma$ in the same point, then there are adjacent faces of $\FF$, and $\psi(\{f,h\})$ is the separatrix of $L_\infty$ image of the separatrix $f\cap h\in\Leaf(\FF)$. Thus item 4 holds. It follows that $\psi(Y)$ is an open $L_\infty$-interval, and $\psi^*(X)$ is an open $L_\infty^*$-interval.

	Conversely, assume now that $\psi^*(X)$ is an open $L_\infty^*$-interval. Then $\psi(Y)$ is an open $L_\infty$-interval. We claim that $X$ is open. Assume the claim for now.
	Given any two $f,h\in Y$, it follows from item 1 that any leaf or face $g$ that separates $f$ and $h$ also belong to $Y$. Since $X$ is open, it implies that $X$ is connected. Indeed, if it was not, some $f\in\partial X$, not in $X$, would separate two connected components of $X$, contradicting item 1. Then item 4 implies that $X$ is Hausdorff. Therefore, $X$ is a Hausdorff, connected and open subset of a non-Hausdorff 1-manifold, so $X$ is homeomorphic to an open interval.

	We now prove the claim. Take a leaf $f$ in $X$ and an open and transversal curve $\tau\colon\RR\to\RR^2\setminus \Sing(\FF)$ with $\tau(0)\in f$. It follows from item 3 in Definition \ref{d.interval} that there exists two non-separatrix leaves $g,h$ in $X$ that are separated by $f$. Denote by $l_t\in\Leaf(\FF)$ the leaf that contains $\tau(t)$. When $t$ goes to zero, $l_t$ accumulates on $f$, so it separates $f$ from either $g$ or $h$ (depending on the sign of $t$). It follows from item 2 that, up to shortening $\tau$, $l_t$ belongs to $X$ for all $t$. The set of $l_t$ is open, so $X$ is open.
\end{proof}

As a straightforward corollary one gets:

\begin{corollary} The bijection $\psi\colon \Leaf(\FF)\to\Leaf(L_\infty)$ is a homeomorphism.
\end{corollary}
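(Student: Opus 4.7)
The plan is to leverage the preceding lemma (characterizing open intervals in $\Leaf(\FF)$ via $\psi^*$) together with the structural descriptions of branchings already established in the paper. First I would record that $\psi^*$ is a bijection: injectivity follows from Remark~\ref{r.same-end} (two distinct leaves or faces of $\FF$ have distinct pairs of endpoints, and two adjacent faces sharing a separatrix produce distinct separatrix pairs in $L_\infty$), and surjectivity follows from Lemma~\thref{lem-end-lam-comp}, which identifies $L_\infty = L_\infty(\FF)$ as a shell/star pre-lamination whose leaves and separatrixes are precisely the images of non-singular leaves/faces and of adjacent face pairs of $\FF$. The preceding lemma then promotes $\psi^*$ to a homeomorphism, since it puts open $\Leaf(\FF)$-intervals and open $L_\infty^*$-intervals in one-to-one correspondence.

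Next I would match up branchings on each side. The branchings of $\Leaf(\FF)$ split into two families: (a) maximal classes of pairwise non-separated non-singular leaves and/or free faces, and (b) unordered pairs of successive separatrixes at a singular point $p$ of $\FF$. For family (a), Corollary~\ref{c.limites} (or Lemma~\ref{l.leaf-boundary}) shows that the corresponding $L_\infty^*$-leaves are exactly the boundary components of a shell of $L_\infty$, hence form a branching of type (i) in Lemma~\ref{l.branching}. For family (b), the two faces meeting along a separatrix at $p$ map to two adjacent boundary components of the star of $L_\infty$ associated to $p$ (existence of that star is exactly item 5 of Lemma~\ref{l.leaf-boundary}), hence form a branching of type (ii) in Lemma~\ref{l.branching}. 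Conversely, by Lemma~\ref{l.branching}, every branching of $\Leaf(L_\infty)$ is of one of these two forms, so $\psi^*$ induces a bijection between the branchings. The same reasoning combined with Lemma~\ref{lem-cyrc-branch-fol} on the $\FF$-side and Lemma~\ref{l.branching} on the $L_\infty$-side shows that cyclic branchings of $\Leaf(\FF)$ (sets of separatrixes at a given singular point) correspond bijectively to cyclic branchings of $\Leaf(L_\infty)$ (separatrixes of a star).

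It then remains to check that $\psi^*$ respects the orderings, and this is where the real verification lies. For branchings of type (a), the left-right order $\preceq_\mu$ on $\Leaf(\FF)$ is defined by the clockwise cyclic order of the boundary components of a region $R' \subset \RR^2$ bounded by a leaf $l$ and the elements of $\mu$; passing to the compactification $D^2$, this region $R'$ is bounded at infinity by a sub-arc of $\partial D^2 = \partial_\infty\FF$ which is exactly the interval $I$ joining the endpoints of the root of the corresponding shell of $L_\infty$. Thus the clockwise cyclic order around $R'$ restricts to the clockwise orientation of $I$, which is precisely the definition of $\preceq_{\mu_\Delta}$ in $\Leaf(L_\infty)$. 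For branchings of type (b) and for the cyclic branchings, both orderings are defined using the clockwise/anti-clockwise cyclic order of the separatrixes around a singular point $p$ on one side and of the boundary leaves of the star $\Delta_p$ on the other side; these orders coincide because passing from $f$ to $\psi(f)$ preserves the cyclic order on $\partial D^2$ (faces of the singular leaf at $p$ appear in the same cyclic order as their endpoint pairs at infinity, this being a direct consequence of the planarity of $\RR^2 \hookrightarrow D^2$).

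The main obstacle I expect is the last paragraph: carefully unwinding the two definitions of $\preceq_\mu$ so as to identify $R'$ inside $\Int(D^2)$ with the region bounded by a leaf of $\FF$ and the shell $\Delta$ of $L_\infty$, and checking that the orientation of the arc $I \subset \partial D^2$ matches the clockwise orientation of $\partial R'$. Everything else is essentially bookkeeping once the four lemmas (\ref{l.leaf-boundary}, \ref{l.branching}, \ref{lem-cyrc-branch-fol}, and the preceding interval-characterization lemma) are in place.
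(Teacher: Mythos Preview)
Your first paragraph already proves the corollary and matches the paper's approach: the paper treats the corollary as immediate from the preceding lemma, since open intervals form a basis for the topology of $\Leaf(\FF)$ and open $L_\infty^*$-intervals form a basis for the topology of $\Leaf^*(L_\infty)$ by construction, so a bijection carrying one basis to the other is a homeomorphism. Your remarks on bijectivity (via Remark~\ref{r.same-end} and Lemma~\thref{lem-end-lam-comp}) are a welcome clarification that the paper leaves implicit.

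Everything from your second paragraph onward is not part of the corollary at all: you are there proving Theorem~\ref{t.identification-planaire}, which is the \emph{next} statement in the paper and asserts that $\psi^*$ is moreover an isomorphism of singular planar structures. The paper separates these two steps cleanly: the corollary is just the homeomorphism, and the theorem then checks compatibility with the branching orders and cyclic orders. Your arguments for that part are along the same lines as the paper's proof of the theorem (comparing the clockwise order on $\partial R'$ with the order on the arc $I\subset\partial D^2$ for shells, and matching cyclic orders at singular points with cyclic orders on stars), so nothing is wrong, but it does not belong in the proof of the corollary.
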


\begin{proof}[Proof of Theorem~\ref{t.identification-planaire}]
	Let $\psi\colon\Leaf(\FF)\to\Leaf(L_\infty)$ be the homeomorphism constructed above.
	It remains to prove that $\psi$ preserves the order of the branching and the cyclic order of the cyclic branching.

	Let $\nu$ be a cyclic branching. The separatrixes of a singular point of~$\FF$ are in one to one correspondence by $\psi$ with the separatrixes of a star of $L_\infty$, and the two families are ordered using the orientation on $\partial_\infty\FF$. As we took the same ordering convention, the map $\psi$ on $\nu$.

	Let $\mu$ be a branching of~$\FF$. If $\mu$ is a pair of adjacent separatrixes (of a common singularity), then the order on $\psi(\mu)$ is induced by the ordering on the cyclic branching that contains $\psi(\mu)$. If follows from the previous case that $\psi$ is increasing on $\mu$.

	Assume now that $\mu$ is not reduced to a pair of adjacent separatrixes. It corresponds to a shell $\Delta$ of $L_\infty$. The leaves (or faces) in $\mu$ are ordered from the point of view of a leaf $f$ in the common connected component of $\RR^2\setminus\mu$. One can take $f$ so that $\psi(f)$ is close to the root of $\Delta$. The leaf of $L$ in $\Delta_\mu$ are ordered from the point of view of the root. So the order on $\mu$ and on $\psi(\mu)$ given by the same orientation, and $\psi$ is increasing on $\mu$.
\end{proof}

\addcontentsline{toc}{section}{References}

\bibliographystyle{alpha}
\bibliography{ref}

@article{KaplanI,
  title={Regular curve-families filling the plane, I},
  author={Kaplan, Wilfred},
  year={1940}
}

@article{KaplanII,
  title={Regular curve-families filling the plane, II},
  author={Kaplan, Wilfred},
  year={1941}
}

@misc{BBM2024,
      title={Completing Prelaminations }, 
      author={ Barthelm\'e, Thomas AND Bonatti,Christian AND Mann, Kathryn},
      year={2024},
      eprint={2411.03586},
      archivePrefix={arXiv},
      primaryClass={math.DS},
      url={https://arxiv.org/abs/2411.03586}, 
}

@misc{BonattiCirc,
      title={Action on the circle at infinity of foliations of $\mathbb{R}^2 $}, 
      author={Bonatti, Christian},
      year={2023},
      eprint={2301.04530},
      archivePrefix={arXiv},
      primaryClass={math.DS},
      url={https://arxiv.org/abs/2301.04530}, 
}

@inproceedings{Mather,
  title={Foliations of surfaces I: an ideal boundary},
  author={Mather, John N},
  booktitle={Annales de l'institut Fourier},
  volume={32},
  number={1},
  pages={235--261},
  year={1982}
}

@article{haefliger1957,
  title={Vari{\'e}t{\'e}s (non s{\'e}par{\'e}es) {\`a} une dimension et structures feuillet{\'e}es du plan},
  author={Haefliger, Andr{\'e}},
  journal={Enseign. Math.},
  volume={3},
  pages={107--126},
  year={1957}
}

\noindent 
Christian Bonatti,\\
CNRS and Universit\'e  Bourgogne-Europe, Dijon, France.\\
\emph{Email address}: bonatti@u-bourgogne.fr

\vskip 2mm
\noindent
Th\'eo Marty,\\
Universit\'e  Bourgogne-Europe, Dijon, France.\\
\emph{Email address}: marty.theo.math@gmail.com.

\end{document}